\newtheorem{theorem}{Theorem}[section]
\newtheorem{lemma}[theorem]{Lemma}
\newtheorem{proposition}[theorem]{Proposition}
\theoremstyle{definition}
\newtheorem{definition}[theorem]{Definition}
\newtheorem{remark}[theorem]{Remark}
\begin{document}
\sloppy

\title[A characterization on separable subgroups of $3$-manifold groups]{A characterization on separable subgroups of $3$-manifold groups}

\author{Hongbin Sun}
\address{Department of Mathematics, Rutgers University - New Brunswick, Hill Center, Busch Campus, Piscataway, NJ 08854, USA}
\email{hongbin.sun@rutgers.edu}



\subjclass[2010]{57M05, 57M50, 57N10, 20E26}
\thanks{The author is partially supported by NSF Grant No. DMS-1510383.}
\keywords{3-manifold groups, separable subgroups, subgroup separability}

\date{\today}
\begin{abstract}
In this paper, we give a complete characterization on which finitely generated subgroups of finitely generated $3$-manifold groups are separable. Our characterization generalizes Liu's spirality character on $\pi_1$-injective immersed surface subgroups of closed $3$-manifold groups. A consequence of our characterization is that, for any compact, orientable, irreducible and $\partial$-irreducible $3$-manifold $M$ with nontrivial torus decomposition, $\pi_1(M)$ is LERF if and only if for any two adjacent pieces in the torus decomposition of $M$, at least one of them has a boundary component with genus at least $2$.
\end{abstract}

\maketitle
\vspace{-.5cm}
\section{Introduction}

For a group $G$ and a subgroup $H < G$, we say that $H$ is {\it separable} in $G$ if for any $g\in G\setminus H$, there exists a finite index subgroup $G'<G$ such that $H<G'$ and $g\notin G'$. A group $G$ is {\it LERF} (locally extended residually finite) or {\it subgroup separable} if all finitely generated subgroups of $G$ are separable.

The topological meaning of subgroup separability is given by the following heuristic: for any $\pi_1$-injective immersed compact object in a topological space $S\looparrowright M$ (e.g. a $\pi_1$-injective immersed compact subsurface in a $3$-manifold), if $\pi_1(S)$ is a separable subgroup of $\pi_1(M)$, then $S$ lifts to be embedded in some finite cover of $M$.

Since the study of fundamental group is one of the most central topic in $3$-manifold topology, subgroup separability is very important for $3$-manifolds. More precisely, subgroup separability is closely related with the virtual Haken conjecture of hyperbolic $3$-manifolds, which was raised by Thurston in \cite{Thu2} and settled by Agol in \cite{Agol2}. It is one of the biggest breakthrough on $3$-manifold topology in the past fifteen years.

For fundamental groups of $1$-dim and $2$-dim manifolds/complexes, it is known that free groups (\cite{Hall}) and surface groups (\cite{Sco2}) are LERF. For $3$-manifold groups, Scott showed that Seifert manifold groups are LERF (\cite{Sco2}), while the celebrated works of Agol and Wise showed that (finitely generated) hyperbolic $3$-manifold groups are LERF (\cite{Wise} proved the cusped case, see also \cite{GM}, and \cite{Agol2} proved the closed manifold case).

Although all $1$-dim manifold groups, $2$-dim manifold groups and hyperbolic $3$-manifold groups are LERF, many $3$-manifold groups are not LERF. The first such example is given by \cite{BKS}. The example in \cite{BKS} is a graph manifold, and it is homeomorphic to the mapping torus of a Dehn-twist on the one-punctured torus. Then Niblo and Wise (\cite{NW}) proved that all graph manifold groups are not LERF, by showing that all graph manifold groups contain a subgroup that is isomorphic to a nonLERF group given in \cite{BKS}. In \cite{RW}, Rubinstein and Wang gave the first nonseparable subgroup of a graph manifold group that is carried by a $\pi_1$-injective properly immersed subsurface. Inspired by \cite{RW} and armed with modern tools on hyperbolic $3$-manifold groups, Liu proved that many closed mixed $3$-manifold groups have nonseparable subgroups (\cite{Liu}), and these nonseparable subgroups are carried by $\pi_1$-injective immersed subsurfaces.

By using results in \cite{Agol2} and \cite{Wise}, and generalizing the construction in \cite{RW} and \cite{Liu}, the author proved all mixed $3$-manifolds have nonLERF fundamental groups. Among with the ground breaking work of Agol and Wise (\cite{Agol2}, \cite{Wise}), Scott's work on Seifert manifolds (\cite{Sco2}), Niblo-Wise's work on graph manifolds (\cite{NW}) and the author's work on mixed manifolds, we obtain the following result on $3$-manifolds with empty or tori boundary in \cite{Sun}.
\begin{theorem}\label{toriboundary}
  Let $M$ be a compact, orientable, irreducible $3$-manifold with empty or tori boundary. Then $M$ supports one of Thurston's eight geometries if and only if $\pi_1(M)$ is LERF.
\end{theorem}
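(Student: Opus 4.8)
The plan is to verify the two implications separately, in each case reducing to results already in the literature by running through the pieces of the geometric decomposition of $M$. For the direction ``geometric $\Rightarrow$ LERF'', suppose $M$ supports one of Thurston's eight geometries. Six of these --- $\mathbb{S}^3$, $\mathbb{S}^2\times\mathbb{R}$, $\mathbb{E}^3$, $\mathrm{Nil}$, $\mathbb{H}^2\times\mathbb{R}$ and $\widetilde{SL_2(\mathbb{R})}$ --- are exactly the geometries carried by Seifert fibered $3$-manifolds, so for these $\pi_1(M)$ is LERF by Scott's theorem \cite{Sco2}. If $M$ carries the $\mathrm{Sol}$ geometry, then $\pi_1(M)$ is virtually polycyclic; polycyclic groups are LERF (a classical fact going back to Mal'cev), and LERF is a commensurability invariant among finitely generated groups, so $\pi_1(M)$ is LERF. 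Finally, if $M$ is hyperbolic, then $\pi_1(M)$ is LERF by Wise \cite{Wise} in the cusped case and by Agol \cite{Agol2} in the closed case. This exhausts the eight geometries.

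For the converse I would argue contrapositively: assume $M$ is not geometric. Since $M$ is compact, orientable and irreducible with empty or toral boundary, its torus (JSJ) decomposition is then nontrivial, and by geometrization each JSJ piece is either Seifert fibered or hyperbolic. If every piece is Seifert fibered, then $M$ is a nontrivial graph manifold (in particular not itself Seifert fibered), so $\pi_1(M)$ contains a copy of the nonLERF group of \cite{BKS} by Niblo--Wise \cite{NW}, and hence is not LERF. If some piece is hyperbolic, then $M$ is a mixed $3$-manifold, and $\pi_1(M)$ is not LERF by the author's earlier work, which produces a $\pi_1$-injective immersed nonseparable subsurface generalizing the constructions of Rubinstein--Wang \cite{RW} and Liu \cite{Liu}. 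In either case $\pi_1(M)$ fails to be LERF, which is what the contrapositive requires.

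The first direction is essentially bookkeeping: it just assembles Scott's theorem, the classical structure theory of polycyclic groups, and the Agol--Wise theorems. All of the real content lies in the second direction, and within it the mixed case is the main obstacle --- constructing the nonseparable surface there requires controlling how incompressible surfaces in adjacent hyperbolic and Seifert blocks can be glued up along the JSJ tori so that the resulting immersed surface cannot be made embedded in any finite cover, and this leans in an essential way on the virtual specialness of hyperbolic $3$-manifold groups established by Agol and Wise.
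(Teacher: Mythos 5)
Your proof is correct and follows essentially the same route as the paper, which obtains this theorem precisely by assembling Scott's theorem for the Seifert geometries, the polycyclic case for $\mathrm{Sol}$, Agol--Wise for the hyperbolic case, Niblo--Wise for non-geometric graph manifolds, and the author's earlier work \cite{Sun} for mixed manifolds. The only point worth double-checking in your contrapositive is the implicit exclusion of $\mathrm{Sol}$ manifolds (which have nontrivial torus decomposition with all pieces Seifert yet are geometric), but your standing assumption that $M$ is not geometric already handles this.
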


In this paper, we address the LERFness of fundamental groups of all other $3$-manifolds ($3$-manifolds with boundary of genus at least $2$). The following theorem gives a complete characterization on LERFness of fundamental group for all compact orientable $3$-manifolds.

\begin{theorem}\label{allmanifold}
  Let $M$ be a compact, orientable, irreducible and $\partial$-irreducible $3$-manifold with nontrivial torus decomposition and does not support the $\text{Sol}$ geometry. Then $\pi_1(M)$ is LERF if and only if for any two adjacent pieces of the torus decomposition of $M$, at least one of them has a boundary component with genus at least $2$.

  Moreover, let $M$ be a compact orientable $3$-manifold, $\pi_1(M)$ is LERF if and only if each piece of the sphere-disc decomposition of $M$ either supports one of Thurston's eight geometries or satisfies the above condition of LERF fundamental group.
\end{theorem}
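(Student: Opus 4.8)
The plan is to reduce Theorem~\ref{allmanifold} to Theorem~\ref{toriboundary} and to our characterization of separable subgroups (the generalization of Liu's spirality character \cite{Liu}), using only soft properties of separability together with the sphere--disc decomposition. I would first settle the ``moreover'' clause. Cutting $M$ along a maximal system of essential spheres and boundary-compressing discs exhibits $\pi_1(M)$ as a free product $\pi_1(P_1)*\cdots*\pi_1(P_k)*F$, where $P_1,\dots,P_k$ are the resulting compact, orientable, irreducible and $\partial$-irreducible pieces and $F$ is a finitely generated free group (each essential sphere contributes a free $\mathbb{Z}$ or splits off a free factor, and so does each compressing disc, since $\pi_1$ of a sphere or a disc is trivial). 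Free groups are LERF \cite{Hall}, a free product of LERF groups is again LERF, and LERF is inherited by subgroups; hence $\pi_1(M)$ is LERF if and only if every $\pi_1(P_i)$ is LERF. If $P_i$ has trivial torus decomposition then $P_i$ is geometric by geometrization, and its fundamental group is LERF: Seifert and $I$-bundle pieces by Scott \cite{Sco2}, cusped hyperbolic pieces and hyperbolic pieces with totally geodesic boundary by Wise \cite{Wise}, closed hyperbolic pieces by Agol \cite{Agol2}, and the Euclidean, Nil, Sol and spherical pieces because their fundamental groups are virtually polycyclic or finite. If $P_i$ has nontrivial torus decomposition and does not support Sol, its LERFness is governed by the first assertion; and a Sol piece is virtually polycyclic, hence LERF, which is consistent with the ``supports a geometry'' alternative. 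This reduces the theorem to its first assertion.

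For the ``only if'' half of the first assertion I argue by contraposition. Suppose two adjacent pieces $M_1,M_2$ of the torus decomposition each have all boundary components tori, and let $N$ be the union of $M_1$ and $M_2$ along the decomposition tori between them (or the corresponding self-gluing if $M_1=M_2$). As a union of $\pi_1$-injective pieces along $\pi_1$-injective tori, $N$ is $\pi_1$-injective in $M$; moreover $N$ is compact, orientable, irreducible and $\partial$-irreducible with tori boundary and nontrivial torus decomposition, and $N$ is not a Sol manifold --- $N$ has nonempty boundary unless $N=M$, in which case $M$ is not Sol by hypothesis, while compact Sol $3$-manifolds are closed. By Theorem~\ref{toriboundary}, $\pi_1(N)$ is not LERF, so there is a finitely generated $H<\pi_1(N)$ that is not separable in $\pi_1(N)$. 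Since separability of a subgroup in an overgroup restricts to any intermediate subgroup, $H$ is not separable in $\pi_1(M)$, so $\pi_1(M)$ is not LERF.

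The ``if'' half of the first assertion is the heart of the matter, and here I invoke our characterization of separable subgroups. Given a finitely generated $H<\pi_1(M)$, pass to the cover $M_H\to M$ with $\pi_1(M_H)=H$, take a compact core of $M_H$ adapted to the torus decomposition, and form the associated decomposition graph, whose vertices and edges map to pieces and decomposition tori of $M$; the characterization says $H$ is separable in $\pi_1(M)$ precisely when the associated generalized spirality invariant --- a homomorphism from $H_1$ of this graph to a suitable multiplicative group, assembled from the covering degrees over the decomposition tori --- is trivial. The key geometric fact is that a piece with a boundary component of genus at least $2$ is hyperbolic with nonempty totally geodesic boundary (Seifert pieces of an orientable manifold have tori boundary, and an $I$-bundle piece cannot arise when the torus decomposition is nontrivial), and such a piece is \emph{flexible}: in a finite cover one can prescribe an arbitrary covering degree on a chosen lift of the genus-$\geq 2$ boundary component, independently of the degrees over the other boundary components, so that this surface absorbs any degree discrepancy and no nontrivial spirality factor is forced across the piece. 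The hypothesis says exactly that the pieces with only tori boundary form an independent set in the torus-decomposition graph (with no self-adjacencies), so every closed walk in that graph --- hence every cycle in the decomposition graph of the core of $M_H$ --- passes through a vertex lying over a flexible piece. Choosing the finite covers so as to normalize the spirality at such a vertex on each cycle shows the generalized spirality invariant of $H$ vanishes; therefore $H$ is separable, and $\pi_1(M)$ is LERF.

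I expect the ``if'' direction to be the real obstacle. The reductions above are soft, and the ``only if'' direction is essentially a one-line appeal to Theorem~\ref{toriboundary}, whereas showing that the independent-set condition forces the generalized spirality invariant of an \emph{arbitrary} finitely generated subgroup to vanish --- not just that of a surface subgroup, and in the presence of hyperbolic pieces, which on their own can still transmit spirality --- requires making the flexibility of higher-genus-boundary pieces precise and controlling how it interacts with the invariant. Ultimately this rests on the full strength of the characterization theorem, and behind it on the cubulation techniques of Wise \cite{Wise} and Agol \cite{Agol2}.
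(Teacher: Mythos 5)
Your overall architecture matches the paper's: the ``moreover'' clause via the free product decomposition and Lemma \ref{basicLERF}, the ``only if'' direction by applying Theorem \ref{toriboundary} to the union $N=M_1\cup M_2$ of two adjacent tori-boundary pieces and restricting separability, and the ``if'' direction via Theorem \ref{main}. The first two parts are fine (modulo the minor point that a higher-genus-boundary piece need not have \emph{totally geodesic} boundary --- it is merely atoroidal, hence admits an infinite-volume geometrically finite structure, which is all that is used).

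The substantive problem is the mechanism you give for the ``if'' direction. You describe the generalized spirality invariant as a homomorphism on $H_1$ of the decomposition graph of the core of $M_H$, and you propose to show it vanishes by ``choosing the finite covers so as to normalize the spirality at [a flexible] vertex on each cycle.'' This cannot work as stated: the generalized spirality character is an invariant of $H$ alone (Definition \ref{spiralitydef}), independent of any choice of finite covers, so no choice of covers can make it vanish; and it is not defined on $H_1$ of the full decomposition graph but on $H_1(\Phi(H))$, factoring through $H_1(G_{\Phi(H)})$ where $G_{\Phi(H)}$ is the dual graph of the \emph{almost fibered surface} --- i.e.\ only the virtually fibered and partially fibered pieces carry any spirality data at all. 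The correct argument is structural rather than a normalization: a piece of $M$ with a genus-$\geq 2$ boundary component is atoroidal, hence hyperbolic with an infinite-volume geometrically finite structure, so \emph{every} finitely generated subgroup of its fundamental group is geometrically finite and the corresponding pieces of $M_H$ contribute nothing to $\Phi(H)$. The hypothesis that no two tori-boundary pieces are adjacent then forces the contributing pieces of $M_H$ to be pairwise non-adjacent, so $G_{\Phi(H)}$ has no edges at all, $H_1(G_{\Phi(H)};\mathbb{Z})=0$, and by Remark \ref{factorthrough} the character is trivial; Theorem \ref{main} then gives separability. Your ``flexibility'' intuition (prescribing boundary covering degrees independently on geometrically finite pieces) is real, but it belongs to the \emph{proof} of the ``aspirality implies separability'' half of Theorem \ref{main} (Proposition \ref{geofinite}), not to the verification that $H$ is aspiral, and conflating the two leaves this step of your argument without a valid justification.
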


Actually, Theorem \ref{allmanifold} is a corollary of a more technical theorem (Theorem \ref{main}), which characterizes (finitely generated) separable subgroups of (finitely generated) $3$-manifold groups.

Let $M$ be a compact, orientable, irreducible and $\partial$-irreducible $3$-manifold. Since $3$-manifolds with trivial torus decomposition always have LERF fundamental groups (\cite{Sco2}, \cite{Wise}, \cite{Agol2}), we always assume the torus decomposition of $M$ is nontrivial. We also assume $M$ does not support $\text{Sol}$ geometry, since they have polycyclic fundamental groups and these groups are LERF (\cite{Rob} 5.4.16). For any finitely generated infinite index subgroup $H<G\cong\pi_1(M)$, we use $\pi:M_H\to M$ to denote the covering space of $M$ corresponding to $H<\pi_1(M)$, then $M_H$ has an induced graph of space structure.

Similar to \cite{Liu}, the obstruction on separability of $H<\pi_1(M)$ lies in the {\it almost fibered surface} $\Phi(H)$ associated to $H$, and $H$ is separable in $\pi_1(M)$ if and only if the {\it generalized spirality character} of $H$ is trivial. We will give the precise definition of almost fibered surface and generalized spirality character in Section \ref{spiralitycharacter}. Currently, the readers only need to know that the almost fibered surface $\Phi(H)$ is a (possibly disconnected) embedded compact subsurface of $M_H$, and the generalized spirality character is a homomorphism $s:H_1(\Phi(H);\mathbb{Z})\to \mathbb{Q}_+^{\times}$ to the multiplicative group of positive rational numbers. If $s:H_1(\Phi(H);\mathbb{Z})\to \mathbb{Q}_+^{\times}$ is the trivial homomorphism, we say that $H$ is {\it aspiral} in $\pi_1(M)$.

The following theorem gives a complete characterization on separable subgroups of $3$-manifold groups, where the $3$-manifold is compact, orientable, irreducible and $\partial$-irreducible. The characterization on subgroups of more general $3$-manifold groups directly follows from this result.

\begin{theorem}\label{main}
Let $M$ be a compact, orientable, irreducible and $\partial$-irreducible $3$-manifold with nontrivial torus decomposition and does not support the $\text{Sol}$ geometry. Then a finitely generated infinite index subgroup $H<\pi_1(M)$ is separable if and only if $H$ is aspiral in $\pi_1(M)$.
\end{theorem}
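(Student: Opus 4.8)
The plan is to establish the two directions separately, mirroring the structure of Liu's argument but working in the graph-of-spaces setting of the covering $M_H \to M$ rather than with a single immersed surface.

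\textbf{The "only if" direction: nontrivial generalized spirality forces nonseparability.} Suppose the generalized spirality character $s : H_1(\Phi(H);\mathbb{Z}) \to \mathbb{Q}_+^{\times}$ is nontrivial. I would first argue that if $H$ were separable, then a suitable compact core $C \subset M_H$ carrying $H$, together with the almost fibered surface $\Phi(H) \subset C$, would lift to an embedding in some finite cover $\widehat{M} \to M$. In that finite cover, $\Phi(H)$ becomes an embedded (possibly disconnected) $\pi_1$-injective surface, and its geometric behavior at the JSJ tori of $\widehat{M}$ is constrained: an embedded surface meeting a JSJ torus does so in a collection of parallel essential circles, and the "horizontal slopes'' it induces on the two sides of each Seifert piece must be compatible with the actual Seifert fibrations. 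The spirality character is exactly the multiplicative defect measuring the incompatibility of these slope data around cycles in $\Phi(H)$; tracing a cycle realizing a nontrivial value of $s$ around the graph of the decomposition of $\widehat{M}$ would force a slope to be simultaneously equal to $p/q$ and to $(p/q)\cdot s(\gamma) \neq p/q$ times itself, a contradiction. The key technical input here is that the spirality character is a \emph{lift invariant}: passing to a finite cover multiplies all the local slope data by rational factors coming from the covering degrees along edges, but these factors cancel around any cycle, so $s$ is unchanged (or at least its triviality is unchanged). I would phrase this as: no finite cover can make $\Phi(H)$ embedded if $s$ is nontrivial, hence $H$ is not separable.

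\textbf{The "if" direction: aspiral subgroups are separable.} Suppose $s$ is trivial. The strategy is to build, by hand, for each $g \in \pi_1(M) \setminus H$, a finite-index subgroup containing $H$ but not $g$ — equivalently, to exhibit a finite cover of $M$ in which a compact core of $M_H$ embeds and which still "separates out'' $g$. I would proceed piece by piece along the torus decomposition of $M_H$: on each Seifert-fibered piece and each hyperbolic piece of $M_H$, the relevant subgroup is separable by Scott (\cite{Sco2}) respectively Agol--Wise (\cite{Wise},\cite{Agol2}) together with LERFness of the pieces, so one can find finite covers of the individual JSJ pieces of $M$ into which the corresponding pieces of the compact core embed. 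The aspirality hypothesis is precisely what allows these piecewise finite covers to be \emph{glued along the JSJ tori}: triviality of $s$ means the horizontal slope data match up with integral (not merely rational) multiplicities, so after passing to a common multiple of the covering degrees the tori covers can be chosen compatibly, yielding an honest finite cover $\widehat{M} \to M$ with the core embedded. Finally, a standard separability-in-a-graph-of-groups argument (retracting onto the embedded core, as in Scott's original argument and its generalizations) upgrades the embedded core into the desired finite-index subgroup avoiding $g$; the hyperbolic pieces contribute the needed ``bounded packing''/LERF properties for this gluing-and-retracting to terminate.

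\textbf{Expected main obstacle.} The hard part will be the gluing step in the "if'' direction: matching finite covers of adjacent JSJ pieces along their common torus so that the horizontal surfaces glue up \emph{and} the extra freedom is arranged to miss $g$. Controlling both the slope/degree bookkeeping (where aspirality enters) and the position of $g$ simultaneously, especially when $g$ lives "deep'' in the graph-of-groups structure crossing many JSJ tori, requires a careful induction on the structure of $M_H$ and its compact core, and is where the almost-fibered-surface formalism of Section \ref{spiralitycharacter} will be doing the real work. A secondary subtlety is handling the boundary of genus at least $2$: these pieces give extra room (large surface subgroups are always separable) and must be folded into the bookkeeping as the "sink'' that absorbs incompatibilities, which is exactly the mechanism behind the LERF criterion in Theorem \ref{allmanifold}.
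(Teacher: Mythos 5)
Your overall architecture matches the paper's: separability is converted via Scott's criterion into embeddings of compact subsets of $M_H$ into finite covers, the "only if" direction extracts a slope/degree incompatibility around cycles of $\Phi(H)$, and the "if" direction builds finite covers of the individual pieces and glues them using aspirality, finishing with separability of (semi-)cover subgroups. However, there is a genuine gap in your "only if" direction. You assume that once $\Phi(H)$ is embedded in a finite cover $\widehat{M}$, it "meets the JSJ tori in parallel essential circles" and induces well-defined horizontal slope data on each piece, so that a cycle with $s(\gamma)\neq 1$ gives a contradiction. This is exactly what fails for partially fibered pieces: there a vertex surface $\Sigma^v$ is the convex core of a noncompact cover of the base orbifold, it has boundary components lying in the \emph{interior} of the Seifert piece, and its projection to the base orbifold need not be a covering map onto its image even after $\Phi(H)$ is embedded. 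Without that, $\Sigma^v$ is not a horizontal surface in any submanifold and the covering-degree bookkeeping that defines $s_\delta$ has no geometric meaning in $\widehat{M}$. The paper spends most of Section \ref{elegant} (Lemmas \ref{normalizer}, \ref{parallel}, \ref{proper}, \ref{disjoint}) fixing precisely this: one repeatedly invokes separability to pass to further finite covers minimizing certain complexities, until $\Phi(H)$ becomes \emph{properly} embedded in a codimension-$0$ submanifold $\hat{M}$ cut along decomposition tori and vertical tori (Proposition \ref{cut}); only then does $\hat{M}$ fiber over $S^1$ with fiber $\Phi(H)$ and the character trivialize. Your proposal does not contain this step, and the naive version of the argument does not go through.

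On the "if" direction your outline is essentially the paper's, with two points underspecified. First, gluing finite covers of the pieces of $M_K$ does not produce a finite cover of $M$ (its dual graph is only the finite graph $G_K$, not the dual graph of a cover); it produces a finite \emph{semi-cover}, and one must invoke Liu's result (Lemma \ref{semicover}) that finite semi-cover subgroups are separable to embed it into an honest finite cover. Your "retracting onto the embedded core" remark gestures at this but the semi-cover mechanism is the precise tool. Second, the "slope/degree bookkeeping" hides a substantial case analysis: for geometrically finite pieces and $S^1$-bundle pieces the boundary covers can be prescribed independently (via combination theorems for relatively quasi-convex subgroups), whereas for virtually and partially fibered pieces the boundary degrees are coupled through a single parameter, and aspirality is used only to reconcile the coupled degrees around cycles outside a spanning tree. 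Identifying which pieces impose coupled constraints, and the reduction to tori boundary by filling in hyperbolic manifolds with totally geodesic boundary, are needed to make the induction you describe actually close up.
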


\bigskip

Now we give a sketch of the proof of Theorem \ref{main}.

To prove that separability implies aspirality, we will prove the following result: there exists an intermediate finite cover $M'\to M$ of $M_H\to M$, and a codim-$0$ submanifold $\hat{M}\subset M'$, such that $\Phi(H)\subset M_H$ is mapped to a properly embedded subsurface in $\hat{M}$. Then the properness of $\Phi(H)$ in $\hat{M}$ implies the generalized spirality character of $H$ is trivial as in \cite{Liu}. To prove this statement, we use the separability of $H<\pi_1(M)$ to find an intermediate finite cover $M'\to M$ of $M_H\to M$, such that the induced map $\Phi(H)\to M'$ is injective and it is "as nice as possible". If the induced map of $\Phi(H)\to M'$ on their dual graphs is injective, and for each Seifert piece $V\subset M'$, the projection from $\Phi(H)\cap V$ to the base orbifold of $V$ is injective, we can construct the submanifold $\hat{M}\subset M'$ easily. In general, neither of the above two assumptions are satisfied, and we need to take the minimizer of some complexities of $\Phi(H)\to M'$ to construct $\hat{M}$.

To prove that aspirality implies separability, we reduce to the case that $M$ has empty or tori boundary, by pasting hyperbolic $3$-manifolds with totally geodesic boundary to higher genus boundary components of $M$. For any compact subset $K\subset M_H$ in the covering space of $M$ corresponding to $H$, we construct a finite semi-cover $N\to M$ (Definition \ref{semicoverdef}) such that $K$ embeds into $N$, then we use the separability of finite semi-cover subgroups (Lemma 6.2 of \cite{Liu}) to extend the finite semi-cover $N\to M$ to a finite cover $M'\to M$.

Let $G_K$ be the dual graph of $K$ (induced by $M_H$), then for the finite semi-cover $N$ we need to construct, its dual graph is isomorphic to $G_K$. We will construct pieces of $N$ individually and paste them together along boundary. Since geometric $3$-manifolds have LERF fundamental groups, for each piece $V_H\subset M_H$ with $V_H\cap K\ne \emptyset$ that covers a piece $V\subset M$, there exists an intermediate finite cover $\hat{V}\to V$ of $V_H\to V$ such that $V_H\cap K$ embeds into $\hat{V}$. To paste these pieces together and get $N$, we need to control the restriction of covering maps on boundary components of $\hat{V}$. This can be done by a case-by-case construction on various types of pieces of $M$ and various types of their subgroups. The basic idea is that, for geometrically finite subgroups of hyperbolic $3$-manifold groups and subgroups of Seifert manifold groups that contain a power of the regular fiber, the restriction of $\hat{V}\to V$ on different boundary components are independent of each other. In the other cases, the restriction of $\hat{V}\to V$ on different boundary components actually do depend on each other. Then we use the aspirality of $H$ to make sure the restrictions of these covering maps on boundary components can be made compatible, so that all these pieces can be pasted together to get the desired finite semi-cover $N\to M$.

The organization of this paper is as the following. In Section \ref{pre}, we review necessary backgrounds in group theory, $3$-manifold topology and hyperbolic geometry. In Section \ref{spiralitycharacter}, we define the almost fibered surface and generalized spirality character, and prove their basic properties. In Section \ref{elegant}, we prove that if a subgroup is separable, then its generalized spirality character is trivial. In Section \ref{casebycase}, for each type of subgroups of hyperbolic or Seifert $3$-manifold group $L<\pi_1(N)$, we prove a result on the existence of certain intermediate finite cover of $N_L\to N$, with some freedom on its boundary components. In Section \ref{combinatorics}, by assuming $H$ is aspiral, we analyse possible combinatoric patterns of two adjacent pieces of $M_H$, and construct a finite semi-cover $N\to M$ such that the pre-selected compact subset $K\subset M_H$ embeds into it, which implies the separability of $H$. In Section \ref{3manifold}, we use Theorem \ref{main} to prove Theorem \ref{allmanifold}.

\bigskip

{\bf Acknowledgement:} The author thanks Daniel Groves for confirming Theorem \ref{allmanifold} for $3$-manifolds such that all of its pieces have higher genus boundary, before the project in this paper was started. The author is grateful for Feng Luo for comments on a previous version of this paper.

\section{preliminary}\label{pre}

In this paper, we assume all $3$-manifolds are connected and all groups are finitely generated, unless otherwise stated. In this section, we review basic material in group theory, $3$-manifold topology and hyperbolic geometry.

\subsection{Subgroup separability}\label{group}

In this subsection, we review definitions and basic properties on subgroup separability. The main concepts we will study in this paper are the following ones.

\begin{definition}\label{LERF}
Let $G$ be a group, and $H<G$ be a subgroup, we say that $H$ is {\it separable} in $G$ if for any $g\in G\setminus H$, there exists a finite index subgroup $G'<G$ such that $H<G'$ and $g\notin G'$.

A group $G$ is {\it LERF} (locally extended residually finite) or {\it subgroup separable} if all finitely generated subgroups of $G$ are separable in $G$.
\end{definition}

The topological meaning of separable subgroups is given by Scott (\cite{Sco2} Lemma 1.4), and we reinterpret Scott's result as the following.

\begin{lemma}[\cite{Sco2}]\label{Scott}
Let $X$ be a Hausdorff space with fundamental group $\pi_1(X)\cong G$, and $\pi:\tilde{X}\to X$ be a covering space of $X$ corresponding to a subgroup $H<G$. Then $H$ is separable in $G$ if and only if for any compact subset $K\subset \tilde{X}$, there exists an intermediate finite cover $q:X'\to X$ of $\pi:\tilde{X}\to X$ as the following diagram, such that $p|_K:K\to X'$ is an embedding.
\begin{diagram}
\tilde{X} &\rTo^p & X'\\
&\rdTo_{\pi} &\dTo_{q}\\
& &X
\end{diagram}
\end{lemma}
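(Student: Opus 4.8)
This is Scott's argument, and here is the plan. Throughout I assume, as holds in all our applications (where $X$ is homotopy equivalent to a compact $3$-manifold), that $X$ is connected, locally path-connected and semi-locally simply connected, so that coverings are classified by subgroups of $G=\pi_1(X,x_0)$; I fix $\tilde x_0\in\tilde X$ over $x_0$.

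The first step is to set up a dictionary between the covering picture and the coset picture. Given $\tilde a,\tilde b\in\tilde X$ with $\pi(\tilde a)=\pi(\tilde b)$, I would choose paths $\delta_a,\delta_b$ in $X$ from $x_0$ to their common image whose lifts from $\tilde x_0$ end at $\tilde a$ and $\tilde b$, and set $g(\tilde a,\tilde b)=[\delta_a\cdot\overline{\delta_b}]\in G$; this is well defined only modulo the double coset $HgH$, and to avoid confusion I would phrase everything below in terms of that double coset. Path lifting then gives, for any intermediate cover $q:X'\to X$ corresponding to a subgroup $H\le G'\le G$, with factorization $p:\tilde X\to X'$ and $q\circ p=\pi$, the equivalence $p(\tilde a)=p(\tilde b)\iff g(\tilde a,\tilde b)\in G'$; taking $X'=\tilde X$ recovers $\tilde a=\tilde b\iff g(\tilde a,\tilde b)\in H$. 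Thus $p|_K$ fails to be injective exactly when some pair in $K\times_X K:=\{(\tilde a,\tilde b)\in K\times K:\pi\tilde a=\pi\tilde b\}$ with $\tilde a\neq\tilde b$ has $g(\tilde a,\tilde b)\in G'$.

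The crux is finiteness: the set $\Sigma_K\subseteq H\backslash G/H$ of double cosets realized by pairs of $K\times_X K$ is finite. I would prove this by observing that the first projection $\tilde X\times_X\tilde X\to\tilde X$ is itself a covering map, so $\tilde X\times_X\tilde X$ is locally path-connected, and that $(\tilde a,\tilde b)\mapsto g(\tilde a,\tilde b)$ is constant along paths in $\tilde X\times_X\tilde X$: dragging $(\tilde a,\tilde b)$ along such a path post-composes both $\delta_a$ and $\delta_b$ with one and the same path downstairs, which cancels in $[\delta_a\cdot\overline{\delta_b}]$. Hence $g$ is a locally constant function to the discrete set $H\backslash G/H$. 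Since $X$ is Hausdorff, $\tilde X\times_X\tilde X$ is closed in $\tilde X\times\tilde X$, so $K\times_X K=(K\times K)\cap(\tilde X\times_X\tilde X)$ is compact, and a locally constant function on a compact space has finite image.

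The two implications are then routine. If $H$ is separable, I pick representatives $g_1,\dots,g_m\in G\setminus H$ of the nontrivial classes in $\Sigma_K$, choose for each a finite-index $G_i<G$ with $H<G_i$ and $g_i\notin G_i$, and let $G'=\bigcap_i G_i$; this is finite index, contains $H$, and meets no nontrivial class of $\Sigma_K$, so its covering $X'$ makes $p|_K$ injective, hence an embedding (a continuous injection from a compact space into a Hausdorff space). Conversely, given $g\in G\setminus H$, I represent it by a loop $\ell$ at $x_0$, let $\tilde x_1$ be the endpoint of the lift of $\ell$ from $\tilde x_0$ (so $\tilde x_1\neq\tilde x_0$ since $g\notin H$), and apply the hypothesis to the compact set $K=\{\tilde x_0,\tilde x_1\}$ to obtain a finite intermediate cover $X'$ with $p(\tilde x_0)\neq p(\tilde x_1)$; then $G'=q_*\pi_1(X',p(\tilde x_0))$ is finite index, contains $H$, and omits $g$ by the dictionary. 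The only real content is the finiteness step, and the one place demanding care is that $g(\tilde a,\tilde b)$ is well defined only up to a double coset — which is why the separation condition must be stated for $HgH$ rather than for $g$ itself.
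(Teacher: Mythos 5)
The paper does not actually prove this lemma --- it is quoted from Scott (\cite{Sco2}, Lemma 1.4) --- so there is no in-paper argument to compare against; judged on its own, your proof is correct and is essentially Scott's standard argument: the double-coset invariant $Hg(\tilde a,\tilde b)H$, its constancy along paths and hence local constancy on the fiber product $\tilde X\times_X\tilde X$, and compactness of $K\times_X K$ (using Hausdorffness of $X$) give the finiteness that drives both implications, with the added hypotheses (local path-connectedness, semi-local simple connectedness) being exactly what the paper implicitly assumes by speaking of covers corresponding to subgroups. One harmless remark: with your conventions, in the converse direction $g(\tilde x_0,\tilde x_1)=g^{-1}$ rather than $g$, which does not matter since $G'$ is a subgroup.
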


By definition, all finite index subgroups are separable, so we will focus on finitely generated infinite index subgroups in this paper. Here we list a few elementary properties on separability of subgroups that will be used in this paper. All these statements are classical and well-known, and their proofs are all algebraic.

\begin{lemma}\label{basicLERF}
  \begin{enumerate}
    \item Let $G$ be a group, $H<G$ and $\Gamma<G$ be subgroups. If $H$ is separable in $G$, then $H\cap \Gamma$ is separable in $\Gamma$.
    \item Let $G$ be a group, $H<G$ be a subgroup and $\Gamma<G$ be a finite index subgroup, then $H$ is separable in $G$ if and only if $H\cap \Gamma$ is separable in $\Gamma$.
    \item Let $G$ be a group, $H<G$ be a subgroup and $H'<H$ be a finite index subgroup. If $H'$ is separable in $G$, then $H$ is separable in $G$.
    \item Let $G$ be a group, and $\Gamma<G$ be a subgroup. If $G$ is LERF, then $\Gamma$ is LERF. Moreover, if $\Gamma<G$ has finite index, then $G$ is LERF if and only if $\Gamma$ is LERF.
    \item Let $G_1*G_2$ be a free product of residually finite groups and $H<G_1*G_2$ be a finitely generated subgroup, then $H\cong (*_{i=1}^m K_i)*(*_{j=1}^n \mathbb{Z})$ and each $K_i$ equals $H\cap g_i G_{k_i}g_i^{-1}$ for some $k_i\in \{1,2\}$. Moreover, $H$ is separable in $G_1*G_2$ if and only if $K_i$ is separable in $g_i G_{k_i}g_i^{-1}$ for all $i$. In particular, $G_1*G_2$ is LERF if and only if both $G_1$ and $G_2$ are LERF.
  \end{enumerate}
\end{lemma}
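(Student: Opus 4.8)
The plan is to prove the five items of Lemma~\ref{basicLERF} in roughly the order stated, since later items reuse earlier ones, and all arguments are purely algebraic manipulations with cosets and finite-index subgroups.

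\textbf{Items (1)--(4).} For (1), given $g\in \Gamma\setminus(H\cap\Gamma)$, note $g\in G\setminus H$ (if $g\in H$ then $g\in H\cap\Gamma$, contradiction), so separability of $H$ in $G$ gives a finite-index $G'<G$ with $H<G'$, $g\notin G'$; then $G'\cap\Gamma$ has finite index in $\Gamma$, contains $H\cap\Gamma$, and omits $g$. For the nontrivial direction of (2), suppose $H\cap\Gamma$ is separable in $\Gamma$ and let $g\in G\setminus H$. The set of double cosets $H\backslash G/\Gamma$ is finite (since $[G:\Gamma]<\infty$), so I would write down finitely many conjugates $\Gamma_i=x_i\Gamma x_i^{-1}$ and observe that $H\cap\Gamma$ separable in $\Gamma$ implies $H\cap\Gamma_i$ separable in $\Gamma_i$ for each $i$; a standard argument (e.g.\ Scott's, or the observation that separability of $H$ in $G$ is equivalent to $H$ being closed in the profinite topology and this can be checked after intersecting with a finite-index subgroup) then upgrades this to separability of $H$ in $G$. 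For (3): if $H'<H$ has finite index and $H'$ is separable in $G$, then for $g\in G\setminus H$ I consider the finitely many cosets $h_1H',\dots,h_kH'$ comprising $H$; each $gh_iH'$ is disjoint from $H$, so $gh_i\notin H'$, hence by separability of $H'$ there is a finite-index $G_i<G$ containing $H'$ with $gh_i\notin G_i$; then $\bigcap_i G_i$ has finite index, and the coset argument shows $g$ lies outside the (finite-index) subgroup generated by $H$ together with $\bigcap_i G_i$ intersected appropriately — more cleanly, one shows $HG_0$ is a union of finitely many cosets of $G_0=\bigcap_i G_i$ not containing $g$. Item (4) is immediate: if $G$ is LERF and $\Gamma<G$ then any finitely generated $K<\Gamma$ is finitely generated in $G$, hence separable in $G$, hence by (1) separable in $\Gamma$; the finite-index addendum follows from (2) plus the fact that finitely generated subgroups of a finite-index subgroup are finitely generated in $G$.

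\textbf{Item (5).} This is the substantive case. I would invoke the Kurosh subgroup theorem: any subgroup $H<G_1*G_2$ decomposes as a free product of a free group with conjugates of subgroups of the free factors, and finite generation of $H$ forces the decomposition to be finite, giving $H\cong(\ast_{i=1}^m K_i)\ast(\ast_{j=1}^n\mathbb{Z})$ with $K_i=H\cap g_iG_{k_i}g_i^{-1}$. For the separability equivalence: the ``only if'' direction follows from (1), since $K_i=H\cap g_iG_{k_i}g_i^{-1}$ is separable in $g_iG_{k_i}g_i^{-1}$ once $H$ is separable in $G$ (and conjugation is an automorphism, so we may as well separate in $G_{k_i}$). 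The ``if'' direction is where I expect the main work: assuming each $K_i$ is separable in $G_{k_i}$, and using that $G_1,G_2$ are residually finite so $G_1*G_2$ is residually finite, I would build, for a given $g\in(G_1*G_2)\setminus H$, a finite quotient detecting $g\notin H$. The tool is the action of $G_1*G_2$ on the Bass--Serre tree of the splitting: one constructs a finite-index subgroup by specifying compatible finite-index subgroups of the vertex groups (the conjugates of $G_1,G_2$) that contain the relevant $K_i$ and exclude $g$, then uses a graph-of-groups / ping-pong argument to glue these into a finite-index subgroup of the whole group. The ``in particular'' clause is the case $m=0$ or the case where all $K_i$ are trivial together with $n$ arbitrary, combined with the fact that a finitely generated subgroup of $G_j$ is one of the $K_i$ for suitable $H$.

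\textbf{Main obstacle.} The routine items (1), (3), (4) are short coset bookkeeping. The delicate points are the gluing steps in (2) and especially in (5): passing from separability of $H\cap\Gamma_i$ in each conjugate $\Gamma_i$ (resp.\ separability of each Kurosh factor $K_i$) to separability of $H$ in the ambient group requires a uniform choice of finite-index subgroups that are simultaneously compatible across all the double cosets (resp.\ across the Bass--Serre tree), and then assembling them via the structure theory of the free product. Since the paper states these are ``classical and well-known'' with algebraic proofs, I would cite the standard references (e.g.\ Scott~\cite{Sco2}, and the treatment of subgroup separability for free products) rather than reproduce the graph-of-groups gluing in full detail.
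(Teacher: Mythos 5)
Your overall route coincides with the paper's: items (1) and (4) are proved identically, and for item (5) the paper likewise gets the decomposition from Kurosh, the ``only if'' direction from (1), and simply cites Burns~\cite{Bur} for the ``if'' direction rather than carrying out any Bass--Serre gluing --- so your closing decision to defer to the literature there is exactly what the paper does. The one real difference is in (2) and (3): the paper handles both uniformly by the profinite-topology characterization (separable $=$ closed; the profinite topology on a finite-index $\Gamma$ agrees with the subspace topology from $G$; and $H$ is a finite union of cosets of $H\cap\Gamma$, resp.\ of $H'$), which you mention only as an aside for (2). Your primary, hands-on argument for (3) has two repairable defects. First, to conclude $g\notin h_iG_0$ you need to separate $h_i^{-1}g$ from $H'$, not $gh_i$ (the two conditions $gh_i\notin G_i$ and $h_i^{-1}g\notin G_i$ are not interchangeable). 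Second, $HG_0=\bigcup_i h_iG_0$ is in general only a union of left cosets, not a subgroup, so excluding $g$ from it does not by itself produce the required finite-index subgroup containing $H$; the standard fix is to replace each $G_i$ by its normal core $N_i$ and set $N=\bigcap_i N_i$, so that $HN$ is a finite-index subgroup containing $H$ and omitting $g$ --- or, more simply, to run the closed-set argument as the paper does. With those repairs (and with the vague ``upgrade'' step in your (2) replaced by the same profinite observation you already cite), the proposal is correct.
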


\begin{proof}
  \begin{enumerate}
  \item For any $g\in \Gamma \setminus H\cap \Gamma$, $g\in G \setminus H$ holds. Then the separability of $H$ in $G$ implies that there is a finite index subgroup $G'<G$ containing $H$ and $g\notin G'$. Then $G'\cap \Gamma$ is a finite index subgroup of $\Gamma$ such that $H\cap \Gamma<G'\cap \Gamma$ and $g\notin G'\cap \Gamma$.
  \item Recall that $H$ is separable in $G$ if and only if $H$ is a closed subgroup under the profinite topology of $G$. Since $H$ is a finite union of left cosets of $H\cap \Gamma$, and the profinite topology on $\Gamma$ coincides with subspace topology on $\Gamma$ induced from the profinite topology on $G$, $H$ is closed in $G$ if and only if $H\cap \Gamma$ is closed in $\Gamma$.
  \item It also follows from the profinite topology argument in (2), since $H$ is a finite union of left cosets of $H'$.
  \item For any finitely generated subgroup $H<\Gamma$, since $G$ is LERF, $H$ is separable in $G$. Then part (1) implies that $H=H\cap \Gamma$ is separable in $\Gamma$. So LERFness of $G$ implies LERFness of $\Gamma$. Now we suppose that $\Gamma<G$ has finite index and $\Gamma$ is LERF. Let $H$ be any finitely generated subgroup of $G$, then $H\cap \Gamma$ is a finitely generated subgroup of $\Gamma$, so it is separable in $\Gamma$. Then part (2) implies that $H$ is separable in $G$. So in the finite index case, LERFness of $\Gamma$ implies LERFness of $G$.
  \item The first statement follows from the Kurosh subgroup theorem. The "if" part follows from \cite{Bur}, and the "only if" part follows from (1). $G_1*G_2$ is LERF if and only if both $G_1$ and $G_2$ are LERF follows from the fact that a free product (of finitely many groups) is finitely generated if and only if all of its free factors are finitely generated.
  \end{enumerate}
\end{proof}

\subsection{$3$-manifold topology}\label{3mfld}

In this subsection, we review basic topology of $3$-manifolds, and explain how to reduce the study on separability of subgroups from all finitely generated $3$-manifold groups to the case that the $3$-manifold is compact, orientable, irreducible and $\partial$-irreducible.

The goal of this paper is to study finitely generated $3$-manifold groups and their finitely generated subgroups. It follows from \cite{Sco1}, \cite{Hem2} and the geometrization of $3$-manifolds (\cite{Per1}, \cite{Per2}, see also \cite{BBMBP}, \cite{KL}, \cite{MT}) that all finitely generated $3$-manifold groups are residually finite (i.e. the trivial subgroup is separable). For a recent work on constructing infinitely generated $3$-manifold groups with non-residually finite fundamental groups, see \cite{CS}.

Since most tools in $3$-manifold topology work better for compact $3$-manifolds, we prefer to restrict ourselves to fundamental groups of compact $3$-manifolds (these groups are automatically finitely generated). This follows from the Scott core theorem in \cite{Sco1}, which says that any $3$-manifold $M$ with finitely generated fundamental group contains a compact codim-$0$ submanifold $C_M$, such that the inclusion $C_M\to M$ induces isomorphism on fundamental groups.

By taking the orientable double cover if necessary, we can further restrict to compact orientable 3-manifolds, since LERFness and separability of subgroups are well-behaved under taking finite index subgroups (Lemma \ref{basicLERF} (2) (4)). Then we apply the sphere and disc decompositions to $M$, and get irreducible and $\partial$-irreducible pieces of $M$. The general case follows from the irreducible, $\partial$-irreducible case since LERFness and separability of subgroups are well-behaved under taking free product (Lemma \ref{basicLERF} (5)).

So in the following, we assume all $3$-manifolds are compact, orientable, irreducible and $\partial$-irreducible, unless otherwise stated. The next decomposition we will do is the torus decomposition, which basically follows from the theory of characteristic submanifolds (\cite{JS}, \cite{Joh}). However, since we do not want to do the annulus decomposition, the precise statement we will use is the following one (see Theorem 1.9 of \cite{Hat}).

\begin{theorem}[\cite{Hat}]\label{torusdecomposition}
For a compact, orientable, irreducible $3$-manifold $M$, there exists a finite collection $\mathcal{T}\subset M$ of disjoint incompressible tori, such that each component of $M\setminus \mathcal{T}$ is either atoroidal or a Seifert manifold, and a minimal such collection $\mathcal{T}$ is unique up to isotopy.
\end{theorem}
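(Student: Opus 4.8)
\emph{Overview.} The plan is to establish existence and uniqueness separately: existence by a maximal‑collection construction followed by a re‑assembly (pruning) step, and uniqueness by the standard incompressible‑surface cut‑and‑paste argument (innermost curves, outermost arcs) combined with the classification of essential surfaces in Seifert‑fibered spaces.

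\emph{Existence.} By Kneser--Haken finiteness there is a uniform bound on the number of components of a system of disjoint, pairwise non‑parallel, non‑$\partial$‑parallel incompressible tori in a compact irreducible $M$; hence a system $\mathcal{T}_0$ of maximal cardinality exists. By maximality, every incompressible torus in each component $N$ of $M\setminus\mathcal{T}_0$ is $\partial$‑parallel (otherwise it would enlarge $\mathcal{T}_0$ or be parallel into it). A compact orientable irreducible $3$‑manifold all of whose incompressible tori are $\partial$‑parallel is, by Waldhausen's work and the classification of Seifert manifolds, either atoroidal or a Seifert‑fibered manifold with small base orbifold (e.g.\ $T^2\times I$, the orientable twisted $I$‑bundle over the Klein bottle, or a fibering over a disk with at most two cone points or over a pair of pants). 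Thus $\mathcal{T}_0$ already cuts $M$ into atoroidal and Seifert pieces; it is typically not minimal, because a large Seifert piece of the eventual decomposition has been subdivided by vertical tori. Re‑assemble: whenever a torus $T\in\mathcal{T}_0$ is adjacent to two Seifert pieces whose Seifert fibrations agree on $T$ (or is $\partial$‑parallel, or parallel to another torus of the system) it may be deleted, with the merged piece still atoroidal or Seifert. Deleting greedily until nothing can be deleted yields a minimal collection $\mathcal{T}$ with the required property.

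\emph{Uniqueness.} Let $\mathcal{T},\mathcal{T}'$ be two minimal collections, isotoped so that $\lvert\mathcal{T}\cap\mathcal{T}'\rvert$ is smallest possible. First suppose $\mathcal{T}\cap\mathcal{T}'=\emptyset$. Then each $T\in\mathcal{T}$ lies in a single piece $W$ of $M\setminus\mathcal{T}'$. If $W$ is atoroidal, $T$ is $\partial$‑parallel in $W$, hence (being non‑$\partial$‑parallel in $M$) isotopic to a component of $\mathcal{T}'$. If $W$ is Seifert, then by the classification of incompressible surfaces in Seifert manifolds $T$ is isotopic to a vertical torus (a horizontal torus would force $W$ to fiber over $S^1$, a case treated separately), and minimality of $\mathcal{T}'$ forbids a vertical torus interior to $W$, so again $T$ is isotopic to a component of $\mathcal{T}'$. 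Hence $\mathcal{T}$ is isotopic into $\mathcal{T}'$, and minimality of $\mathcal{T}'$ gives $\mathcal{T}\simeq\mathcal{T}'$. For the general case, first arrange that every curve of $\mathcal{T}\cap\mathcal{T}'$ is essential on both tori (a curve bounding a disk on one side can be removed by an innermost‑disk isotopy using incompressibility of the other, without increasing the intersection). Now each $T'\in\mathcal{T}'$ is cut by $\mathcal{T}$ into annuli, each lying in a piece $W$ of $M\setminus\mathcal{T}$. If $W$ is atoroidal, such an essential annulus is $\partial$‑parallel and may be pushed across $\partial W$, removing an intersection curve; if $W$ is Seifert, isotope the annulus to be vertical (horizontal again forces a fibering over $S^1$), and a vertical annulus with boundary on $\partial W$ can likewise be isotoped to reduce the intersection. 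Either way we contradict minimality of $\lvert\mathcal{T}\cap\mathcal{T}'\rvert$, so $\mathcal{T}\cap\mathcal{T}'=\emptyset$ and the previous paragraph applies.

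\emph{Main obstacle.} The technical core, in both parts, is the Seifert‑fibered case: one needs the theorem that an essential torus or annulus in a Seifert‑fibered space is isotopic to a vertical one, \emph{except} in a short list of small manifolds ($T^2\times I$, the twisted $I$‑bundle over the Klein bottle, small Seifert pieces over the disk or Möbius band, and manifolds fibering over $S^1$), where essential tori can be non‑vertical or the Seifert fibration itself is not unique. Correctly identifying which tori of a maximal system are redundant, and carrying out the isotopies that straighten an annulus or torus into vertical position near these exceptional pieces, is where all the care is needed; the rest is routine cut‑and‑paste. As an alternative to the hands‑on construction one may instead invoke the characteristic submanifold theory of Jaco--Shalen and Johannson and take $\mathcal{T}$ to be the torus part of the frontier of the characteristic pair, but the self‑contained argument above is the one I would write out.
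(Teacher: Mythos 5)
The paper does not prove this statement at all: it is quoted verbatim from Theorem 1.9 of the cited source [Hat], so there is no in-paper proof to compare against. Your outline is essentially the standard JSJ existence/uniqueness argument that Hatcher himself gives: Kneser--Haken finiteness to produce a maximal system, pruning to reach a non-redundant one, and cut-and-paste with the classification of essential annuli and tori in Seifert fibered spaces (minding the short list of small exceptional manifolds) for uniqueness. The strategy and the identification of where the real work lies are both right.

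Two deductions as written would not survive being filled in. First, in the uniqueness step you assert that ``minimality of $\mathcal{T}'$ forbids a vertical torus interior to $W$.'' It does not: any Seifert piece over a hyperbolic base orbifold contains many essential vertical tori in its interior, and their existence has nothing to do with whether $\mathcal{T}'$ is minimal. The correct deduction is about $\mathcal{T}$, not $\mathcal{T}'$: if $T\in\mathcal{T}$ were isotopic to a non-$\partial$-parallel vertical torus in the interior of the Seifert piece $W$ of $M\setminus\mathcal{T}'$, one must show that $T$ is removable from $\mathcal{T}$ (the pieces of $M\setminus\mathcal{T}$ abutting $T$ merge into something still atoroidal or Seifert, using that the fibration of $W$ restricts compatibly to both sides of $T$), contradicting minimality of $\mathcal{T}$; this requires the same merging criterion you use in the pruning step and is where the twisted $I$-bundle over the Klein bottle and the other small pieces must be handled. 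Second, your greedy deletion only produces a collection from which no \emph{single} torus can be removed; that this coincides with ``minimal'' in the sense of the theorem (no proper subcollection suffices, equivalently minimal cardinality) is not automatic and is usually obtained as a byproduct of the uniqueness argument, so the logical order of the two halves of your proof needs to be arranged accordingly. Both points are repairable and are handled in [Hat], but as stated they are gaps.
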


We always assume the torus decomposition of $M$ is non-trivial, otherwise $M$ supports a geometric structure and $\pi_1(M)$ is LERF (\cite{Sco2}, \cite{Wise}, \cite{Agol2}). We also assume that $M$ does not support the $\text{Sol}$ geometry. In this case, $M\setminus \mathcal{T}$ consists of one copy of $T^2\times I$ or two copies of the twisted $I$-bundle over Klein bottle, and $\pi_1(M)$ is LERF. In the following, we will call each component of $M\setminus \mathcal{T}$ a {\it vertex space} or a {\it piece} of $M$, and call each component of $\mathcal{T}$ an {\it edge space} or a {\it decomposition torus} of $M$. Then no piece of $M$ is homeomorphic to $T^2\times I$.

For each atoroidal piece $V\subset M\setminus \mathcal{T}$ that is not the twisted $I$-bundle over Klein bottle, it is proved in \cite{Thu3} (see also \cite{Mor}) that the interior of $V$ supports a complete hyperbolic structure. The hyperbolic structure has finite volume if and only if all boundary components of $V$ are homeomorphic to the torus. When $V$ has a boundary component with higher genus (genus at least $2$), there exists a geometrically finite hyperbolic structure on $\text{int}(V)$ such any cusp ends of its convex core is a torus cusp (i.e. it has no accidental parabolic). We will always choose such a geometrically finite hyperbolic structure on $V$. Moreover, we can choose the hyperbolic structure such that its convex core has totally geodesic boundary if and only if $V$ is acylindrical.

For each Seifert piece $V\subset M$, it has a unique Seifert structure ($S^1$-bundle over $2$-orbifold structure), unless $V$ is homeomorphic to the twisted $I$-bundle over Klein bottle (Theorem 2.3 of \cite{Hat}). The twisted $I$-bundle over Klein bottle has exactly two Seifert structures. For one of them, the base orbifold is the M\"{o}bius band, and the base orbifold of the other Seifert structure is a disc with two index-$2$ singular points. When taking a finite cover of $M$, a twisted $I$-bundle over Klein bottle in $M$ may have an elevation (a component of its preimage) homeomorphic to $T^2\times I$, and it is the only case that the torus decomposition of $M$ does not lift to the torus decomposition of its finite cover.

For $3$-manifolds as in Theorem \ref{torusdecomposition}, those manifolds with empty or tori boundary attract more attention of $3$-manifold topologists. In this case, irreducible automatically implies $\partial$-irreducible, unless $M$ is the solid torus. For such an $M$, the above torus decomposition is the JSJ decomposition, and it is closely related to the geometric decomposition. The only difference  is that, if $M$ contains a twisted $I$-bundle over Klein bottle, the torus decomposition decomposes along the boundary torus of this $I$-bundle, and the geometric decomposition decomposes along the zero section of this bundle (a Klein bottle). We can always get rid of this difference by taking a double cover of $M$ such that the preimage of each twisted $I$-bundle over Klein bottle is homeomorphic to $T^2\times I$.

For compact, orientable, irreducible $3$-manifolds with empty or tori boundary and nontrivial torus decomposition, the concept of semi-cover was introduced in \cite{PW}, and it played important role in the study on subgroup separability of $3$-manifold groups.

\begin{definition}\label{semicoverdef}
Let $M$ be a compact, orientable, irreducible $3$-manifold with empty or tori boundary and nontrivial torus decomposition. A {\it semi-cover} of $M$ is a $3$-manifold $N$ and a local embedding $f:N\to M$, such that the restriction of $f$ on each boundary component of $N$ is a covering map to a decomposition torus or a boundary component of $M$.

Moreover, $f:N\to M$ is a {\it finite semi-cover} if $f:N\to M$ is a semi-cover and $N$ is compact.
\end{definition}

If $f:N\to M$ is a finite semi-cover, then all boundary components of $N$ are homeomorphic to tori. Let $\mathcal{T}_M$ be the set of decomposition tori of $M$, then $f^{-1}(\mathcal{T}_M)$ is a collection of $\pi_1$-injective tori in $N$, and each component of $N\setminus f^{-1}(\mathcal{T}_M)$ is a finite cover of a component of $M\setminus \mathcal{T}_M$. Two components of $f^{-1}(\mathcal{T}_M)$ are parallel to each other only if they are mapped to the same torus $T\in \mathcal{T}_M$ and $T$ bounds a twisted $I$-bundle over Klein bottle in $M$. If $M$ does not contain the twisted $I$-bundle over Klein bottle, then $f^{-1}(\mathcal{T}_M)\setminus \partial N$ gives the torus decomposition of $N$.

One important property of finite semi-covers is given by Lemma 6.2 of \cite{Liu}.
\begin{lemma}[\cite{Liu}]\label{semicover}
If $N$ is a connected finite semi-cover of a $3$-manifold $M$ with empty or tori boundary, then $N$ has an embedded lifting in a finite cover of $M$. In fact, the semi-covering map $N\to M$ is $\pi_1$-injective and $\pi_1(N)$ is separable in $\pi_1(M)$.
\end{lemma}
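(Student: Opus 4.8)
\emph{Proof proposal.} The plan is to separate the statement into its two assertions — that $f$ is $\pi_1$-injective, and that $f_*\pi_1(N)$ is separable in $\pi_1(M)$ — and to observe that the ``embedded lifting'' conclusion follows from these by Scott's criterion. For $\pi_1$-injectivity I would use the graph-of-groups structure: the torus decomposition presents $M$ as a graph of aspherical $3$-manifolds glued along incompressible tori, and the decomposition of $N$ cut along $f^{-1}(\mathcal T_M)$ is carried by $f$ to this structure so that each vertex piece $\hat V$ of $N$ maps to a piece $V$ of $M$ by a genuine finite cover and each torus of $f^{-1}(\mathcal T_M)$ maps to a decomposition torus by a covering map; since finite covers are $\pi_1$-injective, the standard criterion for $\pi_1$-injectivity of a morphism of graphs of groups applies to $f_*$. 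Given this, covering theory for graphs of spaces identifies the cover $M_{f_*\pi_1(N)}\to M$ with a graph of spaces whose vertex and edge spaces are covers of those of $M$, and the sub-graph-of-spaces built from the covers corresponding to the vertex and edge groups carried by the decomposition of $\pi_1(N)$ is precisely $N$; hence the canonical lift $N\to M_{f_*\pi_1(N)}$ is an embedding and a homotopy equivalence, i.e. identifies $N$ with a Scott core. Consequently, once $f_*\pi_1(N)$ is known to be separable, applying Lemma~\ref{Scott} to the compact set $N\subset M_{f_*\pi_1(N)}$ yields an intermediate finite cover $M'\to M$ into which $N$ embeds.

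It remains to prove that $f_*\pi_1(N)$ is separable in $\pi_1(M)$. By Lemma~\ref{Scott} once more, it suffices, for each compact $K\subset M_{f_*\pi_1(N)}$, to construct a finite cover $M'\to M$ through which $M_{f_*\pi_1(N)}\to M$ factors and into which $K$ embeds; and since $N$ is a core of $M_{f_*\pi_1(N)}$ we may take $K=N$, so the real task is to enlarge $N$ to a genuine finite cover of $M$. I would build $M'$ piece by piece: listing the decomposition tori $\mathcal T_M=\{T_1,\dots,T_r\}$ and the pieces $V_1,\dots,V_s$ of $M$, each piece $\hat V$ of $N$ is a finite cover of some $V_j$ carrying on its boundary tori covering maps onto various $T_i$ or boundary components of $M$, and the goal is to enlarge each $\hat V$ over $V_j$ to a further finite cover $\hat V'\to V_j$ so that $\hat V$ still embeds in $\hat V'$ and the boundary tori of the various $\hat V'$ over a fixed $T_i$ carry prescribed finite-index subgroups of $\pi_1(T_i)\cong\mathbb Z^2$, chosen so that the $\hat V'$ glue up along the resulting covers $T_i'\to T_i$ into a closed-up finite cover $M'\to M$ restricting over $N$ to $f$.

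The tools for the piece-by-piece step are that every piece of $M$ is geometric with LERF fundamental group (Seifert pieces by \cite{Sco2}, the geometrically finite hyperbolic pieces by \cite{Wise} and \cite{Agol2}), together with separability of the peripheral $\mathbb Z^2$-subgroups and of the relevant double cosets inside each $\pi_1(V_j)$. For a hyperbolic piece these permit realizing essentially independent compatible peripheral data on the different cusps by one finite cover. For a Seifert piece the regular fiber is central, so the covering degree along the fiber direction is forced to be uniform across all boundary tori of a given elevation; but because $N$ is an honest $3$-manifold semi-covering $M$, the elevations in $N$ of a Seifert piece already respect this constraint, so the required finite cover still exists — here one also uses that the base $2$-orbifold (Fuchsian) groups are LERF in order to prescribe boundary slopes downstairs. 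The final step is the globalization: around each torus $T_i$ one must pass to a common finite-index refinement inside $\pi_1(T_i)$ of the data imposed by its two adjacent pieces, re-enlarge those pieces to accommodate it, and propagate; finiteness of the dual graph is what makes this terminate. I expect this globalization to be the main obstacle and the true content of the lemma: each piece $V_j$ is subjected at once to several boundary conditions — one per adjacent elevation in $N$, plus the requirement that the already-fixed subsurface $\hat V$ over $V_j$ survive inside the enlarged cover — and one must show that the compatibility built into $N$ as a genuine $3$-manifold (in particular the automatic matching of fiber degrees across gluing tori for Seifert pieces), combined with LERF-ness and peripheral double-coset separability, is exactly enough to satisfy all of them simultaneously with a single finite cover.
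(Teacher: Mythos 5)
First, note that the paper does not prove this statement at all: it is quoted verbatim from Lemma 6.2 of \cite{Liu}, so there is no in-paper proof to compare against; your proposal has to stand on its own. Your treatment of the easy half is fine: $\pi_1$-injectivity from the graph-of-groups structure, and the identification of the lift of $N$ in $M_{f_*\pi_1(N)}$ with a Scott core, are both correct and standard. The problems are in the separability half.

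The first gap is logical. Scott's criterion (Lemma~\ref{Scott}) quantifies over \emph{all} compact subsets of $M_{f_*\pi_1(N)}$; you cannot ``take $K=N$'' just because $N$ is a core. Embedding the core in an intermediate finite cover is strictly weaker than separability, and an arbitrary compact $K$ meets pieces of $M_{f_*\pi_1(N)}$ outside $N$, which are typically \emph{non-compact} covers of pieces of $M$ (their groups are intersections of $\pi_1(N)$ with conjugates of vertex groups, hence peripheral), so they cannot simply be absorbed into a larger semi-cover. Your argument therefore proves at most the ``embedded lifting'' conclusion and does not reach the separability assertion. The second gap is in the completion construction itself. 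A connected finite cover $\hat V\to V_j$ cannot be ``enlarged'' to a further finite cover $\hat V'\to V_j$ containing it compatibly: an injective map of connected covers of $V_j$ over $V_j$ is a homeomorphism, so the pieces of $N$ must reappear \emph{verbatim} in $M'$ and the construction must \emph{add} new pieces rather than modify existing ones. Third, the part you yourself flag as ``the true content of the lemma'' is not carried out, and the two assertions you lean on there are not free: (a) realizing independently prescribed covers on the several cusps of a hyperbolic piece by a single finite cover is a genuine theorem (in this paper it is Proposition~\ref{geofinite}, proved via the Mart\'inez-Pedroza combination theorem), not a formal consequence of LERF plus double-coset separability; and (b) for the \emph{newly added} Seifert pieces the fiber degree is forced by the boundary torus being matched and then propagates to all their other boundary tori, so the constraints travel around cycles of the dual graph of $M$ --- the fact that the pieces already in $N$ are consistent says nothing about this. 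Your ``refine and propagate; finiteness of the dual graph makes it terminate'' does not work as stated, because each refinement at a torus forces new refinements at the adjacent pieces' other tori, and around a cycle this feedback has no a priori reason to stabilize; one needs to fix all degrees in advance (e.g.\ a single large common multiple together with taking many copies of the added pieces) and then solve a matching problem. As it stands the proposal identifies the difficulty correctly but does not resolve it.
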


For a compact, orientable, irreducible, $\partial$-irreducible $3$-manifold $M$, its torus decomposition induces a graph of space structure on $M$. For any finitely generated subgroup $H<\pi_1(M)$, the covering space $\pi:M_H\to M$ corresponding to $H$ has an induced graph of space structure, and we use $G_H$ to denote its dual graph. We may sometimes implicitly thicken $\pi^{-1}(\mathcal{T}_M)$, so that the projection $M_H\to G_H$ maps each vertex space of $M_H$ to a vertex of $G_H$ and maps each edge space of $M_H$ to an edge of $G_H$.

Each edge space of $M_H$ is a covering space of a decomposition torus of $M$, so it is homeomorphic to a torus, a cylinder, or a plane. Since $H$ is finitely generated, there exists a finite subgraph $G_H^c\subset G_H$, such that the submanifold $M_H^c\subset M_H$ corresponding to $G_H^c$ carries $\pi_1(M_H)$. Moreover, since each edge space of $M_H$ has finitely generated fundamental group ($\{e\}$, $\mathbb{Z}$ or $\mathbb{Z}^2$), an argument as in Theorem 4 of \cite{KS} implies that all vertex spaces of $M_H$ have finitely generated fundamental groups.

\subsection{Subgroups of hyperbolic $3$-manifold groups and Seifert manifold groups}\label{subgroups}

By the end of previous subsection, we have seen that each piece of $M_H$ has finitely generated fundamental group. In this section, we review all possible types of finitely generated subgroups of hyperbolic $3$-manifold groups and Seifert manifold groups.

We first deal with the hyperbolic case. When a hyperbolic $2$- or $3$-manifold has cusps, we may not distinguish the cusped open manifold and the manifold obtained by truncating its cusps. Let $N$ be a finite volume hyperbolic $3$-manifold and $L<\pi_1(N)$ be a finitely generated subgroup, then one of the following hold.
\begin{enumerate}
  \item $L<\pi_1(N)$ is a finite index subgroup, and the covering space $N_L\to N$ corresponding to $L$ is a finite cover of $N$.
  \item $L<\pi_1(N)$ is a geometrically finite subgroup of $\pi_1(N)$, i.e. the convex core of $N_L$ has finite volume.
  \item $L<\pi_1(N)$ is a geometrically infinite subgroup of $\pi_1(N)$, which is equivalent to that $L$ is a virtually fibered subgroup (\cite{Thu1}, \cite{Can} and \cite{Agol1}, \cite{CG}).
\end{enumerate}

In the geometrically infinite case, the covering space $N_L$ of $N$ is homeomorphic to $\Sigma_L\times \mathbb{R}$ or $\Sigma_L\tilde{\times} \mathbb{R}$ (twisted $\mathbb{R}$-bundle). Here $\Sigma_L$ is an orientable surface in the first case and nonorientable in the second case.

If $N$ is a hyperbolic $3$-manifold with higher genus boundary, it supports a geometrically finite hyperbolic structure with infinite volume. Under this hyperbolic structure on $N$, any finitely generated subgroup of $\pi_1(N)$ is still geometrically finite.

\bigskip

Now we consider the case that $N$ is a Seifert manifold, then $N$ is an $S^1$-bundle over a $2$-orbifold $\mathcal{O}_N$. We first suppose that $N$ is not the twisted $I$-bundle over Klein bottle, so that the Seifert structure on $N$ is unique. For any finitely generated subgroup $L<\pi_1(N)$, we have the following possibilities, according to the induced bundle structure on $N_L$.

\textbf{Case I.} The induced bundle structure on $N_L$ is an $S^1$-bundle.

Case I.1. The base space of $N_L$ is a compact $2$-orbifold, then $N_L\to N$ is a finite cover and $L<\pi_1(N)$ is a finite index subgroup.

Case I.2. The base space of $N_L$ is a noncompact $2$-orbifold.

\textbf{Case II.} The induced bundle structure on $N_L$ is an $\mathbb{R}$-bundle, then the base space $\mathcal{O}_{N_L}$ of $N_L$ is a surface (with no singular points).

Case II.1. The base space of $N_L$ is a compact surface, then $N_L$ is homeomorphic to $\Sigma_L\times \mathbb{R}$ or $\Sigma_L\tilde{\times} \mathbb{R}$ for some compact surface $\Sigma_L$. In this case, $L$ is a virtually fibered subgroup of $\pi_1(N)$.

Case II.2. The base space of $N_L$ is a noncompact surface, and $N_L$ is homeomorphic to $\mathcal{O}_{N_L}\times \mathbb{R}$ or $\mathcal{O}_{N_L}\tilde{\times} \mathbb{R}$ for some noncompact surface $\mathcal{O}_{N_L}$. In this case, we call $L$ a {\it partially fibered subgroup} of $\pi_1(N)$.

In Case II.1, $\Sigma_L$ is naturally a compact subsurface of $N_L$. In Case II.2, if $L$ is nontrivial, we construct a compact subsurface $\Sigma_L \subset N_L$ in the following, such that the inclusion induces isomorphism on fundamental groups. Then $\Sigma_L\subset N_L$ is the {\it partially fibered subsurface} corresponding to the {\it partially fibered subgroup} $L<\pi_1(N)$.

Since we excluded the case that $N$ is the twisted $I$-bundle over Klein bottle, the Seifert structure on $N$ is unique. So the base orbifold $\mathcal{O}_N$ of $N$ is unique, and its interior supports a finite area cusped hyperbolic structure. We fix such a finite area hyperbolic structure on the interior of $\mathcal{O}_N$, and also identify $\mathcal{O}_N$ with a truncation of this cusped hyperbolic surface, then $\mathcal{O}_{N_L}$ has an induced hyperbolic structure. We take the convex core of $\mathcal{O}_{N_L}$, and denote it by $\Sigma_L$. Then $\Sigma_L$ is a compact subsurface of $\mathcal{O}_{N_L}$, and each component of $\mathcal{O}_{N_L}\setminus \Sigma_L$ is neither homeomorphic to the disc nor homeomorphic to the annulus. There always exists an embedded section $\Sigma_L\hookrightarrow N_L$ of the $\mathbb{R}$-bundle $N_L\to \mathcal{O}_{N_L}$, and this section is unique up to isotopy of sections. So we can consider $\Sigma_L$ as a subsurface of $N_L$.

If $L$ corresponds to a parabolic subgroup of $\pi_1(\mathcal{O}_N)$ isomorphic to $\mathbb{Z}$, the convex core is empty and we modify the definition of $\Sigma_L$ as the following. We take $\Sigma_L$ to be a closed $\epsilon$-neighborhood of the boundary component of $\mathcal{O}_{N_L}$ corresponding to $L$. When $\epsilon>0$ is small enough, $\Sigma_L$ is homeomorphic to the annulus and the projection from $\Sigma_L\subset \mathcal{O}_{N_L}$ to its image in $\mathcal{O}_N$ is a covering map.

If $N$ is a twisted $I$-bundle over Klein bottle, one of the following hold, and we only need to construct $\Sigma_L$ in the third case.
\begin{enumerate}
  \item $L<\pi_1(N)$ has finite index, i.e. $N_L\to N$ is a finite cover (as in Case I.1).
  \item $L<\pi_1(N)$ is trivial, then we do not need to construct $\Sigma_L$ (as in Case II.2).
  \item $L<\pi_1(N)$ is a nontrivial infinite index subgroup. Then $L\cong \mathbb{Z}$ and at least one of the Seifert structure on $N$ lifts to an $\mathbb{R}$-bundle structure on $N_L$ (whose base orbifold is homeomorphic to the annulus or M\"{o}bius band). Then we take $\Sigma_L$ to be a section of this $\mathbb{R}$-bundle. If $N_L$ has two induced $\mathbb{R}$-bundle structures, $\Sigma_L$ is transverse with both of them. (This corresponds to Case II.1.)
\end{enumerate}

\bigskip

In the partially fibered case, only a proper subset of boundary components of $\Sigma_L$ are mapped to the boundary of $N_L$, and $\Sigma_L$ do have some boundary components that are mapped to the interior of $N_L$. Those boundary components of $\Sigma_L$ that are mapped to $\partial N_L$ are more important for our definition of almost fibered surface and generalized spirality character. Actually, if no boundary component of $\Sigma_L$ is mapped to $\partial N_L$, then $\Sigma_L$ has no contribution to the almost fibered surface $\Phi(H)$.

The partially fibered case is the main difference between this work and the work of Liu (\cite{Liu}): the subgroup associated to an immersed $\pi_1$-injective subsurface in a closed $3$-manifold does not have any partially fibered piece.

\section{Generalized spirality character}\label{spiralitycharacter}

In this section, we define the main object in this paper: the generalized spirality character. It is a generalization of Liu's spirality character (\cite{Liu}) on $\pi_1$-injective immersed subsurfaces in closed, orientable, irreducible $3$-manifolds, and it is also rooted in the work of Rubinstein-Wang on graph manifolds (\cite{RW}).

The main difference between our generalized spirality character and the spirality character in \cite{Liu} is that we need to consider partially fibered subgroups in Seifert pieces. For example, the nonseparable subgroup of $3$-manifold group constructed in \cite{BKS} is a subgroup of a graph manifold group $H<\pi_1(M)$, such that the induced graph of group structure on $H$ has one vertex group and one edge group, and the vertex group is actually a partially fibered subgroup. Moreover, in \cite{Liu}, the almost fibered surface is naturally a subsurface of the given immersed surface. While in our case, we need to extract the "almost fibered surface" from the subgroup $H<\pi_1(M)$.

Recall that we always assume $M$ is a compact, orientable, irreducible, $\partial$-irreducible $3$-manifold with nontrivial torus decomposition, and does not support the $\text{Sol}$ geometry.

\subsection{The almost fibered surface $\Phi(H)$}\label{almostfiber}

In this subsection, for any finitely generated infinite index subgroup $H<\pi_1(M)$, we define the almost fibered surface $\Phi(H)$.

Since the fundamental group is defined with respect to a base point, we always implicitly take a base point $*\in M$, and identify $H$ with a subgroup of $\pi_1(M,*)$. In many cases, we need to change to another base point $*'\in M$. Then we take a path $\gamma$ in $M$ from $*'$ to $*$ and consider subgroup $\gamma H\gamma^{-1}< \pi_1(M,*')$. Then $H$ is separable in $\pi_1(M,*)$ if and only if $\gamma H\gamma^{-1}$ is separable in $\pi_1(M,*')$. So we will often abuse notation and still use $H$ to denote $\gamma H\gamma^{-1}$.

Let $\pi:M_H\to M$ be the covering space of $M$ corresponding to $H$, then the torus decomposition on $M$ induces a graph of space structure on $M_H$. Each elevation of a piece of $M$ in $M_H$ is called a {\it vertex space} or a {\it piece} of $M_H$, and each elevation of a decomposition torus of $M$ in $M_H$ is called an {\it edge space} of $M_H$. We denote the dual graph of $M_H$ by $G_H$. For each vertex $v$ of $G_H$, the corresponding piece $M_H^v$ in $M_H$ is a covering space of a piece of $M$ with finitely generated fundamental group. Since $H$ is finitely generated and $M$ does not support the $\text{Sol}$-geometry, only finitely many pieces of $M_H$ have nontrivial fundamental groups.

Let $M_H^c$ be the minimal connected submanifold of $M_H$, such that it is a union of pieces of $M_H$, it contains all pieces of $M_H$ with nontrivial fundamental groups, and the inclusion $M_H^c\hookrightarrow M_H$ induces isomorphism on fundamental groups. Since $H$ is finitely generated, $M_H^c$ has only finitely many pieces. Let $G_H^c$ be the subgraph of $G_H$ dual with $M_H^c$, then $G_H^c$ is a finite subgraph of $G_H$ and each component of $G_H\setminus G_H^c$ is a tree.

For a vertex $v\in G_H^c$, we use $M^v$ to denote the image of $M_H^v\subset M_H$ in $M$. The almost fibered surface $\Phi(H)$ is constructed by pasting a sub-collection of following subsurfaces living in pieces of $M_H^c$.
\begin{enumerate}
  \item If $M_H^v$ corresponds to a virtually fibered subgroup of $M^v$ ($M^v$ is either hyperbolic or Seifert), then $M_H^v$ is homeomorphic to $\Sigma^v\times \mathbb{R}$ or $\Sigma^v\tilde{\times} \mathbb{R}$ for some compact surface $\Sigma^v$. Then we take an embedded copy of $\Sigma^v$ in $M_H^v$.
  \item If $M^v$ is a Seifert space and $M_H^v$ corresponds to a partially fibered subgroup of $M^v$, then in Case II.2 of Section \ref{subgroups}, we constructed a partially fibered subsurface $\Sigma^v\subset M_H^v$. In this case, $\Sigma^v$ is a compact embedded subsurface in $M_H^v$, but it is not properly embedded.
\end{enumerate}

By our construction, for any two distinct components of $\partial \Sigma^v\cap \partial M_H^v$, they lie in distinct boundary components of $M_H^v$. In case (1), each boundary component of $M_H^v$ is homeomorphic to the cylinder ($C=S^1\times \mathbb{R}$), and each of them intersects with exactly one component of $\partial \Sigma^v$. In case (2), some boundary components of $M_H^v$ are homeomorphic to the cylinder and some are homeomorphic to the plane ($P=\mathbb{R}^2$). Since $\Sigma^v$ is given by the convex core construction, a boundary component of $M_H^v$ intersects with $\Sigma^v$ if and only if it is homeomorphic to the cylinder.

Let $\Sigma_1$ and $\Sigma_2$ be two fibered or partially fibered subsurfaces in $M_H^c$, such that their boundary components $c_1\subset \partial\Sigma_1$ and $c_2\subset \partial\Sigma_2$ lie in the same cylinder $C\subset M_H^c$. Since $c_1$ and $c_2$ are isotopic to each other in $C$, we can and will assume they are mapped to the same simple closed curve in $C$.

Let $J_H^c$ be the set of vertices $v\in G_H^c$ such that the following hold.
\begin{enumerate}
\item The vertex $v$ corresponds to a piece $M_H^v\subset M_H^c$ that is virtually fibered or partially fibered.
\item Moreover, if $M_H^v$ corresponds to a partially fibered subgroup of a Seifert piece in $M$, the partially fibered subsurface $\Sigma^v$ is non-empty and does not lie in the interior of $M_H^v$.
\end{enumerate}

In other words, the second condition excludes those partially fibered pieces $M_H^v$ with only plane boundary components. In particular, all vertices $v$ with $\pi_1(M_H^v)=\{e\}$ are excluded.

\begin{definition}\label{almostfiberdef}
Let $\{\Sigma^v\}_{v\in J_H^c}$ be the (finite) collection of virtually fibered and partially fibered subsurfaces defined for vertices $v\in J_H^c$. For each cylinder edge space $C\subset M_H$ that intersects with the surfaces in $\{\Sigma^v\}_{v\in J_H^c}$ along exactly two circles, we paste these surfaces along their intersections with $C$. After doing all these pasting, we get the {\it almost fibered surface} $\Phi(H)$, and it is naturally a subsurface of $M_H^c(\subset M_H)$.
\end{definition}

Note that $\Phi(H)$ might be disconnected. By the above pasting process, $\Phi(H)$ has a natural graph of space structure. We call each $\Sigma^v$ a {\it piece} or a {\it vertex space} of $\Phi(H)$, and call each circle component of $\Sigma^v\cap \Sigma^{v'}$ an {\it edge space} of $\Phi(H)$.  Let $G_{\Phi(H)}$ be the  dual graph of $\Phi(H)$, then the inclusions $\Phi(H)\to M_H^c$ and $\Phi(H)\to M_H$ induce injective maps on their dual graphs.

Let $M'\to M$ be a finite cover and let $H'=H\cap \pi_1(M')$, it is easy to check that the almost fibered surface $\Phi(H')$ (defined by $H'<\pi_1(M')$) is a finite cover of $\Phi(H)$. There are two graph of space structures on $\Phi(H')$. One of them is induced by the graph of space structure on $M'$, and the other one is induced by the covering map $\Phi(H')\to \Phi(H)$. If $M$ does not contain the twisted $I$-bundle over Klein bottle, then these two graph of space structures on $\Phi(H')$ are same with each other. Otherwise, in the graph of space structure on $\Phi(H')$ induced from $\Phi(H')\to \Phi(H)$, there might be some annulus vertex spaces that are mapped into some $T^2\times I\subset M'$. If we pinch all these annuli to circles, we obtain the graph of space structure on $\Phi(H')$ induced by $M'$.

\subsection{On oriented proper paths in $\Sigma^v\subset \Phi(H)$}\label{arcs}

In this subsection, we do some preparation for defining the generalized spirality character. For a virtually fibered or partially fibered subsurface $\Sigma^v\subset \Phi(H)\subset M_H$ and an oriented proper path $\delta:I\to \Sigma^v$ with $\delta(0),\delta(1)\in \partial \Sigma^v$, we will define a rational number $s_{\delta}\in \mathbb{Q}_+$ associated to $\delta$.

Let $M_H^v$ be the piece of $M_H$ containing $\Sigma^v$, and let $M^v$ be the corresponding piece of $M$. Let $c^{\text{ini}}$ be the boundary component of $\Sigma^v$ containing $\delta(0)$ and $c^{\text{ter}}$ be the boundary component of $\Sigma^v$ containing $\delta(1)$.

\textbf{Case I.} We first consider the case that $M^v$ is a Seifert manifold. If $M^v$ is not the twisted $I$-bundle over Klein bottle, then it has a unique Seifert structure and we define $$s_{\delta}=\frac{\langle c^{\text{ini}},h\rangle}{\langle c^{\text{ter}},h \rangle}.$$ Here $h$ is the regular fiber of the Seifert space $M^v$, and $\langle \cdot, \cdot\rangle$ is the (unsigned) algebraic intersection number in the corresponding boundary components of $M^v$. If $M^v$ is the twisted $I$-bundle over Klein bottle, and we apply the above formula to any Seifert structure of $M^v$ that transverses with $\Sigma_L$, we always have $s_{\delta}=1$.

Note that this definition of $s_{\delta}$ is similar to the definition in \cite{RW}.

\textbf{Case II.} If $\Sigma^v\subset \Phi(H)$ is a virtually fibered subsurface, the definition of $s_{\delta}$ is more complicated, and is is similar to the definition in \cite{Liu}. We first take a triple $(S^v,\phi^v,\{\hat{c}_i\}_{i \in I^v})$ such that the following conditions hold.
\begin{enumerate}
\item If $\Sigma^v$ is orientable, $S^v$ is homeomorphic to $\Sigma^v$. Otherwise, $S^v$ is the orientable double cover of $\Sigma^v$.
\item If $M^v$ is a Seifert space, then $\phi^v:S^v\to S^v$ is the identity. If $M^v$ is hyperbolic, then $\phi^v:S^v\to S^v$ is under the canonical form of pseudo-Anosov maps (i.e. it preserves two transverse measured singular foliations) and its restriction on $\partial S^v$ is identity.
\item The mapping torus $J^v=S^v\times I/(x,0)\sim (\phi^v(x),1)$ is equipped with a finite covering map $J^v\to M^v$, such that $S^v\times \{0\}$ is mapped to $\Sigma^v\subset M^v$ via a homeomorphism or a double cover. By (2), distinct boundary components of $S^v$ intersect with distinct boundary components of $J^v$.
\item Let $\{c_i\}_{i\in I^v}$ be the set of all boundary components of $\Sigma^v$, then each $\hat{c}_i$ is an elevation of $c_i$ in $S^v$.
\end{enumerate}

For an oriented proper path $\delta: I\to \Sigma^v$, we define $s_{\delta}$ by the following way.

Let $T^{\text{ini}}$ be the boundary torus of $M^v$ that contains $c^{\text{ini}}$, let $\hat{c}^{\text{ini}}$ be the boundary component of $S^v$ in $\{\hat{c}_i\}_{i \in I^v}$ corresponding to $c^{\text{ini}}$, and let $\hat{T}^{\text{ini}}$ be the boundary component of $J^v$ containing $\hat{c}^{\text{ini}}$. Similarly, we define $T^{\text{ter}}\subset \partial M^v$, $\hat{c}^{\text{ter}}\subset \partial S^v$ and $\hat{T}^{\text{ter}}\subset \partial J^v$ by $c^{\text{ter}}$. Then we define $$s_{\delta}=\frac{[\hat{T}^{\text{ini}}:T^{\text{ini}}]}{[\hat{T}^{\text{ter}}:T^{\text{ter}}]}.$$ Here $[\cdot : \cdot]$ denotes the covering degree.

In the definition of $s_{\delta}$, we made some choices, and we need to prove that $s_{\delta}$ is well-defined. The proof is similar to the argument in Section 4.2 of \cite{Liu}. When $\Sigma^v$ is orientable, the only ambiguity is on the surface automorphism $\phi^v$, since we can take powers of $\phi^v$. When we pass from $\phi^v$ to $(\phi^v)^n$, the covering degrees $[\hat{T}^{\text{ter}}:T^{\text{ter}}]$ and $[\hat{T}^{\text{ini}}:T^{\text{ini}}]$ are both multiplied by $n$, so $s_{\delta}$ is well defined.

When $\Sigma^v$ is nonorientable, taking a power of $\phi^v:S^v\to S^v$ still does not affect $s_{\delta}$, by the same argument as above. Moreover, since each $c_i\subset \Sigma^v$ has exactly two elevations in $S^v$, there are exactly two possible choices of $\hat{T}^{\text{ini}}$ (and $\hat{T}^{\text{ter}}$). Then up to taking a power of $\phi^v$, these two choices of $\hat{T}^{\text{ini}}$ differ by a deck transformation of $J^v\to M^v$, and the same argument works for $\hat{T}^{\text{ter}}$. So $s_{\delta}$ is well defined.

It is also easy to check that, if $M^v$ is a twisted $I$-bundle over Klein bottle and we apply the above definition, $s_{\delta}=1$ always holds.

\bigskip

In the case that $M^v$ is a Seifert space and $M_H^v$ corresponds to a virtually fibered subgroup of $M^v$, Case I and Case II give two definitions of $s_{\delta}$. The following lemma proves that these two definitions of $s_{\delta}$ give us the same number.

\begin{lemma}\label{welldefine}
Suppose $M^v$ is a Seifert space and $M_H^v$ corresponds to a virtually fibered subgroup of $M^v$. Let $\delta: I\to \Sigma^v$ be an oriented proper path, then the two definitions of $s_{\delta}$ in Case I and Case II are equal to each other.
\end{lemma}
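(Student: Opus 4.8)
The plan is to reduce the claimed equality to a single, symmetric statement about the individual boundary components of $\Sigma^v$. If $M^v$ is the twisted $I$-bundle over the Klein bottle there is nothing to prove, since both definitions give $s_\delta=1$; so assume $M^v$ carries a unique Seifert fibration with regular fiber $h$. Note that in Case I and in Case II the quantity $s_\delta$ depends only on the ordered pair $(c^{\text{ini}},c^{\text{ter}})$ of boundary components of $\Sigma^v$ joined by $\delta$, and in each case it is a quotient $X(c^{\text{ini}})/X(c^{\text{ter}})$ of a numerical invariant $X$ of a boundary component. So it suffices to exhibit a positive integer $d$, the same for all boundary components, such that for every boundary component $c$ of $\Sigma^v$ lying over a boundary torus $T\subset\partial M^v$, with $\hat T\subset\partial J^v$ the associated boundary torus of the mapping torus $J^v$,
$$[\hat T:T]\;=\;d\cdot\langle c,h\rangle_T .$$
Taking the quotient of this identity for $c^{\text{ini}}$ and $c^{\text{ter}}$ then gives the lemma.

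Now the set-up. Since $M^v$ is Seifert we have $\phi^v=\mathrm{id}$, so $J^v=S^v\times S^1$, and the defining property of the triple $(S^v,\phi^v,\{\hat c_i\}_{i\in I^v})$ provides a finite covering $J^v\to M^v$ under which $S^v\times\{0\}$ is carried to $\Sigma^v$ by a homeomorphism, or (when $\Sigma^v$ is nonorientable) by the orientation double cover; in the nonorientable case each boundary circle of $\Sigma^v$ lifts to exactly two disjoint circles in $S^v$, each mapping homeomorphically onto it, and $\hat c$ is the chosen such elevation of $c$. Thus $\hat T=\hat c\times S^1$, and the curve $\hat c\times\{0\}=(S^v\times\{0\})\cap\hat T$ is a primitive class on the torus $\hat T$ which maps homeomorphically onto a curve on $T$ representing the image of $c$ — that is, onto the class used to compute $\langle c,h\rangle_T$.

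The key input is that $J^v\to M^v$ is fiber-preserving and that a regular fiber of $J^v$ covers $h$ with a degree $d$ independent of the fiber (hence of $c$). This is where uniqueness of the Seifert fibration of $M^v$ (\cite{Hat}, Theorem 2.3) is used: pulling the fibration back along $J^v\to M^v$ gives a Seifert fibration of $S^v\times S^1$ in which $S^v\times\{0\}$ is horizontal, and by uniqueness of the Seifert fibration of $S^v\times S^1$ (for $\chi(S^v)<0$; the case $S^v\times S^1\cong T^2\times I$ being checked directly) this pulled-back fibration is isotopic to the product fibration, so $\{\mathrm{pt}\}\times S^1$ is a regular fiber on both sides and maps to $d\,h$ in $T$. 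I expect this step — in particular ruling out an exotic pulled-back fibration with exceptional fibers — to be the main point requiring care. Granting it, choose a basis $\{s,h\}$ of $H_1(T;\mathbb Z)$ with $\langle s,h\rangle=1$ and write the image of $c$ in $H_1(T)$ as $p\,s+q\,h$, so that $\langle c,h\rangle_T=|p|$, with $p\neq 0$ because $\Sigma^v$ is horizontal. The restriction $\hat T\to T$ sends $[\{\mathrm{pt}\}\times S^1]\mapsto d\,h$ and $[\hat c\times\{0\}]\mapsto p\,s+q\,h$; since $\{[\hat c\times\{0\}],[\{\mathrm{pt}\}\times S^1]\}$ is a basis of $H_1(\hat T;\mathbb Z)$, the covering degree $[\hat T:T]$ equals the index of the image lattice, namely
$$[\hat T:T]=\left|\det\begin{pmatrix}p&q\\0&d\end{pmatrix}\right|=d\,|p|=d\cdot\langle c,h\rangle_T .$$
Applying this to $c^{\text{ini}}$ and to $c^{\text{ter}}$ and dividing shows the Case I and Case II values of $s_\delta$ coincide; any remaining ambiguity in the choices (such as which elevation $\hat c$ is used) is already absorbed by the well-definedness of $s_\delta$ in Case II established above.
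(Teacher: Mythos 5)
Your proposal is correct and follows essentially the same route as the paper: both reduce the claim to the identity $[\hat T:T]=d\cdot\langle c,h\rangle_T$ with $d$ the (boundary-independent) degree of the covering on regular fibers, the paper phrasing the covering degree as $\langle q(\hat c),q(\hat h)\rangle$ and you as the equivalent $2\times 2$ determinant. The point you flag as delicate — that the pulled-back fibration on $S^v\times S^1$ is the product one, so $\{\mathrm{pt}\}\times S^1$ is a regular fiber and $\langle\hat c,\hat h\rangle=1$ — is exactly the step the paper uses implicitly, and your appeal to uniqueness of the Seifert fibration (valid since $\chi(S^v)<0$ here) handles it correctly.
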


\begin{proof}
  Let $J^v$ be a finite cover of $M^v$ homeomorphic to $S^v\times S^1$ as in Case II, and $q:J^v\to M^v$ be the covering map. Let $c^{\text{ini}}, c^{\text{ter}}\subset \partial \Sigma_v$, $\hat{c}^{\text{ini}}, \hat{c}^{\text{ter}}\subset \partial S_v$, $\hat{T}^{\text{ini}},\hat{T}^{\text{ter}}\subset \partial J^v$, $T^{\text{ini}},T^{\text{ter}}\subset \partial M^v$ be defined as in Case II. Let $\hat{h}$ and $h$ be the regular fibers of $J^v$ and $M^v$ respectively.

  Since $S^v$ is homeomorphic to either $\Sigma^v$ or the orientable double cover of $\Sigma^v$, the covering degrees of $\hat{c}^{\text{ini}}\to c^{\text{ini}}$ and $\hat{c}^{\text{ter}}\to c^{\text{ter}}$ are both $1$.  Since $\hat{h}$ and $\hat{c}^{\text{ini}}$ has intersection number $1$ in $\hat{T}^{\text{ini}}$, we have $[\hat{T}^{\text{ini}}:T^{\text{ini}}]=\langle q(\hat{c}^{\text{ini}}),q(\hat{h})\rangle=\langle c^{\text{ini}},q(\hat{h})\rangle$. Similarly, we have $[\hat{T}^{\text{ter}}:T^{\text{ter}}]=\langle q(\hat{c}^{\text{ter}}),q(\hat{h})\rangle=\langle c^{\text{ter}},q(\hat{h})\rangle$. Since the covering degree from the regular fiber of $J^v$ to the regular fiber of $M^v$ does not depend on which boundary component we choose, we have $$\frac{[\hat{T}^{\text{ini}}:T^{\text{ini}}]}{[\hat{T}^{\text{ter}}:T^{\text{ter}}]}=\frac{\langle c^{\text{ini}},q(\hat{h})\rangle}{\langle c^{\text{ter}},q(\hat{h})\rangle}=\frac{\langle c^{\text{ini}},h\rangle}{\langle c^{\text{ter}},h\rangle}.$$
\end{proof}

\subsection{The generalized spirality character}\label{spiralitycharacterdef}

In this section, we define the generalized spirality character $s:H_1(\Phi(H);\mathbb{Z})\to \mathbb{Q}_+^{\times}$. The spirality character in \cite{Liu} is original defined by partial dilations and a principal $\mathbb{Q}^{\times}$-bundle over $\Phi(S)$, then Liu showed that it can be computed by a combinatorial formula (formula 4.5 in \cite{Liu}). In this paper, we directly define the generalized spirality character by a combinatorial formula, which is closer to the flavor of \cite{RW}. Moreover, the definition in \cite{Liu} also takes care of orientations, so the image of Liu's spirality character may contain negative rational numbers. For simplicity, we will forget the sign and simply define it to be a homomorphism to $\mathbb{Q}_+^{\times}$.

We first define the generalized spirality character when $\Phi(H)$ is connected, then the definition for general case directly follows. When $\Phi(H)$ has no edge space (consists of one single vertex space), we define $s:H_1(\Phi(H);\mathbb{Z})\to \mathbb{Q}_+^{\times}$ to be the trivial homomorphism, i.e. $s(\alpha)=1$ for all $\alpha\in H_1(\Phi(H);\mathbb{Z})$.

In the case that $\Phi(H)$ has nonempty edge spaces, we take a base point $*\in \Phi(H)$ in an edge space, and first define a homomorphism $\hat{s}:\pi_1(\Phi(H),*)\to \mathbb{Q}_+^{\times}$.

\begin{definition}\label{spiralitydef}
Let $c:I\to \Phi(H)$ be an oriented closed path based at $*$ and transverse with edge spaces of $\Phi(H)$. Then $c$ is a concatenation of oriented paths $\delta_1,\cdots,\delta_n$ such that each $\delta_i$ is an oriented proper path in a vertex space $\Sigma_i\subset \Phi(H)$. Then we define $\hat{s}:\pi_1(\Phi(H),*)\to \mathbb{Q}_+^{\times}$ by
$$\hat{s}([c])=\prod_{i=1}^ns_{\delta_i}.$$
Then $\hat{s}$ induces a homomorphism $$s:H_1(\Phi(H);\mathbb{Z})\to \mathbb{Q}_+^{\times},$$ which is the {\it generalized spirality character}.
\end{definition}

It is easy to check that $\hat{s}$ is well-defined. Since all edge spaces of $\Phi(H)$ are essential simple closed curves, a homotopy between two oriented closed paths $c$ and $c'$ (up to reparameterization) consists of a finite sequence of following moves.
\begin{enumerate}
  \item Homotopy of paths with respect to their intersections with edge spaces of $\Phi(H)$, which clearly does not affect $\hat{s}([c])$.
  \item Do finger move of $c$ across one edge space of $\Phi(H)$. Those new terms in the formula created by the finger move cancel with each other, so $\hat{s}([c])$ does not change.
\end{enumerate}
From the definition, $\hat{s}:\pi_1(\Phi(H),*)\to \mathbb{Q}_+^{\times}$ is clearly a group homomorphism. Since $\mathbb{Q}_+^{\times}$ is abelian, $\hat{s}:\pi_1(\Phi(H),*)\to \mathbb{Q}_+^{\times}$ induces a homomorphism $s:H_1(\Phi(H);\mathbb{Z})\to \mathbb{Q}_+^{\times}$.

\begin{remark}\label{factorthrough}
Let $G_{\Phi(H)}$ be the dual graph of $\Phi(H)$. Then $s$ actually factors through a homomorphism $H_1(G_{\Phi(H)};\mathbb{Z})\to \mathbb{Q}_+^{\times}$, since only the initial and terminal points of $\{\delta_i\}$ affect the definition of $\hat{s}([c])$, and the topology of these paths in vertex spaces of $\Phi(H)$ do not matter. However, we prefer to use $H_1(\Phi(H);\mathbb{Z})$ in the definition of generalized spirality character, since it is more convenient for our proof, and this notation is compatible with the spirality character in \cite{Liu}.
\end{remark}

\subsection{Generalized spirality character under finite cover}\label{coverspirality}

For a finitely generated infinite index subgroup $H<\pi_1(M)$, we would like to study behavior of the generalized spirality character when taking finite covers of $M$ and finite index subgroups of $H$.

At first, we study the case that $M'\to M$ is a finite cover such that $H<\pi_1(M')$.

\begin{lemma}\label{manifoldcoverspirality}
  Let $M'\to M$ be a finite cover such that $H<\pi_1(M')$, then the almost fibered surfaces of $H$ associated to $M$ and $M'$ are homeomorphic to each other. Moreover, for the generalized spirality characters $s:H_1(\Phi(H);\mathbb{Z})\to \mathbb{Q}^{\times}$ and $s':H_1(\Phi(H);\mathbb{Z})\to \mathbb{Q}_+^{\times}$ defined by $M$ and $M'$ respectively, $s=s'$ holds.
\end{lemma}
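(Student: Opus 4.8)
The plan is to exploit the fact that the covering map $\rho:M'\to M$ is \emph{finite}, so it is in particular a local homeomorphism, and that the covering space $M_H\to M$ factors as $M_H\to M'\xrightarrow{\rho} M$ precisely because $H<\pi_1(M')$. I would first observe that $M_H$, viewed as a covering of $M$ or of $M'$, is literally the same space; the only thing that changes is the reference structure used to decompose it, namely the torus decomposition $\mathcal{T}_M$ of $M$ versus $\mathcal{T}_{M'}=\rho^{-1}(\mathcal{T}_M)$ of $M'$. Since $\rho$ is a finite cover, $\mathcal{T}_{M'}$ is an incompressible collection of tori whose image is $\mathcal{T}_M$, and — with the usual caveat about a twisted $I$-bundle over a Klein bottle having a $T^2\times I$ elevation, which is exactly the situation already discussed in Section~\ref{3mfld} and after Definition~\ref{almostfiberdef} — the graph-of-spaces structure on $M_H$ induced from $M'$ agrees with the one induced from $M$ up to collapsing parallel tori. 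Hence the dual graph $G_H$, the core subgraph $G_H^c$, and the collection of vertex spaces $M_H^v$ are unchanged, and for each vertex $v$ the piece $M_H^v$ covers the $M'$-piece $V'=\rho(M_H^v)$, which in turn finitely covers the $M$-piece $V=\rho(V')$.

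Next I would check that the vertex surfaces $\Sigma^v$ are intrinsically unchanged. In the virtually fibered case, $M_H^v$ is homeomorphic to $\Sigma^v\times\mathbb{R}$ or $\Sigma^v\widetilde{\times}\mathbb{R}$ regardless of whether we regard $V'$ or $V$ as the base, so $\Sigma^v$ only depends on the homeomorphism type of $M_H^v$. In the partially fibered Seifert case, $\Sigma^v$ is defined via the convex core of the induced hyperbolic structure on the base orbifold $\mathcal{O}_{M_H^v}$; since a finite cover of a Seifert piece has a Seifert structure compatible with the original (passing to the unique Seifert structure, or handling the twisted $I$-bundle over the Klein bottle separately as in Section~\ref{subgroups}), the induced $\mathbb{R}$-bundle structure on $M_H^v$ and hence the base orbifold $\mathcal{O}_{M_H^v}$ are the same, so $\Sigma^v$ is the same subsurface up to isotopy. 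The pasting of the $\Sigma^v$ along cylinder edge spaces then produces the same $\Phi(H)\subset M_H^c$ from $M'$ as from $M$; this gives the first assertion, that the two almost fibered surfaces are homeomorphic (indeed equal as subsurfaces of $M_H$, up to isotopy).

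For the equality of spirality characters, I would argue that each local number $s_\delta$ is unchanged, since $s$ is built from the $s_\delta$ by the combinatorial formula in Definition~\ref{spiralitydef}. Fix an oriented proper path $\delta$ in $\Sigma^v$ with endpoints on boundary circles $c^{\mathrm{ini}},c^{\mathrm{ter}}$. In the Seifert case, $s_\delta=\langle c^{\mathrm{ini}},h\rangle/\langle c^{\mathrm{ter}},h\rangle$ where $h$ is the regular fiber; replacing the base piece $V$ by its finite cover $V'$ replaces $h$ by a fiber $h'$ with $h'=kh$ for a single multiplicity $k$ independent of the boundary component (the fiber degree of $V'\to V$ is well-defined), so both intersection numbers scale by $k$ and the ratio is unchanged. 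In the hyperbolic virtually fibered case, one may choose the auxiliary mapping-torus cover $J^v\to V$ to factor through $V'$ (a finite cover of a finite cover is a finite cover), so the very same triple $(S^v,\phi^v,\{\hat c_i\})$ witnesses $s_\delta$ over both $V$ and $V'$; the boundary-torus covering degrees $[\hat T^{\mathrm{ini}}:T^{\mathrm{ini}}]$ and $[\hat T^{\mathrm{ter}}:T^{\mathrm{ter}}]$ relative to $V'$ differ from those relative to $V$ each by the same factor, namely the degree with which the corresponding boundary torus of $V'$ covers that of $V$ — but since $s_\delta$ is already known to be well-defined under passing between such choices (the well-definedness argument in Section~\ref{arcs}), and passing from $V$ to $V'$ is exactly an instance of enlarging the cover, the ratio is unchanged. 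The twisted $I$-bundle over the Klein bottle only ever contributes $s_\delta=1$, so it causes no trouble. Plugging the unchanged $s_\delta$ into $\hat s([c])=\prod_i s_{\delta_i}$ gives $\hat s=\hat s'$, hence $s=s'$ after passing to $H_1$.

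The main obstacle I anticipate is bookkeeping around the twisted $I$-bundle over the Klein bottle: this is the sole case where the torus decomposition of $M$ fails to lift to that of $M'$, so a priori the two graph-of-spaces structures on $M_H$ genuinely differ, and one must verify that the discrepancy is exactly the "collapse parallel annuli/tori" operation already described after Definition~\ref{almostfiberdef} and that it affects neither $\Phi(H)$ (since such an elevated $T^2\times I$ contributes nothing, having trivial or annulus-type pieces that are excluded from $J_H^c$) nor the spirality formula (since those pieces carry $s_\delta=1$). Once this case is dispatched, the remainder is a matter of carefully unwinding definitions, with no real analytic or combinatorial content beyond the fiber-degree and covering-degree scaling arguments sketched above.
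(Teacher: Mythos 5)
Your proposal is correct and follows essentially the same route as the paper's own proof: you identify the two graph-of-space structures on $M_H$ up to collapsing the trivial $I$-bundle pieces coming from twisted $I$-bundles over the Klein bottle (where $s_\delta=1$), observe that $\Phi(H)$ itself is unchanged, and then compare each local ratio $s_\delta$ using the constancy of the fiber degree in the Seifert case and an auxiliary mapping-torus cover factoring through both $V'$ and $V$ in the virtually fibered case. This matches the paper's argument in both structure and level of detail, so there is nothing further to add.
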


\begin{proof}

The covering space of $M$ associated to $H<\pi_1(M)$ and the covering space of $M'$ associated to $H<\pi_1(M')$ are homeomorphic to each other, and we denote it by $M_H$. Then $M_H$ has two graph of space structures, which are induced by $M$ and $M'$ respectively. These two graph of space structures on $M_H$ are almost identical with each other, except that some trivial $I$-bundle vertex spaces are pinched to edge spaces. This pinching process only changes the graph of space structure on $\Phi(H)$, but does not change the topology of $\Phi(H)$.

For the graph of space structure on $\Phi(H)$ induced from $M$, the pinched pieces of $\Phi(H)$ live in those pieces of $M$ that are homeomorphic to the twisted $I$-bundle over Klein bottle, so $s_{\delta}=1$ holds for these pieces. Then we can ignore these terms in the definition of $\hat{s}$, and we can ignore the difference between these two graph of space structures on $\Phi(H)$.

For each oriented proper path $\delta$ in a piece $\Sigma^v\subset \Phi(H)$, $s'_{\delta}$ defined by $M'$ and $s_{\delta}$ defined by $M$ are equal to each other. In the Seifert case, it follows from the fact that the covering degree on regular fibers do not depend on the boundary component. In the fibered case, suppose $(S^v,\phi^v)$ is a pair that defines $s_{\delta}$ as in Case II, we can take a big enough power $(\phi^v)^n$ of $\phi^v$ such that the mapping torus $S^v\times I/(\phi^v)^n$ covers the corresponding vertex spaces in $M$ and $M'$. Then condition (2) in Case II of Section \ref{arcs} implies that the covering map $S^v\times I/(\phi^v)^n\to S^v\times I/\phi^v$ has the same covering degree on all boundary components. So $s_{\delta}=s_{\delta}'$ holds.

Then it follows that the two generalized spirality characters are equal to each other.
\end{proof}

Let $\bar{H}<H$ be a finite index subgroup, the following lemma compares the two generalized spirality characters $\bar{s}:H_1(\Phi(\bar{H});\mathbb{Z})\to \mathbb{Q}_+^{\times}$ and $s:H_1(\Phi(H);\mathbb{Z})\to \mathbb{Q}_+^{\times}$.

\begin{lemma}\label{subgroupspirality}
  Let $\bar{H}<H$ be a finite index subgroup, then the following commutative diagram hold. Here the horizontal map is induced by a finite covering map $\Phi(\bar{H})\to \Phi(H)$.
  \begin{diagram}
H_1(\Phi(\bar{H});\mathbb{Z}) &\rTo & H_1(\Phi(H);\mathbb{Z})\\
&\rdTo_{\bar{s}} &\dTo_s\\
& &\mathbb{Q}_+^{\times}
\end{diagram}
\end{lemma}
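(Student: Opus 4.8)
The plan is to verify that the covering map $\Phi(\bar H)\to\Phi(H)$ is compatible with the combinatorial definition of the spirality characters on both sides, reducing everything to a statement about oriented proper paths in individual vertex spaces. First I would recall from the remark preceding Lemma~\ref{manifoldcoverspirality} that $\Phi(\bar H)\to\Phi(H)$ is indeed a finite covering map of surfaces, and that it respects the graph-of-space structures (after pinching the $I$-bundle pieces over twisted $I$-bundles over Klein bottles, on which $s_\delta=1$, so they may be ignored). In particular the induced map on dual graphs $G_{\Phi(\bar H)}\to G_{\Phi(H)}$ sends vertex spaces to vertex spaces: if $\bar\Sigma^{\bar v}\subset\Phi(\bar H)$ covers $\Sigma^v\subset\Phi(H)$, then $\bar\Sigma^{\bar v}$ and $\Sigma^v$ are fibered (resp.\ partially fibered) subsurfaces of pieces $\bar M_{\bar H}^{\bar v}$ and $M_H^v$ covering the same piece $M^v$ of $M$.

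The key step is the following local claim: for any oriented proper path $\bar\delta\colon I\to\bar\Sigma^{\bar v}$ with image $\delta$ its projection to $\Sigma^v$, one has $s_{\bar\delta}=s_\delta$. In the Seifert case this is immediate from the Case~I formula $s_\delta=\langle c^{\mathrm{ini}},h\rangle/\langle c^{\mathrm{ter}},h\rangle$: the boundary curves $\bar c^{\mathrm{ini}},\bar c^{\mathrm{ter}}$ of $\bar\Sigma^{\bar v}$ map to the corresponding boundary curves of $\Sigma^v$ (each mapped homeomorphically, by the convex-core/section construction in Section~\ref{subgroups}), and the regular fiber $\bar h$ of $\bar M_{\bar H}^{\bar v}$ covers the regular fiber $h$ of $M^v$ with a degree independent of the boundary component, so the ratio is unchanged; when $M^v$ is a twisted $I$-bundle over Klein bottle both sides equal $1$. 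In the fibered case, I would argue exactly as in the proof of Lemma~\ref{manifoldcoverspirality}: choose a pair $(S^v,\phi^v)$ and a power $(\phi^v)^n$ whose mapping torus simultaneously covers the relevant vertex spaces of $\Phi(\bar H)$ and $\Phi(H)$; then condition (2) in Case~II of Section~\ref{arcs} (the restriction of $\phi^v$ to $\partial S^v$ is the identity, so distinct boundary components of $S^v$ meet distinct boundary tori) forces the covering map between mapping tori to have the same degree on every boundary torus, and the defining ratio $[\hat T^{\mathrm{ini}}:T^{\mathrm{ini}}]/[\hat T^{\mathrm{ter}}:T^{\mathrm{ter}}]$ is therefore unaffected. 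Lemma~\ref{welldefine} guarantees these two computations agree when $M^v$ is Seifert but virtually fibered, so there is no ambiguity.

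Granting the local claim, I would finish as follows. Take any class $\bar\alpha\in H_1(\Phi(\bar H);\mathbb{Z})$ and represent it by an oriented closed path $\bar c$ in $\Phi(\bar H)$ transverse to the edge spaces, say $\bar c=\bar\delta_1\cdots\bar\delta_m$ with each $\bar\delta_j$ an oriented proper path in a vertex space. Its image $c$ in $\Phi(H)$ is then the concatenation of the projected paths $\delta_1,\dots,\delta_m$ (still transverse to the edge spaces, since the covering map sends edge spaces to edge spaces), and $[c]$ represents the image of $\bar\alpha$ under $H_1(\Phi(\bar H);\mathbb{Z})\to H_1(\Phi(H);\mathbb{Z})$. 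By Definition~\ref{spiralitydef} and the local claim,
\[
\bar s(\bar\alpha)=\prod_{j=1}^m s_{\bar\delta_j}=\prod_{j=1}^m s_{\delta_j}=s([c])=s\bigl(\text{image of }\bar\alpha\bigr),
\]
which is exactly the asserted commutativity. (One should note that on a single-vertex $\Phi(H)$ both characters are trivial by convention, so that degenerate case is automatic, and if $\Phi(\bar H)$ has a vertex space mapping to a pinched edge space the corresponding $s_{\bar\delta}=1$ term may be dropped as above.)

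The main obstacle I anticipate is the bookkeeping in the fibered local claim: one must be careful that the same power $n$ and the same pair $(S^v,\phi^v)$ can be used to dominate the vertex spaces of $\bar\Sigma^{\bar v}$, of $\Sigma^v$, and of the underlying piece $M^v$ all at once, and that the orientability subtleties (when $\Sigma^v$ is nonorientable, so $S^v$ is its orientable double cover and each boundary curve has two elevations) do not spoil the ``same degree on all boundary components'' conclusion. This is handled exactly as the well-definedness discussion after the definition of $s_\delta$ in Section~\ref{arcs} — the two choices of $\hat T^{\mathrm{ini}}$ differ by a deck transformation after passing to a power of $\phi^v$ — so no genuinely new argument is needed, only care to invoke it in the relative (covering) setting rather than the absolute one.
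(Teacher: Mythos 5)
Your overall strategy (reduce to a local statement about each $s_{\bar\delta}$ and then multiply along the loop) is the right one, but the local claim you rely on is false, and this is exactly the point where the paper's proof has to work harder. You assert that $s_{\bar\delta}=s_{\delta}$ for every oriented proper path, justified by the claim that each boundary circle of a piece $\bar\Sigma\subset\Phi(\bar H)$ is mapped homeomorphically onto the corresponding boundary circle of $\Sigma\subset\Phi(H)$. That is not true: $\Phi(\bar H)\to\Phi(H)$ is a genuine finite covering of surfaces induced by $M_{\bar H}\to M_H$, and a boundary circle $\bar c\subset\partial\bar\Sigma$ can cover $c\subset\partial\Sigma$ with any degree $[\bar c:c]\geq 1$, and these degrees can differ from one boundary component to another. (The ``degree one on boundary'' statement you are importing is from Lemma \ref{welldefine}, where it holds because $S^v$ is the orientable double cover of $\Sigma^v$ and each boundary circle has two elevations; it does not apply to an arbitrary finite cover $\bar\Sigma\to\Sigma$.) Concretely, in the Seifert Case I formula the projection of $\bar c^{\mathrm{ini}}$ to $\partial M^v$ is $[\bar c^{\mathrm{ini}}:c^{\mathrm{ini}}]$ times that of $c^{\mathrm{ini}}$, so the correct relation is
\[
\bar s_{\bar\delta}\;=\;\frac{[\bar c^{\mathrm{ini}}:c^{\mathrm{ini}}]}{[\bar c^{\mathrm{ter}}:c^{\mathrm{ter}}]}\cdot s_{\delta},
\]
and similarly in the fibered case. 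Your termwise identity $\prod_j s_{\bar\delta_j}=\prod_j s_{\delta_j}$ therefore does not hold factor by factor.

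The lemma is still true, but for a different reason: around a closed path $\bar c=\bar\delta_1\cdots\bar\delta_m$ the circle $\bar c^{\mathrm{ter}}$ of $\bar\delta_j$ coincides with the circle $\bar c^{\mathrm{ini}}$ of $\bar\delta_{j+1}$, so the correction factors $[\bar c^{\mathrm{ini}}:c^{\mathrm{ini}}]/[\bar c^{\mathrm{ter}}:c^{\mathrm{ter}}]$ telescope to $1$ in the product, and only then does $\bar s([\bar c])=s([c])$ follow. You need to replace your local claim by the displayed relation and add this cancellation step; the rest of your outline (the reduction to vertex spaces, the treatment of pinched annulus pieces, and the appeal to the well-definedness discussion for the fibered case) is fine.
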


\begin{proof}
  Since $\bar{H}<H$ is a finite index subgroup, $M_{\bar{H}}$ is a finite cover of $M_H$. The preimage of virtually fibered and partially fibered pieces of $M_H$ are virtually fibered and partially fibered pieces of $M_{\bar{H}}$ respectively, and so do the virtually fibered and partially fibered pieces in $\Phi(H)$. So the almost fibered surface $\Phi(\bar{H})$ is a finite cover of $\Phi(H)$.

Let $p:\Phi(\bar{H})\to \Phi(H)$ be the covering map induced by $M_{\bar{H}}\to M_H$. For a virtually fibered or partially fibered subsurface $\bar{\Sigma}\subset \Phi(\bar{H})$ and an oriented proper path $\bar{\delta}:I\to \bar{\Sigma}$, let $\Sigma \subset \Phi(H)$ and $\delta=p\circ \bar{\delta}$ be their projections in $\Phi(H)$. Then the relation between $\bar{s}_{\bar{\delta}}$ and $s_{\delta}$ is as the following. Let $\bar{c}^{\text{ini}}$, $\bar{c}^{\text{ter}}$ be the boundary components of $\bar{\Sigma}$ containing $\bar{\delta}(0)$ and $\bar{\delta}(1)$ respectively, and let $c^{\text{ini}}$, $c^{\text{ter}}$ be the boundary components of $\Sigma$ containing $\delta(0)$ and $\delta(1)$ respectively. Then we have $$\bar{s}_{\bar{\delta}}=\frac{[\bar{c}^{\text{ini}}:c^{\text{ini}}]}{[\bar{c}^{\text{ter}}:c^{\text{ter}}]}\cdot s_{\delta}.$$ For the Seifert case, it follows directly from the definition. For the fibered case, it follows from a similar argument in the previous lemma.

Then the diagram commutes by the definition of generalized spirality character and the above equality. For an oriented closed path $c$ in $\Phi(\bar{H})$, when we compute $\bar{s}([c])$, the $\bar{c}^{\text{ter}}$ of $\bar{\delta}_i$ equals the $\bar{c}^{\text{ini}}$ of $\bar{\delta}_{i+1}$. So when we multiply $\bar{s}_{\bar{\delta}_1},\cdots,\bar{s}_{\bar{\delta}_n}$ together, the difference between $\bar{s}_{\bar{\delta}}$ and $s_{\delta}$ cancel with each other.

\end{proof}

\begin{remark}\label{samespirality}
Since $\Phi(\bar{H})$ is a finite cover of $\Phi(H)$, the image of $H_1(\Phi(\bar{H});\mathbb{Z})$ is a finite index subgroup of $H_1(\Phi(H);\mathbb{Z})$. Since $\mathbb{Q}^{\times}_{+}$ is torsion free, the generalized spirality character $s$ is trivial if and only if $\bar{s}$ is trivial.
\end{remark}

\section{Separability implies aspirality}\label{elegant}

In this section, we prove that if $H<\pi_1(M)$ is a separable subgroup, then the generalized spirality character of $H$ is trivial, i.e. the "only if" part of Theorem \ref{main}.

\subsection{The first reduction}

In this subsection, by using the separability of $H$, we do some simple reduction and find an intermediate finite cover $M'\to M$ of $M_H\to H$ such that it satisfies several normalization properties.

We first take a Scott core $C_H\subset M_H$ (\cite{Sco1}), then $C_H$ is a compact codim-$0$ submanifold of $M_H$, and it can be constructed by pasting Scott cores of (finitely many) pieces of $M_H^c$. The Scott core $C_H$ is path-connected and we can assume that $\Phi(H)\subset C_H$.

\begin{lemma}\label{embedding}
Suppose that $H<\pi_1(M)$ is separable, then there exists an intermediate finite cover $M'\to M$ of $M_H\to M$, such that $p|_{C_H}:C_H\to M'$ is an embedding. In particular, $p|_{\Phi(H)}:\Phi(H)\to M'$ is an embedding. Here $p$ is the covering map from $M_H$ to $M'$.
\end{lemma}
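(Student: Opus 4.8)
The plan is to directly invoke Scott's topological criterion for separability (Lemma \ref{Scott}) with $X = M$, $\tilde X = M_H$, and the compact set $K = C_H \subset M_H$. By hypothesis $H < \pi_1(M)$ is separable, so Scott's lemma produces an intermediate finite cover $q : M' \to M$ of $\pi : M_H \to M$, together with the factoring covering map $p : M_H \to M'$, such that $p|_{C_H} : C_H \to M'$ is an embedding. This is essentially the whole argument: the statement is a near-verbatim specialization of Lemma \ref{Scott}.

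There are two small points I would check before declaring victory. First, $C_H$ must genuinely be a compact subset of $M_H$: this is guaranteed by the Scott core theorem (\cite{Sco1}), which gives a compact codim-$0$ submanifold carrying $\pi_1(M_H)$, and we have already arranged $\Phi(H) \subset C_H$, so the ``in particular'' clause follows immediately from the restriction of an embedding to a subspace. Second, the finite cover $M'$ from Scott's lemma sits between $M_H$ and $M$, i.e. $H < \pi_1(M') < \pi_1(M)$ with finite index in $\pi_1(M)$; this is exactly the ``intermediate finite cover'' in the statement, so no further work is needed.

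There is no real obstacle here — the lemma is a direct application of an already-cited result. The only thing worth a sentence is making sure the reader sees why $C_H$ is the right compact set to feed into Scott's criterion: it is compact (Scott core), it contains $\Phi(H)$, and the embedding of $C_H$ into $M'$ is precisely the normalization we want to exploit in the subsequent subsections, where we will further modify $M'$ to make $\Phi(H)$ properly embedded in a codim-$0$ submanifold.

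\begin{proof}
Apply Lemma \ref{Scott} with $X = M$, $\tilde X = M_H$, and the compact subset $K = C_H \subset M_H$, which is compact by the Scott core theorem (\cite{Sco1}). Since $H < \pi_1(M)$ is separable, Lemma \ref{Scott} yields an intermediate finite cover $q : M' \to M$ of $\pi : M_H \to M$, with covering map $p : M_H \to M'$ satisfying $q \circ p = \pi$, such that $p|_{C_H} : C_H \to M'$ is an embedding. Since $\Phi(H) \subset C_H$, the restriction $p|_{\Phi(H)} : \Phi(H) \to M'$ is also an embedding.
\end{proof}
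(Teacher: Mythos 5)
Your proof is correct and is exactly the paper's argument: the paper also proves this lemma by applying Scott's criterion (Lemma \ref{Scott}) to the compact subset $C_H\subset M_H$, with the "in particular" clause following from $\Phi(H)\subset C_H$. Nothing further is needed.
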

\begin{proof}
   This follows directly from Scott's lemma (Lemma \ref{Scott}) and separability of $H$, since $C_H\subset M_H$ is a compact subset.
\end{proof}

\begin{remark}\label{passtocover}
By Lemma \ref{basicLERF} (2), $H$ is separable in $\pi_1(M)$ implies that $H$ is separable in $\pi_1(M')$. By Lemma \ref{manifoldcoverspirality}, we need only to prove that $H$ is asprial in $M'$. So we abuse notation and still use $M$ to denote the finite cover given by Lemma \ref{embedding}, then we can assume that $\Phi(H)$ and $C_H$ are embedded in $M$.
\end{remark}

Currently, some vertex pieces of $\Phi(H)$ might be nonorientable surfaces, and some orientable pieces of $\Phi(H)$ may lie in a semi-bundle or lie in a Seifert manifold with nonorientable base orbifold. In the following lemma, we get rid of these situations.

\begin{lemma}\label{normalize}
  Suppose that $\Phi(H)$ is embedded in $M$, then there is a finite cover $M'$ of $M$ such that the following conditions hold for $H'=H\cap \pi_1(M')$.
  \begin{enumerate}
    \item For each Seifert piece of $M'$, its base space is orientable and has no singular points.
    \item Each vertex piece of $\Phi(H')$ is an orientable surface.
    \item For each virtually fibered subsurface $\Sigma\subset \Phi(H')$ that lies in a piece $V\subset M'$, $\Sigma$ is a fibered subsurface of $V$.
  \end{enumerate}
\end{lemma}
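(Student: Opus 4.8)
The three conclusions are normalizations that one arranges by passing to a finite cover, and the plan is to establish them in the order (1), then (2), then (3), each time replacing $M$ by a further finite cover $M'$ and $H$ by $H'=H\cap\pi_1(M')$, always choosing the new cover so that the properties already obtained survive. This is legitimate: $\Phi(H')$ is always a finite cover of $\Phi(H)$ (as noted after Definition~\ref{almostfiberdef}), so the list of vertex pieces only gets refined, and by Lemma~\ref{manifoldcoverspirality} and Remark~\ref{samespirality} the generalized spirality character of $H$ is trivial if and only if that of $H'$ is, so replacing $M$ by $M'$ is harmless for the eventual application. We also retain the conclusion of Remark~\ref{passtocover}, so $\Phi(H)$ and $C_H$ are embedded in $M$ throughout. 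A vertex piece $\Sigma^v$ of $\Phi(H)$ can fail to be orientable only if it is a section of a Seifert piece with non-orientable base orbifold, or it lies in a twisted $I$-bundle over the Klein bottle, or it is a virtually fibered subsurface $\Sigma^v\cong\Sigma^v\tilde{\times}\{0\}$ of a hyperbolic piece (which is then necessarily finite-volume, by the geometrically finite dichotomy recalled in Section~\ref{subgroups}); the first two sources are killed by (1), so after arranging (1) all further work takes place in the hyperbolic pieces.

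For condition (1), first pass to a finite cover of $M$ in which every twisted $I$-bundle over the Klein bottle has every elevation homeomorphic to $T^2\times I$; such a cover exists by residual finiteness of $\pi_1(M)$, and these buffer $T^2\times I$ regions are circle bundles over the annulus and hence already satisfy (1). For the remaining Seifert pieces one invokes the classical structure theory: each is finitely covered by a circle bundle over an orientable surface with boundary, hence by a trivial such bundle. Realizing these piece covers simultaneously as a single finite cover of $M$, compatibly along the decomposition tori, is the standard ``good finite cover of a graph (or mixed) manifold'' construction, which uses only residual finiteness of $\pi_1(M)$ together with separability of the cyclic peripheral subgroups in the adjacent vertex groups; I will take it for granted. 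Rename the resulting cover $M$.

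Once (1) holds, a vertex piece of $\Phi(H)$ lying in a Seifert piece is a horizontal surface covering an orientable base surface, hence orientable, and a virtually fibered such surface is already a fibre of the trivial circle bundle regarded as a bundle over $S^1$ (using the elementary fact that a connected incompressible horizontal surface in a trivial circle bundle over a surface with boundary is a fibre of a fibration over $S^1$; if one prefers to avoid this, a further product-type finite cover of the Seifert piece makes it so). So fix a hyperbolic piece $V=M^v$ and a virtually fibered subsurface $\Sigma^v$ in the covering piece $M_H^v$ of $V$. By the resolution of the virtual fibering question (\cite{Thu1}, \cite{Can}, \cite{Agol1}, \cite{CG}), $\pi_1(\Sigma^v)$ is a virtual fibre subgroup of $\pi_1(V)$, so there is a finite cover $\hat V\to V$ fibring over $S^1$ with fibre group of finite index in $\pi_1(\Sigma^v)$. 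Since the monodromy is a homeomorphism, the $\mathbb{Z}/2$ orientation character of the fibre is monodromy-invariant, hence extends over $\pi_1(\hat V)$; passing to the corresponding double cover when the fibre is non-orientable yields a finite cover $\hat V'\to V$ over which the relevant elevation of $\Sigma^v$ becomes an honest orientable fibre of a surface-bundle structure, i.e.\ conditions (2) and (3) hold on this piece (the check that $\pi_1(\Sigma^v)\cap\pi_1(\hat V')$ is exactly this fibre group uses that $\pi_1(\Sigma^v)$ is a surface group containing the fibre group with finite index).

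The main obstacle is the global assembly: the covers $\hat V'\to V$ chosen on individual hyperbolic pieces must be fitted into a single honest finite cover $M'\to M$, and because $M$ may have boundary components of genus at least $2$ the semi-cover extension lemma (Lemma~\ref{semicover}) is not available. The device that saves the day is that both characters in play --- the $\mathbb{Z}$-valued fibration class and the $\mathbb{Z}/2$-valued orientation class of the fibre --- restrict trivially to every boundary circle of the fibre (such a circle has an annular, hence orientable, neighbourhood, and the fibration class vanishes on it), and therefore restrict trivially to every decomposition torus of $\hat V'$. Thus each $\hat V'\to V$ restricts to a trivial cover over the decomposition tori; taking, for each piece, enough parallel copies of its chosen cover to equalize degrees and then gluing these copies along the decomposition tori by any bijective labelling produces a possibly disconnected finite cover of $M$, each of whose connected components is a connected finite cover $M'\to M$ on which all of (1), (2), (3) hold. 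Setting $H'=H\cap\pi_1(M')$ completes the proof. I expect the only genuinely delicate point to be the bookkeeping in this last step --- checking that the piece covers coming from the three stages are mutually compatible and that the gluing never restores a good piece to a bad one.
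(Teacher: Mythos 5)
There is a genuine gap in your treatment of the hyperbolic virtually fibered pieces, and it sits exactly at the step you flag as "the device that saves the day." First, the implication you use is false: a class in $H^1(\hat V';\mathbb{Z})$ or $H^1(\hat V';\mathbb{Z}/2)$ that vanishes on the boundary circles of the fibre need not vanish on a decomposition torus $T$, because $H_1(T)$ has rank $2$ and the class can be (and for the fibration class always is) nonzero on a transversal to the fibre: the restriction of the fibration class to $T$ is dual to $\partial(\text{fibre})\cap T$, which is essential in $T$. Second, and more fundamentally, the cover $\hat V\to V$ you extract from the virtual fibering theorem has uncontrolled degree and uncontrolled restriction to the boundary tori; the failure of these boundary restrictions to be compatible across adjacent pieces is precisely what the generalized spirality character measures, so an argument that assembles arbitrary fibering covers "by any bijective labelling" would prove that every $H$ with embedded $\Phi(H)$ can be normalized, which contradicts the existence of spiral (non-separable) examples. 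Your proof never uses the hypothesis that $\Phi(H)$ is embedded in $M$ in the hyperbolic case, which is a sign that something is missing.

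The paper's route uses the embeddedness essentially: since $\Sigma\subset V$ is an embedded incompressible surface whose complement is virtually a product, Theorem~10.6 of \cite{Hem1} shows $V\setminus N(\Sigma)$ is an $I$-bundle, so $V$ is \emph{already} a $\Sigma$-bundle or semi-bundle over $S^1$ --- no fibering cover of large degree is needed. The only cover required is the double cover $V''$ dual to the $\mathbb{Z}_2$-class $\sigma$ carried by the union of the cores of the two twisted $I$-bundles (in the semi-bundle case); each boundary torus of $V$ meets that union in two parallel circles, so $\sigma$ restricts to zero on every boundary torus and $V''\to V$ is a homeomorphism on each boundary component. Global assembly is then the elementary trick of taking one copy of $V''$ for the modified pieces and two copies of $V$ for the rest and gluing by homeomorphisms, producing a genuine double cover $M''\to M'$. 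Your handling of condition (1) (via Lemma~3.1 of \cite{PW}) and of the Seifert pieces is consistent with the paper, but the hyperbolic case needs to be rebuilt around the embeddedness of $\Sigma$ in $V$ rather than around the virtual fibering theorem.
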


\begin{proof}
  Note that if one of the conditions in this lemma hold for some $M'$, then it holds for all further finite covers. So we can work on these three conditions separately.

  At first, Lemma 3.1 of \cite{PW} implies the existence of a finite cover $M'$ of $M$ satisfying condition (1). For this $M'$, any vertex piece $\Sigma\subset \Phi(H')$ that lies in a Seifert piece of $M'$ automatically satisfies condition (2). So we only need to work on virtually fibered pieces $\Sigma\subset \Phi(H')$.

  Let $\Sigma\subset \Phi(H')$ be an embedded virtually fibered subsurface in a piece $V\subset M'$. Then for a regular neighborhood $N(\Sigma)$ of $\Sigma$, $V\setminus N(\Sigma)$ has a finite cover that is homeomorphic to a (possibly disconnected) surface cross interval. Then Theorem 10.6 of \cite{Hem1} implies that $V\setminus N(\Sigma)$ is an $I$-bundle over a surface.

  If $\Sigma$ is nonorientable, let $S$ be the orientable double cover of $\Sigma$. Then $V$ is a union of two twisted $I$-bundles over $\Sigma$. Let $\sigma$ be the $\mathbb{Z}_2$-cohomology class dual with the union of cores of these two twisted $I$-bundles, and let $V''$ be the double cover of $V$ corresponding to $\sigma$. Then $V''$ is an $S$-bundle over $S^1$, and each boundary component of $V''$ is mapped to a boundary component of $V$ by homeomorphism. If $\Sigma$ is an orientable surface, then $V$ is either a $\Sigma$-bundle over $S^1$, or a semi-bundle as above and $\Sigma$ is the common boundary of two twisted $I$-bundles. In the second case, we still take the double cover $V''$ of $V$ as the previous case.

  Now for each piece $V\subset M'$, if we take the double cover $V''$ as above, we take one copy of $V''$, otherwise we take two copies of $V$. Since the restriction of the double cover on each boundary component of $V''$ is a homeomorphism, we can paste these $V''$ and $V$ together to get a double cover $M''$ of $M'$. Then it is easy to check that $M''$ and $H''=H'\cap \pi_1(M'')$ satisfy all desired conditions.

\end{proof}

\begin{remark}\label{noannulus}
  If $M$ and $H<\pi_1(M)$ satisfy the conditions in the above lemma, then no piece of $M$ is the twisted $I$-bundle over Klein bottle, and no vertex piece of $\Phi(H)$ is the annulus. Then for any intermediate finite cover $M'\to M$ of $M_H\to M$, the graph of space structure on $\Phi(H)$ induced by $M'$ is same with the graph of space structure on $\Phi(H)$ induced by $M$.
\end{remark}

Similar to Remark \ref{passtocover}, the separable subgroup $H'<\pi_1(M')$ constructed in Lemma \ref{normalize} is separable and we only need to prove that $H'$ is aspiral in $\pi_1(M')$. We can still assume the almost fibered surface $\Phi(H')$ and a Scott core $C_{H'}\subset M_{H'}$ are embedded into $M'$. For simplicity, we still denote this manifold by $M$ and denote this subgroup by $H$.

The following lemma is the main reduction in this subsection.

\begin{lemma}\label{normalizer}
Suppose that $H<\pi_1(M)$ is separable and satisfies Lemma \ref{embedding} and \ref{normalize}, then there is an intermediate finite cover $M'\to M$ of $M_H\to M$, such that the following holds.

For any piece $\Sigma \subset \Phi(H)$ and any boundary component $c\subset \partial \Sigma$, let $M_{\Sigma}'\subset M'$ be the vertex piece of $M'$ containing $\Sigma$, and $T$ be the boundary component of $M_{\Sigma}'$ containing $c$. Then for any base point $*\in c$, $\pi_1(T,*)$ lies in the normalizer of $\pi_1(\Sigma,*)$.
\end{lemma}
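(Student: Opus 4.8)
The plan is to exploit the separability of $H$ once more, applied to an enlargement of the Scott core $C_H$ that records the curves $c\subset\partial\Sigma$ together with short loops in the enclosing boundary tori. First I would fix, for each piece $\Sigma\subset\Phi(H)$ and each boundary component $c\subset\partial\Sigma$, the edge space $E_c\subset M_H$ through which $c$ runs: in $M_H$ the vertex piece $M^v_H$ containing $\Sigma$ has the property (noted in Section~\ref{almostfiber}) that $c$ lies on a cylinder boundary component of $M^v_H$, and that cylinder covers a decomposition torus of $M$. The subtlety is that in $M_H$ this boundary component is only a cylinder $S^1\times\mathbb{R}$, not a torus, so $\pi_1$ of it is just $\mathbb{Z}=\langle c\rangle$ and the normalizer condition is automatic there; the condition only becomes a genuine constraint after passing to a finite cover $M'$ in which the cylinder may thicken to (part of) a torus $T$. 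So the real content is: we must choose the finite cover $M'$ so that the extra $\mathbb{Z}$-direction of $T$ (transverse to $c$) does not move $\pi_1(\Sigma)$ off itself by conjugation.

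The key observation is that this is a \emph{finite} list of conditions. There are finitely many pairs $(\Sigma,c)$ since $\Phi(H)$ is compact. For each such pair, consider the vertex piece $M^v\subset M$ covered by $M^v_H$, the corresponding subgroup $L_v<\pi_1(M^v)$ with covering space $M^v_H$, and inside it the peripheral subgroup $P_c<L_v$ carried by the cylinder boundary component through $c$ — so $P_c\cong\mathbb{Z}=\langle c\rangle$ sits inside the boundary torus subgroup $\mathbb{Z}^2$ of $\pi_1(M^v)$. In $M_H$ itself nothing in $\pi_1(M^v_H)$ normalizes $\pi_1(\Sigma)$ beyond $\pi_1(\Sigma)$, but we do not need that; we need only that in the cover $M'$ we construct, the torus $T$ above $c$ has $\pi_1(T)$ normalizing $\pi_1(\Sigma)$. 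The plan for each piece is case-by-case according to the classification recalled in Section~\ref{subgroups}:
\begin{enumerate}
\item If $M^v$ is Seifert and $\Sigma^v$ is a (virtually or partially) fibered subsurface arising from the convex-core construction on the base orbifold, then $\pi_1(\Sigma)$ contains (a power of) the regular fiber $h$, hence contains the center of $\pi_1(M^v)$; the full boundary torus subgroup of any covering piece is generated by a boundary curve of $\Sigma$ together with a power of $h$, and a power of $h$ is central, so it automatically normalizes $\pi_1(\Sigma)$ once the boundary curve of $T$ is chosen to be a multiple of $c$.
\item If $M^v$ is hyperbolic and $\Sigma^v$ is a fibered subsurface (so $M^v_H\cong\Sigma^v\times\mathbb{R}$ or $\Sigma^v\widetilde\times\mathbb{R}$ and $\pi_1(\Sigma)<\pi_1(M^v)$ is virtually fibered), then along the cusp of $M^v$ corresponding to $T$, $\pi_1(\Sigma)$ contains a peripheral $\mathbb{Z}$; the commensurator of a geometrically infinite fibered subgroup of a hyperbolic $3$-manifold group is itself virtually fibered and, after an appropriate finite cover, the ambient cusp torus acts on $\Sigma$ as a cover of $\Sigma$ restricted to its boundary — i.e.\ $\pi_1(T)$ can be made to normalize $\pi_1(\Sigma)$ by choosing the cover so that $T$ covers the mapping torus of the identity on $\partial S^v$, using condition (2) of the triple $(S^v,\phi^v,\{\hat c_i\})$ in Section~\ref{arcs} that $\phi^v|_{\partial S^v}=\mathrm{id}$.
\end{enumerate}

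For each pair $(\Sigma,c)$ the above produces a finite-index subgroup of $\pi_1(M^v)$, equivalently a finite cover of the relevant piece, realizing the normalizer condition at that one boundary curve; by Lemma~\ref{semicover} (separability of finite semi-cover subgroups) and the separability of $H$ itself, I can assemble these finitely many local requirements into a single intermediate finite cover $M'\to M$ of $M_H\to M$: take the (finite) intersection of the relevant finite-index subgroups of $\pi_1(M)$, using that $H$ is separable to keep $H<\pi_1(M')$, and use Lemma~\ref{basicLERF}(2) to stay in the separable world. I expect the main obstacle to be the hyperbolic fibered case: making precise that the cusp torus $T$ above $c$ normalizes $\pi_1(\Sigma)$ requires understanding how the peripheral $\mathbb{Z}^2$ of a hyperbolic piece sits relative to a geometrically infinite (virtually fibered) subgroup and passing to a finite cover in which the "extra" direction of $T$ is realized by a boundary-covering of the fiber surface — this is where the condition $\phi^v|_{\partial S^v}=\mathrm{id}$ and the mapping-torus description $J^v=S^v\times I/\phi^v$ do the work, but the bookkeeping of which finite cover simultaneously achieves this at all cusps of all pieces, compatibly across decomposition tori, is the delicate point. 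The Seifert case should be routine once centrality of (a power of) the fiber is invoked.
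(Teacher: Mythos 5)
Your reduction to finitely many pairs $(\Sigma,c)$ and the observation that the condition is stable under passing to further intermediate covers both match the paper, but the core of your argument has a genuine gap, and it sits exactly where the lemma has its real content: the partially fibered Seifert case. First, the claim that $\pi_1(\Sigma)$ contains a power of the regular fiber $h$ is false: virtually and partially fibered subsurfaces are \emph{horizontal} (they come from sections over the base, transverse to the fibration), and a finitely generated subgroup containing a power of the fiber is precisely the ``$S^1$-bundle'' case of Section \ref{subgroups}, which contributes nothing to $\Phi(H)$. What is true, and what the paper uses, is only that $h$ is central in $\pi_1(M_{\Sigma},*)$ (the base orbifold is orientable by Lemma \ref{normalize}(1)), hence $h$ and $c$ both lie in the normalizer, so $N_T=N(\pi_1(\Sigma,*))\cap\pi_1(T,*)$ has finite index in $\pi_1(T,*)$. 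The remaining — and essential — step is to produce an \emph{intermediate} finite cover of $M_H\to M$ (so $H<\pi_1(M')$) in which the elevation of $T$ through $c$ has fundamental group inside $N_T$. Your assembly step (``intersect the relevant finite-index subgroups\ldots using that $H$ is separable to keep $H<\pi_1(M')$'') is not a valid use of separability: intersecting subgroups that do not contain $H$ will not contain $H$, and separability only lets you find a finite-index subgroup containing $H$ that avoids finitely many \emph{specified} elements of $\pi_1(M)\setminus H$ — elements you never exhibit. Exhibiting them is the actual proof: take coset representatives $g_1,\dots,g_k$ of $N_T$ in $\pi_1(T,*)$, choose $\gamma_i\in\pi_1(\Sigma,*)$ with $g_i\gamma_ig_i^{-1}\notin\pi_1(\Sigma,*)$, note $g_i\gamma_ig_i^{-1}\notin H$ because $H\cap\pi_1(M_{\Sigma},*)=\pi_1(\Sigma,*)$, use separability to get $M'$ with $H<\pi_1(M')$ avoiding these conjugates, and then the identity $g_i\gamma_ig_i^{-1}=(g_in)(n^{-1}\gamma_in)(g_in)^{-1}$ forces $\pi_1(T')\le N_T$. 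Without some argument of this kind your Seifert case does not close, and your prescription of the boundary behavior of the cover (``$\pi_1(T)$ generated by a multiple of $c$ and a power of $h$'') is not something you are free to impose while keeping $H$ inside the cover's group.

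On the fibered cases you have the difficulty inverted. Under the lemma's hypotheses Lemma \ref{normalize}(3) is already in force, so every virtually fibered piece $\Sigma\subset\Phi(H)$ is an honest fiber of the corresponding piece $M_{\Sigma}\subset M$; hence $\pi_1(\Sigma,*)$ is normal in $\pi_1(M_{\Sigma},*)$ and $\pi_1(T,*)$ normalizes it with no further cover and no distinction between hyperbolic and Seifert pieces. Your sketch for the hyperbolic case via commensurators of geometrically infinite subgroups and covers of the mapping torus is therefore unnecessary, and as written it is also not carried out (and again does not address how to keep $H$ inside the resulting cover). Likewise, invoking Lemma \ref{semicover} to assemble covers of pieces is the technique of the ``aspirality implies separability'' direction and plays no role in this lemma; here everything must be extracted from separability of $H$ applied to explicit elements outside $H$.
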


\begin{proof}
  Note that once the normalizer condition holds for one piece $\Sigma \subset \Phi(H)$ and one boundary component $c\subset \partial \Sigma$ in some $M'$, then it holds for all further intermediate finite covers of $M_H\to M'$. So it suffices to prove the lemma for one piece $\Sigma\subset \Phi(H)$ and one of its boundary component $c$.

  If $\Sigma$ is a virtually fibered subsurface, then Lemma \ref{normalize} (3) implies that $\Sigma$ is a fibered subsurface in the corresponding piece $M_{\Sigma}\subset M$. Then $\pi_1(\Sigma,*)$ is a normal subgroup of $\pi_1(M_{\Sigma},*)$, so $\pi_1(T,*)$ lies in the normalizer of $\pi_1(\Sigma,*)$.

  Now it suffices to consider the case that $\Sigma$ is a partially fibered subsurface in a Seifert piece $M_{\Sigma}\subset M$. Since the base orbifold of $M_{\Sigma}$ is orientable (Lemma \ref{normalize} (1)), the regular fiber $h$ of $M_{\Sigma}$ lies in the center of $\pi_1(M_{\Sigma},*)$, so it lies in the normalizer of $\pi_1(\Sigma,*)$. Since both $h$ and $c$ lie in the normalizer $N(\pi_1(\Sigma,*))$ of $\pi_1(\Sigma,*)$ and they are two nonparallel slopes on $T$, $N_T=N(\pi_1(\Sigma,*))\cap \pi_1(T,*)$ is a finite index subgroup of $\pi_1(T,*)$.

  We take the left coset decomposition $\pi_1(T,*)=N_T\cup (\cup_{i=1}^kg_iN_T)$. Then for each $i$, there exists $\gamma_i\in \pi_1(\Sigma,*)$ such that $g_i\gamma_ig_i^{-1}\notin \pi_1(\Sigma,*)$ (or $g_i^{-1}\gamma_ig_i\notin \pi_1(\Sigma,*)$). Since $g_i\gamma_ig_i^{-1}\in \pi_1(M_{\Sigma},*)$ and $\pi_1(M_{\Sigma},*)\cap H=\pi_1(\Sigma,*)$, we have $g_i\gamma_ig_i^{-1}\notin H$. By separability of $H$, there is an intermediate finite cover $M'\to M$ of $M_H\to M$ such that $g_i\gamma_ig_i^{-1}\notin \pi_1(M')$ for all $i=1,2,\cdots,k$.

  Then for the elevation $T'$ of $T$ in $M'$ that contains $c$, we must have $\pi_1(T')<N_T$, thus $\pi_1(T')$ lies in the normalizer of $\pi_1(\Sigma)$. Otherwise, there exist $g_in\in \pi_1(T')$ for some $i\in \{1,2,\cdots,k\}$ and $n\in N_T$. Since $n^{-1}\gamma_in\in \pi_1(\Sigma)<H$, we have $$g_i\gamma_ig_i^{-1}=(g_in)(n^{-1}\gamma_in)(g_in)^{-1}\in \pi_1(T')\pi_1(\Sigma)\pi_1(T')\subset \pi_1(M').$$ It contradicts with the assumption that $g_i\gamma_ig_i^{-1}\notin \pi_1(M')$, so the proof is done.
\end{proof}

\subsection{Properly embedding $\Phi(H)$ into a virtual submanifold}\label{reduction}

In this subsection, we always assume that $M$ and a separable subgroup $H<\pi_1(M)$ satisfy Lemma \ref{embedding}, \ref{normalize} and \ref{normalizer}. Note that any further intermediate finite cover $M'\to M$ of $M_H\to M$ still satisfies these lemmas.

The following proposition is the main result in this subsection. This result enables us to lift the embedded almost fibered surface $\Phi(H)\subset M$ to some finite cover $M'$, such that $\Phi(H)$ lies in a codim-$0$ submanifold $\hat{M}\subset M'$ as a properly embedded subsurface. Then the properness implies aspirality as in \cite{Liu}.

\begin{proposition}\label{cut}
  If $H<\pi_1(M)$ is separable, then there exists an intermediate finite cover $M'\to M$ of $M_H\to M$ and a (possibly disconnected) codimension-$0$ submanifold $\hat{M}\subset M'$, such that the following hold.
  \begin{enumerate}
    \item For each component $T$ of $\partial \hat{M}\setminus \partial M'$, $T$ is either a decomposition torus of $M'$ or a vertical torus in a Seifert piece of $M'$.
    \item $\Phi(H)$ is contained in $\hat{M}$ as a properly embedded subsurface.
  \end{enumerate}
\end{proposition}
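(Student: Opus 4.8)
The plan is to build $\hat M$ piece by piece, one vertex space of $M'$ at a time, using the normalization already achieved in Lemmas \ref{embedding}, \ref{normalize} and \ref{normalizer}, and then to pass to a further finite cover to make the local pieces of $\hat M$ fit together along the edge tori. For a piece $V'\subset M'$ and the collection $\{\Sigma^v\}$ of subsurfaces of $\Phi(H)$ lying in $V'$, I want to produce a codim-$0$ submanifold $\hat V\subset V'$ containing all these $\Sigma^v$ properly embedded, whose frontier is either $\partial$-parallel or (in the Seifert case) a union of vertical tori. In the hyperbolic case each $\Sigma^v$ is either a fibered surface (after Lemma \ref{normalize}(3)) or geometrically finite and embedded; taking a regular neighborhood of $\Sigma^v$ together with the $I$-bundle complementary pieces (as in the proof of Lemma \ref{normalize}, via Theorem 10.6 of \cite{Hem1}) and then, if necessary, thickening to absorb product regions, produces the desired $\hat V$. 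In the Seifert case I use Lemma \ref{normalize}(1) so that $V'$ is a genuine circle bundle over an orientable surface $\mathcal O_{V'}$; the image of each $\Sigma^v$ in $\mathcal O_{V'}$ is an immersed subsurface, and Lemma \ref{normalizer} guarantees that near each boundary curve of $\Sigma^v$ lying on $\partial V'$ the surface is ``untwisted'' along the fiber direction, so one can take the $S^1$-bundle over a regular neighborhood (in $\mathcal O_{V'}$) of the projection of $\Sigma^v$ to get a sub-Seifert piece of $V'$ with vertical torus frontier.

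The main subtlety — and the step I expect to be the real obstacle — is the \emph{matching along edge tori}. An edge space $C$ of $\Phi(H)$ lies on a decomposition torus $T$ of $M'$, and the two pieces $\hat V_1,\hat V_2$ built on the two sides of $T$ each cut out an elevation of $T$; for $\hat V_1\cup\hat V_2$ to glue into an embedded submanifold we need the two ``cut loci'' on $T$ to coincide (or to be disjoint or nested — but nested is ruled out once we also demand that $\Phi(H)$ is \emph{properly} embedded, not just embedded, in $\hat M$). On the Seifert side the cut locus on $T$ is a union of fibers together with the slope of $C$; on a hyperbolic fibered side it is the slope of $C$ together with the fiber slope of the fibration; on a geometrically finite hyperbolic side there is more freedom. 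So the obstruction to matching is exactly a discrepancy between the fiber slopes that $T$ inherits from its two sides, and this is precisely a multiplicative datum living in $\mathbb{Q}_+^\times$ — it is the reason the generalized spirality character was defined. Here I would invoke separability once more: the ratio of the two induced subgroups of $\pi_1(T)$ can be killed in a finite cover by the same coset-enumeration argument as in Lemma \ref{normalizer}, using that $H\cap\pi_1(V_i')$ is exactly the subgroup carried by the $\Sigma^v$'s, so that ``bad'' elements of $\pi_1(T)$ conjugating across lie outside $H$ and can be separated out. After finitely many such moves — one per edge of $G_{\Phi(H)}$, noting $G_{\Phi(H)}$ is finite — all the local cut loci agree, and we set $\hat M=\bigcup \hat V$.

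It then remains to check the two bulleted conclusions. Conclusion (1) is automatic from the construction: every component of $\partial\hat M\setminus\partial M'$ arises either as an elevation of a decomposition torus (the matched edge tori) or as a vertical torus in a Seifert piece (the frontier of a sub-Seifert piece), and thickening by product regions does not introduce other frontier components because a product region $F\times I$ abutting $T$ is simply absorbed, leaving $T$ itself as the frontier. Conclusion (2): inside each $\hat V$ the surface $\Sigma^v$ is properly embedded by construction — its boundary on $\partial V'$ lands on $\partial\hat M\cap\partial M'$, and its other boundary curves land on the vertical/edge tori — and the matching along the $C$'s ensures the pieces $\Sigma^v$ assemble to $\Phi(H)$ with $\partial\Phi(H)\subset\partial\hat M$, i.e. $\Phi(H)$ is properly embedded in $\hat M$. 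One small point to watch is the partially fibered pieces whose surface $\Sigma^v$ has boundary curves mapped to the \emph{interior} of $M_H^v$: by the definition of $J_H^c$ and $\Phi(H)$ these curves are not edge spaces of $\Phi(H)$, so when building $\hat V$ I simply do not cut along the corresponding fibers, which is consistent because those curves bound disks-with-holes pattern in $\mathcal O_{V'}$ that lie in the interior; the vertical torus frontier of $\hat V$ still works. This completes the plan.
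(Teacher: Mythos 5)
There is a genuine gap, and it sits exactly where the paper has to work hardest. In your Seifert step you take $\hat V$ to be the circle bundle over a regular neighborhood of $r(\Sigma^v)$ and assert that Lemma \ref{normalizer} makes $\Sigma^v$ properly embedded there. Lemma \ref{normalizer} only says that the peripheral groups $\pi_1(T)$ of the decomposition tori meeting $\partial\Sigma^v$ normalize $\pi_1(\Sigma^v)$; it says nothing about the boundary curves of a partially fibered piece that lie in the \emph{interior} of the Seifert piece, and it does not imply that $r|_{\Sigma^v}$ is a covering onto its image. Without that, a boundary geodesic of $\Sigma^v$ can project into the interior of $r(\Sigma^v)$, so it lies in the interior of $r^{-1}(N(r(\Sigma^v)))$ and $\Phi(H)$ fails to be proper in your $\hat M$. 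Ruling this out is precisely the paper's Lemma \ref{proper}, which needs a new argument: a complexity $C_\Sigma(M')=\max_x|r^{-1}(x)\cap\Sigma|$ minimized over intermediate finite covers, together with a separability argument showing that the elements $f_i$ produced by two sheets of $\Sigma$ over the same base point do not normalize $\pi_1(\Sigma)$ and can therefore be excluded from a finite cover, forcing the minimizer to be proper. A parallel omission: you never exclude the configuration in which a boundary circle and an interior decomposition circle of $\Phi(H)$ lie on the same decomposition torus of $M'$; in that case the torus is interior to $\hat M$ while carrying a component of $\partial\Phi(H)$, again killing properness. This is the paper's Lemma \ref{parallel}(1), proved by yet another minimization-plus-separability argument (and Lemma \ref{disjoint} plays a similar role for non-parallel partially fibered pieces in one Seifert piece).

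By contrast, the issue you single out as the main subtlety --- matching ``cut loci'' on the edge tori --- does not actually arise in a correct construction: the vertex pieces of $\hat M$ (a whole piece of $M'$ on the fibered side, $r^{-1}(r(\Sigma))$ on the partially fibered side) each contain the full decomposition torus, so the edge tori of $\hat M$ are entire decomposition tori of $M'$ and the vertical-torus frontier occurs only in the interior of Seifert pieces, where nothing needs to be matched. Moreover your proposed fix would not work even if matching were needed: the discrepancy between the fiber slopes induced on a decomposition torus from its two sides is an invariant of $M$ itself, not of the choice of finite cover, so it cannot be ``killed'' by a coset-separation argument as in Lemma \ref{normalizer}; indeed that multiplicative discrepancy is exactly the spirality datum, whose triviality is the \emph{conclusion} this proposition is designed to deliver, and so cannot be used as an ingredient in building $\hat M$. (Minor point: hyperbolic pieces of $\Phi(H)$ are always fibered after Lemma \ref{normalize}; geometrically finite pieces contribute nothing to $\Phi(H)$, so the ``geometrically finite and embedded'' alternative in your hyperbolic case does not occur.)
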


To prove Proposition \ref{cut}, we need a few steps to find the intermediate finite cover $M'\to M$ of $M_H\to M$, such that $\Phi(H)$ lives in $M'$ in better and better position.

We first define a set $$\mathcal{I}=\{M'\ |\ M'\ \text{is\ an\ intermediate\ finite\ cover\ of\ } M_H\to M\}.$$ Then for any $M'\in \mathcal{I}$, $\Phi(H)\subset M_H$ projects into $M'$ by an embedding. In the following lemmas, the key point is to take the minimizer of some complexity in $\mathcal{I}$.

We first prove that there exists some $M'\in \mathcal{I}$, such that for any two decomposition circles or boundary components of $\Phi(H)$ that are mapped to the same decomposition torus or boundary torus of $M'$, they should have "same" adjacent pieces. To make the statement of following lemma simpler, we count boundary components of $M'$ as its decomposition tori, and count boundary components of $\Phi(H)$ as its decomposition circles.

\begin{lemma}\label{parallel}
   There exists an intermediate finite cover $M'\to M$ of $M_H\to M$, such that the following hold.
\begin{enumerate}
\item For any two decomposition circles $c,c'\subset \Phi(H)$, if they are mapped to the same decomposition torus $T'\subset \partial M'$, then either both $c$ and $c'$ lie in $\partial (\Phi(H))$ or both of them lie in $\text{int}(\Phi(H))$.
\item  Moreover, for $c$ and $c'$ as above, let $\Sigma,\Sigma'$ be two vertex spaces of $\Phi(H)$ adjacent to $c,c'$ respectively, such that they lie in the same vertex piece $V'\subset M'\setminus T'$ (as in Figure 1). Then $\Sigma$ and $\Sigma'$ are parallel with each other in $V'$. More precisely, after choosing a path $\gamma$ in $T'$ from $*\in c$ to $*'\in c'$, we have $\pi_1(\Sigma,*)=\gamma\pi_1(\Sigma',*')\bar{\gamma}<\pi_1(V',*)$.
\end{enumerate}
\end{lemma}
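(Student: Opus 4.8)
The plan is a minimizer argument over the poset $\mathcal{I}=\{M'\mid M'\text{ an intermediate finite cover of }M_H\to M\}$. First I would record two facts that stay fixed along $\mathcal I$: each decomposition circle of $\Phi(H)$ lies in its own cylinder edge space of $M_H$ (a vertex space $M_H^v$ contributes at most one $\Sigma^v$, and a cylinder boundary of $M_H^v$ meets $\partial\Sigma^v$ in at most one circle, so a pasting cylinder contributes exactly one interior circle and any other relevant cylinder contributes exactly one boundary circle); and, by Remark \ref{noannulus}, whether a decomposition circle of $\Phi(H)$ is a boundary circle or an interior circle, and the isotopy class of each vertex space $\Sigma^v$, are intrinsic to $\Phi(H)$ and do not depend on $M'$. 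For $M'\in\mathcal I$ I then define a complexity $\kappa(M')\in\mathbb Z_{\ge 0}$ equal to the number of unordered pairs of distinct decomposition circles $c,c'$ of $\Phi(H)$ that are mapped into a common decomposition torus or boundary torus of $M'$ and are \emph{incompatible}, in the sense that either exactly one of $c,c'$ lies in $\partial\Phi(H)$, or the vertex spaces of $\Phi(H)$ adjacent to $c$ and $c'$ lie in a common piece of $M'$ but are not parallel in it. Because a covering map pulls disjoint tori (resp.\ disjoint pieces) back to disjoint subsets, refining a cover inside $\mathcal I$ can only split apart circles (resp.\ pieces) that already lay on a common torus (resp.\ in a common piece), and never merges anything; combined with the intrinsic nature of the other data, $\kappa$ is non-increasing along refinements in $\mathcal I$, so a minimizer $M'\in\mathcal I$ exists.

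Next I would argue that a minimizer $M'$ satisfies conclusions (1) and (2). Suppose not, so there is an incompatible pair $c,c'$ of decomposition circles mapped to a common torus $T'$ of $M'$. If the incompatibility is of the first kind --- say $c'$ is an interior (pasting) circle, so the cylinder of $M_H$ carrying it has a vertex space $\Sigma^w$ of $\Phi(H)$, $w\in J_H^c$, on its other side, while the cylinder carrying $c$ does not --- then I would use the separability of $H$ in $\pi_1(M')$ (Lemma \ref{basicLERF}(2)) together with the separability of the peripheral $\mathbb Z^2$ subgroup $\pi_1(T')$ in $\pi_1(M')$ to find an intermediate finite cover $M''\to M'$ of $M_H\to M'$ in which the elevations of $T'$ containing $c$ and $c'$ are distinct, i.e.\ $c$ and $c'$ are separated; concretely one exhibits finitely many elements of $\pi_1(M')\smallsetminus H$ whose exclusion forces the desired separation, in the style of the proof of Lemma \ref{normalizer}. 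This gives $M''\in\mathcal I$ with $\kappa(M'')<\kappa(M')$, a contradiction. If the incompatibility is of the second kind --- the vertex spaces $\Sigma,\Sigma'$ adjacent to $c,c'$ lie in a common piece $V'$ of $M'$ but are not parallel in $V'$ --- then the goal instead is to separate the two pieces $M_H^v,M_H^{v'}$ of $M_H$ carrying $\Sigma,\Sigma'$, i.e.\ to find $M''\in\mathcal I$ in which they project to distinct pieces; again this is driven by the separability of $H$ together with the separability of the vertex subgroup $\pi_1(V)$ and of $\pi_1(T)$ inside $\pi_1(M)$, and again it strictly decreases $\kappa$.

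The step I expect to be the main obstacle is showing that the second kind of incompatibility can always be removed, equivalently: if $M_H^v$ and $M_H^{v'}$ cannot be separated into distinct pieces by any intermediate finite cover and their boundary circles $c,c'$ lie on a common torus $T'$, then $\Sigma$ and $\Sigma'$ are automatically parallel in the piece $V'$ containing them. This is exactly where the earlier normalizations are used: Lemma \ref{normalize} guarantees that $\Sigma$ and $\Sigma'$ are honest fibered or partially fibered subsurfaces of a piece with orientable base (in the Seifert case) or orientable fiber (in the fibered case), and Lemma \ref{normalizer} guarantees that $\pi_1(T')$ normalizes $\pi_1(\Sigma)$ and $\pi_1(\Sigma')$. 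Under these hypotheses the subgroup $\pi_1(\Sigma)<\pi_1(V')$ is pinned down by the slope of $c$ on $T'$ --- as the fiber subgroup, i.e.\ the kernel of the relevant fibration $V'\to S^1$, when $V'$ is hyperbolic; and as the preimage of a $\pi_1(T')$-normalized subsurface subgroup of the base orbifold when $V'$ is Seifert --- so after the change of basepoint along a path $\gamma\subset T'$ one gets $\pi_1(\Sigma,*)=\gamma\,\pi_1(\Sigma',*')\,\bar\gamma$, which is the asserted parallelism. Carrying out this case analysis over the possible types of pieces, and verifying in each case that the separations invoked in the previous paragraph are genuinely available, is the technical heart of the proof.
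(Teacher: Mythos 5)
Your overall architecture matches the paper's: both arguments take a minimizer of an integer-valued complexity over the poset $\mathcal I$ of intermediate finite covers and show that any violation of the conclusion can be used, via separability of $H$, to strictly decrease the complexity. (The paper's complexity is $C(M')=|E(\Phi(H))|-|E_{\Phi(H)}(M')|$, the failure of injectivity of the map from decomposition circles to decomposition tori; your $\kappa$ counts incompatible pairs, which works just as well since resolving a pair means putting $c$ and $c'$ on different tori.) Your treatment of the first kind of incompatibility is the right strategy but skips the one nontrivial point: the element of $\pi_1(M')\setminus H$ to be excluded is not generic. The paper observes that the piece $V_H$ of $M_H$ on the far side of the boundary circle $c'$ must be a geometrically finite hyperbolic piece (a Seifert piece there would force a fiber into $\pi_1(V_H)$), hence cannot contain the fibered subgroup $\pi_1(\Xi)$ sitting on the far side of $c$; this produces $g\in\pi_1(\Xi)\setminus\gamma\pi_1(V_H)\bar\gamma$, and $aga^{-1}\notin H$ is then verified by a lifting argument. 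The normalizer property of Lemma \ref{normalizer} is what converts ``$aga^{-1}\notin\pi_1(M'')$'' into ``$c$ and $c'$ lie on different tori of $M''$.''

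The genuine gap is in your third paragraph, i.e.\ in part (2). You propose that when the two pieces cannot be separated, $\Sigma$ and $\Sigma'$ are \emph{automatically} parallel because $\pi_1(\Sigma)<\pi_1(V')$ is ``pinned down by the slope of $c$ on $T'$.'' This is false. A hyperbolic piece $V'$ can fiber in several ways (several fibered faces of the Thurston norm ball) whose fiber subgroups are distinct normal subgroups of $\pi_1(V')$ yet restrict to the same slope on a given boundary torus; likewise two partially fibered subsurfaces of a Seifert piece can share a boundary slope on $T'$ (and each be normalized by $\pi_1(T')$) while covering entirely different subsurfaces of the base orbifold. So no rigidity statement pins down $\pi_1(\Sigma)$ from $c$ alone, and the ``main obstacle'' you isolate cannot be resolved the way you suggest. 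The paper instead runs exactly the same separation argument as in part (1): if $\pi_1(\Sigma,*)\ne\gamma\pi_1(\Sigma',*')\bar\gamma$, pick $g$ in the symmetric difference, show $aga^{-1}\notin H$ (Claim 1), exclude it from some $\pi_1(M'')$ by separability, and deduce via the normalizer lemma (Claim 2) that $c$ and $c'$ are forced onto different decomposition tori of $M''$, contradicting minimality of the complexity. In other words, non-parallel pieces are never shown to be separable into distinct pieces, nor shown to be automatically parallel; their boundary circles are separated onto distinct tori, which makes conclusion (2) vacuous for that pair. You need to replace your third paragraph with this argument.
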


\begin{proof}

Let $E(\Phi(H))$ be the set of all decomposition circles of $\Phi(H)$. For each $M'\in \mathcal{I}$, let $E_{\Phi(H)}(M')$ be the set of all decomposition tori of $M'$ that contain some $c\in E(\Phi(H))$. Then we define the complexity of $M'$ by $$C(M')=|E(\Phi(H))|-|E_{\Phi(H)}(M')|.$$
Here $|\cdot|$ denotes the number of elements in a (finite) set. $C(M')$ measures how far is $E(\Phi(H))\to E_{\Phi(H)}(M')$ from being injective. Since the complexity $C(M')$ is a natural number, the minimizer of this complexity exits. When we take a finite cover $M''\to M'$ with $M',M''\in \mathcal{I}$, the complexity does not increase. We will prove that a minimizer $M'\in \mathcal{I}$ satisfies desired properties.

We take a base point $*\in c$ and denote the covering space corresponding to $H<\pi_1(M')$ by $(M_H,*)\to (M',*)$.

\bigskip

We first prove (1) by proof by contradiction. Without loss of generality, we suppose that $c$ lies in $\text{int}(\Phi(H))$ and $c'$ lies in $\partial \Phi(H)$. Then there is a unique piece $\Sigma'\subset\Phi(H)$ containing $c'$, and we denote the piece of $M'$ that contains $\Sigma'$ by $V'$. Since $c\subset \text{int}(\Phi(H))$, there is another piece $V\subset M'$ that is adjacent to $V'$ along $T'$. For the two pieces of $\Phi(H)$ adjacent to $c$, we denote the one lying in $V'$ by $\Sigma$, and denote the one lying in $V$ by $\Xi$. All these constructions are summarized by Figure 1.

\begin{center}
\includegraphics[width=2in]{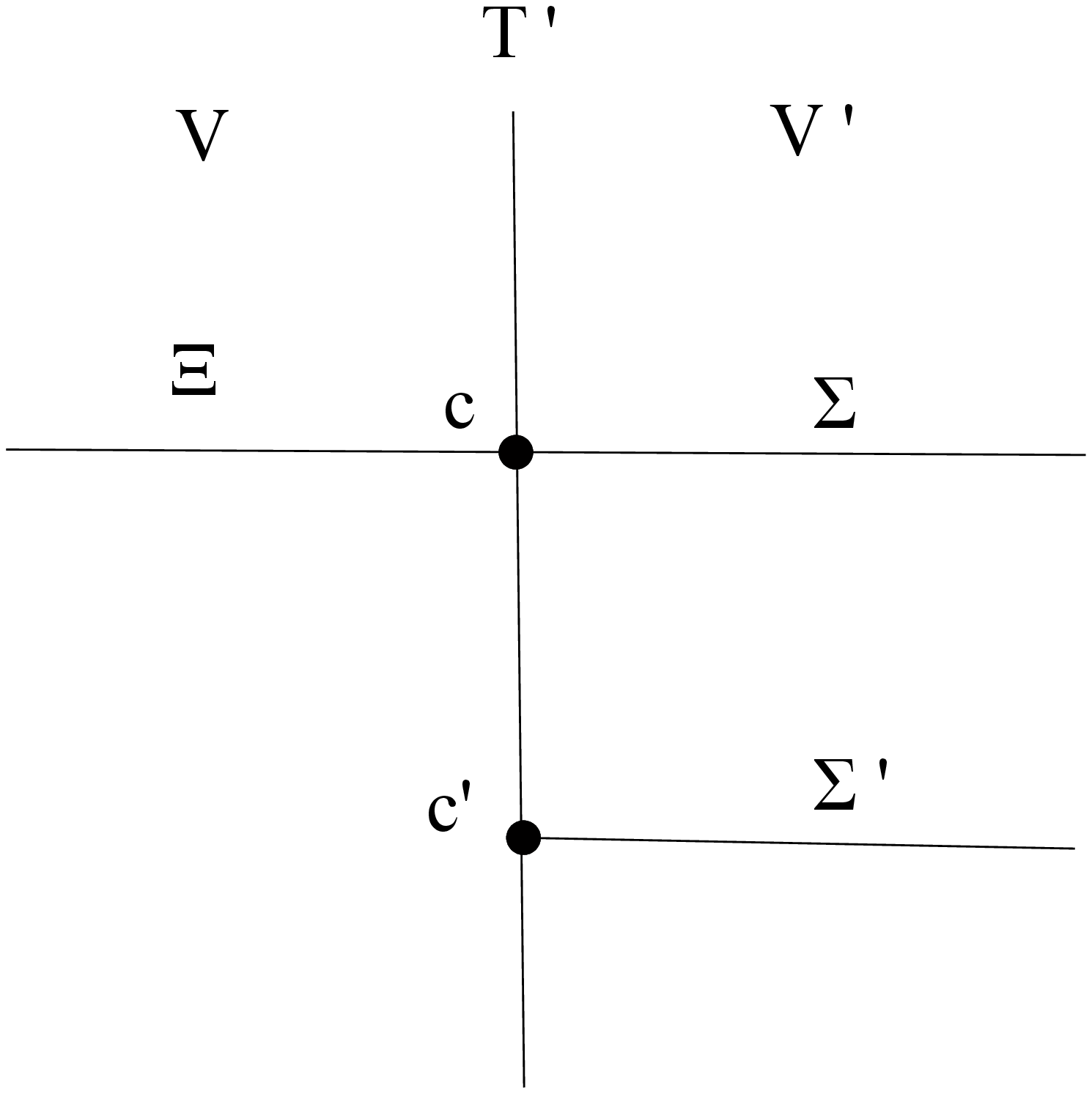}
 \centerline{Figure 1}
\end{center}

We take an auxiliary point $*'\in c'$, a path $\alpha$ in the Scott core $C_H$ from $*$ to $*'$, and a path $\gamma$ in $T'$ from $*$ to $*'$. Then $\alpha\bar{\gamma}$ gives an element $a\in \pi_1(M',*)$.

 We denote the vertex space of $M_H$ that contains $c'$ but does not contain $\Sigma'$ by $V_H$. Then $(V_H,*')$ is a covering space of $(V,*')$ that does not intersect with $\text{int}(\Phi(H))$. $V$ can not be a Seifert manifold, since $\pi_1(V_H)$ is nontrivial and it does not contain any power of the regular fiber of $V$. So $V$ must be hyperbolic and $V_H$ corresponds to a geometrically finite subgroup of $\pi_1(V)$. For $\pi_1(\Xi)<\pi_1(V)$, it is a fibered subgroup of $\pi_1(V)$, so any subgroup of $\pi_1(V)$ containing $\pi_1(\Xi)$ as a proper subgroup is a finite index subgroup of $\pi_1(V)$. So there exists an element $g\in \pi_1(\Xi)-\pi_1(V_H)$. More precisely, there exists $g\in \pi_1(V,*)$ such that $g\in \pi_1(\Xi,*)$ but $g\notin \gamma \pi_1(V_H,*')\bar{\gamma}$.

  \bigskip

  \textbf{Claim 1:} We must have $aga^{-1}\notin H$.

  Otherwise, $aga^{-1}=\alpha\bar{\gamma}g\gamma\bar{\alpha}$ lifts to a loop in $M_H$ base at $*$, and it implies that $\bar{\gamma} g\gamma$ lifts to a loop in $M_H$ based at $*'$. Since both $g$ and $\gamma$ are paths in $V$, $\bar{\gamma} g\gamma$ lifts to a loop in $V_H\subset M_H$ based at $*'$. So we have $\bar{\gamma}g\gamma\in \pi_1(V_H,*')$, which contradicts with $g\notin \gamma \pi_1(V_H,*')\bar{\gamma}$. This implies Claim 1 holds.

  \bigskip

 By the separability of $H<\pi_1(M')$, there is an intermediate finite cover $(M'',*)\to (M',*)$ of $(M_H,*)\to (M',*)$ such that $aga^{-1}\notin \pi_1(M'')$.
  \bigskip

  \textbf{Claim 2:} For $M''$ as above, $*\in c$ and $*'\in c'$ are not mapped to the same decomposition torus of $M''$.

  Otherwise, suppose both $*$ and $*'$ are mapped to the same decomposition torus $T''\subset M''$, then there is a path $\delta$ in $T''$ from $*$ to $*'$. Recall that $\alpha$ is a path in the Scott core $C_H$ from $*$ to $*'$. Then $\alpha\bar{\delta}$ gives an element $b\in \pi_1(M'',*)$ and $\delta\bar{\gamma}$ gives an element $c\in \pi_1(T',*)$. So we have $$aga^{-1}=(\alpha\bar{\gamma})g(\gamma\bar{\alpha})=(\alpha\bar{\delta})\big((\delta\bar{\gamma})g(\delta\bar{\gamma})^{-1}\big)(\alpha\bar{\delta})^{-1}=b(cgc^{-1})b^{-1}.$$
  By Lemma \ref{normalizer}, $\pi_1(T')$ lies in the normalizer of $\pi_1(\Xi)$. Since $g\in \pi_1(\Xi)$, $cgc^{-1}\in \pi_1(\Xi)<H<\pi_1(M'')$ holds. Since $b\in \pi_1(M'')$, we have $aga^{-1}=b(cgc^{-1})b^{-1}\in \pi_1(M'')$, which contradicts with our assumption that $aga^{-1}\notin \pi_1(M'')$. So Claim 2 holds.

\bigskip

  So $*$ and $*'$ are mapped to different decomposition tori of $M''$, thus $c$ and $c'$ are mapped to different decomposition tori in $M''$. This implies that $C(M'')<C(M')$, which contradicts with the assumption that $M'$ is a minimizer of $C(\cdot)$. So $c$ and $c'$ must simultaneously lie in $\partial \Phi(H)$ or lie in $\text{int}(\Phi(H))$.

  \bigskip

  For the moreover part, the proof is almost the same. If $\pi_1(\Sigma,*)\ne \gamma \pi_1(\Sigma',*')\bar{\gamma}$, there exists $g$ such that either $g\in \pi_1(\Sigma,*)\setminus \gamma \pi_1(\Sigma',*')\bar{\gamma}$ or $g\in \gamma \pi_1(\Sigma',*')\bar{\gamma}\setminus \pi_1(\Sigma,*)$. Then a similar proof as above (invoking Claim 1 and Claim 2) gives us a contradiction.

 Since Lemma \ref{normalizer} implies that $\pi_1(T',*)$ lies in the normalizer of $\pi_1(\Sigma,*)$, the choice of path $\gamma$ from $*$ to $*'$ does not matter. Then $\pi_1(\Sigma,*)=\gamma \pi_1(\Sigma',*')\bar{\gamma}$ implies that the covering spaces of $V'$ corresponding to $\pi_1(\Sigma)$ and $\pi_1(\Sigma')$ are same with each other.
\end{proof}

Note that the condition of this lemma still holds when passing to a further finite cover $M''\to M$ of $M'\to M$ with $M''\in \mathcal{I}$.

\begin{remark}\label{sameboundary}
     By the construction of $\Phi(H)$, each piece of $\Phi(H)$ is either a fibered surface, or the convex core with respect to a fixed hyperbolic structure on the base orbifold, or an annulus neighborhood of a boundary component. So $\pi_1(\Sigma,*)=\gamma \pi_1(\Sigma',*')\bar{\gamma}$ implies that $\Sigma$ and $\Sigma'$ intersect with the same set of boundary components of $V'$, and their projections to the base orbifold of $V'$ have the same image when $V'$ is a Seifert manifold.
\end{remark}

\begin{remark}\label{alwayssametori}
  By the proof of Lemma \ref{parallel}, in the remaining part of this section, we will always assume that $M$ is a minimizer of $C(\cdot)$ in $\mathcal{I}$. So for any intermediate finite cover $M'\to M$ of $M_H\to M$, if $c$ and $c'$ are two decomposition circles in $\Phi(H)$ that are mapped to the same decomposition torus in $M$, then they are mapped to the same decomposition torus in $M'$.
\end{remark}

The following lemma is the crucial one to guarantee the properness of $\Phi(H)\subset \hat{M}$ in Proposition \ref{cut}.

\begin{lemma}\label{proper}
 Suppose that $H< \pi_1(M)$ and $\Phi(H)\to M$ satisfy Lemma \ref{parallel}. Then there exists an intermediate finite cover $M'\to M$ of $M_H\to M$, such that the following hold.

 For any partially fibered subsurface $\Sigma\subset \Phi(H)$ that is contained in a Seifert piece $V\subset M'$, let $r:V\to \mathcal{O}_V$ be the projection to its base orbifold. Then $\Sigma$ is contained in $\hat{V}=r^{-1}(r(\Sigma))\subset V$ as an properly embedded subsurface (i.e. $\Sigma\cap \partial \hat{V}=\partial \Sigma$).
\end{lemma}

\begin{proof}
The conclusion of this lemma is equivalent with that $r|_{\Sigma}:\Sigma\to \mathcal{O}_V$ is a covering map to the image of $\Sigma$. If the conclusion holds for one partially fibered subsurface $\Sigma\subset \Phi(H)$ in some $M'\in \mathcal{I}$, then it still holds for $\Sigma$ and any finite intermediate cover $M''\to M'$ of $M_H\to M'$. So we only need to work on one single partially fibered piece $\Sigma\subset \Phi(H)$.

Now we define a complexity on $\mathcal{I}$ with respect to $\Sigma$. Let $V$ be the Seifert piece of $M'$ containing $\Sigma$, $\mathcal{O}_V$ be the base orbifold of $V$, and $r|_{\Sigma}:\Sigma\to \mathcal{O}_V$ be the restriction of the projection map from $V$ to $\mathcal{O}_V$. Then we define a (discontinuous) function $\rho:\mathcal{O}_V\to \mathbb{N}$ by $\rho(x)=|r^{-1}(x)\cap\Sigma|$, and define the complexity of $M'$ by $$C_{\Sigma}(M')=\max{\rho}.$$

Recall that (Case II.2 of Section \ref{subgroups}) $\Sigma$ is the convex core of a covering space $\mathcal{O}_{V_L}\to \mathcal{O}_V$ with finitely generated fundamental group (or a closed $\epsilon$-neighborhood of a boundary component). Since $\Sigma$ is transverse with the Seifert fibration, the compactness of $\Sigma$ implies that $C_{\Sigma}(M')=\max{\rho}$ is a finite number. Moveover, if $M''\to M'$ is an intermediate finite cover of $M_H\to M'$, then $C_{\Sigma}(M'')\leq C_{\Sigma}(M')$ holds. Since $C_{\Sigma}(\cdot)$ is a natural number, the minimizer of $C_{\Sigma}(\cdot)$ exists.

Now we take a minimizer $M'$ of the complexity $C_{\Sigma}(\cdot)$ in $\mathcal{I}$. Then we will prove that for the minimizer $M'$, $\Sigma$ is a properly embedded subsurface of $\hat{V}=r^{-1}(r(\Sigma))\subset V$. We only need to prove it in the case that $\Sigma$ is obtained by the convex core construction, since this lemma clearly holds for the closed $\epsilon$-neighborhood case, if $\epsilon>0$ is small enough.

\bigskip

Since $r$ maps each boundary component of $\Sigma$ to a closed geodesic in $\mathcal{O}_V$, for two boundary components of $\Sigma$, either they are mapped to the same closed geodesic in $\mathcal{O}_V$, or their images are transverse with each other. Let $X$ be the set of all self-intersection points of $r(\partial \Sigma)$, then all of $\mathcal{O}_V \setminus r(\partial \Sigma)$, $r(\partial \Sigma) \setminus X$ and $X$ have finitely many components. For the function $\rho:\mathcal{O}_V\to \mathbb{N}$, it is constant on each of these components.

By the connectedness of $\Sigma$, if $\Sigma$ is an properly embedded subsurface of $\hat{V}=r^{-1}(r(\Sigma))$, then $\text{im}(\rho)=\{0,n\}$, where $n$ is the intersection number between $\Sigma$ and a regular fiber of $\hat{V}$. Actually, it is a sufficient and necessary condition. If $\Sigma$ is not proper in $\hat{V}$, then $r$ maps some $x\in \partial \Sigma$ into $\text{int}(r(\Sigma))$. Then there is a small open disc $U\subset r(\Sigma)$ containing $r(x)$, such that $(r|_{\Sigma})^{-1}(U)$ is a disjoint union of open subsets $\cup V_{\alpha}$ in $\Sigma$. Moreover, for each $\alpha$, $r|_{V_{\alpha}}$ is an embedding from $V_{\alpha}$ to $U$ whose image either equals $U$ or is a half disc centered at $r(x)$. Since the component $V_{\alpha_0}$ that contains $x$ is a half disc, $\rho$ is positive but not a constant function near $x$.

If $\Sigma$ is not proper in $\hat{V}$, then $\text{im}(\rho)$ consists of more than two integers, and let $L$ be the maximum of $\rho$. We take all components of $\mathcal{O}_V \setminus r(\partial \Sigma)$, $r(\partial \Sigma) \setminus X$ and $X$ such that the value of $\rho$ on them equals $L$, and denote the collection of their closures by $\{A'_1,\cdots,A_l'\}$. Then we throw away all those $A'_i$ such that $A_i'\subsetneqq A'_j$ for some $j$, and obtain a sub-collection $\{A_1,\cdots,A_k\}$. For any $A_i$, there exist $x_i\in A_i$ and $y_i,y_i'\in \Sigma$ such that $r(y_i)=r(y_i')=x_i$ and one of the following hold.
\begin{enumerate}
  \item Either $y_i\in \partial \Sigma$ and $y_i'\in \text{int}(\Sigma)$;
  \item or $y_i, y_i'\in \partial \Sigma$, and there are half discs $D_i,D_i'\subset \Sigma$ of equal radius, such that $D_i$ and $D_i'$ contain $y_i$ and $y_i'$ respectively, but $r(D_i)\ne r(D_i')$.
\end{enumerate}
Figure 2 shows these two possibilities. The left half corresponds to case (1) and the right half corresponds to case (2). The dotted points in the picture correspond to $x_i\in \mathcal{O}_V$. For the function $\rho$, its value on the lighter shaded region is smaller than its value on the darker shaded region.

  \begin{center}
\includegraphics[width=3in]{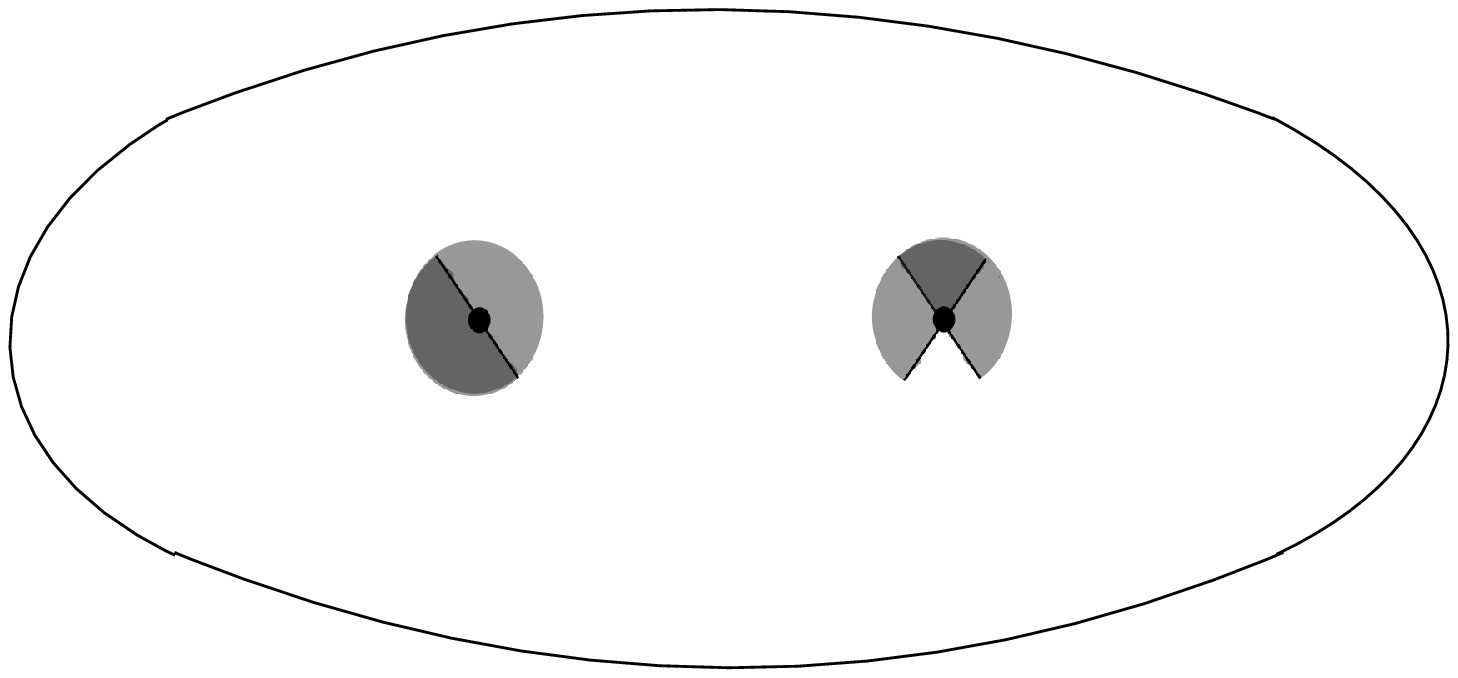}
 \centerline{Figure 2}
\end{center}

We take a base point $*\in \partial\Sigma\cap \partial V$. For each $i$, we take an oriented path $\alpha_i$ in $\Sigma$ from $*$ to $y_i$, an oriented path $\beta_i$ that lies in a regular fiber of $V$ and goes from $y_i$ to $y_i'$, and an oriented path $\gamma_i$ in $\Sigma$ from $y_i'$ to $*$. Let $f_i\in \pi_1(V,*)$ be the element given by closed path $\alpha_i\beta_i\gamma_i$. Then we first prove the following claim.

\bigskip

\textbf{Claim.} For each $i$, $f_i$ does not lie in the normalizer of $\pi_1(\Sigma,*)$ in $\pi_1(V,*)$.

Recall that $\Sigma$ is the convex core of a covering space $\mathcal{O}_{V_H}\to \mathcal{O}_V$. Let $N$ be the normalizer of $\pi_1(\Sigma,*)$ in $\pi_1(\mathcal{O}_V,*)$, then the deck transformation group of $\mathcal{O}_{V_H}\to \mathcal{O}_V$ is isomorphic to $N/\pi_1(\Sigma,*)$, which also acts on $\Sigma$ as a covering map action. Since $\Sigma$ is compact, $N/\pi_1(\Sigma,*)$ must be a finite group, so $N$ is finitely generated. Let $\Sigma_N$ be the convex core of the covering space $\mathcal{O}_N\to \mathcal{O}_V$ corresponding to $N<\pi_1(\mathcal{O}_N)$ (after truncating cusps), then $\Sigma_N$ is compact. Recall that Lemma \ref{normalize} (1) implies that $V$ is an $S^1$-bundle over surface $\mathcal{O}_V$. Let $V_N$ be the pull back bundle of $V\to \mathcal{O}_V$ via covering map $\mathcal{O}_N\to \mathcal{O}_V$, and we take a base point $*\in V_N$ that is mapped to $*\in \Sigma\subset V$.

The spaces we constructed above are summarized in the following diagram.

\begin{diagram}
& &V_H & \rTo & V_N & \rTo & V\\
& \ruInto&\dTo & &\dTo_{r_N} & &\dTo_{r}\\
 \Sigma &\subset &\mathcal{O}_{V_H} & \rTo & \mathcal{O}_N & \rTo & \mathcal{O}_V\\
& \rdTo(4,2)&  &  & \cup &  & \\
& & & & \Sigma_N &  & \\
\end{diagram}

Suppose that $f_i$ lies in the normalizer of $\pi_1(\Sigma,*)$ in $\pi_1(V,*)$, then $r_*(f_i)$ lies in the normalizer of $\pi_1(\Sigma,*)$ in $\pi_1(\mathcal{O}_V,*)$. Since $\pi_1(\Sigma)<\pi_1(\mathcal{O}_N)$ and $V_N$ is the pull-back bundle of $\mathcal{O}_N\to \mathcal{O}_V$, $\Sigma$ lifts to an embedded surface in $V_N$. Since $r_*(f_i)\in N$, $\alpha_i\beta_i\gamma_i$ lifts to a loop in $V_N$ based at $*$. We use $r_N:V_N\to \mathcal{O}_N$ to denote the projection from $V_N$ to its base orbifold. Since $\beta_i$ is a path in the regular fiber, while $y_i$ and $y_i'$ are the initial and terminal points of $\beta_i$ respectively, after lifting everything to $V_N$, we have $r_N(y_i)=r_N(y_i')\in \Sigma_N\subset \mathcal{O}_N$.

By considering $\Sigma$ as a subsurface of $V_N$, $r_N|_{\Sigma}:\Sigma\to \Sigma_N\subset \mathcal{O}_N$ is the finite cover induced by $\mathcal{O}_{V_H}\to \mathcal{O}_N$. By also considering $\Sigma$ as a subsurface of $V$, we have the following commutative diagram.

\begin{diagram}
\Sigma &\rTo^{r_N|_{\Sigma}} & \Sigma_N\\
&\rdTo_{r|_{\Sigma}} &\dTo\\
& &\mathcal{O}_V
\end{diagram}

By our assumption, there are open discs or half discs $D_i,D_i'\subset \Sigma$ of the same radius and centered at $y_i,y_i'$ respectively, such that $r(D_i)\ne r(D_i')$. However, since $r_N|_{\Sigma}:\Sigma\to \Sigma_N$ is a covering map and $r_N(y_i)=r_N(y_i')$ in $\Sigma_N$, $r_N$ must map $D_i$ and $D_i'$ to the same half disc in $\Sigma_N$, i.e. $r_N(D_i)=r_N(D_i')$. It contradicts with $r(D_i)\ne r(D_i')$, so $f_i$ does not lie in the normalizer of $\pi_1(\Sigma,*)$ in $\pi_1(V,*)$. The proof of this claim is done.

\bigskip

As subgroups of $\pi_1(V,*)$, we have $f_i\pi_1(\Sigma,*)f_i^{-1}\ne \pi_1(\Sigma,*)$, so there exists $g_i\in \pi_1(\Sigma,*)$ such that $f_ig_if_i^{-1}\notin \pi_1(\Sigma,*)$ (or replace $f_i$ by $f_i^{-1}$ if necessary). Then $H\cap \pi_1(V,*)=\pi_1(\Sigma,*)$ implies that $f_ig_if_i^{-1}\notin H$, and separability of $H$ guarantees the existence of an $M''\in \mathcal{I}$ such that $f_ig_if_i^{-1}\notin \pi_1(M'')$ for all $i$.

Let $V''$ be the Seifert piece of $M''$ that contains $\Sigma$, then $y_i$ and $y_i'$ do not lie in the same regular fiber of $V''$. Otherwise, we must have $$\alpha_ih^{n_i}\beta_i\gamma_i=(\alpha_ih^{n_i}\alpha_i^{-1})f_i\in \pi_1(V'',*),$$
here $h$ is the regular fiber of $V\subset M'$ based at $y_i$ and $n_i\in \mathbb{Z}$. Since $\alpha_ih\alpha_i^{-1}$ lies in the center of $\pi_1(V,*)$, $g_i\in \pi_1(\Sigma,*)\subset \pi_1(V'',*)$ and $(\alpha_ih^{n_i}\alpha_i^{-1})f_i\in \pi_1(V'',*)$ imply that $$f_ig_if_i^{-1}=(\alpha_ih^{n_i}\alpha_i^{-1})f_ig_if_i^{-1}(\alpha_ih^{n_i}\alpha_i^{-1})^{-1}=(\alpha_ih^{n_i}\alpha_i^{-1}f_i)g_i(\alpha_ih^{n_i}\alpha_i^{-1}f_i)^{-1}\in \pi_1(V'')<\pi_1(M''),$$ which contradicts with the choice of $M''$. So for $M''$, $y_i$ and $y_i'$ do not lie in the same regular fiber of $V''$.

Let $r'':V''\to \mathcal{O}''$ be the projection to the base orbifold of $V''$, then we have the following commutative diagram.

\begin{diagram}
\Sigma &\rInto & V'' & \rTo & V\\
&\rdTo &\dTo_{r''} & &\dTo_{r}\\
&      & \mathcal{O}'' & \rTo^{s} &\mathcal{O}_V\\
\end{diagram}

Suppose that $C_{\Sigma}(M'')=L$, then there exists $x''\in \mathcal{O}''$ such that $|(r'')^{-1}(x'')\cap \Sigma|=L$. Then we must have $|r^{-1}(s(x''))\cap \Sigma|=L$. By our choice of $\{A_1,\cdots,A_k\}$, there exists $i$ such that $s(x'')\in A_i$. So there is a path $\gamma$ in $A_i$ from $s(x'')$ to $x_i$ such that $\gamma$ maps $(0,1)\subset I$ to the interior of $A_i$. Then we lift $\gamma$ to a path $\gamma''$ in $\mathcal{O}''$ from $x''$ to some $x_i''\in \mathcal{O}''$ with $s(x_i'')=x_i$. Since $\gamma''$ maps $(0,1)$ to a component of $\mathcal{O}''\setminus r''(\partial \Sigma)$, $r''(\partial \Sigma)\setminus X''$ or $X''$ ($X''$ is the set of all self-intersections of $r''(\partial \Sigma)$), we have
$$|(r'')^{-1}(x_i'')\cap \Sigma |=|(r'')^{-1}(x'')\cap \Sigma|=L.$$ However, we must have $|(r'')^{-1}(x_i'')\cap \Sigma| < |r^{-1}(x_i)\cap \Sigma|=L$, since one of $y_i,y_i'\in r^{-1}(x_i)\cap \Sigma$ is not in $(r'')^{-1}(x_i'')\cap \Sigma$.

We have proved that  $C_{\Sigma}(M'')<C_{\Sigma}(M')$, but it contradicts with that $M'$ is a minimizer of $C_{\Sigma}(\cdot)$. So for the minimizer $M'$ and $V\subset M'$, the image of $\rho:\mathcal{O}_V\to \mathbb{N}$ consists of only two integers, thus $\Sigma$ is a properly embedded subsurface of $\hat{V}=r^{-1}(r(\Sigma))\subset V$.

\end{proof}

The following lemma implies the existence of an $M'\in \mathcal{I}$, such that for any two pieces $\Sigma_1,\Sigma_2\subset \Phi(H)$ lying in the same Seifert piece $V\subset M'$, either $\Sigma_1$ and $\Sigma_2$ are parallel in $V$ (as in Lemma \ref{parallel}), or they have disjoint projection in the base orbifold of $V$. Actually, this lemma is not absolutely necessary for the proof of "separability implies aspirality" part of Theorem \ref{main}. Without this lemma, the $\hat{M}$ in Proposition \ref{cut} will be a manifold with a $\pi_1$-injective immersion into $M'$, instead of simply a submanifold of $M'$.

\begin{lemma}\label{disjoint}
  Suppose that $H< \pi_1(M)$ and $\Phi(H)\to M$ satisfy Lemma \ref{parallel} and \ref{proper}. Then there exists an intermediate finite cover $M'\to M$ of $M_H\to M$, such that for any two distinct pieces $\Sigma_1,\Sigma_2$ of $\Phi(H)$, one of the following hold.
  \begin{enumerate}
    \item $\Sigma_1$ and $\Sigma_2$ lie in two distinct pieces of $M'$.
    \item $\Sigma_1$ and $\Sigma_2$ lie in the same piece $V\subset M'$ and they are parallel to each other (as in Lemma \ref{parallel}).
    \item $\Sigma_1$ and $\Sigma_2$ are partially fibered subsurfaces lying in the same Seifert piece $V\subset M'$, and $r(\Sigma_1)$ is disjoint from $r(\Sigma_2)$ in $\mathcal{O}_V$. Here $r:V\to \mathcal{O}_V$ is the projection from $V$ to its base orbifold.
  \end{enumerate}
\end{lemma}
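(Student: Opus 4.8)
The plan is to argue exactly as in the proof of Lemma \ref{proper}: equip the set $\mathcal{I}$ of intermediate finite covers of $M_H\to M$ with a natural-number-valued complexity, check that it does not increase under passing to further covers, take a minimizer, and use the separability of $H$ to reach a contradiction if the conclusion fails.

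Call a pair of distinct pieces $\{\Sigma_1,\Sigma_2\}$ of $\Phi(H)$ \emph{bad} in $M'\in\mathcal{I}$ if $\Sigma_1$ and $\Sigma_2$ lie in the same piece $V\subset M'$, are not parallel in $V$ (in the sense of Lemma \ref{parallel}), and, when $V$ is a Seifert piece, the images $r(\Sigma_1),r(\Sigma_2)$ in the base orbifold $\mathcal{O}_V$ are not disjoint. I would first record the monotonicity: if $\{\Sigma_1,\Sigma_2\}$ is not bad in $M'$, then it is not bad in any $M''\in\mathcal{I}$ over $M'$. Indeed, if $\Sigma_1,\Sigma_2$ sit in distinct pieces of $M'$ they sit in distinct pieces of $M''$; if some conjugate of $\pi_1(\Sigma_2)$ equals $\pi_1(\Sigma_1)$ inside $\pi_1(V)$ the same holds inside $\pi_1(V'')\le \pi_1(V)$; and disjointness of the base projections is inherited by the pull-back base orbifold of $\mathcal{O}_{V''}\to \mathcal{O}_V$. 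Hence $C(M')=|\{\text{bad pairs in }M'\}|$ is a non-increasing natural number along $\mathcal{I}$, a minimizer exists, and it suffices to show that whenever a bad pair is present one can pass to a cover in which that pair is no longer bad (so, by monotonicity, $C$ strictly drops, contradicting minimality). When the common piece $V$ is hyperbolic, Lemma \ref{normalize}(3) forces both $\Sigma_i$ to be fibered subsurfaces of $V$, hence fibers of one fibration, hence parallel, so no bad pair lives in a hyperbolic piece and we may assume $V$ is Seifert.

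So let $M'$ be a minimizer carrying a bad pair $\{\Sigma_1,\Sigma_2\}$ in a common Seifert piece $V$, and fix a base point $*$. I would split according to whether $\pi_1(\Sigma_1,*)$ and $\pi_1(\Sigma_2,*)$ are conjugate in $\pi_1(V,*)$. If they are conjugate, this is the case that should collapse to (2): taking a conjugator together with the regular fiber $h$ — which is central, since $\mathcal{O}_V$ is orientable by Lemma \ref{normalize}(1), and hence normalizes $\pi_1(\Sigma_1,*)$ — and running the argument of Lemma \ref{parallel} (invoking the normalizer description of Lemma \ref{normalizer}), one produces elements of $\pi_1(V,*)\setminus H$ whose removal in a finite cover $M''\in\mathcal{I}$ makes $\Sigma_1$ and $\Sigma_2$ parallel. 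If they are not conjugate, then since $r(\Sigma_1)\cap r(\Sigma_2)\ne\emptyset$ some regular fiber of $V$ meets both surfaces; choosing $y_1\in\Sigma_1$ and $y_2\in\Sigma_2$ on such a fiber and forming the loop made of a path in $\Sigma_1$ to $y_1$, the fiber segment from $y_1$ to $y_2$, and a path in $\Sigma_2$ back, one obtains — by the same normalizer Claim as in the proof of Lemma \ref{proper}, via the pull-back bundle $V_N$ over the cover of $\mathcal{O}_V$ corresponding to the normalizer of $\pi_1(\Sigma_1)$ — an element $f\in\pi_1(V,*)$ not normalizing $\pi_1(\Sigma_1,*)$, hence an element $fgf^{-1}\notin H$ with $g\in\pi_1(\Sigma_1,*)$. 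Killing all such $fgf^{-1}$ in a finite cover $M''$ ensures that no regular fiber of the Seifert piece of $M''$ containing $\Sigma_1$ meets both $\Sigma_1$ and $\Sigma_2$, i.e.\ $r''(\Sigma_1)\cap r''(\Sigma_2)=\emptyset$, so the pair is no longer bad in $M''$. Once the minimizer has $C(M')=0$, we get precisely the trichotomy (1)--(3) of the statement.

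The hard part will be the non-conjugate case: one must set up the loop through the common fiber carefully and verify that the resulting $f$ genuinely fails to normalize $\pi_1(\Sigma_1,*)$ (the exact analogue of the Claim in the proof of Lemma \ref{proper}), which is where the pull-back-bundle diagram and Lemma \ref{normalize}(1) do the real work; one must also keep the bookkeeping that removing elements for one bad pair cannot turn a good pair bad, but that is exactly the monotonicity of $C(\cdot)$ set up at the outset.
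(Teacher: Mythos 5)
The overall scaffolding of your proposal (reduce to a single pair of pieces, note that the good configurations persist under further covers, dispose of the hyperbolic and fibered cases via Lemma \ref{normalize}(3) and Lemma \ref{parallel}(2)) matches the paper, and the complexity-minimizer dressing is harmless since there are only finitely many pairs. The gap is in the core step, for two non-parallel partially fibered pieces $\Sigma_1,\Sigma_2$ in a common Seifert piece $V$: you must produce, from the assumption that some regular fiber meets both surfaces, an explicit element of $\pi_1(M)\setminus H$. You propose to get this by showing that the loop $f$ (path in $\Sigma_1$ to $y_1$, fiber segment to $y_2\in\Sigma_2$, path in $\Sigma_2$ back, connector to the base point) fails to normalize $\pi_1(\Sigma_1,*)$, ``by the same normalizer Claim as in the proof of Lemma \ref{proper}.'' That Claim does not transfer: its proof lifts everything to the pull-back bundle $V_N$ over the cover corresponding to the normalizer of $\pi_1(\Sigma_1)$ and derives a contradiction from the fact that $r_N|_{\Sigma}:\Sigma\to\Sigma_N$ is a covering map forcing the two local discs at $y_i$ and $y_i'$ to have the same image --- and this uses crucially that both points lie on the \emph{same} surface $\Sigma$. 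With $y_2\in\Sigma_2$ there is no covering map from $\Sigma_2$ to $\Sigma_N$ and no local disc mismatch to exploit, so there is no a priori reason such an $f$ fails to normalize $\pi_1(\Sigma_1,*)$, and without that your candidate $fgf^{-1}$ need not leave $H$.

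What the paper actually uses, and what is absent from your proposal, is the structural consequence of Lemma \ref{parallel}(2): since $\Sigma_1$ and $\Sigma_2$ are not parallel, each boundary torus of $V$ meets at most one of them. This furnishes a boundary circle $c\subset\partial\Sigma_1\cap\partial V$ lying in a boundary torus that $\Sigma_2$ misses, so no closed path in $\Sigma_2$ is freely homotopic to $c$ in $V$; the elements killed are the conjugates $f_{i,t,s}\,c\,f_{i,t,s}^{-1}$, and the reason they are not in $H$ is the lifting argument of Claim 1 of Lemma \ref{parallel} (the conjugated loop would have to lift into the vertex space of $M_H$ containing $\Sigma_2$), not a normalizer computation. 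Your conjugate/non-conjugate dichotomy is also off target: under the standing hypothesis that Lemma \ref{parallel} already holds, pieces sharing a boundary torus of $V$ are automatically parallel, and ``parallel as in Lemma \ref{parallel}'' means conjugate by a path in a common boundary torus --- not something one can manufacture by a further cover for pieces whose boundary circles lie in disjoint sets of tori. Finally, to conclude that the projections become disjoint you must kill finitely many elements accounting for every component of $r(\Sigma_1)\cap r(\Sigma_2)$ and every fiber preimage; the finiteness comes from Lemma \ref{proper} (both projections are covering maps onto convex cores with geodesic boundary), whereas your write-up treats only a single fiber per bad pair.
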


\begin{proof}
  At first, for a pair of pieces $\Sigma_1, \Sigma_2\subset \Phi(H)$, if they satisfy one of these three conditions for some $M'$, then they still satisfy one of them in any intermediate finite cover $M''\to M'$ of $M_H\to M'$ (condition (2) is preserved by Remark \ref{alwayssametori}). So we only need to work on one pair of pieces $\Sigma_1,\Sigma_2\subset \Phi(H)$ and assume they lie in the same piece $V\subset M$.

  At first, there is nothing to prove if $V$ is hyperbolic. In the hyperbolic case, $V\cap \Phi(H)$ consists of a disjoint union of fibered subsurfaces (Lemma \ref{normalize} (3)), so they must be parallel to each other. Similarly, if $V$ is a Seifert manifold and one component of $V\cap \Phi(H)$ is a fibered subsurface, it intersects with all boundary components of $V$. Then Lemma \ref{parallel} (2) implies that all components of $V\cap \Phi(H)$ are parallel fibered subsurfaces.

  The remaining case is that $V$ is a Seifert manifold and all components of $V\cap \Phi(H)$ are partially fibered subsurfaces. By Lemma \ref{parallel} (2), if $\Sigma_1$ is not parallel to $\Sigma_2$, each boundary component of $V$ intersects with at most one of $\Sigma_1$ and $\Sigma_2$. Take $*\in \Sigma_1\cap \partial V$ that lies in a boundary component $c\subset \partial \Sigma_1$, $*'\in \Sigma_2\cap \partial V$ and an oriented path $\alpha$ in the Scott core $C_H$ from $*$ to $*'$.

  Let $T_1=r(\Sigma_1)$ and $T_2=r(\Sigma_2)$ be the projections of $\Sigma_1$ and $\Sigma_2$ in $\mathcal{O}_V$ respectively, then Lemma \ref{proper} implies that $r|_{\Sigma_1}:\Sigma_1\to T_1$ and $r|_{\Sigma_2}:\Sigma_2\to T_2$ are both covering maps. So boundary components of $T_1$ and $T_2$ are closed geodesics in $\mathcal{O}_V$, and $T_1\cap T_2$ has only finitely many connected components.

  Let $A_1,\cdots,A_k$ be all components of $T_1\cap T_2$. For each $i$, take a point $x_i\in A_i$. Let $r^{-1}(x_i)\cap \Sigma_1=\{y_{i,1},\cdots,y_{i,n}\}$ and let $r^{-1}(x_i)\cap \Sigma_2=\{z_{i,1},\cdots,z_{i,m}\}$. For each pair $s,t$ with $s\in \{1,\cdots,n\}$ and $t\in \{1,\cdots,m\}$, we take an oriented path $\beta_{i,s}$ in $\Sigma_1$ from $*$ to $y_{i,s}$, an oriented path $\gamma_{i,t}$ in $\Sigma_2$ from $*'$ to $z_{i,t}$ and an oriented path $\delta_{i,t,s}$ from $z_{i,t}$ to $y_{i,s}$ that lies in the regular fiber that projects to $x_i$. Then $f_{i,t,s}=\alpha\gamma_{i,t}\delta_{i,t,s}\beta_{i,s}^{-1}$ is an element in $\pi_1(M',*)$ (as in Figure 3).

  \begin{center}
\includegraphics[width=2in]{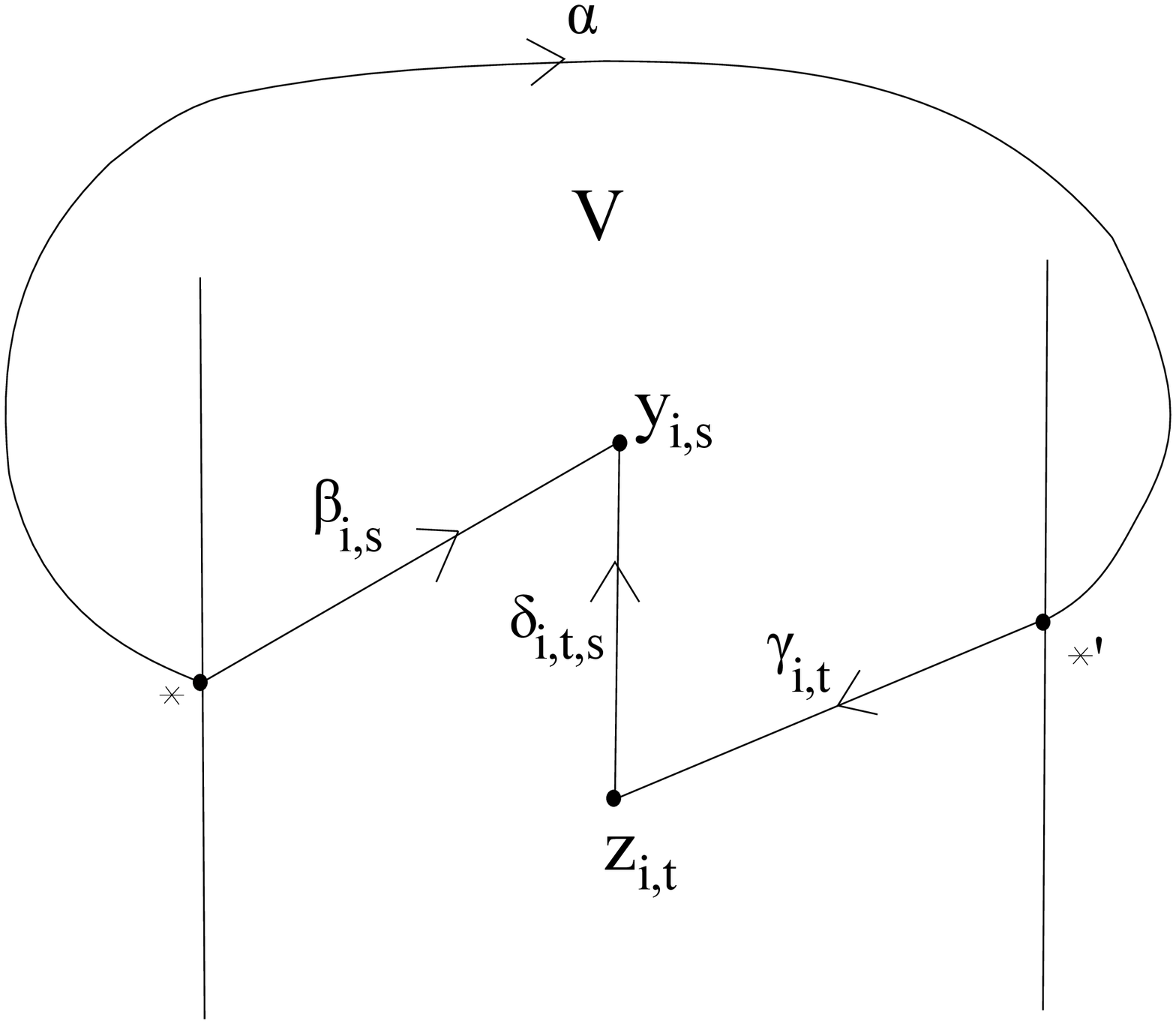}
 \centerline{Figure 3}
\end{center}

  Since $\Sigma_2$ does not intersect with the boundary component of $V$ containing $c$, any closed path in $\Sigma_2$ is not homotopy to $c$ in $V$. Then an argument as in Claim 1 of Lemma \ref{parallel} implies that $f_{i,t,s}cf_{i,t,s}^{-1}\notin H$. By separability of $H$, there exists an intermediate finite cover $M'\to M$ of $M_H\to M$ such that $f_{i,t,s}cf_{i,t,s}^{-1}\notin \pi_1(M')$ for all $i\in \{1,\cdots,k\}$, $s\in \{1,\cdots,n\}$ and $t\in \{1,\cdots,m\}$. If $\Sigma_1$ and $\Sigma_2$ do not lie in the same piece of $M'$, then the proof is done. Otherwise, suppose that both $\Sigma_1$ and $\Sigma_2$ lie in a Seifert piece $V'\subset M'$. Then for any triple $(i,t,s)$ as above, $y_{i,s}$ and $z_{i,t}$ do not lie in the same regular fiber of $V'$, by the argument in Lemma \ref{proper}.

So for $M'$, the projections of $\Sigma_1$ and $\Sigma_2$ are disjoint in $\mathcal{O}_{V'}$, and the proof is done.

\end{proof}

Now we are ready to prove Proposition \ref{cut}.

\begin{proof}

We take a finite cover $M'\in \mathcal{I}$ of $M$ that satisfies Lemma \ref{parallel}, \ref{proper} and \ref{disjoint}. Then $\hat{M}$ will be constructed by pasting submanifolds of pieces of $M'$, instructed by a quotient graph of $G_{\Phi(H)}$ (the dual graph of $\Phi(H)$).

On $G_{\Phi(H)}$, we define two vertices $v,v'$ to be equivalent if the corresponding pieces $\Sigma^v,\Sigma^{v'}\subset \Phi(H)$ lie in the same piece $V\subset M'$, and $V$ has a boundary component $T$ that intersects with both $\Sigma^v$ and $\Sigma^{v'}$. We define two edges $e,e'\subset G_{\Phi(H)}$ to be equivalent if their corresponding decomposition circles $c^e,c^{e'}\subset \Phi(H)$ lie in the same decomposition torus $T\subset M'$.

If two vertices $v,v'$ of $G_{\Phi(H)}$ are in the same equivalence class, then Lemma \ref{parallel} (2) implies that they have the same valence. Moreover, each edge $e$ adjacent to $v$ is equivalent to an edge $e'$ adjacent to $v'$ and vice versa. If two edges $e,e'$ of $G_{\Phi(H)}$ are in the same equivalence class, then the two vertices adjacent to $e$ are equivalent to the two vertices adjacent to $e'$.

The quotient of $G_{\Phi(H)}$ under this equivalence relation gives rise to a graph $\hat{G}$, and the manifold $\hat{M}$ we will construct has dual graph isomorphic to $\hat{G}$.

\bigskip

For each vertex $[v]\in \hat{G}$, take a representative vertex $v\in G_{\Phi(H)}$ of $[v]$, and let $\Sigma^v\subset \Phi(H)$ be the piece of $\Phi(H)$ corresponding to $v$. If $\Sigma^v$ is a fibered subsurface, then we take $M^{[v]}$ to be the piece of $M'$ containing $\Sigma^v$, and it is clear that $M^{[v]}$ is independent of the choice of $v\in [v]$.

If $\Sigma^v$ is a partially fibered subsurface, let $M^v$ be the Seifert piece of $M'$ that contains $\Sigma^v$, $\mathcal{O}_{M^v}$ be the base orbifold of $M^v$ and $r:M^v\to \mathcal{O}_{M^v}$ be the projection. Then we take $M^{[v]}=r^{-1}(r(\Sigma^v))$, and Lemma \ref{proper} implies that $\Sigma^v\subset M^{[v]}$ is a properly embedded subsurface. For another vertex $v'\in [v]$, Lemma \ref{parallel} (2) implies that $\Sigma^v$ is parallel to $\Sigma^{v'}$ in $M^v$. Since both $\Sigma^v$ and $\Sigma^{v'}$ are obtained by the convex core construction (or they are both the closed $\epsilon$-neighborhood of the same boundary component), we must have $r(\Sigma^v)=r(\Sigma^{v'})$. So $M^{[v]}$ is a well-defined submanifold of $M^v$, and $\partial M^{[v]}\setminus \partial M^v$ is a union of vertical tori in $M^v$. If we have two partially fibered pieces $\Sigma^v,\Sigma^w\subset \Phi(H)$ lying in the same Seifert piece $V\subset M'$ such that no boundary component of $V$ intersects with both $\Sigma^v$ and $\Sigma^w$, then Lemma \ref{disjoint} (3) implies that the submanifolds $M^{[v]}$ and $M^{[w]}$ are disjoint from each other.

For each edge $[e]\in \hat{G}$, it is represented by some edge $e\in G_{\Phi(H)}$. Let $c^e\subset \Phi(H)$ be the decomposition circle corresponding to $e$, and let $T^{[e]}\subset M'$ be the decomposition torus containing $c^e$, then $T^{[e]}$ is clearly well defined.

\bigskip

We define $\hat{M}$ to be the graph of space with dual graph $\hat{G}$, such that its vertex and edge spaces are the $M^{[v]}$ and $T^{[e]}$ constructed as above. It is naturally a codim-$0$ submanifold of $M'$, and condition (1) holds by the construction of vertex spaces $M^{[v]}$.

By our construction of $\hat{M}$, $\Phi(H)$ is a subsurface of $\hat{M}$. So it suffices to check that $\partial (\Phi(H))\subset \partial \hat{M}$, which implies that $\Phi(H)\subset \hat{M}$ is proper. Let $c$ be a boundary component of $\Phi(H)$, let $\Sigma$ be the piece of $\Phi(H)$ that contains $c$, and let $V$ be the vertex space of $M'$ that contains $\Sigma$. If $c$ lies in a decomposition torus $T\subset M'$, then Lemma \ref{parallel} (1) implies that $T$ is a boundary component of $\hat{M}$. If $c$ lies in the interior of a Seifert piece $V\subset M'$, then $\Sigma$ is a partially fibered subsurface. Then the construction of $M^{[v]}$ and Lemma \ref{proper} imply that $c$ lies in a boundary component of $\hat{M}$. So $\Phi(H)\subset \hat{M}$ is a properly embedded subsurface, and the proof is done.

\end{proof}

\subsection{Proof of separability implies aspirality}

Now we are ready to prove the "only if" part of Theorem \ref{main}.

\begin{proof}
We first apply Lemma \ref{embedding} and \ref{normalize} to $M$ and $H$, to get a finite cover $M'\to M$ and $H'=H\cap \pi_1(M')$ satisfying these two lemmas.

Let $M'_{H'}\to M'$ be the covering space of $M'$ corresponding to $H'<\pi_1(M')$. Then Proposition \ref{cut} implies that there exists an intermediate finite cover $M''\to M'$ of $M'_{H'}\to M'$ and a codim-$0$ submanifiold $\hat{M}\subset M''$, such that $\partial \hat{M}$ is a union of incompressible tori in $M''$, and $\Phi(H')\subset \hat{M}$ is a properly embedded subsurface.

Since we only need to prove the generalized spirality character is trivial on each component of $\Phi(H')$, we can assume that both $\Phi(H')$ and $\hat{M}$ are connected. Since $\Phi(H')\subset \hat{M}$ is a properly embedded subsurface and each piece of $\Phi(H')$ is a fibered subsurface of the corresponding piece of $\hat{M}$, $\hat{M}$ has a $\Phi(H')$-bundle over $S^1$ structure. Then we take a finite cyclic cover $\bar{M}$ of $\hat{M}$ along $\Phi(H')$, such that the monodromy of the fibering structure on $\bar{M}$ fixes all pieces of $\Phi(H')$, and the pseudo-Anosov and periodic canonical forms on pieces of $\Phi(H')$ fix the boundary pointwisely.

It follows from Definition \ref{spiralitycharacterdef} that the generalized spirality character of $\Phi(H')$ in $\bar{M}$ is trivial. This is because that each piece $\Sigma\subset \Phi(H')$ is a fibered subsurface in the corresponding piece $\bar{V}\subset \bar{M}$, and $\bar{V}$ satisfies the requested conditions in Case II of Section \ref{arcs}. So in the formula of generalized spirality character in Section \ref{spiralitycharacterdef}, all $s_{\delta}$ for $H'<\pi_1(\bar{M})$ are equal to $1$, thus the generalized spirality character of $\Phi(H')$ in $\bar{M}$ is trivial.

Lemma \ref{manifoldcoverspirality} and the definition of generalized spirality character together imply the generalized spirality character of $\Phi(H')$ in $M'$ is trivial. Then Lemma \ref{manifoldcoverspirality} and \ref{subgroupspirality} imply that the generalized spirality character of $\Phi(H)$ in $M$ is trivial.
\end{proof}

\section{Constructing finite covers of hyperbolic and Seifert pieces}\label{casebycase}

In the following two sections, we will prove the "aspirality implies separability" part of Theorem \ref{main}.

In this section, we do some preparation for this proof. For various types of finitely generated infinite index subgroup $L$ of an orientable finite volume hyperbolic or Seifert $3$-manifold group $\pi_1(N)$, we construct an intermediate finite cover $\hat{N}\to N$ of $N_L\to N$ whose restriction to $\partial \hat{N}$ has some pre-selected pattern. In the next section, we will paste these finite covers of pieces of $M$ together, to get a finite semi-cover of $M$.

The possibilities of a finitely generated subgroup of a finite volume hyperbolic or Seifert $3$-manifold group was studied in Section \ref{subgroups}, and they are summarized in Table 1.

\begin{table}\label{covers}
\begin{center}
\begin{tabular}{ |c |c |c |}
\hline
  f.g. subgroup & cover of hyperbolic $N$  & cover of Seifert $N$ \\
  \hline
 finite index & finite cover & finite cover \\
 \hline
 no contribution  &\multirow{2}{*} {geometrically finite}  & $S^1$-bundle over \\
 to $\Phi(H)$&   &  a non-compact $2$-orbifold\\
 \hline
may & & $\mathbb{R}$-bundle over a compact surface\\
contribute & geometrically infinite & i.e. virtually fibered\\
 \cline{3-3}
 to $\Phi(H)$& i.e. virtually fibered & $\mathbb{R}$-bundle over a non-compact surface \\
 & & i.e. partially fibered\\
 \hline
\end{tabular}
\caption{Covering spaces of finite volume hyperbolic and Seifert pieces.}
\end{center}
\end{table}

For each finite volume hyperbolic or Seifert $N$ as above, let $L$ be a finitely generated subgroup of $\pi_1(N)$ and let $N_L$ be the covering space of $N$ corresponding to $L<\pi_1(N)$. Then each boundary component of $N_L$ is a covering space of a torus, so it is a torus, a cylinder or a plane. Table 2 summarizes all possible types of boundary components of $N_L$, and whether $N_L$ has finitely many or infinitely many boundary components of a given type.

\begin{table}\label{boundaries}
\begin{center}
\begin{tabular}{ | c |c |c |c | c|}
\hline
  type of $N$ & subgroup & torus & cylinder  & plane \\
  \hline
 hyp. or Seifert & finite index & finite & no & no \\
 \hline
 hyperbolic & geometrically finite &finite& finite & possibly infinite\\
 \hline
  hyperbolic & geometrically infinite  &  no & finite& no\\
  \hline
  Seifert & contains a power of fiber & finite & possibly infinite & no \\
  \hline
  Seifert & virtually fibered &  no & finite & no\\
  \hline
  Seifert & partially fibered  & no & finite & possibly infinite\\
  \hline
\end{tabular}
\caption{Boundary components of covering spaces of finite volume hyperbolic and Seifert pieces.}
\end{center}
\end{table}

\bigskip

Now we set up some notations that will be used throughout this section. We assume $L<\pi_1(N)$ is a finitely generated infinite index subgroup, and use $\pi:N_L\to N$ to denote the covering space of $N$ corresponding to $L<\pi_1(N)$. The boundary components of $N_L$ consists of tori $T_1,\cdots,T_l$, cylinders $C_1,\cdots,C_m, \cdots$, and planes $P_1,\cdots,P_n,\cdots$. Note that $N_L$ may not have all three types of boundary components (see Table 2).

For each cylinder $C_j\subset \partial N_L$, it contains a unique essential simple closed curve $c_j$ (up to isotopy), which is called the {\it core} of $C_j$. Then $\pi(c_j)=x_js_j$ for some $x_j\in \mathbb{Z}_+$ and some essential simple closed curve (slope) $s_j$ in the torus $\pi(C_j)\subset \partial N$. Then we take another slope $t_j\subset \pi(C_j)$ not parallel to $s_j$. For each plane $P_k\subset \partial N_L$, we take two nonparallel slopes $u_k,v_k$ in the torus $\pi(P_k)\subset \partial N$.

\subsection{Finite volume hyperbolic case}\label{hyperbolic}

We first work on finitely generated infinite index subgroups of finite volume hyperbolic $3$-manifold groups.

If $L<\pi_1(N)$ is a geometrically infinite subgroup, then all boundary components of $N_L$ are cylinders. It turns out to be the simplest case.

\begin{proposition}\label{hypfiber}
Let $N$ be a finite volume hyperbolic $3$-manifold, $L<\pi_1(N)$ be a geometrically infinite subgroup that corresponds to an orientable fibered surface, and $K\subset N_L$ be any compact subset. Then there exist $A_1,\cdots,A_l\in \mathbb{Z}_+$, such that for any $\alpha\in \mathbb{Z}_+$, there exists an intermediate finite cover $q:\hat{N}\to N$ of $\pi:N_L\to N$, such that the following conditions hold for commutative diagram:\begin{diagram}
N_L &\rTo^p & \hat{N}\\
&\rdTo_{\pi} &\dTo_{q}\\
& &N.
\end{diagram}
\begin{enumerate}
  \item $p|_{K}:K\to \hat{N}$ is an embedding.
  \item $p$ maps any two distinct cylinders in $\partial N_L$ to distinct boundary components of $\hat{N}$.
  \item For each cylinder $C_j\subset \partial N_L$, $p$ maps its core $c_j$ to a simple closed curve in $p(C_j)$, and $q|_{p(C_j)}:p(C_j)\to \pi(C_j)$ is the finite cover corresponding to subgroup $\mathbb{Z}[c_j]\oplus \mathbb{Z}[\alpha A_jt_j]<\pi_1(\pi(C_j))$.
\end{enumerate}
(Here $\mathbb{Z}[c_j]\oplus \mathbb{Z}[\alpha A_jt_j]<\pi_1(\pi(C_j))$ denotes the subgroup of $\pi_1(\pi(C_j))$ that is the direct sum of two infinite cyclic subgroups generated by $c_j$ and $\alpha A_jt_j$ respectively.)
\end{proposition}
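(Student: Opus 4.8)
I would realize $N_L\to N$ as the infinite cyclic cover of a surface bundle and then take $\hat N$ to be a finite cyclic cover of that bundle; the prescribed behaviour on each cylinder $C_j$ is then read off from how the cyclic covering unwraps the corresponding boundary torus. Since $L$ is geometrically infinite it is a virtual fiber (\cite{Thu1}, \cite{Can}, \cite{Agol1}, \cite{CG}): there is a finite cover $P_0\to N$ with $L<\pi_1(P_0)$ fibering over $S^1$ with compact fiber $F$ and with $L$ of finite index in the fiber subgroup $\pi_1(F)$. As $\pi_1(F)$ has only finitely many subgroups of index $[\pi_1(F):L]$, some power of the monodromy preserves the conjugacy class of $L$, and passing to the corresponding further finite cover yields a finite cover $P\to N$, still with $L<\pi_1(P)$, that fibers over $S^1$ with fiber $\Sigma_L$ and with $L=\pi_1(\Sigma_L)$ the fiber subgroup; after one more cyclic cover we may assume the monodromy $\phi\colon\Sigma_L\to\Sigma_L$ is the identity on $\partial\Sigma_L$. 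Then $N_L\to N$ factors through $P$ as the infinite cyclic cover, so $N_L\cong\Sigma_L\times\mathbb{R}$, each $C_j$ equals $(\partial\Sigma_L)_j\times\mathbb{R}$, and its core is $c_j=(\partial\Sigma_L)_j\times\{0\}$. For $n\in\mathbb{Z}_+$, let $P_n\to P$ be the $n$-fold cyclic cover, i.e.\ the mapping torus of $\phi^n$; the covering $p_n\colon N_L\to P_n$ is injective on $\Sigma_L\times(-n/2,n/2)$, it carries each $C_j$ onto a boundary torus of $P_n$, and it maps the core $c_j$ homeomorphically onto a section of that torus.

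\textbf{Boundary bookkeeping and the choice of $A_j$.} Fix $j$ and let $T_j\subset\partial P$ be the torus carrying the image of $C_j$; since $\phi|_{\partial\Sigma_L}=\mathrm{id}$, we have $\pi_1(T_j)=\mathbb{Z}[c_j]\oplus\mathbb{Z}[\mu_j]$ where $\mu_j$ is the class of a section of the circle fibration of $T_j$ coming from the bundle structure of $P$. Under the covering $P\to N$ the images $\pi(c_j)=x_js_j$ and $\pi(\mu_j)$ generate a finite index subgroup of $\pi_1(\pi(C_j))$; in particular $\pi(\mu_j)$ is not parallel to $s_j$, so in the $\mathbb{Q}$-basis $\{s_j,t_j\}$ of $\pi_1(\pi(C_j))\otimes\mathbb{Q}$ we may write $\pi(\mu_j)=a_js_j+b_jt_j$ with $a_j,b_j\in\mathbb{Q}$ and $b_j\neq0$. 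Choose $D\in\mathbb{Z}_+$ divisible by every $x_j$ and by all denominators of the numbers $a_j,b_j$, so that $Da_j\in x_j\mathbb{Z}$ and $Db_j\in\mathbb{Z}$ for all $j$. Then fix $R>0$ with $K\subset\Sigma_L\times(-R,R)$, let $D'$ be a multiple of $D$ with $D'>2R$, and set $A_j:=D'\,|b_j|\in\mathbb{Z}_+$.

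\textbf{Conclusion.} Given $\alpha\in\mathbb{Z}_+$, I would take $\hat N:=P_{\alpha D'}$, with $q\colon\hat N\to N$ the composite covering and $p:=p_{\alpha D'}\colon N_L\to\hat N$. Condition (1) holds since $\alpha D'\ge D'>2R$, so $p$ is injective on $\Sigma_L\times(-\alpha D'/2,\alpha D'/2)\supset K$. Condition (2) holds since distinct cylinders correspond to distinct boundary circles of $\Sigma_L$, hence to distinct boundary tori of $P_{\alpha D'}$. For condition (3), $p(c_j)$ is an embedded section of the torus $p(C_j)$, hence a simple closed curve, and $q|_{p(C_j)}\colon p(C_j)\to\pi(C_j)$ is the cover corresponding to
\[
\mathbb{Z}[c_j]+\mathbb{Z}\bigl(\alpha D'\,\pi(\mu_j)\bigr)=\mathbb{Z}[c_j]+\mathbb{Z}\bigl(\alpha D'b_j\,t_j\bigr)=\mathbb{Z}[c_j]+\mathbb{Z}(\alpha A_j\,t_j),
\]
where the first equality uses $[c_j]=x_js_j$ together with $\alpha D'a_j\in x_j\mathbb{Z}$ (so $\alpha D'\pi(\mu_j)$ and $\alpha D'b_jt_j$ differ by an element of $\mathbb{Z}[c_j]$), and the second is the definition $A_j=D'|b_j|$ (the sign of $b_j$ being irrelevant for the lattice); this is the claimed internal direct sum since $x_js_j$ and $\alpha A_jt_j$ are $\mathbb{Q}$-independent.

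\textbf{Expected main obstacle.} The first step — reducing to a bundle in which $L$ itself is the fiber subgroup and the monodromy is trivial on $\partial\Sigma_L$ — is the main technical input, resting on the virtual fibering theorems plus routine group-theoretic tidying. The delicate point in reconciling (1) with (3) is that condition (3) forces the cyclic covering degree to be exactly $\alpha D'$, which for small $\alpha$ could be too small for $K$ to embed; this is circumvented precisely because the $A_j$ are chosen after $K$, so $D'$ may be inflated past the ``$\mathbb{R}$-width'' of $K$ before $\alpha$ enters.
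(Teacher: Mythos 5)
Your proposal is correct and follows essentially the same route as the paper: pass to an intermediate finite cover that is a mapping torus of $\Sigma_L$ with monodromy trivial on $\partial\Sigma_L$, and realize $\hat N$ as a further finite cyclic cover dual to the fiber, reading off conditions (1)--(3) from how the cyclic cover unwraps each boundary torus. The only difference is bookkeeping: the paper absorbs the normalization into choosing a power of $\phi$ so that each elevation of $t_j$ meets $\Sigma_L$ exactly once and then sets $A_j$ to be the resulting covering degree on $t_j$, whereas you carry the rational coefficients $a_j,b_j$ explicitly and package the same divisibility constraints into $D'$.
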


\begin{proof}
Since $L$ is a geometrically infinite subgroup corresponding to an orientable fibered surface, $N_L$ is homeomorphic to $\Sigma_L\times \mathbb{R}$ with $\Sigma_L\cap C_j=c_j$ for each cylinder $C_j\subset \partial N_L$. Then $N_L\to N$ has an intermediate finite cover $N'=\Sigma_L\times I/\phi$ that is a mapping torus. By taking a power of $\phi$, we can assume that the projection from $K\subset N_L$ to $N'$ is an embedding, $\phi$ fixes $\partial \Sigma_L$ pointwisely, and each elevation of $t_j$ in $N'$ intersects with $\Sigma_L$ exactly once.

We still use $\phi$ to denote this power of $\phi$. Let $t_j'$ be an elevation of $t_j$ in $N'$ that lies in the image of $C_j\subset \partial N_L$, and let $A_j$ be the covering degree of $t_j'\to t_j$. Then for any $\alpha\in \mathbb{Z}_+$, the manifold $\hat{N}=\Sigma_L\times I/\phi^{\alpha}$ satisfies all desired conditions.

\end{proof}

\begin{remark}
The manifold $\hat{N}$ constructed in Proposition \ref{hypfiber} satisfies the conditions of $J^v$ in Case II of Section \ref{arcs}, so it can be used to define $s_{\delta}$ for an oriented proper path $\delta$ in a fibered subsurface.
\end{remark}

Now we deal with the geometrically finite case. In this case, boundary components of $N_L$ can be torus, cylinder and plane, and there might be infinitely many plane boundary components. So the statement becomes more complicated.

\begin{proposition}\label{geofinite}
Let $N$ be a finite volume hyperbolic $3$-manifold, $L<\pi_1(N)$ be a geometrically finite subgroup, and $K\subset N_L$ be any compact subset. Let $\mathcal{P}=\{P_1,\cdots,P_n\}$ be a finite collection of plane boundary components of $\partial N_L$. Then there exists $A\in \mathbb{Z}_+$, such that for any positive integers $\alpha_1,\cdots,\alpha_m, \beta_1,\cdots,\beta_n,\delta_1,\cdots,\delta_n\in \mathbb{Z}_+$, there exists an intermediate finite cover $q:\hat{N}\to N$ of $\pi:N_L\to N$, such that the following conditions hold for commutative diagram:\begin{diagram}
N_L &\rTo^p & \hat{N}\\
&\rdTo_{\pi} &\dTo_{q}\\
& &N.
\end{diagram}
\begin{enumerate}
  \item $p|_{K}:K\to \hat{N}$ is an embedding.
  \item For any two distinct boundary components of $N_L$ in $\{T_1,\cdots,T_l,C_1,\cdots,C_m,P_1,\cdots,P_n\}$, $p$ maps them to distinct boundary components of $\hat{N}$.
  \item For each torus boundary component $T_i\subset \partial N_L$, $p|_{T_i}:T_i\to p(T_i)$ is a homeomorphism.
  \item For each cylinder $C_j\subset \partial N_L$, $p$ maps $c_j$ to a simple closed curve in $p(C_j)$, and $q|_{p(C_j)}:p(C_j)\to \pi(C_j)$ is the finite cover corresponding to $\mathbb{Z}[c_j]\oplus \mathbb{Z}[\alpha_j At_j]<\pi_1(\pi(C_j))$.
  \item For each plane $P_k\in \mathcal{P}$, $q|_{p(P_k)}:p(P_k)\to \pi(P_k)$ is the finite cover corresponding to $\mathbb{Z}[\beta_kAu_k]\oplus \mathbb{Z}[\delta_kAv_k]<\pi_1(\pi(P_k))$.
\end{enumerate}
\end{proposition}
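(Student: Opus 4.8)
The plan is to produce the cover in two phases: first obtain a convenient intermediate finite cover realizing conditions $(1)$--$(3)$ and pinning down the position of $K$, and then refine it over the finitely many relevant cusps so as to install the prescribed peripheral covers $(4)$--$(5)$. The integer $A$ will emerge as the least common multiple of a small list of divisibility obstructions read off from the first cover, and the whole rigidity of the construction rests on the separability properties of $\pi_1(N)$ coming from the work of Wise and Agol.

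First I would fix $K$ once and for all, enlarging it if necessary. Since $L$ is geometrically finite, the convex core of $N_L$ is compact and $N_L$ is homeomorphic to the interior of a compact $3$-manifold whose boundary consists of the cusp cross-sections; so I may assume that $K$ contains the convex core together with a compact collar of each of $T_1,\dots,T_l$, $C_1,\dots,C_m$ and each $P_k\in\mathcal P$, in particular $T_i\subset K$ and $K\cap C_j$ is a compact sub-annulus with core $c_j$. Because $N$ is a finite-volume hyperbolic $3$-manifold, $\pi_1(N)$ is LERF (\cite{Wise},\cite{Agol2}), and, being virtually compact special, it has separable double cosets of its geometrically finite subgroups. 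Applying Scott's Lemma (Lemma \ref{Scott}) to $K$, together with finitely many further separability conditions -- separating the finitely many cusp subgroups of $L$ pairwise (double-coset separability), and separating each $\pi_1(T_i)$ from the finitely many cosets of it inside the ambient $\mathbb Z^2=\pi_1(\pi(T_i))$, which is a finite-index containment -- I obtain an intermediate finite cover $p_0\colon N_L\to N_0\to N$ in which $K$ embeds, distinct boundary components from our finite list have distinct images, and each $T_i$ maps homeomorphically onto a boundary component of $N_0$. Any further intermediate cover of $N_L\to N_0$ automatically retains all of this: the factorization of $p_0$ through it forces the corresponding map out of $N_L$ to stay injective on $K$, to stay a homeomorphism on each $T_i$, and to keep distinct images distinct.

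Next I would record the obstructions. For the cusp of $N_0$ carrying $p_0(C_j)$ let $\Lambda_j^0\le\langle s_j,t_j\rangle=\pi_1(\pi(C_j))$ be its peripheral subgroup; it is finite-index and contains $x_js_j$, and I set $d_j$ by $\Lambda_j^0\cap\langle t_j\rangle=\langle d_jt_j\rangle$. Similarly, for each $P_k\in\mathcal P$, with $\Lambda_k^0$ the peripheral subgroup of the cusp of $N_0$ carrying $p_0(P_k)$, I set $e_k,f_k$ by $\Lambda_k^0\cap\langle u_k\rangle=\langle e_ku_k\rangle$ and $\Lambda_k^0\cap\langle v_k\rangle=\langle f_kv_k\rangle$. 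Put $A=\operatorname{lcm}(d_1,\dots,d_m,e_1,\dots,e_n,f_1,\dots,f_n)$; this depends only on $N_0$, hence only on $N$, $L$, $K$, $\mathcal P$. Given $\alpha_j,\beta_k,\delta_k\in\mathbb Z_+$, put $\Lambda_j=\mathbb Z[c_j]\oplus\mathbb Z[\alpha_jAt_j]$ and $\Lambda_k=\mathbb Z[\beta_kAu_k]\oplus\mathbb Z[\delta_kAv_k]$. Divisibility of $A$ by $d_j$ (resp.\ by $e_k$, $f_k$) forces $\Lambda_j\le\Lambda_j^0$ and $\Lambda_k\le\Lambda_k^0$, so after suitable conjugation each $\Lambda_j,\Lambda_k$ is a subgroup of $\pi_1(N_0)$ containing the peripheral subgroup of $L$ at the corresponding cusp (which is $\langle c_j\rangle$ for $C_j$ and trivial for $P_k$). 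Let $L^{+}\le\pi_1(N_0)$ be generated by $L$ together with all these $\Lambda_j$ and $\Lambda_k$. A standard application of the Klein--Maskit combination theorems -- adjoining a rank-two parabolic group to a geometrically finite group along a parabolic subgroup, respectively at a point of its domain of discontinuity -- shows that $L^{+}$ is geometrically finite and that its maximal parabolic subgroup at the cusp $C_j$ (resp.\ $P_k$) is exactly a conjugate of $\Lambda_j$ (resp.\ $\Lambda_k$); in particular $L^{+}$ meets the relevant ambient $\mathbb Z^2$'s in exactly $\Lambda_j$ and $\Lambda_k$. Since $\pi_1(N_0)$ is again a finite-volume hyperbolic $3$-manifold group, $L^{+}$ is separable in it; as $\Lambda_j^0/\Lambda_j$ and $\Lambda_k^0/\Lambda_k$ are finite, separating $L^{+}$ from the finitely many elements of $\Lambda_j^0\setminus\Lambda_j$ and $\Lambda_k^0\setminus\Lambda_k$ and intersecting the resulting finite-index overgroups of $L^{+}$ yields a finite-index $G$ with $L\le L^{+}\le G\le\pi_1(N_0)$ whose intersections with the relevant $\mathbb Z^2$'s are exactly $\Lambda_j$ and $\Lambda_k$. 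The cover $\hat N\to N_0$ corresponding to $G$ is then the desired intermediate finite cover of $N_L\to N$: by the previous paragraph it keeps $(1)$--$(3)$, and by construction the cusp of $\hat N$ carrying $p(C_j)$ is a torus with fundamental group $\Lambda_j$, so $q$ restricts there to the cover associated to $\Lambda_j=\mathbb Z[c_j]\oplus\mathbb Z[\alpha_jAt_j]$ with $p(c_j)$ a primitive class (hence a simple closed curve), giving $(4)$, and likewise $(5)$ holds on the cusps carrying the $p(P_k)$.

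The main obstacle is the \emph{independence} of the prescriptions on distinct cusps: refining the cover over one cusp must not force an uncontrolled refinement over another -- this is precisely what fails for Seifert and graph-manifold pieces, and it is why the proposition is confined to hyperbolic $N$. It is overcome here by the combination-theoretic input that $L^{+}$ remains geometrically finite with its peripheral structure at every cusp under our control, together with the LERFness and double-coset separability of $\pi_1(N_0)$. I expect the technical heart of a complete proof to be exactly this: verifying the geometric finiteness of $L^{+}$ and the exactness $L^{+}\cap(\text{peripheral }\mathbb Z^2)=\Lambda_j$, so that separability genuinely cuts each cusp down to $\Lambda_j$ rather than to something larger, with the remaining bookkeeping (persistence of $(1)$--$(3)$, primitivity of $p(c_j)$, matching covering degrees) being routine.
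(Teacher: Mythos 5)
Your overall architecture matches the paper's: first use LERFness of $\pi_1(N)$ to pass to an intermediate finite cover in which $K$ embeds and the finitely many relevant boundary components are separated (conditions (1)--(3)), then enlarge $L$ by adjoining the prescribed rank-two peripheral lattices, verify that the enlarged group is geometrically finite with exactly those lattices as its parabolic subgroups at the relevant cusps, and finally apply separability of the enlarged (finitely generated) group to produce $\hat N$. The paper carries out the combination step via Theorem 1.1 of \cite{MP} (combination of relatively quasiconvex subgroups of relatively hyperbolic groups) rather than Klein--Maskit, but that substitution by itself would be acceptable.

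The genuine gap is in your choice of $A$, i.e.\ in the order of quantifiers. You define $A$ as the least common multiple of the divisibility obstructions $d_j,e_k,f_k$ needed merely to ensure $\Lambda_j\le\Lambda_j^0$ and $\Lambda_k\le\Lambda_k^0$, and only afterwards invoke a combination theorem for $L^{+}=\langle L,\Lambda_1,\dots,\Lambda_m,\Lambda_1,\dots,\Lambda_n\rangle$. But every version of the combination theorem you could use here -- Klein--Maskit for adjoining a parabolic along a cusp of a geometrically finite group, or Mart\'inez-Pedroza's relatively hyperbolic version -- requires the adjoined lattice to be \emph{sufficiently sparse} (the second generator sufficiently long) before it yields that $L^{+}$ is geometrically finite, decomposes as the expected amalgam, and meets the peripheral $\mathbb Z^2$ in exactly $\Lambda_j$. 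Since the proposition must hold for all parameters including $\alpha_j=\beta_k=\delta_k=1$, and your $A$ can be as small as $1$ (e.g.\ when $N_0$ does not refine a given cusp at all), your $\Lambda_j$ can be an essentially arbitrary finite-index sublattice of $\pi_1(\pi(C_j))$ containing $c_j$; for such a lattice the interactive-pair/horoball hypotheses of the combination theorem fail in general, $L^{+}$ need not be the amalgam, and $L^{+}\cap\pi_1(\pi(C_j))$ can strictly contain $\Lambda_j$, after which no amount of separability can cut the cusp of $\hat N$ down to $\Lambda_j$. The correct logic, as in the paper, is that the combination theorem is what \emph{produces} the constants $A_1,\dots,A_m,B_1,\dots,B_n,D_1,\dots,D_n$ (the required sparseness thresholds), and $A$ is then defined as their least common multiple; only with that $A$ does the subgroup $L'$ generated by $L$ and the lattices $\mathbb Z[c_j]\oplus\mathbb Z[\alpha_jAt_j]$, $\mathbb Z[\beta_kAu_k]\oplus\mathbb Z[\delta_kAv_k]$ have the controlled graph-of-groups structure and peripheral subgroups needed for the final separability step.
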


\begin{proof}
We take a base point $*\in K\subset N_L$ and $*$ will be the base point of all fundamental groups in this proof. By enlarging $K$, we can assume that $K$ is path-connected, it contains all torus boundary components $T_1,\cdots,T_l$ of $N_L$, and it intersects with all of $C_1,\cdots,C_m$ and $P_1,\cdots,P_n$.

Since $L<\pi_1(N)$ is separable, there exists an intermediate finite cover of $N_L\to N$ such that $K$ is mapped into this finite cover by embedding. By abusing notation, we still denote this finite cover by $N$. Then $N$ satisfies condition (1) and (2) in this proposition.

For each boundary component $T_i$, $C_j$ or $P_k$ of $N_L$, we take a path in $K$ from $*$ to $T_i$, $C_j$ or $P_k$ respectively, and consider $\pi_1(\pi(T_i)), \pi_1(\pi(C_j))$ and $\pi_1(\pi(P_k))$ as subgroups of $\pi_1(N,*)$ by conjugation them by these paths.

Then we have: $L\cap \pi_1(\pi(T_i))=\pi_1(\pi(T_i))$, $L\cap \pi_1(\pi(C_j))=\langle c_j \rangle$ and $L\cap \pi_1(\pi(P_k))=\{e\}$. Note that $\pi_1(N)$ is a relative hyperbolic group, $L<\pi_1(N)$ is a relatively quasi-convex subgroup, and $\pi_1(\pi(C_j)), \pi_1(\pi(P_k))<\pi_1(N)$ are maximal parabolic subgroups. By applying Theorem 1.1 of \cite{MP} inductively, there exist $A_1,\cdots,A_m, B_1,\cdots,B_n, D_1,\cdots,D_n\in \mathbb{Z}_+$, such that the subgroup $L''<\pi_1(N)$ generated by
$$L,\langle c_j, A_jt_j\rangle<\pi_1(\pi(C_j))\ j=1,\cdots,m,\  \text{and}\ \langle B_ku_k, D_kv_k\rangle<\pi_1(\pi(P_k))\ k=1,\cdots,n$$
is isomorphic to an iterated $\mathbb{Z}$-amalgamation and free product:
$$\Big(L*_{\{\langle c_j\rangle\}_{j=1}^m}(*_{j=1}^m\langle c_j, A_jt_j\rangle)\Big)*(*_{k=1}^n \langle B_ku_k, C_kv_k\rangle).$$ Moreover, each parabolic subgroup of $L''$ is conjugate to a subgroup of $L$, $\langle c_j, A_jt_j\rangle$ or $\langle B_ku_k, C_kv_k\rangle$.

Let $A$ be the least common multiple of above positive integers $A_1,\cdots,A_m, B_1,\cdots,B_n, D_1,\cdots,D_n$. Then for any positive integers $\alpha_1,\cdots,\alpha_m, \beta_1,\cdots,\beta_n,\delta_1,\cdots,\delta_n\in \mathbb{Z}_+$, we take the subgroup $L'<L''<\pi_1(N)$ generated by
$$L,\langle c_j, \frac{\alpha_jA}{A_j}A_jt_j\rangle<\pi_1(\pi(C_j))\ \text{and}\ \langle \frac{\beta_kA}{B_k}B_ku_k, \frac{\delta_kA}{D_k}D_kv_k\rangle<\pi_1(\pi(P_k))$$ for $j=1,\cdots,m$ and $k=1,\cdots,n$.
 Then $L'$ has a similar group of graph structure as $L''$, and each parabolic subgroup of $L'$ is conjugate to a subgroup of one of these factors.

Let $N_{L'}$ be the covering space of $N$ corresponding to $L'<\pi_1(N)$ (which is an intermediate cover of $N_L\to N$). Then the following boundary components of $N_{L'}$ are distinct from each other: images of $T_1,\cdots,T_l$, the tori boundary components of $N_{L'}$ corresponding to $\langle c_j, \alpha_jAt_j\rangle$ ($j=1,\cdots,m$) and $\langle \beta_kAu_k, \delta_kAv_k\rangle$ ($k=1,\cdots,n$). This is because their corresponding subgroups are not conjugate to each other in $L'$.

The covering spaces we constructed are summarized by the following tower of covering spaces: $$N_L\to N_{L'}\to N_{L''}\to N,$$ and we will apply LERFness of $\pi_1(N)$ to $N_{L'}\to N$ to get $\hat{N}$.

We take a compact subset $K'\subset N_{L'}$ that contains $K$ and all tori boundary components of $N_{L'}$. Since $L'$ is finitely generated, it is separable in $\pi_1(N)$. So there exists an intermediate finite cover $\hat{N}\to N$ of $N_{L'}\to N$, such that $K'$ is projected into $\hat{N}$ by embedding. Then all desired conditions hold for $\hat{N}$. $K$ embeds in $\hat{N}$ since $K'$ embeds in $\hat{N}$. The boundary components of $N_L$ under consideration (i.e. $T_1,\cdots,T_l,C_1,\cdots,C_m,P_1,\cdots,P_n$) are mapped to distinct boundary components of $\hat{N}$, since their projections to $N_{L'}$ are distinct tori and they are all contained in $K'$. The covering maps from boundary components of $\hat{N}$ to boundary components of $N$ are the desired ones, since the restriction of $N_{L'}\to \hat{N}$ on these boundary components are homeomorphisms.
\end{proof}

\subsection{Seifert case}\label{Seifert}

Now we work on the case that $N$ is a Seifert space. We always assume that $N$ is not the twisted $I$-bundle over Klein bottle, so the base orbifold $\mathcal{O}_N$ of $N$ has negative Euler characteristic. Some proofs in this section are borrowed from proofs in Section \ref{hyperbolic}, since we will first apply techniques in hyperbolic geometry to the base orbifold.

If $L<\pi_1(N)$ is a virtually fibered subgroup, then the corresponding statement and its proof are same with the hyperbolic case.

\begin{proposition}\label{Seifertfiber}
Let $N$ be a Seifert $3$-manifold with orientable base orbifold, $L<\pi_1(N)$ be a virtually fibered subgroup, and $K\subset N_L$ be any compact subset. Then there exist $A_1,\cdots,A_l\in \mathbb{Z}_+$, such that for any $\alpha\in \mathbb{Z}_+$, there exists an intermediate finite cover $q:\hat{N}\to N$ of $\pi:N_L\to N$, such that the following conditions hold for commutative diagram:
\begin{diagram}
N_L &\rTo^p & \hat{N}\\
&\rdTo_{\pi} &\dTo_{q}\\
& &N.
\end{diagram}
\begin{enumerate}
  \item $p|_{K}:K\to \hat{N}$ is an embedding.
  \item $p$ maps any two distinct cylinder boundary components of $N_L$ to distinct boundary components of $\hat{N}$.
  \item For each cylinder $C_j\subset \partial N_L$, $p$ maps its core $c_j$ to a simple closed curve in $p(C_j)$, and $q|_{p(C_j)}:p(C_j)\to \pi(C_j)$ is the finite cover corresponding to $\mathbb{Z}[c_j]\oplus \mathbb{Z}[\alpha A_jt_j]<\pi_1(\pi(C_j))$.
\end{enumerate}
\end{proposition}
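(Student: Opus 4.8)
The plan is to mirror the proof of Proposition \ref{hypfiber}; the Seifert case is in fact easier, since the fibered intermediate cover needed there can be produced by hand rather than extracted from tameness. Since $L<\pi_1(N)$ is virtually fibered (Case II.1 of Section \ref{subgroups}), the base space $\mathcal{O}_{N_L}$ of $N_L$ is a compact surface $\Sigma_L$ and $N_L$ is an $\mathbb{R}$-bundle over $\Sigma_L$; because $N$ and $\mathcal{O}_N$ are orientable the regular fiber is central in $\pi_1(N)$, which forces $\Sigma_L$ to be orientable and $N_L\cong\Sigma_L\times\mathbb{R}$, with $\Sigma_L$ embedded as a section meeting each cylinder $C_j\subset\partial N_L$ in its core $c_j$. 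The first step is to exhibit an intermediate finite cover $N'\to N$ of $N_L\to N$ that is a surface bundle over $S^1$ with fiber $\Sigma_L$. The key observation is that, $\mathcal{O}_{N_L}$ being compact, $\mathcal{O}_{N_L}\to\mathcal{O}_N$ is a \emph{finite} orbifold covering, so the image of $L$ in $\pi_1^{\mathrm{orb}}(\mathcal{O}_N)$ has finite index; pulling the Seifert fibration of $N$ back over $\Sigma_L=\mathcal{O}_{N_L}$ then produces a finite cover $N'\to N$ which is an $S^1$-bundle over $\Sigma_L$ and through which $N_L\to N$ factors, and since $L$ itself splits the central extension $1\to\mathbb{Z}\to\pi_1(N')\to\pi_1(\Sigma_L)\to 1$ one has $\pi_1(N')\cong\pi_1(\Sigma_L)\times\mathbb{Z}$, whence $N'\cong\Sigma_L\times S^1$, a mapping torus with trivial monodromy.

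From this point the argument is word for word that of Proposition \ref{hypfiber}. Passing to a sufficiently large cyclic cover of $N'$ in the $S^1$- (regular-fiber-) direction, I would arrange simultaneously that the image of $K$ embeds, that distinct cylinders $C_j$ are sent to distinct boundary tori, and that every elevation of each slope $t_j$ to $N'$ meets the fiber $\Sigma_L$ in exactly one point; for each $j$ let $A_j\in\mathbb{Z}_+$ be the covering degree of such an elevation $t_j'$ onto $t_j$. Then for any $\alpha\in\mathbb{Z}_+$ the further $\alpha$-fold cyclic cover $\hat{N}$ of $N'$ in the $S^1$-direction is again an intermediate finite cover of $N_L\to N$ (as $L=\pi_1(\Sigma_L\times\{*\})$ still sits inside $\pi_1(\hat{N})$), and it satisfies (1)--(3): $K$ still embeds, the cylinders keep distinct images, the core $c_j$ — being a boundary curve of a fiber copy of $\Sigma_L$ — maps to a simple closed curve, and $q|_{p(C_j)}$ realizes precisely the subgroup $\mathbb{Z}[c_j]\oplus\mathbb{Z}[\alpha A_j t_j]<\pi_1(\pi(C_j))$, the factor $A_j$ coming from the base cover and the extra $\alpha$ from the last cyclic cover.

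The only genuinely new ingredient compared with Proposition \ref{hypfiber} is this first step, so that is where I expect the (modest) work to lie: verifying that $N_L\to N$ factors through a surface-bundle-over-$S^1$ finite cover, which amounts to checking that the pulled-back Seifert fibration over $\Sigma_L$ is the trivial $S^1$-bundle (equivalently that the central extension splits, which it does because $L$ is already a section) and that orientability of $N$ and $\mathcal{O}_N$ rules out a twisted fiber, so that $N_L=\Sigma_L\times\mathbb{R}$. Once this structural fact is in place, the remaining boundary bookkeeping — that a slope $t_j$ not parallel to $s_j$ has an elevation meeting the fiber once, and that cyclic covers scale the $t_j$-component linearly while leaving $c_j$ a core curve — is routine and identical to the hyperbolic case.
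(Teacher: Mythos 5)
Your proposal is correct and follows essentially the same route as the paper, which simply declares the proof to be identical to the hyperbolic case (Proposition 5.1): produce an intermediate mapping-torus cover $N'=\Sigma_L\times S^1$ of $N_L\to N$ and take fiberwise cyclic covers, with $A_j$ the degree of an elevation of $t_j$ meeting $\Sigma_L$ once. Your additional verification that such an intermediate cover exists --- by pulling the fibration back over the finite cover $\Sigma_L\to\mathcal{O}_N$ and observing that $L$ splits the central extension --- is exactly the detail the paper leaves implicit, and it is sound.
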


\begin{remark}\label{iffiber}
If we take all $t_j$ to be the regular fiber $h$ in $\pi(C_j) \subset \partial N$, then all $A_j$s must be equal to each other. In this case, the covering degree from $p(C_j)$ to $\pi(C_j)$ equals $\alpha A_j\langle c_j,t_j\rangle=\alpha A_j \langle c_j,h\rangle$. Then the quotient of two such covering degrees is $\frac{\langle c_i,h\rangle}{\langle c_j,h\rangle}$, which equals the $s_{\delta}$ defined in Case I of Section \ref{arcs}.
\end{remark}

To prove the corresponding results for partially fibered subgroups and subgroups corresponding to $S^1$-bundles, we need the following two results on covering spaces of surfaces.

\begin{lemma}\label{infinitecoversurface}
  Let $S$ be a compact orientable hyperbolic surface with boundary, $A<\pi_1(S)$ be a finitely generated infinite index subgroup, and $S_A$ be the covering space of $S$ corresponding to $A$ with convex core $\Sigma_A\subset S_A$. Let $\mathcal{L}=\{L_1,\cdots,L_n\}$ be a finite collection of boundary components of $S_A$ homeomorphic to $\mathbb{R}$, then there exists an intermediate finite cover $\hat{S}\to S$ of $S_A\to S$ such that the following hold.
        \begin{enumerate}
          \item $\Sigma_A\subset S_A$ is mapped to $\hat{S}$ by embedding, and $\hat{S}\setminus \Sigma_A$ is connected.
          \item Let $\mathcal{C}$ be the collection of all boundary components of $S_A$ homeomorphic to $S^1$, then any two distinct components of $\mathcal{C}\cup \mathcal{L}$ are mapped to distinct boundary components of $\hat{S}$.
          \item There is a boundary component of $\hat{S}$ that is not in the image of $\mathcal{C}\cup \mathcal{L}$.
        \end{enumerate}
\end{lemma}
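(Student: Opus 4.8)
The plan is to build $\hat{S}$ in three stages, controlling one of the three conditions at a time and using the permanence of each condition under passing to further intermediate finite covers. First I would dispose of condition (1): since $A<\pi_1(S)$ is finitely generated and surface groups are LERF, $\pi_1(S)$ is LERF, so by Lemma \ref{Scott} (applied to a path-connected compact subset of $S_A$ slightly larger than $\Sigma_A$) there is an intermediate finite cover $S_1\to S$ of $S_A\to S$ in which $\Sigma_A$ embeds. To make $\hat{S}\setminus\Sigma_A$ connected, observe that $\Sigma_A\subset S_A$ is incompressible and $\partial$-incompressible, so $S_1\setminus\Sigma_A$ is a (possibly disconnected) compact surface whose components are not discs; by attaching along boundary curves, or more simply by taking a connected finite cover of $S_1$ adapted to the (finitely many) components of $S_1\setminus\Sigma_A$, one arranges connectedness — alternatively, one can just note that after throwing in extra sheets the complement of the lifted $\Sigma_A$ is connected because each component contributes a free group factor that survives into a suitable cover. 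I would state this as a short lemma-style paragraph rather than belabor it.

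Next I would handle condition (2). The boundary components in $\mathcal{C}$ correspond to the conjugacy classes in $A$ of the (finitely many) primitive elements generating the maximal peripheral cyclic subgroups that bound annuli, and each $L_k\in\mathcal{L}$ corresponds to a peripheral $\mathbb{Z}$ of $\pi_1(S)$ that meets $A$ trivially. For any two distinct such curves (or a curve and a plane, or two planes), their associated peripheral subgroups of $\pi_1(S)$ are non-conjugate in $A$ (for two elements of $\mathcal{C}$ this is because they are distinct boundary components of the cover $S_A$; for anything involving a plane it is automatic). Hence for each such pair there is an element of $\pi_1(S)$, or rather a peripheral element, witnessing that the two cannot become the same boundary component once the relevant peripheral elements are retained; using separability of $A$ (and, for the plane boundary components, first enlarging $A$ by adjoining a finite-index subgroup of each peripheral $\mathbb{Z}$ corresponding to an $L_k$, exactly as in the proof of Proposition \ref{geofinite} but one dimension down — here the Dehn-filling/MP machinery is replaced by the trivial fact that a free group amalgamated with $\mathbb{Z}$ along the trivial subgroup is just a free product) one produces an intermediate finite cover $S_2\to S$ of $S_A\to S$ in which all curves of $\mathcal{C}\cup\mathcal{L}$ go to distinct boundary components and $\Sigma_A$ still embeds with connected complement.

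Finally, condition (3): I want a boundary component of $\hat{S}$ not covered by any curve of $\mathcal{C}\cup\mathcal{L}$. Since $\Sigma_A$ is a compact hyperbolic surface with boundary, $\mathcal{O}_N$ being hyperbolic forces $\chi(\Sigma_A)<0$, so $S_2\setminus\Sigma_A$ is nonempty (any proper finite cover of a hyperbolic surface with boundary has boundary disjoint from a given incompressible finite-type subsurface — more precisely, $S_2$ has strictly larger complexity than $\Sigma_A$ after enough sheets, so some boundary circle of $S_2$ lies in $S_2\setminus\Sigma_A$, hence is disjoint from the image of $\mathcal{C}$; and by construction it is not in the image of $\mathcal{L}$ since those were made disjoint in stage (2)). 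If no such component exists yet, pass to a further connected finite cover $\hat{S}\to S_2$ of degree $\geq 2$ adapted to $S_2\setminus\Sigma_A$: each component of $S_2\setminus\Sigma_A$ is a compact surface with $\chi<0$, so it has a connected finite cover with at least one boundary circle mapping to the interior-facing boundary, and after lifting $\Sigma_A$ trivially across one sheet we retain conditions (1) and (2) while gaining (3).

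The main obstacle, and the step that deserves the most care in the write-up, is stage (2) for the plane boundary components $L_k\in\mathcal{L}$: one must first replace $A$ by the subgroup $A'$ generated by $A$ together with a chosen nontrivial element $a_k$ of each peripheral $\mathbb{Z}\supset A\cap\pi_1(\partial_k) = \{e\}$, check (via the Kurosh/graph-of-groups description, since $A' \cong A * \langle a_1\rangle * \cdots * \langle a_n\rangle$ after suitable conjugation — there is no nontrivial amalgamation because $A$ meets each such peripheral group trivially) that the new peripheral structure of $A'$ has the curves of $\mathcal{C}$ and the (now circle) boundary components from the $a_k$ pairwise non-conjugate, and only then invoke separability of $A'$ in $\pi_1(S)$ to find the intermediate finite cover. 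Everything else is a routine combination of LERF of surface groups and the observation that each of the three conditions persists under further intermediate finite covering. I would present the final argument as: choose $\hat S$ to be a common intermediate finite cover dominating the three covers $S_1, S_2$ (and the auxiliary cover for (3)) produced above; since each condition is inherited by intermediate finite covers of a cover that satisfies it, $\hat S$ satisfies all three simultaneously.
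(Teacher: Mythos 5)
Your overall architecture (embed $\Sigma_A$ by LERF, enlarge $A$ by peripheral elements and apply separability again for (2), treat each condition as hereditary under further intermediate covers) matches the paper's, but three steps that you label as routine are exactly where the paper has to invoke nontrivial results, and one of them as you state it does not work. First, the claim that $\langle A, a_1,\dots,a_n\rangle\cong A*\langle a_1\rangle*\cdots*\langle a_n\rangle$ for \emph{arbitrary} nontrivial peripheral elements $a_k$ is false in general: even though $A\cap\pi_1(\partial_k)=\{e\}$, the generated subgroup need not split as a free product, and -- what is actually needed for (2) -- you must control which peripheral subgroups of $\pi_1(S)$ meet $A'$ and up to which conjugacy, so that the new circle boundary components of $S_{A'}$ are pairwise non-conjugate and do not collide with the circles coming from $\mathcal{C}$. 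The paper gets this by taking sufficiently high powers $E_k l_k$ and invoking the combination theorem of Mart\'inez-Pedroza (Theorem 1.1 of \cite{MP}), exactly as in Proposition \ref{geofinite}; some version of that ping-pong argument is unavoidable, and "trivial fact about free products" does not substitute for it. Second, the connectedness of $\hat S\setminus\Sigma_A$ is not a one-liner about "free group factors surviving into a cover"; the paper cites Baker--Cooper (Theorem 1.3 of \cite{BC}), which is a theorem written precisely to supply this step, and your sketch of gluing complementary components by hand would itself need an argument of that type.

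The more serious gap is condition (3). Your argument produces (at best) a boundary circle of $\hat S$ disjoint from $\Sigma_A$, but that only rules out its being in the image of $\mathcal{C}$; the images $p(L_1),\dots,p(L_n)$ are also boundary circles disjoint from $\Sigma_A$, and "those were made disjoint in stage (2)" says nothing about whether your new circle coincides with one of them. Moreover the proposed construction of the further cover -- cover one complementary component and "lift $\Sigma_A$ trivially across one sheet" -- does not define a covering of $S_2$, since the covering data on the complementary piece and on $\Sigma_A$ must agree along their common boundary. The paper's route is different and is the real content of the lemma: it first \emph{proves} that $S_A$ has infinitely many boundary components (via a hyperbolic-geometry argument producing infinitely many $A$-orbits of parabolic fixed points in the domain of discontinuity), picks a line boundary component $L_{n+1}\notin\mathcal{L}$, enlarges $\mathcal{L}$ to $\mathcal{L}\cup\{L_{n+1}\}$, and runs the stage-(2) argument for this larger collection; the image of $L_{n+1}$ is then automatically a boundary component of $\hat S$ not in the image of $\mathcal{C}\cup\mathcal{L}$. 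Your proposal never establishes (or uses) the existence of such an extra boundary component of $S_A$, and without it condition (3) is not proved. (A smaller slip: $\chi(\Sigma_A)<0$ fails when $A$ is trivial or peripheral cyclic, cases the lemma explicitly allows.)
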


Note that when $A$ is an infinite cyclic parabolic subgroup, our convention is that, $\Sigma_A$ is a closed $\epsilon$-neighborhood of the corresponding circle boundary component of $S_A$. When $A$ is the trivial subgroup, its convex core is the empty set.

\begin{proof}
 Since $A$ is finitely generated, the convex core $\Sigma_A$ has finite area. So $\Sigma_A$ has finitely many boundary components, and it contains all circle boundary components of $S_A$. Since $\pi_1(S)$ is LERF, there exists an intermediate finite cover of $S_A\to S$ such that $\Sigma_A$ embeds into this finite cover, and we still denote this finite cover by $S$. Then the first half of condition (1) holds for $S$.

 We first prove that $S_A$ has infinitely many boundary components. Now we consider $\pi_1(S)$ as a discrete subgroup of $\text{Isom}_+(\mathbb{H}^2)$ such that $\mathbb{H}^2/\pi_1(S)$ is a finite area hyperbolic surface with cusp ends. Then it suffices to prove that, in the domain of discontinuity $\Omega(A)\subset \partial \mathbb{H}_{\infty}^2$, there are infinitely many $A$-orbits of parabolic fixed points of $\pi_1(S)$. If $A$ is the trivial subgroup, we take any parabolic fixed point $x\in \partial \mathbb{H}^2$ of $\pi_1(S)$, and any hyperbolic element $\gamma\in \pi_1(S)$ such that $\gamma(x)\ne x$. Then $\{\gamma^n(x)\}_{n\in \mathbb{Z}}$ gives us infinitely many distinct parabolic fixed points of $\pi_1(S)$. The case that $A$ is an infinite cyclic parabolic subgroup can be proved similarly.

 It remains to consider the case that $\Sigma_A$ has nonempty geodesic boundary in the interior of $S_A$. For any such geodesic boundary component $c\subset \partial \Sigma_A$, let $\delta$ be one elevation of $c$ in $\mathbb{H}^2$. Then the universal cover of $\Sigma_A$ lies on one side of $\delta\subset \mathbb{H}^2$, while the other side is bounded by $\delta$ and a path $\alpha\subset \partial \mathbb{H}^2$. We take any parabolic fixed point $x\in \alpha$ and any hyperbolic element $\gamma\in \pi_1(S)\setminus A$ such that $\gamma$ has a contracting fixed point in $\text{int}(\alpha) \setminus \{x\}$. Then for $N$ large enough, $\{\gamma^n(x)\}_{n>N}$ gives us infinitely many distinct $\langle c\rangle$-orbits of parabolic fixed points of $\pi_1(S)$ in $\alpha$, and they are also inequivalent under the $A$-action .

\bigskip

 Since $S_A$ has infinitely many boundary components, we take a line boundary component $L_{n+1}\subset \partial S_A$ not in $\mathcal{L}$, and enlarge $\mathcal{L}$ to $\mathcal{L}'=\mathcal{L}\cup \{L_{n+1}\}$. Let $l_k$ be a generator of the parabolic subgroup of $\pi_1(S)$ corresponding to $L_k$. Similar to the argument in Proposition \ref{geofinite}, we can apply Theorem 1.1 of \cite{MP} to get positive integers $E_1,\cdots,E_{n+1}$, such that the subgroup $A'<\pi_1(S)$ generated by $A$ and $E_1l_1,\cdots,E_{n+1}l_{n+1}$ is isomorphic to $A*(*_{k=1}^{n+1}\langle E_kl_k\rangle)$, and each parabolic element in $A'$ either conjugates into $A$ or conjugates into $\langle E_kl_k\rangle$ for some $k$.

 The covering space $S_{A'}\to S$ corresponding to $A'<\pi_1(S)$ is an intermediate cover of $S_A\to A$, and we have the following tower of covering spaces: $$S_A\to S_{A'}\to S.$$
By our construction, distinct components of $\mathcal{C}\cup \mathcal{L}'$  in $\partial S_A$ are mapped to distinct boundary components of $S_{A'}$, and all these boundary component of $S_{A'}$ are homeomorphic to $S^1$. Then the convex core $\Sigma_{A'}\subset S_{A'}$ contains all compact boundary components of $S_{A'}$. Now we apply LERFness of $\pi_1(S)$ to $A'<\pi_1(S)$ to get an intermediate finite cover $S'\to S$ of $S_{A'}\to S$ such that $\Sigma_{A'}$ embeds into $S'$. Then distinct components of $\mathcal{C}\cup \mathcal{L}'$ are mapped to distinct boundary components of $S'$. Since the image of $L_{n+1}$ is a boundary component of $S'$ that does not lie in the image of $\mathcal{C}\cup \mathcal{L}$, condition (2) and condition (3) hold.

Note that conditions (2) and (3) still hold when taking any intermediate finite cover of $S_A\to S'$. Then we apply Theorem 1.3 of \cite{BC} to get an intermediate finite cover $\hat{S}\to S'$ of $S_A\to S'$ such that $\hat{S}\setminus \Sigma_A$ is connected, which satisfies the second half of condition (1).
\end{proof}

Let $S$ be a compact hyperbolic surface, we say that a compact connected subsurface $\Sigma\subset S$ is {\it nice} if each boundary component of $\Sigma$ either equals a boundary component of $S$ or lies in the interior of $S$, and no component of $S\setminus \Sigma$ is homeomorphic to the disc or annulus.

In particular, suppose that $S$ is a compact hyperbolic surface, and $\Sigma$ is the convex core of an infinite cover of $S$ with finitely generated fundamental group. If the covering map embeds $\Sigma$ into $S$, then $\Sigma$ is a nice subsurface of $S$.

\begin{lemma}\label{orbifoldcover}
Let $S$ be a compact orientable surface with positive genus and nontrivial boundary, and $\Sigma\subset S$ be a nice subsurface. We suppose that $\partial S\setminus \partial \Sigma\ne \emptyset $ and let $\{c_1,\cdots,c_n\}$ be a collection of components of $\partial S\setminus \partial \Sigma$. Then for any integers $E_1,\cdots,E_n\geq 2$, there exists a finite cover $\hat{S}\to S$, such that $\Sigma$ lifts into $\hat{S}$ and each elevation of $c_k$ in $\hat{S}$ covers $c_k$ by degree $E_k$.
\end{lemma}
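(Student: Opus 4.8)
The plan is to produce a finite-index normal subgroup $K\triangleleft\pi_1(S)$ with $\pi_1(\Sigma)\subset K$ such that the image of each $c_k$ in $\pi_1(S)/K$ has order exactly $E_k$, and then take $\hat S\to S$ to be the cover corresponding to $K$. Indeed $\pi_1(\Sigma)\subset K$ forces $\Sigma$ to lift (embed) into $\hat S$, and normality of $K$ forces every elevation of $c_k$ to cover $c_k$ with degree equal to the order of $c_k$ in $\pi_1(S)/K$. So the whole problem reduces to building a homomorphism $\Phi\colon\pi_1(S)\to Q$ onto a finite group with $\pi_1(\Sigma)\subset\ker\Phi$ and $\mathrm{ord}\,\Phi(c_k)=E_k$ for every $k$.

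Since $\Phi$ must kill $\pi_1(\Sigma)$, it factors through $G:=\pi_1(S)/\langle\langle\pi_1(\Sigma)\rangle\rangle$, and I would first identify the structure of $G$. Let $\Sigma'_1,\dots,\Sigma'_m$ be the components of $\overline{S\setminus N(\Sigma)}$; because $\Sigma$ is nice, none of them is a disc or an annulus, so each $\pi_1(\Sigma'_i)$ is a nontrivial free group, and each $c_k$ is a boundary curve of a unique piece $\Sigma'_{i(k)}$. Cutting $S$ along the curves of $\partial\Sigma$ lying in $\mathrm{int}(S)$ exhibits $\pi_1(S)$ as the fundamental group of a graph of groups with vertex groups $\pi_1(\Sigma),\pi_1(\Sigma'_1),\dots,\pi_1(\Sigma'_m)$ and infinite cyclic edge groups, in which every edge joins the $\Sigma$-vertex to some $\Sigma'_i$-vertex. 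Killing $\pi_1(\Sigma)$ also kills all edge groups, since each of them lies in $\pi_1(\Sigma)$; hence $G$ is the fundamental group of the graph of groups obtained by trivializing the $\Sigma$-vertex and all edges, so $G\cong\pi_1(\Sigma'_1)*\cdots*\pi_1(\Sigma'_m)*F_r$ for some $r\ge0$, and the class of each $c_k$ lies in the free factor $\pi_1(\Sigma'_{i(k)})$ as the boundary curve $c_k$.

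The key observation is that each $\Sigma'_i$ has a boundary curve that is \emph{not} among the $c_k$. Since $\partial S\setminus\partial\Sigma\ne\emptyset$ we have $\Sigma\subsetneq S$, and since $S$ is connected and $\Sigma$ is nonempty, every component of $\overline{S\setminus N(\Sigma)}$ has a boundary curve mapped into $\mathrm{int}(S)$ (a parallel copy of a component of $\partial\Sigma$), which is not a boundary component of $S$ and hence not any $c_k$. Therefore, for each $i$, the set $\{c_k: i(k)=i\}$ is a proper subset of the boundary curves of $\Sigma'_i$, and at most $b-1$ of the $b$ boundary curves of a compact surface with boundary extend to a free basis of its fundamental group. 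So I can define $\psi_i\colon\pi_1(\Sigma'_i)\to Q_i:=\prod_{k:\,i(k)=i}\mathbb{Z}/E_k$ by sending each such $c_k$ to the generator of its $\mathbb{Z}/E_k$ summand and every other basis element to $0$, giving $\mathrm{ord}\,\psi_i(c_k)=E_k$. Finally I assemble $\Phi$ as the composite $\pi_1(S)\twoheadrightarrow G=(\ast_i\pi_1(\Sigma'_i))*F_r\to Q:=Q_1\times\cdots\times Q_m$, where $\pi_1(\Sigma'_i)\xrightarrow{\psi_i}Q_i$ is followed by the $i$-th coordinate inclusion and $F_r$ is sent to $1$. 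Then $\pi_1(\Sigma)\subset\ker\Phi$ and $\mathrm{ord}\,\Phi(c_k)=E_k$ for all $k$, and the cover corresponding to $\ker\Phi$ is the desired $\hat S$.

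The genuinely geometric ingredient is the "proper subset" observation of the third paragraph — that a nonempty nice subsurface of a connected surface meets every complementary component along an interior curve — together with the standard fact that a subcollection of fewer than all the boundary curves of a surface with boundary is part of a free basis; everything else is routine bookkeeping with graphs of groups. The hypotheses that $S$ has positive genus and nontrivial boundary and that $\partial S\setminus\partial\Sigma\ne\emptyset$ serve only to keep $\overline{S\setminus N(\Sigma)}$ nonempty and the complementary pieces nondegenerate, so that these two facts apply; the trivial case $n=0$ is handled at the outset by taking $\hat S=S$.
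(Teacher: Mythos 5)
Your reduction to producing a finite quotient $\Phi\colon\pi_1(S)\to Q$ with $\pi_1(\Sigma)\subset\ker\Phi$ and $\mathrm{ord}\,\Phi(c_k)=E_k$ is a legitimate (indeed stronger, since it yields a regular cover) strategy, but the computation of $G=\pi_1(S)/\langle\langle\pi_1(\Sigma)\rangle\rangle$ is wrong, and the proof breaks there. Killing the normal closure of $\pi_1(\Sigma)$ does not merely trivialize the $\Sigma$-vertex and the edge groups of your graph of groups: each edge group is at the same time a boundary subgroup of some $\pi_1(\Sigma'_i)$, so its generator is killed \emph{as an element of} $\pi_1(\Sigma'_i)$ as well. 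The correct quotient is $G\cong\bigl(\ast_i\,\pi_1(\Sigma'_i)/\langle\langle\text{attaching curves}\rangle\rangle\bigr)\ast F_r$, not $\ast_i\pi_1(\Sigma'_i)\ast F_r$. Consequently the map you assemble from the $\psi_i$ need not vanish on $\pi_1(\Sigma)$: your $\psi_i$ kills the "other basis elements," but the attaching curves are in general not part of the chosen basis and are not killed, so $\Phi$ does not descend to $G$.

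This is not a bookkeeping slip that can be fixed inside your framework, because your target $Q=\prod_k\mathbb{Z}/E_k$ is abelian. Take $S$ of genus $2$ with a single boundary component $c_1$, and let $\Sigma\subset S$ be a genus-one subsurface with one boundary curve $d\subset\mathrm{int}(S)$, so that $\Sigma'=\overline{S\setminus\Sigma}$ has genus one and boundary $\{d,c_1\}$; this $\Sigma$ is nice and all hypotheses of the lemma hold with the collection $\{c_1\}$, $E_1=2$. Here $c_1$ is null-homologous in $S$ (it bounds), and in $\pi_1(\Sigma')=\langle x,y,c_1\rangle$ the attaching curve $d$ is conjugate to $[x,y]c_1$ up to inversion; hence any homomorphism of $\pi_1(S)$ to an abelian group that kills $\pi_1(\Sigma)$ (or even just kills nothing at all) sends $c_1$ to the identity, so the order-$E_1$ requirement is unattainable, even though the lemma itself is true in this case. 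A regular-cover argument could be salvaged, but only with nonabelian quotients in which prescribed products of commutators and boundary classes have prescribed orders; that is exactly the difficulty the paper's proof bypasses by attaching a disc with a cone point of order $E_k$ along each $c_k$ and invoking Proposition 7.1 of \cite{KRS} (goodness plus LERF of the resulting orbifold group) to obtain a torsion-free finite-index subgroup containing $\pi_1(\Sigma)$: the corresponding cover is a surface, so every elevation of $c_k$ must unwrap the cone point completely and covers $c_k$ with degree exactly $E_k$.
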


\begin{proof}
  For each boundary component $c_k \subset \partial S$ in the collection, we take a disc with one singular point of cone angle $\frac{2\pi}{E_k}$, and attach it to $S$ along $c_k$. We denote the resulting $2$-orbifold by $\mathcal{O}$, and consider $S$ as a subset of $\mathcal{O}$. Since $\Sigma \subset S$ is a nice subsurface, the inclusion $\Sigma\to \mathcal{O}$ is $\pi_1$-injective. This is because that, no component of $\mathcal{O}\setminus \Sigma$ is a disc with at most one singular point.

  Since the underlying space of $\mathcal{O}$ is a surface with positive genus, $\pi_1(\mathcal{O})$ is a virtually free group or a virtually surface group. Then Proposition 7.1 of \cite{KRS} implies that $\pi_1(\mathcal{O})$ has a torsion-free finite index subgroup $K$ that contains $\pi_1(\Sigma)$. Although Proposition 7.1 of \cite{KRS} assumes that $\Lambda$ (which is $\pi_1(\mathcal{O})$ in our case) is an arithmetic lattice of simplest type, its proof only uses that $\Lambda$ is good and LERF, and these properties hold for virtually free and virtually surface groups.

  The torsion-free finite index subgroup $K<\pi_1(\mathcal{O})$ corresponds to a finite-sheet orbifold cover $\hat{\mathcal{O}}\to \mathcal{O}$, such that $\hat{\mathcal{O}}$ is a surface (with no singular points) and $\Sigma$ lifts to $\hat{\mathcal{O}}$. Let $\hat{S}$ be the preimage of $S$ in $\hat{\mathcal{O}}$, then for each boundary component $c_k$ of $S$ in the collection $\{c_1,\cdots,c_n\}$, any elevation of $c_k$ in $\hat{S}$ covers $c_k$ by degree $E_k$.

The construction is summarized by the following diagram:
\begin{diagram}
& & \hat{S} &\subset & \hat{\mathcal{O}}\\
&\ruInto &\dTo & & \dTo \\
\Sigma &\subset & S &\subset & \mathcal{O}.
\end{diagram}

\end{proof}

Now we work on the partially fibered case. In this case, we take slopes $t_j\subset \pi(C_j)$ and $v_k\subset \pi(P_k)$ to be the regular fiber of $N$.

\begin{proposition}\label{Seifertpartialfiber}
Let $N$ be a Seifert $3$-manifold whose base orbifold is an orientable surface with positive genus and nontrivial boundary, $L<\pi_1(N)$ be a partially fibered subgroup, and $K\subset N_L$ be any compact subset. Let $\mathcal{P}=\{P_1,\cdots,P_n\}$ be a finite collection of plane boundary components of $\partial N_L$. Then there exists $A\in \mathbb{Z}_+$, such that for any $\alpha, \beta_1\cdots,\beta_n\in \mathbb{Z}_+$, there exists an intermediate finite cover $q:\hat{N}\to N$ of $\pi:N_L\to N$, such that the following conditions hold for commutative diagram:
\begin{diagram}
N_L &\rTo^p & \hat{N}\\
&\rdTo_{\pi} &\dTo_{q}\\
& &N.
\end{diagram}
\begin{enumerate}
  \item $p|_{K}:K\to \hat{N}$ is an embedding.
  \item For any two distinct boundary components of $N_L$ such that each of them is either a cylinder or a plane in $\mathcal{P}$, $p$ maps them to distinct boundary components of $\hat{N}$.
  \item For each cylinder $C_j\subset \partial N_L$, $p$ maps its core $c_j$ to a simple closed curve in $p(C_j)$, and $q|_{p(C_j)}:p(C_j)\to \pi(C_j)$ is the finite cover corresponding to $\mathbb{Z}[c_j]\oplus \mathbb{Z}[\alpha At_j]<\pi_1(\pi(C_j))$.
  \item For each plane $P_k\in \mathcal{P}$, $q|_{p(P_k)}:p(P_k)\to \pi(P_k)$ is the finite cover corresponding to $\mathbb{Z}[\beta_kAu_k]\oplus\mathbb{Z}[\alpha A v_k]<\pi_1(\pi(P_k))$.
\end{enumerate}
\end{proposition}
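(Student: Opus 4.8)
The plan is to pass to the base surface, use that the regular fibre is central, and finish with Scott's lemma; it runs parallel to the proof of Proposition~\ref{geofinite}. Since $\mathcal O_N=S$ is a compact orientable surface with nonempty boundary and no cone points, the Seifert fibration of $N$ is a trivial $S^1$-bundle, so $\pi_1(N)\cong\pi_1(S)\times\mathbb Z\langle h\rangle$ with $h$ central. Being partially fibred, $L\cap\langle h\rangle=\{e\}$, hence $L$ is the graph of a homomorphism $m\colon\bar L\to\mathbb Z$, where $\bar L<\pi_1(S)$ is finitely generated of infinite index; the covering $\pi\colon N_L\to N$ restricts to the $\mathbb R$-bundle $N_L\to S_{\bar L}$ over the cover of $S$ determined by $\bar L$, the partially fibred surface $\Sigma_L$ is a section over the convex core, each cylinder $C_j\subset\partial N_L$ lies over a circle boundary component of $S_{\bar L}$ contained in $\Sigma_L$ with $\bar c_j=c_j$ conjugate in $\pi_1(S)$ to a power of the boundary loop $b_{i(j)}$ carried by $\pi(C_j)$, while each plane $P_k$ lies over a line boundary component $L_k$, for which (after suitable conjugation) the peripheral subgroup $\langle b_{i(k)}\rangle<\pi_1(S)$ meets $\bar L$ trivially. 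We take $u_k=b_{i(k)}$ and $t_j=v_k=h$ as allowed by the statement.

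First I fix $A$ by a filling argument on the base surface (the Seifert counterpart of the relatively hyperbolic input of Proposition~\ref{geofinite}, supplied by Lemmas~\ref{infinitecoversurface} and~\ref{orbifoldcover} applied to $\bar L<\pi_1(S)$ with $\mathcal L=\{L_1,\dots,L_n\}$, the latter used to prescribe boundary covering degrees): there is $A\in\mathbb Z_+$, depending only on $N$, $L$ and $\mathcal P$, such that for all $\alpha,\beta_1,\dots,\beta_n\in\mathbb Z_+$ the subgroup $\langle\bar L,b_{i(1)}^{\beta_1A},\dots,b_{i(n)}^{\beta_nA}\rangle<\pi_1(S)$ is the free product $\bar L*\langle b_{i(1)}^{\beta_1A}\rangle*\cdots*\langle b_{i(n)}^{\beta_nA}\rangle$, with every parabolic element conjugate into one of the factors. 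Because $h$ is central and $\bar L\cap\langle b_{i(k)}\rangle=\{e\}$, the assignment that is $m$ on $\bar L$ and trivial on each $\langle b_{i(k)}^{\beta_kA}\rangle$ extends to a homomorphism of this free product to $\mathbb Z$, and
\[ G=\langle\,L,\ h^{\alpha A},\ b_{i(1)}^{\beta_1A},\dots,b_{i(n)}^{\beta_nA}\,\rangle<\pi_1(S)\times\mathbb Z=\pi_1(N) \]
is its graph together with $\langle h^{\alpha A}\rangle$. Hence $G$ is finitely generated, $G\cap\langle h\rangle=\langle h^{\alpha A}\rangle$, and using centrality of $h$ together with a normal-form computation in the free product one gets that $G$ meets the peripheral subgroup $\pi_1(\pi(C_j))$ of $\pi_1(N)$ exactly in $\mathbb Z[c_j]\oplus\mathbb Z[\alpha At_j]$ and the peripheral subgroup $\pi_1(\pi(P_k))$ (for $P_k\in\mathcal P$) exactly in $\mathbb Z[\beta_kAu_k]\oplus\mathbb Z[\alpha Av_k]$, these finitely many peripheral subgroups being pairwise non-conjugate in $G$.

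Now consider the (infinite) cover $N_G\to N$. As $L<G$ the map $\pi$ factors through $N_G$; the image of $C_j$ is the torus boundary component $T_j$ of $N_G$ with $\pi_1(T_j)=\mathbb Z[c_j]\oplus\mathbb Z[\alpha At_j]$, the image of $P_k\in\mathcal P$ is the torus $T_k'$ with $\pi_1(T_k')=\mathbb Z[\beta_kAu_k]\oplus\mathbb Z[\alpha Av_k]$, and the image of $K$ is compact. Let $Y\subset N_G$ be a compact connected set containing this image of $K$ together with all $T_j$ and all $T_k'$. Since Seifert manifold groups are LERF (\cite{Sco2}), $G$ is separable in $\pi_1(N)$, so Lemma~\ref{Scott} yields an intermediate finite cover $q\colon\hat N\to N$ of $N_G\to N$, with $p\colon N_L\to\hat N$, into which $Y$ (hence $K$) embeds; this is (1). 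Since the $T_j,T_k'$ embed and are pairwise non-conjugate in $G$, the $C_j$ and the $P_k$ ($P_k\in\mathcal P$) are carried to pairwise distinct boundary tori $p(C_j)=p(T_j)$, $p(P_k)=p(T_k')$ of $\hat N$, which is (2); and because $T_j$ (resp.\ $T_k'$) embeds, the restriction of $q$ to $p(C_j)$ (resp.\ $p(P_k)$) coincides with the restriction of $N_G\to N$, i.e.\ is the cover of $\pi(C_j)$ corresponding to $\mathbb Z[c_j]\oplus\mathbb Z[\alpha At_j]$ (resp.\ of $\pi(P_k)$ corresponding to $\mathbb Z[\beta_kAu_k]\oplus\mathbb Z[\alpha Av_k]$), with $p(c_j)$ an essential simple closed curve in $p(C_j)$; this gives (3) and (4).

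The point that needs the most care is the second paragraph. Unlike the geometrically finite hyperbolic case, the Seifert fibre forces the common coefficient $\alpha A$ in the fibre direction on every distinguished boundary torus, so one must fill the finitely many selected plane ends \emph{and} the fibre simultaneously and verify that the resulting subgroup $G$ still contains $L$ and meets each of the finitely many distinguished peripheral subgroups in exactly the prescribed lattice; centrality of $h$ (which makes the fibre part automatic and exact) together with the free-product structure on the base surface (which keeps the base parts exact, as in Proposition~\ref{geofinite}) is what makes this work, and embedding the tori $T_j,T_k'$ into $\hat N$ is what prevents the final LERF step from enlarging these intersections. The degenerate cases of Section~\ref{subgroups} in which $\Sigma_L$ is an $\epsilon$-annulus (so that $\bar L$ is a single peripheral infinite cyclic subgroup, or trivial) are handled identically, with $L$ itself cyclic or trivial.
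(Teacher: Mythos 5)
Your approach is genuinely different from the paper's: you transplant the proof of Proposition~\ref{geofinite} to the Seifert setting by writing $\pi_1(N)\cong\pi_1(S)\times\mathbb{Z}\langle h\rangle$, realizing $L$ as the graph of a homomorphism $\bar L\to\mathbb{Z}$, and building the enlarged subgroup $G$ algebraically via the combination theorem on the base surface; the paper instead builds an explicit horizontal surface $\Sigma'$ in an intermediate cover, exhibits it as $\Sigma'\times S^1$, and finishes with Lemma~\ref{orbifoldcover} and a product cover $\hat\Sigma\times s_2$. Your route is viable and arguably cleaner (the centrality of $h$ does make the fibre direction exact, and pinning the peripheral subgroups by embedding the compact tori $T_j,T_k'$ is the same device the paper uses in Proposition~\ref{geofinite}), but as written it has two concrete gaps.

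First, condition (1) is not established. You embed $Y\subset N_G$ into $\hat N$ and conclude ``hence $K$ embeds,'' but $p|_K$ factors through the covering projection $N_L\to N_G$, which is an infinite-degree cover (already $[G:L]\ge\alpha A$ in the fibre direction) and need not be injective on the arbitrary compact set $K$. No choice of $Y$ or of $\hat N$ below $N_G$ can repair this if $K\to N_G$ is not injective. The fix is the preliminary step that opens the paper's proofs of both Propositions~\ref{geofinite} and~\ref{Seifertpartialfiber}: first use separability of $L$ itself to replace $N$ by an intermediate finite cover into which $K$ embeds; afterwards every further intermediate cover of $N_L\to N$ automatically contains an embedded copy of $K$. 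Second, you declare $u_k=b_{i(k)}$ ``as allowed by the statement,'' but the slopes $u_k$ are part of the given data (in the application via Table~5, $u_k$ is the regular fibre of the adjacent Seifert piece, not a boundary curve of the base), and $\mathbb{Z}[\beta_kA\,b_{i(k)}]\oplus\mathbb{Z}[\alpha Ah]$ is in general a different lattice from $\mathbb{Z}[\beta_kAu_k]\oplus\mathbb{Z}[\alpha Ah]$. The remedy is to adjoin $u_k^{\beta_kA}$ rather than $b_{i(k)}^{\beta_kA}$: its projection to $\pi_1(S)$ is still a high power of the peripheral element, so the combination theorem applies, and redoing your normal-form computation with $\mu(u_k^{\beta_kA})$ equal to the fibre coordinate of $u_k^{\beta_kA}$ recovers exactly $\mathbb{Z}[\beta_kAu_k]\oplus\mathbb{Z}[\alpha Av_k]$. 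With these two repairs (and a sentence justifying that the finitely many peripheral subgroups of $G$ are pairwise non-conjugate, which you assert but which deserves the same word of justification the paper gives), the argument goes through.
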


\begin{proof}

  At first, we enlarge $K\subset N_L$ so that the partially fibered subsurface $\Sigma_L \subset N_L$ is contained in $K$. Since $\pi_1(N)$ is LERF, there exists an intermediate finite cover of $N_L\to N$ such that $K$ embeds into this finite cover. By abusing notation, we still use $N$ to denote this finite cover and it satisfies condition (1).

  Let $\mathcal{O}_{N_L}$ be the base orbifold of $N_L$, then distinct boundary components of $N_L$ project to distinct boundary components of $\mathcal{O}_{N_L}$. Let $\mathcal{P}'=\{P'_1,\cdots,P'_n\}$ be the line boundary components of $\mathcal{O}_{N_L}$ corresponding to $\mathcal{P}$, and $\mathcal{C}'$ be the finite collection of circle boundary components of $\mathcal{O}_{N_L}$. By Lemma \ref{infinitecoversurface}, there is an intermediate finite cover $\mathcal{O}'\to \mathcal{O}_N$ of $\mathcal{O}_{N_L}\to \mathcal{O}_N$, such that the following holds.
\begin{enumerate}
\item $\Sigma_L$ embeds into $\mathcal{O}'$ and $\mathcal{O}'\setminus \Sigma_L$ is connected.
\item Any two distinct components of $\mathcal{C}'\cup \mathcal{P}'$ are mapped to distinct boundary components of $\mathcal{O}'$.
\item $\mathcal{O}'$ has a boundary component that is not in the image of $\mathcal{C}'\cup \mathcal{P}'$.
\end{enumerate}

  We take the pull-back bundle of $N\to \mathcal{O}_N$ via covering map $\mathcal{O}'\to \mathcal{O}_N$ and denote the resulting manifold by $N'$. Then $N'$ satisfies condition (2) in this proposition, and so does any further intermediate finite cover of $N_L\to N'$. Moreover, each core $c_j\subset C_j$ is mapped to a simple closed curve in a boundary component of $N'$.

  \bigskip

  Since $K$ contains $\Sigma_L$ and $K$ embeds into $N'$, we can consider $\Sigma_L$ as a subsurface of $N'$. Let $r:N'\to \mathcal{O}'$ be the projection to the base orbifold of $N'$, then $r|_{\Sigma_L}:\Sigma_L\to \mathcal{O}'$ is an embedding. We decompose $N'$ as $N'=N'_+\cup N'_-$, by taking $N'_+=r^{-1}(r(\Sigma_L))$ and $N'_-=\overline{N'\setminus N'_+}$. Then both $N'_+$ and $N'_-$ have induced Seifert structures. Since $\Sigma_L$ is a section of $S^1$-bundle $N'_+$, $N'_+$ is homeomorphic to $\Sigma_L\times S^1$. Since $\mathcal{O}'\setminus \Sigma_L$ is connected, $N'_-=r^{-1}(\overline{\mathcal{O}'\setminus \Sigma_L})$ is connected.

  For each component $T'$ of $\partial N'_-\cap \partial N'_+$, it is equipped with a slope $\Sigma_L\cap T'$. For each component $T'_k\subset \partial N'_-$ that is the image of $P_k\in \mathcal{P}$, it is equipped with a slope $u_k'$, which is an elevation of $u_k\subset \pi(P_k)\subset \partial N$ in $N'$. The above condition (3) on $\mathcal{O}'$ implies that $N'_-$ has a boundary component that lies neither in $\partial N'_+$ nor in the image of $\mathcal{P}$. Since all these slopes on $\partial N'_-$ are not parallel to the regular fiber of $N'_-$, by the theory of horizontal subsurfaces in Seifert spaces, there exists a connected properly embedded horizontal subsurface $\Sigma_-\subset N'_-$, such that its intersection with each component of $(\partial N'_+\cap \partial N'_-)\cup (\cup_{k=1}^nT'_k)$ consists of parallel copies of the above chosen slope. Let $i$ be the algebraic intersection number between $\Sigma_-$ and the regular fiber of $N'_-$. Then for each component $T'\subset \partial N'_+\cap \partial N'_-$, $\Sigma_-\cap T'$ consists of $i$ parallel copies of $\Sigma_L\cap T'$. Then we paste $\Sigma_-$ and $i$ parallel copies of $\Sigma_L$ in $N'_+$ to get a properly embedded horizontal subsurface $\Sigma'\subset N'$, such that each $u_k'$ is parallel to a boundary component of $\Sigma'$.

    So $N'$ has a $\Sigma'$-bundle over $S^1$ structure, i.e. $N'=\Sigma'\times I/\phi$, such that the period of $\phi$ is $i$. For $N''=\Sigma'\times I/\phi^i$, it is homeomorphic to $\Sigma'\times S^1$, and $\Sigma'$ intersects with its regular fiber exactly once. Since $\Sigma_L$ is a subsurface of $\Sigma'$, $N''\to N$ is an intermediate finite cover of $N_L\to N$, and we denote the covering map from $N_L$ to $N''$ by $s:N_L\to N''$. By abusing notation, we use $u_k'$ to denote the boundary component of $\Sigma'$ that lies in the image of $P_k\subset \partial N_L$.

    Let $A_0$ be the induced covering degree of $N''\to N$ on the regular fiber. For each slope $u_k\subset \pi(P_k)$, let $B_k$ be the covering degree of $u_k'\to u_k$. Let $A'$ be the least common multiple of $A_0, B_1,\cdots,B_n$, and let $A=2A'$. For any positive integers $\alpha, \beta_1,\cdots,\beta_n\in \mathbb{Z}_+$, we will construct a finite cover $\hat{N}\to N$ satisfying all desired conditions.

    For each boundary component $u_k'\subset \partial \Sigma'$ corresponding to $P_k\subset \partial N_L$, we associate it with an integer $\beta_k\frac{A}{B_k}\geq 2$. By Lemma \ref{orbifoldcover}, there exists a finite cover $s_1:\hat{\Sigma}\to \Sigma'$ such that $\Sigma_L$ lifts into $\hat{\Sigma}$ and each elevation of $u_k'$ in $\hat{\Sigma}$ has covering degree $\beta_k\frac{A}{B_k}$ to $u_k'$.

  Let $s_2:S^1\to S^1$ be the covering map of degree $\alpha\frac{A}{A_0}$. Then the covering space $\hat{s}=s_1\times s_2:\hat{N}=\hat{\Sigma}\times S^1\to N''=\Sigma'\times S^1$ is an intermediate cover of $N_L\to N''$. Since $\Sigma'$ intersects with the regular fiber of $N''$ exactly once, for each cylinder $C_j\subset \partial N_L$, $s(C_j)\subset \partial N''$ is the finite cover of $\pi(C_j)\subset \partial N$ corresponding to $\mathbb{Z}[c_j]\oplus \mathbb{Z}[A_0t_j]<\pi_1(\pi(C_j))$. For each plane $P_k\subset \partial N_L$, $s(P_k)\subset \partial N''$ is the finite cover of $\pi(P_k)\subset \partial N$ corresponding to $\mathbb{Z}[B_ku_k]\oplus \mathbb{Z}[A_0v_k]<\pi_1(\pi(P_k))$. Since the covering map $\hat{N}\to N''$ preserves the product structure, it is easy to check that $\hat{N}$ satisfies all desired conditions.

The constructions in this proof is summarized by the following diagram.
\begin{diagram}
                & & & &\hat{\Sigma}\times \{1\} & & \subset & &  \hat{\Sigma} \times S^1 & \rTo & \Sigma'\times S^1   & \rTo &  \Sigma'\times  I/\phi & &\\
                & &\ruInto(3,2)  & &  & & &  \ldTo_{\cong}       &         &                 & \dTo_{\cong} &    & \dTo_{\cong}& &\\
\Sigma_L & \rInto & N_L & \rTo(3,1) & & & \hat{N} & & \rTo(3,1)^{\hat{s}} & & N'' & \rTo & N' & \rTo &N \\
               & \rdInto& \dTo &      &              &          & &   & &            &        &        & \dTo_{r}& & \dTo \\
               &           &  \mathcal{O}_{N_L} & \rTo(7,1) & & & & & & & & & \mathcal{O}' & \rTo & \mathcal{O}_N
\end{diagram}
\end{proof}

It remains to deal with the case that the induced bundle structure on $N_L$ is an $S^1$-bundle over a noncompact $2$-orbifold. In this case, for each cylinder boundary component $C_j\subset \partial N_L$, its core $c_j$ is mapped to a power of the regular fiber of $N$. Let the regular fibered of $N$ be denoted by $h$, then the image of $c_j$ equals $xh$ for some $x\in \mathbb{Z}_+$, which is independent of boundary component $C_j$.

\begin{proposition}\label{S1bundle}
Let $N$ be a Seifert $3$-manifold whose base orbifold is an orientable surface with positive genus and nontrivial boundary, $L<\pi_1(N)$ be a finitely generated infinitely index subgroup that contains a power of the regular fiber, and $K\subset N_L$ be any compact subset. Let $\mathcal{C}=\{C_1,\cdots,C_m\}$ be a finite collection of cylinder boundary components of $N_L$. Then there exists $A\in \mathbb{Z}_+$, such that for any $\alpha_1\cdots,\alpha_m\in \mathbb{Z}_+$, there exists an intermediate finite cover $q:\hat{N}\to N$ of $\pi:N_L\to N$, such that the following conditions hold for commutative diagram:\begin{diagram}
N_L &\rTo^p & \hat{N}\\
&\rdTo_{\pi} &\dTo_{q}\\
& &N.
\end{diagram}
\begin{enumerate}
  \item $p|_{K}:K\to \hat{N}$ is an embedding.
  \item For any two distinct boundary components of $N_L$ such that each of them is either a torus or a cylinder in $\mathcal{C}$, $p$ maps them to distinct boundary components of $\hat{N}$.
  \item For each torus boundary component $T_i\subset \partial N_L$, $p|_{T_i}:T_i\to p(T_i)$ is a homeomorphism.
  \item For each cylinder $C_j\in \mathcal{C}$, $p$ maps $c_j$ to a simple closed curve in $p(C_j)$, and $q|_{p(C_j)}:p(C_j)\to \pi(C_j)$ is the finite cover corresponding to $\mathbb{Z}[c_j]\oplus \mathbb{Z}[\alpha_j At_j]<\pi_1(\pi(C_j))$.
\end{enumerate}
\end{proposition}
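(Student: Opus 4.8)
The plan is to use separability of $L$ (recall that Seifert manifold groups are LERF, by Scott) to reduce to the case where $N$ is a trivial circle bundle $S\times S^1$ with $L$ also respecting the product splitting, and then to build $\hat N$ as $\hat S\times S^1$ for a carefully chosen finite cover $\hat S\to S$. Since $S=\mathcal O_N$ is an orientable surface with nonempty boundary, the bundle $N\to S$ is trivial, so $\pi_1(N)=\pi_1(S)\times\langle h\rangle$; let $\bar L<\pi_1(S)$ be the image of $L$, which is finitely generated of infinite index, and let $d\ge1$ be determined by $L\cap\langle h\rangle=\langle h^d\rangle$. First I would enlarge $K$ so that it is path-connected, contains all torus boundary components $T_1,\dots,T_l$, meets every $C_j$ in the given collection, and contains $r^{-1}(\Sigma)$, where $r:N_L\to\mathcal O_{N_L}=S_{\bar L}$ is the bundle projection and $\Sigma\subset S_{\bar L}$ is the convex core. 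Since $h^{d/p}\notin L$ for every prime $p\mid d$, separability of $L$ produces an intermediate finite cover of $N_L\to N$ whose fundamental group avoids these finitely many elements and therefore meets $\langle h\rangle$ exactly in $\langle h^d\rangle$; moreover this last property persists under passing to any further intermediate cover of $N_L$, since such a group still contains $L$ and is still contained in the one just found. Renaming this cover $N$, its regular fiber is $h^d$, so $L$ now contains the whole fiber of $N$, whence $L=\bar L\times\langle h\rangle$ (with $h$ the new fiber) and $N_L=S_{\bar L}\times S^1$. In this model $T_i=\tau_i\times S^1$ with $\tau_i$ a circle boundary component of $S_{\bar L}$ lying in $\Sigma$; $C_j=\ell_j\times S^1$ with $\ell_j$ a line boundary component of $S_{\bar L}$ disjoint from $\Sigma$ and covering a boundary component $b_j\subset\partial S$; the core $c_j$ is a fiber, so $\pi(C_j)=b_j\times S^1$ and $\pi(c_j)=h$; we take $t_j=b_j\times\{\mathrm{pt}\}$.

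The second step secures the embedding of $K$ before tampering with the boundary. Put $K_1=r(K)\subset S_{\bar L}$; enlarging $K$ once more we may assume $K_1$ is a compact subsurface containing $\Sigma$ and all the $\tau_i$, so $\pi_1(K_1)=\bar L$. By LERF of $\pi_1(S)$ and Scott's lemma there is an intermediate finite cover $S_0\to S$ of $S_{\bar L}\to S$ into which $K_1$ embeds. Applying Lemma \ref{infinitecoversurface} to $S_0$, $\bar L$, and the finite collection $\{\ell_1,\dots,\ell_m\}$ of line boundary components of $S_{\bar L}$, I obtain an intermediate finite cover $S'\to S_0$ of $S_{\bar L}\to S_0$ in which $\Sigma$ embeds with $S'\setminus\Sigma$ connected, the circle boundary components of $S_{\bar L}$ together with $\ell_1,\dots,\ell_m$ map to pairwise distinct boundary components of $S'$, and some boundary component of $S'$ is missed by all of them. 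Since $\pi_1(K_1)=\bar L=\pi_1(\Sigma)\subset\pi_1(S')$, the lift of $K_1$ from $S_0$ to $S'$ has fundamental group $\bar L$, hence is degree one, so $K_1$ embeds in $S'$ as well. Let $\ell_j'\subset\partial S'$ be the circle onto which $\ell_j$ maps: the $\ell_j'$ are pairwise distinct and distinct from every boundary component of $\Sigma$, so $\ell_j'\in\partial S'\setminus\partial\Sigma$, and $\ell_j'$ covers $b_j$ with some degree $\mu_j$.

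Now I would tune the degrees. Being the convex core of the infinite cover $S_{\bar L}\to S'$ that happens to be embedded, $\Sigma$ is a nice subsurface of $S'$, and $S'$, which covers $S$ and (arranging an initial finite cover of $N$ if needed, harmlessly) still has positive genus and nonempty boundary, so Lemma \ref{orbifoldcover} applies. Let $A$ be twice the least common multiple of $\mu_1,\dots,\mu_m$, an integer depending only on $N$, $L$ and the collection $\{C_1,\dots,C_m\}$; for any positive integers $\alpha_1,\dots,\alpha_m$ the integers $E_j=\alpha_jA/\mu_j$ are each $\ge2$, so Lemma \ref{orbifoldcover} yields a finite cover $\hat S\to S'$ into which $\Sigma$ (hence, by the same argument as above, $K_1$) embeds, with every elevation of $\ell_j'$ covering $\ell_j'$ by degree $E_j$; in particular $\hat S$ is intermediate over $S_{\bar L}$ and the circle $\hat\ell_j\subset\partial\hat S$ to which $\ell_j$ maps covers $b_j$ with degree $E_j\mu_j=\alpha_jA$. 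Finally set $\hat N=\hat S\times S^1$, the circle factor being the fiber $h$; this is a finite cover of $N=S\times S^1$ and, since $\hat S$ is intermediate over $S_{\bar L}$, an intermediate finite cover of $N_L=S_{\bar L}\times S^1\to N$, automatically with regular fiber $h^d$. Then $K\subset K_1\times S^1$ embeds in $\hat N$ because $K_1$ embeds in $\hat S$; distinct $\tau_i$'s and $\ell_j$'s have disjoint preimages in $\hat S$ with each $\tau_i$ mapping by degree one (being inside the embedded $\Sigma$), so distinct $T_i,C_j$ go to distinct boundary components of $\hat N$ and each $p|_{T_i}$ is a homeomorphism; and $p$ sends the fiber $c_j$ to a simple closed curve in $p(C_j)=\hat\ell_j\times S^1$, with $q|_{p(C_j)}:\hat\ell_j\times S^1\to b_j\times S^1=\pi(C_j)$ equal to the degree-$\alpha_jA$ cover of $b_j$ crossed with the identity, i.e.\ the cover of $\pi(C_j)$ corresponding to $\mathbb Z[c_j]\oplus\mathbb Z[\alpha_jAt_j]$. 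This verifies (1)--(4); passing the conclusions back through the renaming of $N$ is as in the proofs of Propositions \ref{geofinite} and \ref{Seifertpartialfiber}.

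The main obstacle I anticipate is the clash between requiring $K$ to embed in $\hat N$ (which, handled directly by LERF, forces passage to an uncontrolled finite cover) and requiring the covering degree of $p(C_j)\to\pi(C_j)$ to equal precisely $\alpha_jA$ (which an uncontrolled further cover would inflate). The plan resolves this by securing the $K$-embedding first, at the level of $S_0$, and using $\pi_1(K_1)=\bar L$ to see that $K_1$ continues to embed in every intermediate cover of $S_{\bar L}\to S_0$, only then adjusting the boundary degrees through Lemmas \ref{infinitecoversurface} and \ref{orbifoldcover}, whose outputs are all intermediate over $S_{\bar L}$. A second point is forcing the regular fiber of the Seifert piece to be exactly the power $h^d$ of the fiber lying in $L$: this is what makes $c_j$ primitive in $p(C_j)$ and $p|_{T_i}$ a homeomorphism, and it is both arranged and preserved by the separability step at the start.
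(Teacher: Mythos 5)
Your overall route is the same as the paper's: normalize by LERF so that $L$ contains the regular fiber and $K$ embeds, work on the base surface via Lemma \ref{infinitecoversurface}, control the boundary degrees via Lemma \ref{orbifoldcover}, and pull the resulting surface cover back to an $S^1$-bundle. Your explicit trivialization $N\cong S\times S^1$, $L=\bar L\times\langle h\rangle$, $N_L=S_{\bar L}\times S^1$ is a cleaner bookkeeping of what the paper achieves with its ``restriction of $N_L\to N'$ on regular fibers has degree $1$'' step and its pull-back bundles, and your two-stage handling of the $K$-embedding (secure it first, then note that every further intermediate cover of $S_{\bar L}$ still embeds $K_1$ because the composite to $S_0$ is injective) is sound.

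There is, however, one genuine gap: you write ``we take $t_j=b_j\times\{\mathrm{pt}\}$,'' i.e.\ you \emph{choose} $t_j$ to be the section slope, whereas in the proposition $t_j$ is a slope on $\pi(C_j)$ fixed in advance (in Section 5's preamble) and prescribed downstream by Table 4 to be, e.g., the degeneracy slope or the regular fiber of the \emph{adjacent} piece. For a general such $t_j$, writing its elevation in the (renamed) boundary torus as $y_jb_j+z_jh$ with $y_j=\langle t_j,h\rangle\neq 0$, one has $\mathbb{Z}[c_j]\oplus\mathbb{Z}[\alpha_jAt_j]=\mathbb{Z}[c_j]\oplus\mathbb{Z}[\alpha_jA|y_j|\,b_j]$, so your cover, which realizes $\mathbb{Z}[c_j]\oplus\mathbb{Z}[\alpha_jA\,b_j]$, is the required one only when $|y_j|=1$. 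The repair is exactly the factor the paper calls $\gamma_j$: take $E_j=\alpha_jA\,|\langle t_j,h\rangle|/\mu_j$ in your application of Lemma \ref{orbifoldcover} (or, equivalently, post-compose your statement with Lemma \ref{Z2} to trade $b_j$ for $t_j$ at the cost of enlarging the constant $A$). Since $|\langle t_j,h\rangle|$ is a constant of the data and not of the parameters $\alpha_j$, this does not disturb the quantifier structure, but as written your argument proves a strictly weaker statement than the one that is invoked in Lemma \ref{constructionontree}.
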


\begin{proof}

Since $\pi_1(N)$ is LERF, there exists an intermediate finite cover $N'\to N$ of $N_L\to N$, such that $K$ embeds into $N'$. By LERFness again, we can assume that the restriction of $N_L\to N'$ on regular fibers has degree $1$,  and each torus boundary component of $N_L$ is mapped to $N'$ by a homeomorphism. This $N'$ satisfies condition (1), (3) and the first half of (4) in this proposition. We abuse notation and still denote this manifold by $N$.

Let $\mathcal{O}_N$ be the base orbifold of $N$, and $\mathcal{O}_{N_L}$ be the base orbifold of $N_L$. Then we apply Lemma \ref{infinitecoversurface} to get an intermediate finite cover $\mathcal{O}'\to \mathcal{O}_N$ of $\mathcal{O}_{N_L}\to \mathcal{O}_N$, such that the following holds.
\begin{enumerate}
\item $\Sigma_L\subset \mathcal{O}_{N_L}$ embeds into $\mathcal{O}'$.
\item All circle boundary components of $\mathcal{O}_{N_L}$ and all of its line boundary components corresponding to $\mathcal{C}$ are mapped to distinct boundary components of $\mathcal{O}'$.
\end{enumerate}
We take the pull-back bundle of $N\to \mathcal{O}_N$ via covering map $\mathcal{O}'\to \mathcal{O}_N$, and denote it by $N'$. Since $\Sigma_L$ embeds into $\mathcal{O'}$, $N'\to N$ is an intermediate finite cover of $N_L\to N$. It is easy to see that $N'\to N$ satisfies the first three conditions in this proposition.

Let $s:N_L\to N'$ be the covering map. Then for any two distinct $C_i,C_j\in \mathcal{C}$, $s(C_i)$ and $s(C_j)$ are distinct boundary components of $N'$. Let $s_j$ be the boundary component of $\mathcal{O}'$ corresponding to $s(C_j)$, and let $t'_j$ be an elevation of $t_j\subset \pi(C_j)\subset \partial N$ in $s(C_j)\subset \partial N'$.

Let $A_j$ be the covering degree of $t'_j\to t_j$, and let $\gamma_j$ be the algebraic intersection number between $t'_j$ and the regular fiber of $N'$. Let $A'$ be the least common multiple of $A_1,\cdots,A_m$, and let $A=2A'$. For any positive integers $\alpha_1,\cdots,\alpha_m\in \mathbb{Z}_+$, and any component $s_j\subset \partial \mathcal{O}'$ corresponding to $s(C_j)\subset \partial N'$, we associate $s_j$ with a positive integer $\alpha_j\gamma_j\frac{A}{A_j}\geq 2$.

Then Lemma \ref{orbifoldcover} provides us an intermediate finite cover $\hat{\mathcal{O}}\to \mathcal{O}'$ of $\mathcal{O}_{N_L}\to \mathcal{O}'$, such that $\Sigma_L$ lifts into $\hat{\mathcal{O}}$ and each elevation of $s_j\subset \partial \mathcal{O}'$ in $\hat{\mathcal{O}}$ has covering degree $\alpha_j\gamma_j\frac{A}{A_j}$ to $s_j$. Let $\hat{N}$ be the pull-back bundle of $N'\to \mathcal{O}'$ via covering map $\hat{\mathcal{O}}\to \mathcal{O}'$, and we use $p:N_L\to \hat{N}$ to denote the covering map.

For each $j$, $p(C_j)\subset \partial \hat{N}$ is a covering space of $s(C_j)\subset \partial N'$, such that its restriction on regular fiber has degree $1$ and the covering degree on base space is $\alpha_j\gamma_j\frac{A}{A_j}$. Since the algebraic intersection number between $t'_j$ and the regular fiber in $s(C_j)\subset \partial N'$ is $\gamma_j$, and the covering degree on base space is a multiple of $\gamma_j$, each elevation of $t'_j$ in $p(C_j)\subset \partial \hat{N}$ (denoted by $\hat{t}_j$) has algebraic intersection number $1$ with the regular fiber of $\hat{N}$, and the covering degree from $\hat{t}_j$ to $t'_j$ is $\alpha_j\frac{A}{A_j}$.

Since $\hat{t}_j$ has algebraic intersection number $1$ with the regular fiber of $\hat{N}$, the fundamental group of $p(C_j)\subset \partial \hat{N}$ is generated by the regular fiber and $\hat{t}_j$. Since the covering degree from $t'_j$ to $t_j$ is $A_j$, the covering degree from $\hat{t}_j$ to $t_j$ is $A_j\cdot (\alpha_j\frac{A}{A_j})=\alpha_jA$. So condition (4) holds for $\hat{N}$.

The constructions in this proof is summarized by the following diagram:
\begin{diagram}
\Sigma_L & \rInto & N_L                            & \rTo^p        & \hat{N} & \rTo & N'                   & \rTo &N \\
               & \rdInto& \dTo                          &             &        \dTo      &         & \dTo       &         & \dTo \\
               &            &  \mathcal{O}_{N_L} & \rTo         & \hat{\mathcal{O}}     & \rTo   & \mathcal{O}' & \rTo  & \mathcal{O}_N
\end{diagram}

\end{proof}

For the readers' convenience, we summary the results in Proposition \ref{hypfiber}, \ref{geofinite}, \ref{Seifertfiber}, \ref{Seifertpartialfiber} and \ref{S1bundle} in Table 3.

In Table 3, $L<\pi_1(N)$ is a finitely generated infinite index subgroup, $\pi:N_L\to N$ is the covering space corresponding to $L$, $C\subset \partial N_L$ is a cylinder boundary component and $P\subset \partial N_L$ is a plane boundary component. The second (or third) column of the table should be read as: for any compact subset $K\subset N_L$, there exists a positive integer $A$ (or $A_C$) such that for any positive integer $\alpha$ (or $\alpha_C,\beta_P, \delta_P$), there exists an intermediate finite cover $\hat{N}\to N$ of $N_L\to N$ with covering map $p:N_L\to \hat{N}$, such that $K$ is embedded into $\hat{N}$, and $p(C)$ (or $p(P)$) is the finite cover of $\pi(C)$ (or $\pi(P)$) corresponding to the subgroup given in the table. In the table, when we write $A$ or $\alpha$, we mean it is a positive integer only depends on $N$; and when we write $A_C$ or $\alpha_C$ ($\beta_P,\delta_P$), we mean it is a positive integer that also depends on the boundary component $C$ (or $P$). In the following, we will call $A$ and $A_C$ "constants" of $(N,L,K)$, and call $\alpha$, $\alpha_C, \beta_P,\delta_P$ "parameters" of $(N,L,K)$.

\begin{table}\label{summary}
\begin{center}
\begin{tabular}{ |c |c |c |}
\hline
   \multirow{3}{*} {\backslashbox{$L<\pi_1(N)$}{$\partial$ comp. of $N_L$}}& cylinder $C$ with core $c$ & plane $P$ with two  \\
   & and slope $t\subset \pi(C)$ & nonparallel slopes\\
   & not parallel to $\pi(c)$ &   $u,v\subset \pi(P)$ \\
  \hline
 \multirow{2}{*} {$L$ is virtually fibered} & $\exists A_C$ such that for $\forall \alpha$ &  \multirow{2}{*} {$\times$}\\
  & $\mathbb{Z}[c]\oplus \mathbb{Z}[\alpha A_C t]$ & \\
 \hline
 $N$ is hyperbolic and  &$\exists A$ such that for $\forall \alpha_C$ & $\exists A$ such that for $\forall \beta_P,\delta_P$\\
 $L$ is geom. finite & $\mathbb{Z}[c]\oplus \mathbb{Z}[\alpha_CA t]$& $\mathbb{Z}[\beta_PA u]\oplus \mathbb{Z}[\delta_PA v]$\\
 \hline
 $N$ is Seifert and  & $t$ is the regular fiber & $v$ is the regular fiber\\
 $L$ is partially fibered & $\exists A$ such that for $\forall \alpha$ & $\exists A$ such that for $\forall \beta_P,\alpha$\\
  & $\mathbb{Z}[c]\oplus \mathbb{Z}[\alpha A t]$& $\mathbb{Z}[\beta_PA u]\oplus \mathbb{Z}[\alpha A v]$\\
  \hline
  $N$ is Seifert and  & $\exists A$ such that for $\forall \alpha_C$ & \multirow{2}{*} {$\times$} \\
  $N_L$ is an $S^1$-bundle & $\mathbb{Z}[c]\oplus \mathbb{Z}[\alpha_CA t]$& \\
  \hline
\end{tabular}
\caption{Summary on constructions of finite covers in various cases.
In the fourth row, the two $\alpha$'s in the second and third columns are same with each other.}
\end{center}
\end{table}

\section{Aspirality implies separability}\label{combinatorics}

For the proof of "aspirality implies separability" part in Theorem \ref{main}, we first assume the $3$-manifold $M$ has empty or tori boundary, then the general case follows from this special case by a filling argument in Section \ref{filling}.

Let $M$ be a compact, orientable, irreducible $3$-manifold with empty or tori boundary that does not support a geometric structure, and $H<\pi_1(M)$ be a finitely generated infinite index subgroup. Let $\pi:M_H\to M$ be the covering space of $M$ corresponding to $H<\pi_1(M)$, and $K\subset M_H$ be an arbitrary compact subset. If $H$ is aspiral in $\pi_1(M)$, to prove $H$ is separable in $\pi_1(M)$, it suffices to prove that $K$ embeds in an intermediate finite cover of $M_H\to M$.

Let $M_K$ be the union of all pieces of $M_H$ that intersect with $K$, and we denote its dual graph by $G_K$. Then our strategy to prove the separability of $H$ consists of following steps.
\begin{enumerate}
  \item Enlarge $K$ such that $K$ is path-connected and contains a Scott core of $M_H$.
  \item Construct a finite semi-cover $N\to M$ such that $K$ embeds into $N$ and the dual graph of $N$ is isomorphic to $G_K$.
  \item Take an intermediate finite cover $M'\to M$ of $M_H\to M$, such that $N$ is a submanifold of $M'$ and the restriction of the covering map $M'\to M$ on $N$ is the semi-covering map in (2).
\end{enumerate}

The first step is obvious, and the third step follows from that subgroups of finite semi-covers are separable (Lemma \ref{semicover}). So we will focus on the second step. For each piece $V_K\subset M_K$, we denote its image in $M$ by $V$, which is a piece of $M$. Then the propositions in Section \ref{casebycase} construct many intermediate finite covers $\hat{V}\to V$ of $V_K\to V$ such that $K\cap V_K$ is mapped into $\hat{V}$ via embedding. For each $V_K$, we need to choose one of its intermediate finite cover, such that these finite covers of pieces of $M$ are compatible with each other on the boundary. Then we can paste them together to get the desired finite semi-cover $N\to M$, such that $K$ embeds into $N$.

\subsection{All possible cases of an edge space in $M_K$ and its two adjacent pieces}

For each edge space $E_K\subset M_K$, there are two vertex spaces $V_K,V_K'\subset M_K$ adjacent to $E_K$. The edge space $E_K$ can be a torus, a cylinder, or a plane. Let $\pi:M_H\to M$ be the covering map, then $\pi(E_K)$ is a decomposition torus in $M$.

Now we consider the pair $(V_K,E_K)$. When $E_K=C$ is a cylinder, in Proposition \ref{hypfiber}, \ref{geofinite}, \ref{Seifertfiber}, \ref{Seifertpartialfiber} and \ref{S1bundle}, we take a slope $t\subset \pi(E_K)$ not parallel to $\pi(c)$ (here $c$ is the core of $C$). In Proposition \ref{Seifertpartialfiber}, we require that $t$ is the regular fiber of $\pi(V_K)=V\subset M$, otherwise $t$ can be any slope not parallel to $\pi(c)$. When $E_K=P$ is a plane, in Proposition \ref{geofinite} and \ref{Seifertpartialfiber}, we take two nonparallel slopes $u,v\subset \pi(E_K)$. In Proposition \ref{Seifertpartialfiber}, we require that $v$ is the regular fiber of $\pi(V_K)$ and $u$ can be any slope not parallel to $v$; otherwise $u,v$ can be any two non-parallel slopes.

When we prove "aspirality implies separability", for each vertex piece $V_K\subset M_K$ and its image $V\subset M$, we first need to choose slopes on $\partial V$, so that we can apply the propositions in Section \ref{casebycase} to construct an intermediate finite cover $\hat{V}\to V$ of $V_K\to V$. In this construction, we need to use the information of $V'_K$ to choose slopes on $\pi(E_K)$: one slope $t\subset \pi(E_K)$ when $E_K$ is a cylinder, and two nonparallel slopes $u,v\subset \pi(E_K)$ when $E_K$ is a plane.

In Table 4 and Table 5, we specify our choice of these slopes $t$ and $u,v$ on $\pi(E_K)\subset \pi(V_K)$ (from $V_K$ viewpoint). In priori, we may choose different slopes on $\pi(E_K)\subset \pi(V_K')$ (from $V_K'$ viewpoint). Table 4 deals with the case that $E_K$ is a cylinder, and Table 5 deals with the case that $E_K$ is a plane.

\begin{table}\label{slopeoncylinder}
\begin{center}
\begin{tabular}{ |c |c |c | c| c| c|}
\hline
   \multirow{2}{*} {\backslashbox{$V_K$}{$V'_K$}}& hyperbolic  & hyperbolic & Seifert  & Seifert  & Seifert \\
    & virtual fiber & geom. finite & virtual fiber & partial fiber & $S^1$ bundle \\
  \hline
 hyperbolic &  degeneracy  &  degeneracy & degeneracy & degeneracy& degeneracy \\
 virtual fiber & slope of $\pi(V_K)$ & slope of $\pi(V_K)$ & slope of $\pi(V_K)$& slope of $\pi(V_K)$ & slope of $\pi(V_K)$ \\
 \hline
 hyperbolic  & degeneracy & \multirow{2}{*} {any slope}& regular fiber& regular fiber&\multirow{2}{*} {any slope}\\
 geom. finite & slope of $\pi(V'_K)$& & of $\pi(V'_K)$& of $\pi(V'_K)$ &\\
 \hline
 Seifert  & regular fiber& regular fiber& regular fiber& regular fiber& regular fiber\\
 virtual fiber & of $\pi(V_K)$& of $\pi(V_K)$& of $\pi(V_K)$& of $\pi(V_K)$& of $\pi(V_K)$\\
   \hline
  Seifert   & regular fiber& regular fiber& regular fiber& regular fiber& regular fiber\\
  partial fiber & of $\pi(V_K)$& of $\pi(V_K)$& of $\pi(V_K)$& of $\pi(V_K)$& of $\pi(V_K)$\\
  \hline
  Seifert  & degeneracy& \multirow{2}{*} {any slope}& regular fiber& regular fiber&\multirow{2}{*} {$\times$}\\
  $S^1$ bundle & slope of $\pi(V'_K)$& & of $\pi(V'_K)$& of $\pi(V'_K)$&\\
  \hline
\end{tabular}
\caption{When $E_K$ is a cylinder and $c$ is the core of $E_K$, the choice of a slope $t\subset \pi(E_K)\subset \pi(V_K)$ not parallel to $\pi(c)$ (from $V_K$ viewpoint).}
\end{center}
\end{table}

\begin{table}\label{slopesonplane}
\begin{center}
\begin{tabular}{ |c |c |c |}
\hline
   \multirow{2}{*} {\backslashbox{$V_K$}{$V'_K$}}&  hyperbolic & Seifert  \\
    &  geom. finite &  partial fiber \\
 \hline
 hyperbolic  & any two& $u$ is regular fiber of $\pi(V'_K)$,\\
 geom. finite & nonparallel slopes&  $v$ is any other slope\\
   \hline
  Seifert   & $v$ is regular fiber of $\pi(V_K)$,& $u$ is regular fiber of $\pi(V'_K)$,\\
  partial fiber & $u$ is any other slope& $v$ is regular fiber of $\pi(V_K)$\\
  \hline
\end{tabular}
\caption{When $E_K$ is a plane, the choice of two nonparallel slopes $u,v\subset \pi(E_K)\subset \pi(V_K)$ (from $V_K$ viewpoint).}
\end{center}
\end{table}

In some cases in Table 4, we take the degeneracy slope for hyperbolic virtually fibered subgroups, which is defined in \cite{GK}. For any (isotopy class of) pseudo-Anosov map, we take a representative $\phi:S\to S$ in this isotopy class such that $\phi$ preserves a pair of measured singular foliations on $S$. For the mapping torus $M_{\phi}=S\times I/\phi$, the suspensions of the above singular foliations give a slope on each boundary component $T\subset \partial M_{\phi}$, and it is the {\it degeneracy slope} of $M_{\phi}$ on $T$.

In the case that $E_K$ is a cylinder, when we switch the role of $V_K$ and $V'_K$, Table 4 gives us another slope $t'\subset \pi(E_K)\subset \pi(V_K')$. By checking Table 4, if both $V_K$ and $V'_K$ are "hyperbolic virtually fibered", "Seifert virtually fibered" or "Seifert partially fibered", it is possible that $t\ne t'$. Otherwise, we can and will choose these slopes such that $t=t'$.

In the case that $E_K$ is a plane, when we switch the role of $V_K$ and $V'_K$, Table 5 gives us another pair of slopes $u',v'\subset \pi(E_K)\subset \pi(V_K')$. By checking Table 5, we can and will choose these two slopes such that $u=v'$ and $v=u'$ in all cases.

So only in the case that $E_K$ is a cylinder and both $V_K,V'_K$ are "hyperbolic virtually fibered", "Seifert virtually fibered" or "Seifert partially fibered", the two choices of slopes on $\pi(E_K)$ are not compatible with each other. The following simple lemma takes care of this non-compatible issue.

\begin{lemma}\label{Z2}
Let $c,t,t'\in \mathbb{Z}^2$ be three nontrivial elements, such that they generate finite index subgroups $\mathbb{Z}[c]\oplus \mathbb{Z}[t],\mathbb{Z}[c]\oplus \mathbb{Z}[t']<\mathbb{Z}^2$. Then there exist $B,B'\in \mathbb{Z}_+$, such that for any $\alpha\in \mathbb{Z}_+$, $\mathbb{Z}[c]\oplus \mathbb{Z}[\alpha Bt]$ and $\mathbb{Z}[c]\oplus \mathbb{Z}[\alpha B't']$ are the same subgroup of $\mathbb{Z}^2$.
\end{lemma}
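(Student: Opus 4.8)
The plan is to reduce the statement to an elementary computation in the two-dimensional lattice $\mathbb{Z}^2$. First I would record the standing hypothesis in usable form: since $\mathbb{Z}[c]\oplus\mathbb{Z}[t]$ has finite index in $\mathbb{Z}^2$, the pair $\{c,t\}$ is a $\mathbb{Q}$-basis of $\mathbb{Q}^2$, and likewise for $\{c,t'\}$. In particular both $t$ and $t'$ are linearly independent from $c$ over $\mathbb{Q}$. Writing everything in terms of the basis obtained by completing $c$ to a basis of $\mathbb{Z}^2$ is convenient: choose a primitive vector $e\in\mathbb{Z}^2$ together with the primitive vector $c_0$ underlying $c$ (so $c=kc_0$ for some $k\in\mathbb{Z}_+$) so that $\{c_0,e\}$ is a $\mathbb{Z}$-basis of $\mathbb{Z}^2$. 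Then $t=pc_0+qe$ and $t'=p'c_0+q'e$ with $p,q,p',q'\in\mathbb{Z}$, and linear independence from $c$ forces $q\neq 0$ and $q'\neq 0$.

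The key observation is that, modulo the subgroup $\mathbb{Z}[c]=\mathbb{Z}[c_0\cdot k]$, what matters about $t$ and $t'$ is only their $e$-coordinates $q$ and $q'$ — more precisely, the subgroup $\mathbb{Z}[c]\oplus\mathbb{Z}[mt]$ depends on $m$ only through how $\mathbb{Z}[c]$ and $\mathbb{Z}[mt]$ together span $\mathbb{Z}^2$; since $mt\equiv mqe\pmod{\mathbb{Z}[c_0]}$ and $\mathbb{Z}[c]$ already controls the $c_0$-direction up to index $k$, one checks that $\mathbb{Z}[c]\oplus\mathbb{Z}[mt]=\mathbb{Z}[c]\oplus\mathbb{Z}[m't']$ as soon as $mq=m'q'$ as integers and the two ``$c_0$-direction'' contributions match. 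Concretely, I would take $B=|q'|$ and $B'=|q|$. Then for any $\alpha\in\mathbb{Z}_+$, both $\alpha Bt=\alpha q' t$ and $\alpha B't'=\alpha q t'$ have the same $e$-coordinate $\alpha q q'$ (up to sign, which is irrelevant since $\mathbb{Z}[\cdot]$ is symmetric), and differ only by a vector in the $c_0$-direction, i.e. $\alpha Bt-\alpha B't'=\alpha(q'p-qp')c_0$. Since $\alpha(q'p-qp')c_0$ need not lie in $\mathbb{Z}[c]=\mathbb{Z}[kc_0]$, a single direct choice of $B,B'$ as above may not literally work, so the honest fix is to instead set $B=k|q'|$ and $B'=k|q|$ (or more symmetrically absorb a further factor), which forces $\alpha Bt-\alpha B't'\in\mathbb{Z}[kc_0]=\mathbb{Z}[c]$; then $\mathbb{Z}[c]\oplus\mathbb{Z}[\alpha Bt]$ and $\mathbb{Z}[c]\oplus\mathbb{Z}[\alpha B't']$ contain each other's generators and hence coincide.

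The only mild obstacle is bookkeeping: one must be careful that $c$ itself may be non-primitive, so the ``$c_0$-direction'' adjustments really do need the factor $k$, and one should double-check that after introducing the factor $\alpha$ the equality of subgroups is exact rather than merely up to finite index. Once $B,B'$ are pinned down as above, the verification that each of $\alpha Bt$ and $\alpha B't'$ lies in the span of $c$ and the other is a one-line computation using $\alpha Bt-\alpha B't'\in\mathbb{Z}[c]$ together with $\alpha Bt=\tfrac{B}{B'}\cdot\alpha B't'+(\alpha Bt-\alpha B't')$, and symmetrically; this gives mutual containment and therefore equality of the two rank-$2$ subgroups. I expect no genuine difficulty here beyond this indexing care.
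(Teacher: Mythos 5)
Your proposal is correct and is essentially the paper's own argument: the paper normalizes $c=(x,0)$ by an automorphism of $\mathbb{Z}^2$ and takes $B=x|z'|$, $B'=x|z|$, which is exactly your $B=k|q'|$, $B'=k|q|$, and verifies equality by reducing both subgroups to $\mathbb{Z}[c]\oplus\mathbb{Z}[(0,\alpha x|z||z'|)]$, which is the same computation as your observation that the generators differ (after the harmless sign adjustment you note) by an element of $\mathbb{Z}[c]$. The only slip is the spurious factor $\tfrac{B}{B'}$ in your final displayed identity; the trivial identity $\alpha Bt=\alpha B't'+(\alpha Bt-\alpha B't')$ already gives the mutual containment, so this does not affect the argument.
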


\begin{proof}
  Up to an automorphism of $\mathbb{Z}^2$, we can assume that $c=(x,0)\in \mathbb{Z}^2$ for some $x\in \mathbb{Z}_+$. Then we have $t=(y,z)$ and $t'=(y',z')$ for $y,y'\in \mathbb{Z}$ and $z,z'\in \mathbb{Z}\setminus \{0\}$.

  We take $B=x|z'|$ and $B'=x|z|$. Then we have
  $$\mathbb{Z}[c]\oplus \mathbb{Z}[\alpha Bt]=\mathbb{Z}[(x,0)]\oplus \mathbb{Z}[(\alpha x|z'|y,\alpha x|z'|z)]=\mathbb{Z}[(x,0)]\oplus \mathbb{Z}[(0,\alpha x|z'|z)]$$
  and $$\mathbb{Z}[c]\oplus \mathbb{Z}[\alpha B't']=\mathbb{Z}[(x,0)]\oplus\mathbb{Z}[(\alpha x|z|y',\alpha x|z|z')]=\mathbb{Z}[(x,0)]\oplus \mathbb{Z}[(0,\alpha x|z|z')].$$
  So $\mathbb{Z}[c]\oplus \mathbb{Z}[\alpha Bt]$ and $\mathbb{Z}[c]\oplus \mathbb{Z}[\alpha B't']$ are the same subgroup of $\mathbb{Z}^2$.
\end{proof}

\subsection{Proof of "aspirality implies separability" for $3$-manifolds with tori or empty boundary}

In this section, we prove the "aspirality implies separability" part of Theorem \ref{main}, in the case that $M$ is a compact, orientable, irreducible $3$-manifold with empty or tori boundary.

\begin{proposition}\label{toriboundaryseparability}
  Let $M$ be a compact, orientable, irreducible $3$-manifold with empty or tori boundary, such that it has nontrivial torus decomposition and does not support the $\text{Sol}$-geometry. Let $H<\pi_1(M)$ be a finitely generated infinite index subgroup. If $H$ is aspiral, then $H$ is separable in $\pi_1(M)$.
\end{proposition}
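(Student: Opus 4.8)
The plan is to follow the three-step strategy outlined just before the proposition: enlarge $K$, build a finite semi-cover $N\to M$ in which $K$ embeds and whose dual graph is $G_K$, and then invoke Lemma \ref{semicover} to promote $N$ to an honest finite cover of $M$. The genuine content is step two, which is where aspirality of $H$ enters. First I would reduce to a convenient normal form: passing to a finite cover of $M$ (using Lemma \ref{basicLERF}(2), Lemma \ref{manifoldcoverspirality}, Remark \ref{samespirality}), I may assume that $M$ contains no twisted $I$-bundle over the Klein bottle, that every Seifert piece of $M$ has orientable base orbifold, and that $M_H$ and hence $M_K$ are in the position produced by Lemma \ref{normalize}. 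After enlarging $K$ so that it is path-connected, contains a Scott core of $M_H$, and meets every piece of $M_K$ in a compact subsurface adapted to the propositions of Section \ref{casebycase}, I fix for each edge space $E_K$ of $M_K$ the slopes $t$ (resp. $u,v$) on $\pi(E_K)$ prescribed by Tables 4 and 5, using Lemma \ref{Z2} to reconcile the two a priori different choices of the non-fiber slope across a cylinder edge when both adjacent pieces are virtually fibered or partially fibered.

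Next I would run the propositions of Section \ref{casebycase} piece by piece. For each vertex space $V_K\subset M_K$ with image $V\subset M$, the appropriate one of Propositions \ref{hypfiber}, \ref{geofinite}, \ref{Seifertfiber}, \ref{Seifertpartialfiber}, \ref{S1bundle} (selected according to the type of $L=\pi_1(V_K)<\pi_1(V)$ via Tables 1--3) yields a constant $A_V$ (and possibly boundary-dependent constants $A_{V,C}$) such that, for any choice of parameters, there is an intermediate finite cover $\hat V\to V$ of $V_K\to V$ into which $K\cap V_K$ embeds, with controlled covering behaviour on each boundary torus: on the torus $p(E_K)$ covering $\pi(E_K)$, the covering corresponds to the lattice $\mathbb{Z}[\text{core or fiber data}]\oplus\mathbb{Z}[(\text{parameter})\cdot A_V\cdot(\text{slope }t\text{ or }u,v)]$. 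To paste these $\hat V$'s along edge spaces into a semi-cover $N$, I must choose the parameters so that, for every edge $E_K$ of $G_K$ with adjacent pieces $V_K,V'_K$, the two induced finite covers of the torus $\pi(E_K)$ — one coming from $\hat V$, one from $\hat V'$ — coincide as subgroups of $\pi_1(\pi(E_K))\cong\mathbb{Z}^2$; then the two boundary tori are homeomorphic over $\pi(E_K)$ and can be glued. This is a system of compatibility equations, one per edge of $G_K$, in the parameters attached to the boundary components of the $\hat V$'s.

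The hard part — and this is exactly the role of aspirality — is solving that system. Along a cylinder edge $E_K$ with core $c$, after the slope conventions of Table 4 the compatibility of the two lattices $\mathbb{Z}[c]\oplus\mathbb{Z}[\alpha A t]$ and $\mathbb{Z}[c']\oplus\mathbb{Z}[\alpha' A' t']$ forces a relation of the form $\alpha A\langle c,h\rangle = \alpha' A'\langle c',h'\rangle$ (or its analogue with degeneracy/fiber slopes), i.e. the ratio $\alpha/\alpha'$ along the edge is pinned down up to the fixed constants $A,A'$; this ratio is precisely $s_\delta$ from Section \ref{arcs} for a proper arc $\delta$ crossing that cylinder, while plane edges and boundary tori of $M$ impose no such constraint and can be handled with free parameters. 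Thus the parameters attached to the vertices on a cycle of $G_{\Phi(H)}$ must multiply back to $1$ around the cycle after accounting for the constants — which is solvable over $\mathbb{Z}_+$ exactly when the generalized spirality character $s:H_1(\Phi(H);\mathbb{Z})\to\mathbb{Q}_+^\times$ is trivial, because triviality of $s$ says the product of the $s_\delta$'s around any loop is $1$, so a consistent assignment of positive-integer parameters can be found (clear denominators by scaling all parameters by a common multiple). For vertices not in $\Phi(H)$ — geometrically finite hyperbolic pieces, Seifert pieces whose cover is an $S^1$-bundle, and the plane boundary components — the covers on distinct boundary tori are independent (this is the content of Propositions \ref{geofinite} and \ref{S1bundle} and the ``possibly infinite'' rows of Table 2), so these parameters can be chosen last and freely to match whatever the $\Phi(H)$-part dictated. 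Having solved for compatible parameters, I glue the $\hat V$'s along the matched boundary tori to obtain a compact semi-cover $f\colon N\to M$ with dual graph $G_K$ into which $K$ embeds; Lemma \ref{semicover} then gives that $\pi_1(N)$ is separable in $\pi_1(M)$ and that $N$ embeds in a finite cover $M'$ of $M$, whence $K$ embeds in $M'$. Since $K$ was arbitrary, Lemma \ref{Scott} yields separability of $H$. I expect the bookkeeping in the gluing — matching not just the lattices but the actual slopes $c\leftrightarrow c'$ on the glued tori and checking that the conventions of Tables 4--5 make the two boundary parametrizations agree pointwise, including orientations — to be the most delicate routine part, while the conceptual crux is the translation of ``the spirality cocycle is a coboundary'' into ``the parameter system has a positive integral solution.''
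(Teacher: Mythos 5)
Your proposal is correct and follows essentially the same route as the paper: normalize by passing to finite covers and an index-two subgroup of $H$ so that $\Phi(H)$ is orientable and the Seifert bases are orientable of positive genus, build the semi-cover piecewise from the propositions of Section \ref{casebycase} with the slope conventions of Tables 4--5 and Lemma \ref{Z2}, use aspirality exactly to make the boundary covering data consistent around cycles of $G_{\Phi(H)}$ (all other edges and pieces having free, independent parameters), and finish with Lemma \ref{semicover} and Lemma \ref{Scott}. The only difference is organizational: the paper solves your ``compatibility system'' by inducting over a maximal spanning tree of $G_K$ chosen to extend a spanning forest of $G_{\Phi(H)}$, with boundary conditions carrying powers of a master constant $\mathfrak{A}$, and then pasting the non-tree edges with a final modification step --- which is your global positive-integer scaling argument made constructive.
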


To prove Proposition \ref{toriboundaryseparability}, we need the following technical result.

\begin{proposition}\label{embedsemicover}
  Let $M$ be a compact, orientable, irreducible $3$-manifold with empty or tori boundary, such that it has nontrivial torus decomposition and does not support the $\text{Sol}$-geometry. Let $H<\pi_1(M)$ be an aspiral finitely generated infinite index subgroup, and $\pi:M_H\to M$ be the covering space of $M$ corresponding to $H<\pi_1(M)$. We further suppose the base orbifolds of all Seifert pieces of $M$ are orientable surfaces (with no singular points) with positive genus, and the almost fibered surface $\Phi(H)$ is orientable.

  Then for any compact subset $K\subset M_H$, there exist a finite semi-cover $f:N\to M$ and an embedding $i:K\hookrightarrow N$, such that $f\circ i=\pi|_{K}$, i.e. the following diagram commute.
  \begin{diagram}
K &\rTo^i & N\\
&\rdTo_{\pi|_K} &\dTo_{f}\\
& &M
\end{diagram}
\end{proposition}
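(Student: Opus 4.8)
The plan is to carry out the three-step strategy announced just before the statement; for this proposition only the first two steps are needed. First I would enlarge $K$ so that it is path-connected, contains a Scott core of $M_H$ and the almost fibered surface $\Phi(H)$, and so that for every piece $V_K$ of $M_K$ the intersection $V_K\cap K$ carries $\pi_1(V_K)$, where $M_K$ is the union of the (finitely many) pieces of $M_H$ meeting $K$ and $G_K$ its dual graph. For each such piece, with image $V=\pi(V_K)\subset M$, the subgroup $\pi_1(V_K)<\pi_1(V)$ is of one of the types of Table~1, and Propositions~\ref{hypfiber}, \ref{geofinite}, \ref{Seifertfiber}, \ref{Seifertpartialfiber}, \ref{S1bundle} produce intermediate finite covers $\hat{V}\to V$ of $V_K\to V$ into which $V_K\cap K$ embeds, with prescribed behaviour on $\partial\hat{V}$. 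The goal is to choose one such $\hat{V}$ for each $V_K$ so that, along every edge space of $M_K$, the two boundary tori produced on the two adjacent sides coincide as coverings of the corresponding decomposition torus of $M$; gluing them then yields a compact $3$-manifold $N$ with dual graph $G_K$, a local embedding $f\colon N\to M$ restricting on $\partial N$ to coverings of decomposition tori or boundary tori of $M$ (hence a finite semi-cover), together with an embedding $i\colon K\hookrightarrow N$ satisfying $f\circ i=\pi|_K$.

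Next I would set up the gluing data edge by edge. Finite-index pieces are compact, and I would take $\hat{V}=V_K$ verbatim; their edge spaces are tori, and along a torus edge space $E$ I would use the propositions to arrange that the covering map restricts to a homeomorphism on the image of $E$ from the other side as well, so both sides realize the same subgroup $\pi_1(E)<\pi_1(\pi(E))$. For a cylinder or plane edge space $E$ with adjacent pieces $W_1,W_2$ and $\pi(E)=T$, I would choose slopes on $T$ from each side as dictated by Tables~4 and~5; whenever the two choices on a cylinder disagree --- which by Table~4 happens only when $W_1,W_2$ are both hyperbolic virtually fibered or both Seifert virtually or partially fibered --- Lemma~\ref{Z2} reconciles them. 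With these choices fixed, the requirement that the boundary tori of $\hat{W}_1$ and $\hat{W}_2$ lying over $T$ coincide as subgroups of $\pi_1(T)$ becomes a system of arithmetic equations in: the fixed constants $A_V,A_C$ supplied by the propositions; the per-boundary-component parameters $\alpha_C,\beta_P,\delta_P$ of the \emph{flexible} pieces (hyperbolic geometrically finite, and Seifert whose cover is an $S^1$-bundle); and the single \emph{global} parameter $\alpha_V$ attached to each \emph{rigid} piece (hyperbolic or Seifert virtually fibered, Seifert partially fibered).

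Then I would solve this system, and this is where aspirality enters. The parameters of the flexible pieces can be taken to be arbitrarily large common multiples, so they impose no obstruction (this also absorbs the interaction between the global parameter of a partially fibered piece, which governs both its cylinder and its plane edges, and its flexible neighbours). The constraints among the global parameters $\{\alpha_V\}$ live on the subgraph whose vertices are the pieces carrying the vertex spaces of $\Phi(H)$ and whose edges are the cylinder edge spaces between them, i.e.\ on $G_{\Phi(H)}$; each such edge $E$ between $\Sigma^{v},\Sigma^{v'}$ imposes a relation of the shape $\alpha_{v'}A_{v'}/(\alpha_vA_v)=q_E$ with $q_E\in\mathbb{Q}_+^{\times}$ an explicit ratio of fibre or slope intersection numbers coming from Lemma~\ref{Z2} and the covering-degree formulas (using that, by Table~4, rigid Seifert pieces always use the regular fibre, so all their cylinder constants coincide by Remark~\ref{iffiber}). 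Around any cycle the $\alpha$-factors telescope away, and one checks the identity $\prod_E q_E=\prod_i s_{\delta_i}^{\pm 1}$, where $\prod_i s_{\delta_i}$ is precisely the value on that cycle of the generalized spirality character of Definition~\ref{spiralitydef}. Since $H$ is aspiral this product equals $1$ for every cycle, so the system is solvable over $\mathbb{Q}_+^{\times}$; clearing denominators and scaling up by a large common multiple produces a positive-integer solution compatible with all integrality requirements of the flexible pieces. Fixing these values, I would invoke the five propositions to build the $\hat{V}$'s, glue them along the now-matching boundary tori, and read off $N$, $f$ and $i$ as above.

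The main obstacle is the bookkeeping in the previous paragraph: verifying, uniformly over all the combinatorial types of adjacent pairs $(W_1,W_2,E)$ tabulated in Tables~4 and~5, that the arithmetic produced by Lemma~\ref{Z2} together with the boundary behaviour in Propositions~\ref{hypfiber}, \ref{geofinite}, \ref{Seifertfiber}, \ref{Seifertpartialfiber}, \ref{S1bundle} matches the definition of $s_\delta$ in Section~\ref{arcs} exactly, so that cycle-consistency of the parameter system coincides with aspirality and not with a coarser or finer condition. Secondary technical points to dispatch are: pieces of $M_K$ with trivial fundamental group that $K$ pokes into; ensuring that the successive enlargements of $K$ and the intermediate covers used inside the proofs of the five propositions do not destroy embeddings already arranged; and recording that the normalization hypotheses of the proposition (orientable base orbifolds of positive genus, orientable $\Phi(H)$) guarantee there are no twisted $I$-bundle over Klein bottle pieces and no annulus pieces of $\Phi(H)$, so that the graph of space structure on $\Phi(H)$ is stable and Lemma~\ref{orbifoldcover} is available whenever a Seifert piece occurs.
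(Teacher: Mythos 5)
Your proposal is correct and follows essentially the same route as the paper: the five covering propositions, the slope choices of Tables~4 and~5, Lemma~\ref{Z2}, and the identification of the cycle product of the $s_{\delta_i}$ with a ratio of covering degrees on the two sides of an edge space are exactly the ingredients of the paper's argument, with aspirality entering precisely as the cycle-consistency condition on $G_{\Phi(H)}$. The only difference is organizational: the paper realizes your ``solve the system and clear denominators'' step as an explicit induction over a maximal spanning tree of $G_K$ chosen to extend a spanning forest of $G_{\Phi(H)}$, with boundary covering degrees carrying reserve powers $\mathfrak{A}^{n-j+1}$ of a global constant, followed by a modification step at the non-tree edges.
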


The proof of Proposition \ref{embedsemicover} is quite long, so we break it into a few steps.

  At first, for the compact subset $K\subset M_H$, we enlarge $K$ such that the following hold:
  \begin{enumerate}
    \item $K$ is path-connected,
    \item $K$ intersects with all pieces of $M_H^c\subset M_H$ ($M_H^c$ is defined in Section \ref{almostfiber}),
    \item $K$ contains a Scott core of $M_H$ and the almost fibered surface $\Phi(H)\subset M_H$.
  \end{enumerate}
  We define $M_K$ to be the submanifold of $M_H$ that is the union of all vertex spaces $V_H\subset M_H$ such that $V_H\cap K\ne \emptyset$. Then $M_H^c$ is a submanifold of $M_K$, and $\Phi(H)$ is a subsurface of $M_K$. We denote the dual graph of $M_K$ by $G_K$. By the assumption on $K$, $G_K$ is a connected finite subgraph of $G_H$ and each component of $G_H\setminus G_K$ is a tree. For the manifold $N$ that we will construct in this proof, it has a graph of space structure with dual graph isomorphic to $G_K$.

\bigskip

\textbf{Step I. Construct a maximal spanning tree $T\subset G_K$ satisfying the edge condition.}

  The almost fibered surface $\Phi(H)$ is naturally a subsurface of $M_K$, and any piece of $M_K$ contains at most one piece of $\Phi(H)$. So the dual graph $G_{\Phi(H)}$ of $\Phi(H)$ is naturally a subgraph of $G_K$.

  Note that for each partially fibered piece $V_K\subset M_K$, it has no contribution to $\Phi(H)$ if and only if all boundary components of $V_K$ are planes. We start with a maximal spanning forest of $G_{\Phi(H)}$, and extend it to a maximal spanning tree $T\subset G_K$. For each edge $e$ in $G_K\setminus T$ with corresponding edge space $E^e\subset M_K$, one of the following \textbf{edge conditions} hold.
  \begin{enumerate}
    \item At least one of the adjacent vertex space of $E^e$ in $M_K$ is not a virtually fibered or partially fibered piece.
    \item Both adjacent vertex spaces of $E^e$ are virtually fibered or partially fibered pieces, and $E^e$ is homeomorphic to the plane.
    \item If both adjacent vertex spaces of $E^e$ are virtually fibered or partially fibered pieces and $E^e$ is homeomorphic to the cylinder, then $e$ must lie in a simple cycle $c\subset G_{\Phi(H)}\subset G_K$.
  \end{enumerate}

  We take $M_T$ to be the submanifold of $M_K$ dual with $T$, i.e. $M_T$ is obtained by cutting $M_K$ along edge spaces of $M_K$ corresponding to edges in $G_K\setminus T$. Let $n$ be the number of vertices in $T$, and let $T_1\subset \cdots\subset T_n=T$ be a sequence of increasing connected subtrees of $T$, such that $T_j$ has exactly $j$ vertices. Let $M_{T_j}$ be the submanifold of $M_T$ dual with $T_j$.

  \bigskip

  \textbf{Step II. Inductively construct a finite semi-cover $N_{T_j}\to M$ with dual graph isomorphic to $T_j$, such that $K\cap M_{T_j}$ embeds into $N_{T_j}$.}

  Since $G_K$ is a finite graph, for each vertex space $V_K\subset M_K$, there are only finitely many edge spaces of $M_K$ adjacent to $V_K$ (there might be infinitely many such edge spaces in $M_H$). If $V_K$ is not a finite cover of the corresponding vertex space of $M$, there are finitely many cylinder and plane edge spaces $E_1,\cdots,E_m\subset M_K$ adjacent to $V_K$.

For each vertex space $V_K\subset M_K$, we apply Proposition \ref{hypfiber}, \ref{geofinite}, \ref{Seifertfiber}, \ref{Seifertpartialfiber} or \ref{S1bundle} to $V_K, K\cap V_K$, its finitely many adjacent edge spaces $\{E_1,\cdots,E_m\}$ in $M_K$ and the chosen slopes on $\{\pi(E_1),\cdots,\pi(E_m)\}$, to get a positive integer $A_{V_K}$ (only depends on $V_K$), or a finite collection of positive integers $\{A_{(V_K,E_j)}\}_{j=1}^m$ (also depend on $E_j\subset \partial V_K$). For each cylinder edge space $E\subset M_K$ such that the two vertex spaces adjacent to it are both virtually fibered or partially fibered, the two choices of slopes on $\pi(E)$ may not be compatible, then Lemma \ref{Z2} gives us two positive integers $B_E$ and $B'_E$. We take $\mathfrak{A}$ to be the product of all positive integers obtained in the above.

Our construction in step II is given by the following lemma.

  \begin{lemma}\label{constructionontree}
  For each $j\in \{1,2,\cdots,n\}$, there exists a finite semi-cover $s_j:N_{T_j}\to M$, and a semi-cover $p_j:M_{T_j}\to N_{T_j}$ that induces isomorphism on dual graphs, such that following conditions hold for the following commutative diagram.
  \begin{enumerate}
  \item $s_j\circ p_j=\pi|_{M_{T_j}}$.
   \item $p_j|_{K\cap M_{T_j}}:K\cap M_{T_j}\to N_{T_j}$ is an embedding.
    \item For any two distinct boundary components of $M_{T_j}$ that lie in the interior of $M_K$,  $p_j$ maps them to distinct boundary components of $N_{T_j}$.
   \end{enumerate}

  \begin{diagram}
K\cap M_{T_j} &\subset & M_{T_j}\\
& \rdTo_{p_j|_{K\cap M_{T_j}}} & \dTo_{p_j}\\
 & & N_{T_j}\\
 & & \dTo_{s_j}\\
 & & M
\end{diagram}

  Moreover, for any boundary component $E\subset \partial M_{T_j}$ that lies in the interior of $M_K$, it also satisfies the following \textbf{boundary condition}.
  \begin{enumerate}
  \item If $E$ is a torus, $p_j|_{E}:E\to p_j(E)$ is a homeomorphism.
  \item If $E$ is a cylinder, $s_j|_{p_j(E)}:p_j(E)\to \pi(E)$ is the finite cover corresponding to $\mathbb{Z}[c]\oplus \mathbb{Z}[\tau\mathfrak{A}^{n-j+1}t]<\pi_1(\pi(E))$ for some $\tau\in \mathbb{Z}_+$ (Here $c$ is the core of $E$, and $t$ is a slope on $\pi(E)$ chosen by Table 4, from $M_{T_j}$ viewpoint).
  \item If $E$ is a plane, $s_j|_{p_j(E)}:p_j(E)\to \pi(E)$ is the finite cover corresponding to following subgroups of $\pi_1(\pi(E))$:
   \begin{enumerate}
     \item  $\mathbb{Z}[\mu\mathfrak{A}^{n-j+1}u]\oplus \mathbb{Z}[\nu\mathfrak{A}^{n-j+1}v]$ for some $\mu,\nu\in \mathbb{Z}_+$, if $E$ lies in the interior of $M_{T}$;
     \item $\mathbb{Z}[\mathfrak{A}u]\oplus \mathbb{Z}[\mathfrak{A}v]$, if $E$ does not lie in the interior of $M_{T}$ and its adjacent piece in $M_T$ is a geometrically finite piece;
     \item $\mathbb{Z}[\mathfrak{A}u]\oplus \mathbb{Z}[\nu\mathfrak{A}^{n-j+1}v]$ for some $\nu\in \mathbb{Z}_+$, if $E$ does not lie in the interior of $M_{T}$ and its adjacent piece in $M_T$ is a partially fibered piece.
   \end{enumerate} (Here $u,v$ are two nonparallel slopes on $\pi(E)$ chosen by Table 5, from $M_{T_j}$ viewpoint. In case (c), $v$ is the regular fiber in the vertex space of $M_{T_j}$ adjacent to $E$.)
  \end{enumerate}
\end{lemma}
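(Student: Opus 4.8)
The plan is to prove Lemma~\ref{constructionontree} by induction on $j$, at each step building $N_{T_{j+1}}$ from $N_{T_j}$ by gluing on one more finite cover of a single piece of $M$ along the unique new edge of $T_{j+1}$. The tools are exactly Propositions~\ref{hypfiber}, \ref{geofinite}, \ref{Seifertfiber}, \ref{Seifertpartialfiber}, \ref{S1bundle}: whenever a piece $V_K\subset M_K$ is processed, one applies the one of these matching the type of $\pi_1(V_K)<\pi_1(V)$ (with $V$ the image of $V_K$ in $M$, or takes $N=V_K$ if $V_K\to V$ is already a finite cover), with compact set $K\cap V_K$, with the finite family of edge spaces of $M_K$ adjacent to $V_K$ as the collection of cylinders/planes, and with the slopes dictated by Table~4 and Table~5. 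The whole argument is then arithmetic: $\mathfrak{A}$ was defined to be divisible by every constant ($A$ or $A_C$) produced by these propositions and by every $B_E,B_E'$ coming from Lemma~\ref{Z2}, and the power $\mathfrak{A}^{n-j+1}\ge\mathfrak{A}$ supplies one factor of slack per remaining vertex. The exponent is allowed to drop by $1$ at each step precisely because the boundary condition then becomes strictly weaker: a boundary restriction realizing $\mathbb{Z}[c]\oplus\mathbb{Z}[\tau\mathfrak{A}^{n-j+1}t]$ also realizes $\mathbb{Z}[c]\oplus\mathbb{Z}[(\tau\mathfrak{A})\,\mathfrak{A}^{n-j}t]$, so the boundary components of $N_{T_j}$ untouched at stage $j+1$ keep satisfying the condition for free.

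For the base case $j=1$, $M_{T_1}$ is a single piece $V_K$ (if $V_K\to V$ is a finite cover set $N_{T_1}=V_K$ and stop). Otherwise apply the relevant proposition and choose every free parameter ($\alpha$, $\alpha_C$, $\beta_P$, $\delta_P$) to be the appropriate multiple of $\mathfrak{A}^{n}$ divided by the relevant constant, so that each boundary component realizes a subgroup of the required shape with exponent $n=n-1+1$; put $N_{T_1}=\hat V$, $p_1=(V_K\to\hat V)$, $s_1=(\hat V\to V)$. Conditions~(1)--(2) follow from the commutativity of the proposition's diagram and its conclusion~(1), condition~(3) from its distinct-boundary clause, and the boundary condition from the parameter choices; the only point needing care is the plane sub-cases (a)--(c), where the $u$-direction is asked to have coefficient exactly $\mathfrak{A}$ (achievable since $A\mid\mathfrak{A}$) while in Proposition~\ref{Seifertpartialfiber} the fibre direction $v$ is tied to the shared parameter $\alpha$ and therefore gets the large coefficient.

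For the inductive step, write $T_{j+1}=T_j\cup\{v_{j+1}\}$ with the new vertex joined to $T_j$ by a single edge $e$; let $V_K$ be the piece of $v_{j+1}$, $V$ its image, $E^e$ the new edge space, and $W_K$ the piece of $M_{T_j}$ across $E^e$. The inductive hypothesis fixes $s_j|_{p_j(E^e)}$ as a definite finite cover of $\pi(E^e)$ whose associated subgroup has exponent $n-j+1$; apply the relevant proposition to $V_K$ and tune the parameter(s) governing the boundary component of $\hat V$ over $\pi(E^e)$ so that it realizes exactly that subgroup. If the Table~4 slope $t_V$ from the $V_K$-side differs from the slope $t_W$ used on the $M_{T_j}$-side (which, by the discussion after Table~4, occurs only when $V_K$ and $W_K$ are both virtually or partially fibered) we feed $t_V,t_W$ and the core $c$ into Lemma~\ref{Z2} and use $B_E,B_E'\mid\mathfrak{A}$ to produce a common realization; otherwise a single parameter choice matches directly, and for a plane $E^e$ we match using $u_V=v_W$, $v_V=u_W$. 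The remaining parameters are chosen so that every other boundary component of $\hat V$ (over an edge space of $M_K$ meeting $V_K$) acquires covering degree a multiple of $\mathfrak{A}^{n-j}$ of the appropriate slope type; this is possible since $n-j\ge1$ (as $j\le n-1$) so $\mathfrak{A}^{n-j}$ is divisible by every relevant constant, and, because $\mathfrak{A}$ is a product of all these constants, the extra factor $\mathfrak{A}$ of slack lets the single shared parameter of the virtually/partially fibered propositions simultaneously meet the exponent-$(n-j+1)$ demand on $E^e$ and the exponent-$(n-j)$ demands on the rest. Finally set $N_{T_{j+1}}=N_{T_j}\cup_{F}\hat V$, glued along the boundary torus $F$ over $\pi(E^e)$ (legitimate since both sides cover $\pi(E^e)$ through the same subgroup and the fixed space $E^e$ identifies them compatibly with the projections); define $p_{j+1}$ to equal $p_j$ on $M_{T_j}$ and $V_K\to\hat V$ on $V_K$, and $s_{j+1}$ to equal $s_j$ on $N_{T_j}$ and $\hat V\to V$ on $\hat V$. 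Checking $s_{j+1}\circ p_{j+1}=\pi|_{M_{T_{j+1}}}$, the embedding of $K\cap M_{T_{j+1}}$ (the two embeddings agree on $K\cap E^e$), condition~(3) (new and old boundary components lie in different pieces of $N_{T_{j+1}}$ because $p_{j+1}$ is injective on dual graphs), and the boundary condition for $N_{T_{j+1}}$ is then routine.

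I expect the main obstacle to be organizational rather than conceptual: carrying the simultaneous bookkeeping of slopes, free parameters and divisibilities through every case of Tables~3, 4 and 5 at once, and in particular honouring the constraint that Proposition~\ref{Seifertpartialfiber} (and the other fibered-piece propositions) move several boundary components of a piece in lockstep through one shared parameter. This is exactly why the lemma is stated with the dropping power $\mathfrak{A}^{n-j+1}$ rather than a fixed modulus, and why Step~I arranged $T$ so that a cylinder edge joining two fibered pieces is kept inside $T$ unless it lies on a cycle of $G_{\Phi(H)}$; the cycle edges left outside $T$ are closed up in the next step, and only there does aspirality of $H$ enter --- Lemma~\ref{constructionontree} itself uses no aspirality.
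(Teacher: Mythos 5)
Your proposal is correct and follows essentially the same route as the paper's proof: induction along $T_1\subset\cdots\subset T_n$, attaching at each step one new piece built from Propositions \ref{hypfiber}--\ref{S1bundle} with slopes from Tables 4 and 5, tuning the single shared parameter of a fibered/partially fibered piece to match the already-fixed cover of the new edge space (via Lemma \ref{Z2} when the two slope choices disagree), and letting the exponent drop from $\mathfrak{A}^{n-j+1}$ to $\mathfrak{A}^{n-j}$ to absorb that forced choice on the remaining boundary components. Your observations that the torus/geometrically finite/$S^1$-bundle cases need no such care (independent parameters per boundary component) and that no aspirality enters until the cycle edges of $G_K\setminus T$ are closed up also match the paper.
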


\begin{proof}

  We prove this lemma by induction. The frame of this construction is shown in the following diagram.

\begin{diagram}
M_{T_n}        &  \supset & M_{T_{n-1}}    & \supset        & \cdots  &  \supset    & M_{T_2} &  \supset   & M_{T_1} \\
   \dTo_{p_n}   &           & \dTo_{p_{n-1}}                       &             &             &                    & \dTo_{p_2}  &         & \dTo_{p_1} \\
     N_{T_n}      & \supset   & N_{T_{n-1}} & \supset        & \cdots  & \supset     & N_{T_2}  & \supset     & N_{T_1} \\
\dTo                 &             & \dTo                  &                    &            &                  &    \dTo    &                &\dTo \\
    T_n                &  \supset  &  T_{n-1}      & \supset         & \cdots  & \supset    & T_2          & \supset   &  T_1
\end{diagram}

Since $T_1$ consists of a single vertex, $M_{T_1}=V_K$ consists of a single vertex space of $M_K$, and it covers a vertex space $V\subset M$. If $V_K\to V$ is a finite cover, we simply take $N_{T_1}=V_K$. Otherwise, we apply Proposition \ref{hypfiber}, \ref{geofinite}, \ref{Seifertfiber}, \ref{Seifertpartialfiber} or \ref{S1bundle} to $V_K\to V$, $K\cap V_K\subset V_K$ and boundary components of $V_K$ that lie in the interior of $M_K$, to get an intermediate finite cover $\hat{V}\to V$ of $V_K\to V$. Since all constants $A_{V_K}$ and $A_{(V_K,E)}$ are factors of $\mathfrak{A}$, we can choose the parameters in these propositions such that the boundary condition hold for $N_{T_1}=\hat{V}$. In particular, Proposition \ref{geofinite} (5) guarantees that boundary condition (3) (b) can hold for $\hat{V}$, and Proposition \ref{Seifertpartialfiber} (4) guarantees that boundary condition (3) (c) can hold for $\hat{V}$.

  Suppose that we have constructed a finite semi-cover $s_j:N_{T_j}\to M$ and a semi-cover $p_j:M_{T_j}\to N_{T_j}$ satisfying all conditions in this lemma.

  Now we need to construct $N_{T_{j+1}}$. Since $T_{j+1}$ has only one more vertex than $T_j$ and $T_{j+1}$ is a tree, $M_{T_{j+1}}\setminus M_{T_j}$ consists of a single vertex space $V_K\subset M_K$. As submanifolds of $M_T$, $V_K\cap M_{T_j}$ consists of one single edge space $E$. Let $V$ be the vertex space of $M$ covered by $V_K$. If $V_K\to V$ is a finite cover, we simply take $\hat{V}=V_K$. Otherwise, we will apply Proposition \ref{hypfiber}, \ref{geofinite}, \ref{Seifertfiber}, \ref{Seifertpartialfiber} or \ref{S1bundle} to $V_K\to V$, $K\cap V_K\subset V_K$ and all boundary components of $V_K$ that lie in the interior of $M_K$, to get an intermediate finite cover $q:\hat{V}\to V$ of $\pi|_{V_K}:V_K\to V$ as in the following diagram.
  \begin{diagram}
V_K &\rTo^{p} & \hat{V}\\
& \rdTo_{\pi|_{V_K}} & \dTo_{q}\\
 & & V
\end{diagram}

By inductive hypothesis, the boundary condition of $N_{T_{j+1}}$ holds for all components of $\partial N_{T_j}\setminus p_j(E)$. So when we construct $\hat{V}$, we need the following conditions hold.
\begin{enumerate}
\item The boundary condition of $N_{T_{i+1}}$ holds for $\hat{V}$.
\item $p(E)$ and $p_j(E)$ are the same finite cover of $\pi(E)$, i.e. there is a homeomorphism $g:p(E)\to p_j(E)$ such that the following diagram commute.
\begin{diagram}
& & & & E& & & &\\
& & & \ldTo^{p|_{E}}& & \rdTo^{p_j|_{E}} & & & \\
\partial \hat{V}& \supset & p(E) & & \rTo^g  &  & p_j(E)& \subset & \partial N_{T_j}\\
& &  &  \rdTo_{q|_{p(E)}} & & \ldTo_{s_j|_{p_j(E)}} & & \\
&  & &  & \pi(E) & & & &
\end{diagram}
\end{enumerate}

Once we have the above commutative diagram and make sure the boundary condition holds, then we take $N_{T_{j+1}}=\hat{V} \cup_g N_{T_j}$. The finite semi-cover $s_{j+1}:N_{T_{j+1}}\to M$ is defined by $q$ (on $\hat{V}$) and $s_j$ (on $N_{T_j}$), and the semi-cover $p_{j+1}:M_{T_{j+1}}=V_K\cup M_{T_j}\to N_{T_{j+1}}$ is defined by $p$ (on $V_K$) and $p_j$ (on $M_{T_j}$).

\bigskip

{\it Case A. $E$ is a torus.} Then $V_K$ is either a geometrically finite piece or a noncompact $S^1$-bundle. Boundary condition (1) implies that $p_j|_{E}:E\to p_j(E)$ is a homeomorphism, while Proposition \ref{geofinite} (3) and \ref{S1bundle} (3) imply that $p|_E:E\to p(E)$ is a homeomorphism. So the desired homeomorphism $g$ exists. Since the constant $A_{V_K}$ is a factor of $\mathfrak{A}$, and parameters in Proposition \ref{geofinite} and \ref{S1bundle} for boundary components of $V_K$ can be chosen independently, we can construct $\hat{V}$ such that the boundary condition holds.

\bigskip

{\it Case B. $E$ is a cylinder.} Boundary condition (2) implies that $s_j|_{p_j(E)}:p_j(E)\to \pi(E)$ is the finite cover corresponding to $\mathbb{Z}[c]\oplus \mathbb{Z}[\tau \mathfrak{A}^{n-j+1}t]<\pi_1(\pi(E))$ for some slope $t\subset \pi(E)$ and $\tau\in \mathbb{Z}_+$. Here $t$ is determined by the piece of $M_{T_j}$ adjacent to $E$ and $V_K$, according to Table 4.

We first consider the case that $V_K\to V$ corresponds to a geometrically finite subgroup (of hyperbolic $V$) or has an induced $S^1$-bundle structure (from Seifert $V$). By Table 4, we have chosen the same slope $t\subset \pi(E)$ from both $M_{T_j}$ and $V_K$ viewpoint. Then in Proposition \ref{geofinite} and \ref{S1bundle}, we can choose the parameters for boundary components of $V_K$ independently. So we can construct $\hat{V}$ such that the homeomorphism $g$ exists and the boundary condition is satisfied.

It remains to consider the case that $V_K\to V$ corresponds to a virtually fibered or partially fibered subgroup. Note that in Proposition \ref{hypfiber}, \ref{Seifertfiber} and \ref{Seifertpartialfiber}, the parameter $\alpha$ only depends on the vertex space $V_K$, so we can not freely choose parameters for each boundary component of $V_K$ as in the previous case. Moreover, the slope $t\subset \pi(E)$ chosen from $M_{T_j}$ viewpoint and $t'\subset \pi(E)$ chosen from $V_H$ viewpoint may not be same with each other. For the cylinder $E\subset \partial V_K$, we apply Proposition \ref{hypfiber}, \ref{Seifertfiber} or \ref{Seifertpartialfiber} to $V_K$ to get a constant $A_{(V_K,E)}$, and apply Lemma \ref{Z2} to $c,t,t'\in \pi_1(\pi(E))$ to get two constants $B_E,B'_E\in \mathbb{Z}_+$. Then Lemma \ref{Z2} implies that
$$\mathbb{Z}[c]\oplus\mathbb{Z}[\tau \mathfrak{A}^{n-j+1}t]=\mathbb{Z}[c]\oplus\mathbb{Z}[\tau \mathfrak{A}^{n-j}\frac{\mathfrak{A}}{B_E}B_Et]=\mathbb{Z}[c]\oplus \mathbb{Z}[\tau\mathfrak{A}^{n-j}\frac{\mathfrak{A}}{B_E}B'_Et']. $$
By the choice of $\mathfrak{A}$, $\frac{\mathfrak{A}}{B_EA_{(V_K,E)}}$ is an integer. Now we must apply Proposition \ref{hypfiber}, \ref{Seifertfiber} or \ref{Seifertpartialfiber} with parameter $\alpha=\tau\mathfrak{A}^{n-j}B'_E\frac{\mathfrak{A}}{B_EA_{(V_K,E)}}$, so that $q|_{p(E)}:p(E)\to \pi(E)$ is the finite cover corresponding to $$\mathbb{Z}[c]\oplus \mathbb{Z}[\alpha A_{(V_K,E)}t']=\mathbb{Z}[c]\oplus \mathbb{Z}[\tau\mathfrak{A}^{n-j}\frac{\mathfrak{A}}{B_E}B'_Et']<\pi_1(\pi(E)).$$ So in this case, the homeomorphism $g$ exists.

For any other cylinder boundary component $\bar{E}\subset \partial V_K$ lying in the interior of $M_K$, by our choice of parameter $\alpha$, $q|_{p(\bar{E})}:p(\bar{E})\to \pi(\bar{E})$ is the finite cover corresponding to
$$\mathbb{Z}[\bar{c}]\oplus \mathbb{Z}[\alpha A_{(V_K,\bar{E})}\bar{t}]=\mathbb{Z}[\bar{c}]\oplus \mathbb{Z}[\big(\tau B'_EA_{(V_K,\bar{E})}\frac{\mathfrak{A}}{B_EA_{(V_K,E)}}\big)\mathfrak{A}^{n-(j+1)+1}\bar{t}]<\pi_1(\pi(\bar{E})),$$
which satisfies boundary condition (2) of $N_{T_{j+1}}$. If $V_K$ has a plane boundary component $\bar{E}\subset \partial V_K$ that lies in the interior of $M_K$, then $V_K$ must be a partially fibered piece, so it need to satisfy boundary condition (3) (a) or (3) (c). The above computation implies that boundary condition (3) (a) and (3) (c) hold for the $v$-component. We can make sure boundary condition (3) (a) and (3) (c) also hold for the $u$-component, since $A_{V_K}$ is a factor of $\mathfrak{A}$ and we can choose arbitrary parameter $\beta_{\bar{E}}$ in Proposition \ref{Seifertpartialfiber}.

\bigskip

{\it Case C. $E$ is a plane.} Since $E$ lies in the interior of $M_T$, boundary condition (3) (a) implies that $s_j|_{p_j(E)}:p_j(E)\to \pi(E)$ is the finite cover corresponding to $\mathbb{Z}[\mu \mathfrak{A}^{n-j+1} u]\oplus \mathbb{Z}[\nu \mathfrak{A}^{n-j+1}v]<\pi_1(\pi(E))$ for chosen nonparallel slopes $u,v\subset \pi(E)$ and $\mu,\nu \in \mathbb{Z}_+$. Recall that when we use Table 5 to choose two pairs of nonparallel slopes on $\pi(E)$ from viewpoints of $M_{T_j}$ and $V_K$ respectively, we have $v'=u$ and $u'=v$. If $V$ is a hyperbolic $3$-manifold, then $V_K$ corresponds to a geometrically finite subgroup of $\pi_1(V)$. Since we can choose arbitrary parameters in Proposition \ref{geofinite} for different  boundary components of $V_K$, we can make sure the homeomorphism $g:p(E)\to p_j(E)$ exists and boundary conditions are satisfied.

If $V$ is a Seifert manifold, then $V_K$ corresponds to a partially fibered subgroup of $\pi_1(V)$, and $v'=u$ is the regular fiber of $V$ (by Table 5). So we have to take the parameter of $V_K$ to be $\alpha=\mu \mathfrak{A}^{n-j}\frac{\mathfrak{A}}{A_{V_K}}$ (for the whole manifold $V_K$) and $\beta_E=\nu\mathfrak{A}^{n-j}\frac{\mathfrak{A}}{A_{V_K}}$ (for the boundary component $E\subset \partial V_K$), such that the finite cover $q|_{p(E)}:p(E)\to \pi(E)$ corresponds to
$$\mathbb{Z}[(\nu \mathfrak{A}^{n-j}\frac{\mathfrak{A}}{A_{V_K}})A_{V_K} u']\oplus \mathbb{Z}[(\mu \mathfrak{A}^{n-j}\frac{\mathfrak{A}}{A_{V_K}})A_{V_K}v']=\mathbb{Z}[\nu \mathfrak{A}^{n-j+1}v]\oplus \mathbb{Z}[\mu \mathfrak{A}^{n-j+1}u]<\pi_1(\pi(E)).$$
Then the homeomorphism $g:p(E)\to p_j(E)$ exists. For the boundary condition, since the regular fiber of $\hat{V}$ covers the regular fiber of $V$ by a degree $\mu \mathfrak{A}^{n-j+1}$ map, boundary condition (2) holds, and boundary conditions (3) (a) and (3) (c) hold on the $v$-component. In Proposition \ref{Seifertpartialfiber}, for different plane boundary components of $V_K$ that lie in the interior of $M_K$, we can choose arbitrary parameters for their $u$-components, so we can make sure boundary condition (3) (a) and (3) (c) hold for the $u$-component.

Now we finish the proof of Lemma \ref{constructionontree}, and Step II is done.
\end{proof}

\textbf{ Step III. After necessary modifications of the $N_{T_n}$ constructed in Step II, paste pairs of boundary components of $N_{T_n}$ corresponding to edges in $G_K\setminus T$, to get the desired finite semi-cover $N\to M$.}

For simplicity, we denote the finite semi-cover $s_n:N_{T_n}\to M$ constructed in step II by $s:N_T\to M$, and denote the semi-cover $p_n:M_{T_n}\to N_{T_n}$ by $p:M_T\to N_T$. Since $M_T$ can be obtained by cutting $M_K$ along those edge spaces in $M_K$ corresponding to edges in $G_K\setminus T$, the first vertical map in the following diagram is a quotient map by identifying pairs of homeomorphic boundary components of $M_T$. Then our goal is to modify $N_T$ so that we can paste homeomorphic boundary components of $N_T$ to get the desired $N$ (via the second vertical map in the following diagram).
\begin{diagram}
& & M_T & \rTo^p & N_T & \rTo & T\\
& & \dTo &        & \dDotsto &       & \cap\\
K& \subset & M_K  & \rDotsto & N & \rDotsto & G_K
\end{diagram}

Let $e_1,\cdots,e_t$ be all edges in $G_K\setminus T$, and $E_1,\cdots,E_t$ be the corresponding edge spaces of $M_K$. For each $E_i$, it corresponds to two boundary components $E_i',E_i''\subset \partial M_T$, and Lemma \ref{constructionontree} (3) implies that $$\{p(E_1'),p(E_1''),p(E_2'),p(E_2''),\cdots,p(E_t'),p(E_t'')\}$$ are distinct boundary components of $N_T$. Here $s|_{p(E_i')}:p(E_i')\to \pi(E_i)$ and $s|_{p(E_i'')}:p(E_i'')\to \pi(E_i)$ might be two distinct finite covers of $\pi(E_j)$ (up to isomorphism).

Our modification process on $N_T$ is given by the following lemma.

\begin{lemma}\label{pasting}
For each edge space $E\in \{E_1,\cdots,E_t\}$, we can modify the construction of $N_T$, such that the following holds.
\begin{enumerate}
\item The first three conditions in Lemma 6.4 still hold for $s:N_T\to M$ and $p:M_T\to N_T$.
\item For any $E_i\ne E$, the covering maps $s|_{p(E_i')}:p(E_i')\to \pi(E)$ and $s|_{p(E_i'')}:p(E_i'')\to \pi(E)$ are not affected by this modification.
\item After the modification, there exists a homeomorphism $g:p(E')\to p(E'')$ such that the following diagram commute.

\begin{diagram}
E'& =&E&=&E'' \\
\dTo_{p|_{E'}}& & & &\dTo_{p|_{E''}} \\
p(E') & & \rTo^g  &  & p(E'')\\
& \rdTo_{s|_{p(E')}} & & \ldTo_{s|_{p(E'')}} &\\
& & \pi(E) & &
\end{diagram}
\end{enumerate}
\end{lemma}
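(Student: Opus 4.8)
The plan is to handle each edge space $E\in\{E_1,\dots,E_t\}$ separately, and for a fixed $E$ to modify only the parameters used in Step~II along the unique path in $T$ connecting the two vertex spaces of $M_T$ adjacent to the two copies $E',E''$ of $E$. The key observation is that, by the edge condition established in Step~I, for each $e\in G_K\setminus T$ one of three situations occurs, and only the third one (both adjacent vertex spaces virtually/partially fibered and $E$ a cylinder, in which case $e$ lies on a simple cycle of $G_{\Phi(H)}$) actually constrains the covering degrees on the two sides; in the other cases the covering degrees on distinct boundary components of a geometrically finite or $S^1$-bundle piece can be chosen independently, so one can directly arrange the boundary condition to match on $E'$ and $E''$ by choosing equal parameters $\mu,\nu$ (for a plane $E$) or equal $\tau$ (for a cylinder $E$). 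First I would dispose of these easy cases: when at least one side of $E$ is geometrically finite or an $S^1$-bundle, Proposition \ref{geofinite} (3)--(5) or Proposition \ref{S1bundle} (3)--(4) lets us prescribe the restriction to $p(E')$ and to $p(E'')$ to be the \emph{same} finite cover of $\pi(E)$, after which the identity-type homeomorphism $g$ exists; and since the constants $A_{V_K},A_{(V_K,E)}$ all divide $\mathfrak{A}$, the boundary condition for all the other boundary components of the affected pieces is preserved, giving (1) and (2).

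The remaining, genuinely delicate case is $E$ a cylinder with both adjacent pieces virtually fibered or partially fibered, so that the corresponding edge $e$ lies on a simple cycle $c\subset G_{\Phi(H)}\subset G_K$. Here the parameter $\alpha$ attached to each such piece is a single global parameter (Propositions \ref{hypfiber}, \ref{Seifertfiber}, \ref{Seifertpartialfiber}), so we cannot independently control the degree on $E'$ versus the degrees on the other boundary components. The plan is to track how, along the path in $T$ from the vertex of $E'$ to the vertex of $E''$, the inductive formula of Lemma \ref{constructionontree} (boundary condition (2)) forces $s|_{p(E')}$ to correspond to $\mathbb{Z}[c]\oplus\mathbb{Z}[\tau'\mathfrak{A}^{\bullet}t]$ and $s|_{p(E'')}$ to correspond to $\mathbb{Z}[c]\oplus\mathbb{Z}[\tau''\mathfrak{A}^{\bullet}t]$, where the ratio $\tau'/\tau''$ is, up to the already-absorbed constants $A_{(V_K,E)},B_E,B'_E$, a product of the numbers $s_\delta$ attached to the oriented proper paths traversed in the fibered and partially fibered pieces along the cycle $c$. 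By definition of the generalized spirality character (Definition \ref{spiralitydef}) and Remark \ref{factorthrough}, this product over the closed cycle $c$ is exactly $s([c])\in\mathbb{Q}_+^{\times}$. Since $H$ is assumed aspiral, $s([c])=1$, so $\tau'$ and $\tau''$ differ only by a ratio of divisors of $\mathfrak{A}$; absorbing these into the (still free) choice of the remaining global parameters along the path, which is possible because $\mathfrak{A}$ was taken to be a large product of all the relevant constants, we can force $\tau'\mathfrak{A}^{\bullet}=\tau''\mathfrak{A}^{\bullet}$ and hence make $s|_{p(E')}$ and $s|_{p(E'')}$ the same finite cover of $\pi(E)$. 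The homeomorphism $g:p(E')\to p(E'')$ commuting with the covering maps then exists, giving (3); and since the modification only rescales global parameters by divisors of $\mathfrak{A}$, every instance of the boundary condition on the untouched boundary components still holds, as does condition (2) for the other edge spaces $E_i$, provided we process the $E_i$ one at a time and note (as in the opening paragraph of each step in the paper) that once a condition holds it persists under further reparametrization by divisors of $\mathfrak{A}$.

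I expect the main obstacle to be the bookkeeping in the hard case: making precise the claim that the discrepancy between the two prescribed covers of $\pi(E)$ is governed by the spirality of the cycle $c$, and then checking that the repair (rescaling global $\alpha$-parameters) does not disturb any of the previously-arranged boundary conditions along the path or at the other $E_i$'s. In particular one must be careful that the cycle $c$ may pass through partially fibered pieces where two slopes (a fiber slope and a $u$-slope) are in play, and that Lemma \ref{Z2} (applied to the possibly-incompatible choices $t,t'$ on $\pi(E)$ from the two viewpoints) interacts correctly with the spirality cancellation; but since $s_\delta=1$ for twisted $I$-bundle pieces and the partially-fibered $s_\delta$ is the fiber-intersection ratio $\langle c^{\mathrm{ini}},h\rangle/\langle c^{\mathrm{ter}},h\rangle$ (Case I of Section \ref{arcs}), the telescoping around $c$ is exactly what aspirality kills. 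The argument should conclude by taking $N$ to be the quotient of (the modified) $N_T$ obtained by gluing $p(E_i')$ to $p(E_i'')$ via the homeomorphisms $g_i$ for $i=1,\dots,t$; the resulting $f:N\to M$ is a finite semi-cover with dual graph $G_K$ into which $K$ embeds, completing the proof of Proposition \ref{embedsemicover} and hence of Proposition \ref{toriboundaryseparability}.
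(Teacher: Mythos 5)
Your proposal follows the paper's proof in its essential structure: the same trichotomy (torus, plane, cylinder edge spaces), the same observation that when at least one side is geometrically finite or an $S^1$-bundle the per-boundary-component parameters of Propositions \ref{geofinite} and \ref{S1bundle} can be adjusted independently, and the same use of the edge condition plus aspirality (telescoping the $s_{\delta_i}$ around the simple cycle $c\subset G_{\Phi(H)}$) in the one genuinely constrained case, namely a cylinder $E$ with both adjacent pieces virtually or partially fibered.

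The one place where you diverge, and where your argument is shakier than the paper's, is the conclusion of that hard case. The paper performs \emph{no} modification there: the telescoping identity $1=s([c])=\prod s_{\delta_i}=[p(E'):\pi(E)]/[p(E''):\pi(E)]$ gives equality of total covering degrees, and then Lemma \ref{Z2} (which makes $\mathbb{Z}[c]\oplus\mathbb{Z}[B't']$ and $\mathbb{Z}[c]\oplus\mathbb{Z}[B''t'']$ literally the same subgroup) converts that degree equality into $\tau'\mathfrak{A}/B'=\tau''\mathfrak{A}/B''$, hence into equality of the two subgroups $\pi_1(p(E'))=\pi_1(p(E''))$ — so the homeomorphism $g$ exists as things stand. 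You instead propose to absorb a residual discrepancy by rescaling the global $\alpha$-parameters of the fibered/partially fibered pieces along the path in $T$. That step is both unnecessary and hard to carry out as stated: the global parameter of such a piece controls the degrees on \emph{all} of its cylinder boundary components simultaneously, including the ones already glued along tree edges in Lemma \ref{constructionontree}, so changing it after the fact would force a cascade of changes through $N_T$ and could disturb condition (2) for the other $E_i$. You should replace the "absorption" step by the paper's direct computation showing the discrepancy is already zero. The remaining cases (plane with two partially fibered sides, mixed plane case) are also handled correctly in spirit, via the freely choosable non-fiber $u$-parameters of Proposition \ref{Seifertpartialfiber}, matching the paper's Case B.
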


\begin{proof} We prove this lemma by a case-by-case argument.

{\it Case A. $E$ is a torus.} Boundary condition (1) implies that both $p|_{E'}:E'\to p(E')$ and $p|_{E''}:E''\to p(E'')$ are homeomorphisms. Since $E'$ is homeomorphic to $E''$, the desired homeomorphism $g: p(E')\to p(E'')$ exists.

\bigskip

{\it Case B. $E$ is a plane.}

By Table 2, the two adjacent pieces of $E'$ and $E''$ in $M_T$ are both geometrically finite or partially fibered, and we denote them by $V_K'$ and $V_K''$ respectively.

We first suppose that both $E'$ and $E''$ lie in geometrically finite vertex spaces of $M_T$. Then boundary condition (3) (b) implies that $p(E')\to \pi(E)$ corresponds to $$\mathbb{Z}[\mathfrak{A}u'] \oplus \mathbb{Z}[\mathfrak{A}v']<\pi_1(\pi(E))$$ and $p(E'')\to \pi(E)$ corresponds to $$\mathbb{Z}[\mathfrak{A}u''] \oplus \mathbb{Z}[\mathfrak{A}v'']<\pi_1(\pi(E)).$$
Here the slopes $u',v'$ and $u'',v''$ are chosen by Table 5 from $V_K'$ and $V_K''$ viewpoint respectively, so we have $u'=v''$ and $v'=u''$. Then $p(E')\to \pi(E)$ and  $p(E'')\to \pi(E)$ correspond to the same subgroup of $\pi_1(\pi(E))$, and the desired homeomorphism $g:p(E')\to p(E'')$ exists.

If one of $E'$ and $E''$ lies in a geometrically finite piece and the other one lies in a partially fibered piece, we assume that $E'$ lies in a geometrically finite piece $V_K'$, and let $\hat{V}'$ be the image of $V_K'$ in $N_T$. Boundary condition (3) (b) implies that $p(E')\to \pi(E)$ corresponds to subgroup
$$\mathbb{Z}[\mathfrak{A}u'] \oplus \mathbb{Z}[\mathfrak{A}v']<\pi_1(\pi(E)).$$
Since $u'=v''$ and $v'=u''$ hold, boundary condition (3) (c) implies that $p(E'')\to \pi(E)$ corresponds to subgroup
$$\mathbb{Z}[\mathfrak{A}u''] \oplus \mathbb{Z}[\nu'' \mathfrak{A}v'']=\mathbb{Z}[\nu'' \mathfrak{A}u']\oplus \mathbb{Z}[\mathfrak{A}v']<\pi_1(\pi(E))$$
for some $\nu''\in \mathbb{Z}_+$. These two subgroups of $\pi_1(\pi(E))$ may not be equal to each other, so we need to apply Proposition \ref{geofinite} to modify our construction of $\hat{V}'$. For the parameters we used to construct $\hat{V}'$, we multiply the parameter corresponding to slope $u'\subset \pi(E)$ by $\nu''$, and fix all other parameters to construct a new $\hat{V}'$. At first, $V'_K\cap K$ still embeds into this new $\hat{V}'$. Since all other parameters are not changed, this new $\hat{V}'$ can be pasted with other (old) pieces of $N_T$ to get a new $N_T$, and any other boundary components of $N_T$ corresponding to edges in $G_K\setminus T$ are not changed. By our modification, $p(E')$ and $p(E'')$ correspond to the same subgroup of $\pi_1(\pi(E))$, so the desired homeomorphism $g:p(E')\to p(E'')$ exists.

If both $E'$ and $E''$ lie in partially fibered pieces of $M_T$, we denote these two pieces by $V_H',V_H''$ respectively, and let $\hat{V}'$ and $\hat{V}''$ be the images of $V_H'$ and $V_H''$ in $N_T$ respectively. Then boundary condition (3) (c) implies that $p(E')\to \pi(E)$ corresponds to
$$\mathbb{Z}[\mathfrak{A}u'] \oplus \mathbb{Z}[\nu' \mathfrak{A}v']<\pi_1(\pi(E)),$$
and  $p(E'')\to \pi(E)$ corresponds to
$$\mathbb{Z}[\mathfrak{A}u''] \oplus \mathbb{Z}[\nu'' \mathfrak{A}v'']=\mathbb{Z}[\nu'' \mathfrak{A}u']\oplus \mathbb{Z}[\mathfrak{A}v']<\pi_1(\pi(E))$$
for some $\nu',\nu'' \in \mathbb{Z}_+$. These two subgroups of $\pi_1(\pi(E))$ may not be equal to each other. We apply Proposition \ref{Seifertpartialfiber} to modify our construction of $\hat{V}'$ and $\hat{V}''$ as in the previous case. In Proposition \ref{Seifertpartialfiber}, the parameters not corresponding to regular fibers ($u'$ and $u''$ in our case) can be chosen arbitrary, and it does not affect other boundary components. For the parameters we used to construct $\hat{V}'$ and $\hat{V}''$, we multiply the parameter corresponding to $u'$ by $\nu''$ and multiply the parameter corresponding to $u''$ by $\nu'$, while we fix all other parameters to construct our new $\hat{V}'$ and $\hat{V}''$. Similarly to the previous case, after our modification, $K\cap V'_K$ and $K\cap V''_K$ still embed into the new $\hat{V}'$ and $\hat{V}''$, and the desired homeomorphism $g:p(E')\to p(E'')$ exists. Moreover, this modification does not affect other pieces of $N_T$ and other boundary components of $N_T$ corresponding to edges in $G_K\setminus T$.

\bigskip

{\it Case C. $E$ is a cylinder.} Let $V_K'$ and $V_K''$ be the pieces of $M_T$ containing $E'$ and $E''$ respectively.

Case C.1. At least one of $V_K'$ and $V_K''$ does not correspond to virtually fibered or partially fibered subgroups. We assume that $V_K'$ is either geometrically finite, or has an induced $S^1$-bundle structure. By boundary condition (2),  $p(E')\to \pi(E)$ corresponds to subgroup
$$\mathbb{Z}[c] \oplus \mathbb{Z}[\tau'\mathfrak{A}t']<\pi_1(\pi(E)),$$
and  $p(E'')\to \pi(E)$ corresponds to subgroup
$$\mathbb{Z}[c] \oplus \mathbb{Z}[\tau'' \mathfrak{A}t'']<\pi_1(\pi(E))$$
for some $\tau',\tau''\in \mathbb{Z}_+$. Since $V_K'$ is neither virtually fibered nor partially fibered, Table 4 implies $t'=t''$, but these two subgroups of $\pi_1(\pi(E))$ may not be equal to each other.

Now we apply Proposition \ref{geofinite} or \ref{S1bundle} to modify our construction of $\hat{V}'$. In these two propositions, the parameter of $t'$ can be chosen arbitrarily and it does not affect other boundary components of $\hat{V}'$. For the parameters we used to construct $\hat{V}'$, we change the parameter $\alpha_{E'}$ (corresponding to $t'$) from $\tau'\frac{\mathfrak{A}}{A_{V'_K}}$ to $\tau''\frac{\mathfrak{A}}{A_{V'_K}}$ and fix all other parameters to construct our new $\hat{V}'$. Similarly to Case B, after doing this modification, there exists the desired homeomorphism $g:p(E')\to p(E'')$, and this modification does not affect other boundary components of $N_T$ corresponding to edges in $G_K\setminus T$.

\bigskip

Case C.2. If we are not in case C.1, both $V_K'$ and $V_K''$ correspond to virtually fibered or partially fibered subgroups. Then by edge condition (3), the edge $e\subset G_K\setminus T$ is contained in a simple cycle $c\subset G_{\Phi(H)}\subset G_K$. This is the only case that we need to use the aspirality of $H$ in this proof, and we will prove that $p(E')$ and $p(E'')$ are the same finite cover of $\pi(E)$, without doing any modification.

For the inclusion $i:c\to G_{\Phi(H)}$ and the projection $r:\Phi(H)\to G_{\Phi(H)}$, we take a lifting $j:c\to \Phi(H)$ such that $r\circ j=i$ and $j:c\to \Phi(H)$ transverses with the decomposition circles of $\Phi(H)$. We endow $c$ with an arbitrary orientation, then the decomposition circles of $\Phi(H)$ decompose $c$ into a concatenation of oriented paths $\delta_1,\delta_2,\cdots \delta_n$, and each $\delta_i$ is an oriented proper path in a vertex space $\Sigma_i\subset \Phi(H)$. Moreover, we can suppose the terminal point of $\delta_n$ equals the initial point of $\delta_1$, and this point lies in the decomposition circle corresponding to the edge $e\subset G_K\setminus T$.

For each $\delta_i$, $s_{\delta_i}\in \mathbb{Q}_+$ is defined in Section \ref{arcs}, and the value of generalized spirality character $s:H_1(\Phi(H);\mathbb{Z})\to \mathbb{Q}_+^{\times}$ on $[c]\in H_1(\Phi(H);\mathbb{Z})$ is defined by $s([c])=\Pi_{i=1}^ns_{\delta_i}.$ Then the aspirality of $H$ implies $$\prod_{i=1}^ns_{\delta_i}=1.$$

By the assumption of this proposition, each $\Sigma_i$ is a compact orientable hyperbolic surface. Let $c_i^{\text{ini}}$ be the boundary component of $\Sigma_i$ containing the initial point of $\delta_i$, and let $c_i^{\text{ter}}$ be the boundary component of $\Sigma_i$ containing the terminal point of $\delta_i$. As decomposition circles of $\Phi(H)$, we have $c_i^{\text{ter}}=c_{i+1}^{\text{ini}}$ and $c_n^{\text{ter}}=c_1^{\text{ini}}$. Let $\Sigma$ be the surface obtained from $\{\Sigma_i\}_{i=1}^n$ by identifying $c_i^{\text{ter}}$ with $c_{i+1}^{\text{ini}}$ for $i=1,2,\cdots,n-1$ (without identifying $c_n^{\text{ter}}$ with $c_{1}^{\text{ini}}$). Then $\Sigma$ is a subsurface of $\Phi(H)$ and it is also a subsurface of $M_T$. In $M_H$, the edge space $E$ containing $c_1^{\text{ini}}=c_n^{\text{ter}}$ corresponds to two distinct boundary components $E',E''\subset \partial M_T$, with $c_1^{\text{ini}}\subset E'$ and $c_n^{\text{ter}}\subset E''$. Then the semi-cover $s:M_T\to N_T$ embeds $\Sigma\subset M_T$ into $N_T$.

Let $\hat{V}_i$ be the piece of $N_T$ containing $\Sigma_i$, and let $\hat{T}_i^{\text{ini}}$ and $\hat{T}_i^{\text{ter}}$  be the boundary components of $\hat{V}_i$ containing $c_i^{\text{ini}}$ and $c_i^{\text{ter}}$ respectively. Then in $N_T$, we have  $\hat{T}_i^{\text{ter}}=\hat{T}_{i+1}^{\text{ini}}$ for $i=1,2,\cdots,n-1$, while $p(E')=\hat{T}_1^{\text{ini}}$ and $p(E'')=\hat{T}_n^{\text{ter}}$.

Let $V_i$ be the image of $\hat{V}_i$ in $M$, and let $T_i^{\text{ini}}$ and $T_i^{\text{ter}}$ be the images of $\hat{T}_i^{\text{ini}}$ and $\hat{T}_i^{\text{ter}}$ in $M$ respectively. In $M$, we have $T_i^{\text{ter}}=T_{i+1}^{\text{ini}}$ for $i=1,2,\cdots,n-1$ and $\pi(E)=T_1^{\text{ini}}=T_n^{\text{ter}}$.

If $V_i$ is hyperbolic, then the piece of $M_T$ containing $\Sigma_i$ corresponds to a virtually fibered subgroup of $\pi_1(V_i)$. By our construction in Proposition \ref{hypfiber}, $\hat{V}_i$ is a $\Sigma_i$-bundle over $S^1$. By Table 4, the slope $t$ chosen in Proposition \ref{hypfiber} is the degeneracy slope. Then Proposition \ref{hypfiber} (3) implies that, for each boundary component $T\subset \partial \hat{V}_i$, the algebraic intersection number between the degeneracy slope on $T$ and $\Sigma_i\cap T$ is $1$. So the pseudo-Anosov monodromy $\phi_i:\Sigma_i\to \Sigma_i$ of $\hat{V}_i$ restricts to identity on $\partial \Sigma_i$, and it satisfies the conditions in Case II of Section \ref{arcs} (in particular condition (3)). So we have $$s_{\delta_i}=\frac{[\hat{T}_i^{\text{ini}}:T_i^{\text{ini}}]}{[\hat{T}_i^{\text{ter}}:T_i^{\text{ter}}]}.$$

If $V_i$ is a Seifert manifold, then the piece of $M_T$ containing $\Sigma_i$ corresponds to a virtually fibered or partially fibered subgroup of $\pi_1(V_i)$. Then we use Case I of Section \ref{arcs} to compute $s_{\delta_i}$. By Proposition \ref{Seifertfiber} (3) and \ref{Seifertpartialfiber} (3), the covering map $\hat{T}_i^{\text{ini}}\to T_i^{\text{ini}}$ corresponds to subgroup $\mathbb{Z}[c_i^{\text{ini}}]\oplus \mathbb{Z}[\alpha A t_i^{\text{ini}}]<\pi_1(T_i^{\text{ini}})$ for some $\alpha\in \mathbb{Z}_+$. By Table 4, $t_i^{\text{ini}}$ is the regular fiber $h$ of $V_i$, so we have
$$[\hat{T}_i^{\text{ini}}:T_i^{\text{ini}}]=\alpha A\langle c_i^{\text{ini}}, h\rangle.$$
Since the induced covering degree of $\hat{V}_i\to V_i$ on regular fiber is independent of which boundary component we take, we also have
$$[\hat{T}_i^{\text{ter}}:T_i^{\text{ter}}]=\alpha A\langle c_i^{\text{ter}}, h\rangle,$$
So we get
$$s_{\delta_i}=\frac{\langle c_i^{\text{ini}},h\rangle}{\langle c_i^{\text{ter}},h\rangle}=\frac{\alpha A \langle c_i^{\text{ini}},h\rangle}{\alpha A \langle c_i^{\text{ter}},h\rangle}=\frac{[\hat{T}_i^{\text{ini}}:T_i^{\text{ini}}]}{[\hat{T}_i^{\text{ter}}:T_i^{\text{ter}}]}.$$

Since $s([c])=1$, while $\hat{T}_i^{\text{ter}}=\hat{T}_{i+1}^{\text{ini}}$ and $T_i^{\text{ter}}=T_{i+1}^{\text{ini}}$ hold for $i=1,2,\cdots,n-1$, we have
$$1=s([c])=\prod_{i=1}^ns_{\delta_i}=\prod_{i=1}^n\frac{[\hat{T}_i^{\text{ini}}:T_i^{\text{ini}}]}{[\hat{T}_i^{\text{ter}}:T_i^{\text{ter}}]}=\Big(\prod_{i=1}^{n-1}\frac{[\hat{T}_i^{\text{ini}}:T_i^{\text{ini}}]}{[\hat{T}_{i+1}^{\text{ini}}:T_{i+1}^{\text{ini}}]}\Big)\cdot \frac{[\hat{T}_n^{\text{ini}}:T_n^{\text{ini}}]}{[\hat{T}_n^{\text{ter}}:T_n^{\text{ter}}]}=\frac{[\hat{T}_1^{\text{ini}}:T_1^{\text{ini}}]}{[\hat{T}_n^{\text{ter}}:T_n^{\text{ter}}]}.$$
Since $\pi(E)=T_n^{\text{ter}}=T_1^{\text{ini}}$, while $p(E')=\hat{T}_1^{\text{ini}}$ and $p(E'')=\hat{T}_n^{\text{ter}}$ hold, we get
 $$[p(E'):\pi(E)]=[p(E''):\pi(E)].$$

By boundary condition (2), $p(E')\to \pi(E)$ corresponds to subgroup $\mathbb{Z}[c]\oplus \mathbb{Z}[\tau' \mathfrak{A}t']<\pi_1(\pi(E))$ and $p(E')\to \pi(E)$ corresponds to subgroup $\mathbb{Z}[c]\oplus \mathbb{Z}[\tau'' \mathfrak{A}t'']<\pi_1(\pi(E))$ for some $\tau',\tau''\in \mathbb{Z}_+$. By Table 4, $t'$ and $t''$ may not be the same slope on $\pi(E)$. For $c,t',t''\in \pi_1(\pi(E))\cong \mathbb{Z}^2$, Lemma \ref{Z2} gives us two constants $B'$ and $B''$ and they are both factors of $\mathfrak{A}$. Then we have
$$[p(E'):\pi(E)]=\Big[\pi_1(\pi(E)):\mathbb{Z}[c]\oplus \mathbb{Z}[\tau' \mathfrak{A}t']\Big]=\Big[\pi_1(\pi(E)):\mathbb{Z}[c]\oplus \mathbb{Z}[B't']\Big]\cdot\frac{\tau'\mathfrak{A}}{B'}.$$
Similarly, we have
$$[p(E''):\pi(E)]=\Big[\pi_1(\pi(E)):\mathbb{Z}[c]\oplus \mathbb{Z}[\tau'' \mathfrak{A}t'']\Big]=\Big[\pi_1(\pi(E)):\mathbb{Z}[c]\oplus \mathbb{Z}[B''t'']\Big]\cdot\frac{\tau''\mathfrak{A}}{B''}.$$
Since $[p(E'):\pi(E)]=[p(E''):\pi(E)]$ and $\mathbb{Z}[c]\oplus \mathbb{Z}[B't']=\mathbb{Z}[c]\oplus \mathbb{Z}[B''t'']$, the above two equalities imply that $\frac{\tau'\mathfrak{A}}{B'}=\frac{\tau''\mathfrak{A}}{B''}$.

Then Lemma \ref{Z2} implies that $$\pi_1(p(E'))=\mathbb{Z}[c]\oplus \mathbb{Z}[\frac{\tau' \mathfrak{A}}{B'}B't']=\mathbb{Z}[c]\oplus \mathbb{Z}[\frac{\tau' \mathfrak{A}}{B'}B''t'']=\mathbb{Z}[c]\oplus \mathbb{Z}[\frac{\tau'' \mathfrak{A}}{B''}B''t'']=\pi_1(p(E'')).$$ So $p(E')\to \pi(E)$ and $p(E'')\to \pi(E)$ correspond to the same subgroup of $\pi_1(\pi(E))$, and the desired homeomorphism $g:p(E')\to p(E'')$ exists.

\end{proof}

For each $E_i$, the above modification on $p(E_i')$ and $p(E_i'')$ does not affect $p(E_j')$ and $p(E_j'')$ for any $j\ne i$. So we can paste all pairs $\{p(E'_i),p(E''_i)\}$ for $i=1,2,\cdots,t$ by homeomorphisms given by Proposition \ref{pasting}, to get a $3$-manifold $N$ with dual graph isomorphic to $G_K$. This finishes Step III.

\bigskip

In step III, we modified $N_T$ such that we can paste pairs of its boundary components corresponding to edges in $G_K\setminus T$, to get a finite semi-cover $N\to M$ whose dual graph is isomorphic to $G_K$. Then we have the following diagram.

\begin{diagram}
K\cap M_T &\subset &M_T & \rTo^{p}  & N_T  &            & \\
\cap      &        & \dTo &          & \dTo &  \rdTo^{s} & \\
K         & \subset & M_K  & \rTo     & N    & \rTo^{f}  & M\\
\end{diagram}

Lemma \ref{constructionontree} (2) implies that the restriction of $p:M_T\to N_T$ embeds $K\setminus (\cup_{i=1}^t E_i)=K\cap M_T$ into $N_T$, and the modification of $N_T$ in Step III does not change this property. So when we paste $N_T$ to get $N$, $K\subset M_K$ embeds into $N$. The finite semi-cover $f:N \to M$ is induced by the finite semi-cover $s:N_T\to M$ constructed in Step II. This finishes the proof of Proposition \ref{embedsemicover}.

\bigskip

Now we are ready to prove Proposition \ref{toriboundaryseparability}.

\begin{proof} We suppose that $H<\pi_1(M)$ is aspiral, and will prove it is separable in $\pi_1(M)$.

  Let $\chi:H_1(\Phi(H);\mathbb{Z})\to \mathbb{Z}_2$ be the orientability character of $\Phi(H)$. Since the restriction of $\chi$ on $\partial \Phi(H)$ is trivial, and $\pi_1(\Phi(H))$ corresponds to a subgraph of the dual graph of $H$, $\chi$ can be extended to a homomorphism $\chi':H\to \mathbb{Z}_2$. Let $H'$ be the kernel of $\chi'$, then it is an index-$1$ or index-$2$ subgroup of $H$. Then for $H'<\pi_1(M)$, $\Phi(H')$ is an orientable surface.

  By Lemma 3.1 of \cite{PW}, $M$ has a finite cover $M'$ such that for each Seifert piece $V\subset M'$, the base orbifold of $V$ is an orientable surface (with no singular points) with positive genus. Since $H$ is aspiral in $M$, Lemma \ref{subgroupspirality} and \ref{manifoldcoverspirality} imply that $H''=H'\cap \pi_1(M')$ is aspiral in $M'$. In the following, we will prove that $H''$ is separable in $\pi_1(M')$, then Lemma \ref{basicLERF} (2) and (3) imply that $H$ is separable in $\pi_1(M)$.

  In the following, we abuse notation and denote the above $H''<\pi_1(M')$ by $H<\pi_1(M)$. So we can assume that $\Phi(H)$ is an orientable surface, and the base orbifold of any Seifert piece in $M$ is an orientable surface (with no singular points) with positive genus, thus the assumption of Proposition \ref{embedsemicover} holds.

  Let $\pi: M_H\to M$ be the covering space of $M$ corresponds to $H<\pi_1(M)$. For any compact subset $K\subset M_H$, we enlarge $K$ such that it is path-connected, while it contains a Scott core of $M_H$ and $\Phi(H)$. Then Proposition \ref{embedsemicover} gives us a finite semi-cover $f:N\to M$ and an embedding $i:K\hookrightarrow N$, such that $f\circ i=\pi|_{K}$.

  For the finite semi-cover $f: N\to M$, let $M_N\to M$ be the covering space of $M$ correspond to $\pi_1(N)<\pi_1(M)$. Then $f:N\to M$ lifts to an embedding $j_N:N\hookrightarrow M_N$. Since $N$ is compact, Lemma \ref{semicover} implies that $M_N\to M$ has an intermediate finite cover $q: M'\to M$ (with covering map $r:M_N\to M'$) such that $j=r\circ j_N:N\to M'$ is an embedding. The maps we have constructed are summarized in the following diagram, while both horizontal maps are embeddings.
\begin{diagram}
& & & & M_N\\
& & & \ruTo^{j_N} & \dTo_{r}\\
K &\rTo^i & N & \rTo^j & M'\\
&\rdTo_{\pi|_K} &\dTo_{f} & \ldTo_{q} &\\
& &M & &
\end{diagram}

Now we have a finite cover $q:M'\to M$ and an embedding $k=j\circ i:K\hookrightarrow M'$ such that $q\circ k=\pi|_K$, so we have $(\pi|_K)_*(\pi_1(K))<q_*(\pi_1(M'))$. Since $K$ contains a compact core of $M_H$, $\pi_*(\pi_1(M_H))=(\pi|_K)_*(\pi_K)$ is contained in $q_*(\pi_1(M'))$. So $\pi:M_H\to M$ lifts to $p: M_H\to M'$, which is an extension of $k:K\hookrightarrow M'$.
\begin{diagram}
M_H & \rTo^{p}  & M'\\
\cup &  \ruTo^{k} & \dTo_{q}\\
K  & \rTo^{\pi|_K} & M\\
\end{diagram}

So $q:M'\to M$ is an intermediate finite cover of $\pi:M_H\to M$ such that $p|_K=k:K\to M'$ is an embedding. Since $K$ is an arbitrary compact subset of $M_H$, by Lemma \ref{Scott}, $H$ is a separable subgroup of $\pi_1(M)$.

\end{proof}

\subsection{Proof for the general case}\label{filling}

Now we are are ready to prove "aspirality implies separability"  for a general $3$-manifold as in Theorem \ref{main}.

\begin{proof}
Let $M$ be a compact, orientable, irreducible, $\partial$-irreducible $3$-manifold with nontrivial torus decomposition and does not support the $\text{Sol}$-geometry. Let $H<\pi_1(M)$ be an aspiral finitely generated infinite index subgroup. Then we will prove that $H$ is separable in $\pi_1(M)$.

If $M$ has empty or tori boundary, then the separability of $H<\pi_1(M)$ directly follows from Proposition \ref{toriboundaryseparability}. So we only need to consider the case that $M$ has a boundary component with higher genus (genus at least $2$).

We take the torus decomposition $\mathcal{T}\subset M$, then each component $V$ of $M\setminus \mathcal{T}$ is either a Seifert space or atoroidal. If $V$ is atoroidal and has higher genus boundary, then $V$ is irreducible, $\partial$-irreducible and atoroidal, but may not be acylindrical.

For each $g\geq 2$, there exists a compact hyperbolic $3$-manifold $N_g$, such that $\partial N_g$ is a totally geodesic closed surface of genus $g$. The manifold $N_2$ is explicitly constructed in Section 3.2 of \cite{Thu1}, and $N_g$ can be obtained by taking a $(g-1)$-sheet cyclic cover of $N_2$, since $H_1(\partial N_2;\mathbb{Z})\to H_1(N_2;\mathbb{Z})$ is surjective.

For each component $V\subset M\setminus \mathcal{T}$ with higher genus boundary, and each boundary component $\Sigma_g\subset \partial V$ with $g\geq 2$, we attach $N_g$ to $V$ along $\Sigma_g$ (by any homeomorphism) to get a $3$-manifold $\bar{V}$ with tori boundary. Now $\Sigma_g$ is incompressible in both $V$ and $N_g$, $V$ and $N_g$ are both irreducible and atoroidal, while $N_g$ is also acylindrical. A classical argument in $3$-manifold topology implies that $\bar{V}$ is irreducible and atoroidal. So the hyperbolization theorem of Haken manifolds (\cite{Thu3}, see also \cite{Mor}) implies that $\bar{V}$ supports a complete hyperbolic structure with finite volume. By our construction, the inclusion $V\to \bar{V}$ is $\pi_1$-injective, and $\pi_1(V)<\pi_1(\bar{V})$ is a geometrically finite subgroup (since it is not a virtually fibered subgroup).

After doing the above filling construction for each vertex space $V\subset M\setminus \mathcal{T}$ to get $\bar{V}$, we get a compact, orientable, irreducible $3$-manifold $\bar{M}$ with empty or tori boundary, such that $M$ is a submanifold of $\bar{M}$ and the inclusion $M\hookrightarrow \bar{M}$ is $\pi_1$-injective. Each component of $\bar{M}\setminus \mathcal{T}$ is either a component of $M\setminus \mathcal{T}$ with tori boundary, or a finite volume hyperbolic $3$-manifold. So the collection of tori $\mathcal{T}\subset M\subset \bar{M}$ also gives the torus decomposition of $\bar{M}$. Let the covering spaces of $M$ and $\bar{M}$ corresponding to $H<\pi_1(M)$ and $H<\pi_1(\bar{M})$ be denoted by $M_H$ and $\bar{M}_H$ respectively, then $M_H$ is naturally a submanifold of $\bar{M}_H$. Since $\mathcal{T}$ gives the torus decomposition of both $M$ and $\bar{M}$, the induced graph of space structures on $M_H$ and $\bar{M}_H$ are compatible with each other.

For each vertex space $V_H\subset M_H$, let $\bar{V}_H$ be the vertex space of $\bar{M}_H$ containing $V_H$, then $\pi_1(V_H)=\pi_1(\bar{V}_H)$ holds. We use $V$ and $\bar{V}$ to denote the image of $V_H$ and $\bar{V}_H$ in $M$ and $\bar{M}$ respectively. If $V_H\ne \bar{V}_H$, then $V$ has a boundary component of genus at least $2$, and $\bar{V}$ is a hyperbolic $3$-manifold with finite volume. Since $\pi_1(V)<\pi_1(\bar{V})$ is a geometrically finite subgroup, $\pi_1(\bar{V}_H)=\pi_1(V_H)<\pi_1(V)<\pi_1(\bar{V})$ is also geometrically finite. When $V_H=\bar{V}_H$, one is virtually fibered or partially fibered if and only if the other one is.

Let $\Phi_M(H)$ be the almost fibered surface of $H$ defined by $H<\pi_1(M)$, and let $\Phi_{\bar{M}}(H)$ be the almost fibered surface of $H$ defined by $H<\pi_1(\bar{M})$. Then the above argument implies that $V_H \subset M_H$ contributes to $\Phi_M(H)$ if and only if the corresponding piece $\bar{V}_H\subset \bar{M}_H$ contributes to $\Phi_{\bar{M}}(H)$, so $\Phi_M(H)=\Phi_{\bar{M}}(H)$ holds. Then it is easy to check that they have the same generalized spirality character (defined by $M$ and $\bar{M}$ respectively).

So the aspirality of $H$ in $M$ implies that $H$ is aspiral in $\bar{M}$. Since $\bar{M}$ has empty or tori boundary, Proposition \ref{toriboundaryseparability} implies that $H$ is separable in $\pi_1(\bar{M})$. Since $\pi_1(M)$ is a subgroup of $\pi_1(\bar{M})$ containing $H$, Lemma \ref{basicLERF} (1) implies that $H$ is separable in $\pi_1(M)$.
\end{proof}

\section{LERFness of finitely generated $3$-manifold groups}\label{3manifold}

In this section, we use Theorem \ref{main} to prove Theorem \ref{allmanifold}. Actually, the proof only uses the "aspirality implies separability" part of Theorem \ref{main}.

\begin{proof}
  We first deal with the case that $M$ is compact, orientable, irreducible and $\partial$-irreducible. If the torus decomposition of $M$ is trivial, then it is either Seifert of atoroidal. By \cite{Sco1}, Seifert manifolds have LERF fundamental groups. If $M$ is atoroidal, then $M$ supports a complete hyperbolic structure. The Haken case follows from \cite{Thu3} (see also \cite{Mor}), and the non-Haken case follows from \cite{Per1}, \cite{Per2} (see also \cite{BBMBP},\cite{KL} and \cite{MT}). Then works of Wise and Agol prove that $\pi_1(M)$ is LERF (\cite{Wise}, \cite{Agol2}). So we can assume that $M$ has nontrivial torus decomposition. Note that $3$-manifolds supporting the $\text{Sol}$ geometry have virtually polycyclic fundamental groups, and these groups are also LERF.

Under the torus decomposition of $M$, if there are two adjacent pieces $M_1, M_2\subset M$ such that both of them have tori boundary ($M_1$ might be same with $M_2$), then $M_1\cup M_2$ is a $\pi_1$-injective submanifold of $M$. Since $M$ does not support the $\text{Sol}$ geometry, neither of $M_1$ and $M_2$ is homeomorphic to $T^2\times I$ and at least one of them is not homeomorphic to the twisted $I$-bundle over Klein bottle. By the minimal property of torus decomposition, $M_1\cup M_2$ is a mixed manifold or graph manifold. Then Theorem \ref{toriboundary} implies that $\pi_1(M_1\cup M_2)$ is not LERF. Since $\pi_1(M_1\cup M_2)$ is a subgroup of $\pi_1(M)$, Lemma \ref{basicLERF} (4) implies that $\pi_1(M)$ is not LERF.

  Now we suppose that for any two adjacent pieces of $M$, at least one of them has higher genus boundary. For any finitely generated infinite index subgroup $H<\pi_1(M)$, the covering space $M_H\to M$ corresponding to $H$ has an induced graph of space structure. For any piece of $M_H$ that covers  a piece of $M$ with higher genus boundary, since it corresponds to a geometrically finite subgroup of a hyperbolic $3$-manifold group, it has no contribution to the almost fibered surface $\Phi(H)$. So only those pieces of $M_H$ that cover pieces of $M$ with tori boundary may contribute to $\Phi(H)$. Since any two such pieces of $M_H$ are not adjacent to each other, the dual graph $G_{\Phi(H)}$ of $\Phi(H)$ consists of finitely many vertices and has no edges. Since the generalized spirality character $s:H_1(\Phi(H);\mathbb{Z})\to \mathbb{Q}_+^{\times}$ factors through $H_1(G_{\Phi(H)};\mathbb{Z})$ (Remark \ref{factorthrough}), it must be the trivial homomorphism. Then Theorem \ref{main} implies that $H$ is separable in $\pi_1(M)$, and $\pi_1(M)$ is LERF.

\bigskip

 If $M$ is only compact and orientable, let $M_1,M_2,\cdots,M_n$ be the pieces of $M$ under the sphere-disc decomposition. Then $\pi_1(M)$ is a free product of $\pi_1(M_1),\pi_1(M_2),\cdots,\pi_1(M_n)$, and Lemma \ref{basicLERF} (5) implies that $\pi_1(M)$ is LERF if and only if all of $\pi_1(M_1),\pi_1(M_2),\cdots,\pi_1(M_n)$ are LERF.
\end{proof}

\end{document}